\theoremstyle{plain}
\newtheorem*{theorem*}{Theorem}
\theoremstyle{notation}
\numberwithin{equation}{section}
\theoremstyle{plain}
\newtheorem{definition}{Definition}[section]
\newtheorem{theorem}[equation]{Theorem}
\newtheorem{claim}[equation]{Theorem}
\newtheorem{remark}[equation]{Remark}
\newtheorem{corollary}[equation]{Corollary}
\newtheorem{lemma}[equation]{Lemma}
\newtheorem{conj}[equation]{Conjecture}
\newtheorem{proposition}[equation]{Proposition} 
\providecommand{\keywords}[1]
{
  \small	
  \textbf{\textit{Keywords---}} #1
}
\title{Essential dynamics in chaotic attractors}
\author{Eran Igra}
\address{Shanghai Institute for Mathematics and Interdisciplinary Sciences}
\email{eranigra@simis.cn}
\begin{document}

\begin{abstract}
We prove that if a smooth vector field $F$ of $S^3$ generates a sufficiently complicated heteroclinic knot, the flow also generates infinitely many periodic orbits, which persist under smooth perturbations which preserve the heteroclinic knot. Consequentially, we then associate a Template with the flow dynamics - regardless of whether $F$ satisfies any hyperbolicity condition or not. In addition, inspired by the Thurston-Nielsen Classification Theorem, we also conclude topological criteria for the existence of chaotic dynamics for three-dimensional flows - which we apply to study both the Rössler and Lorenz attractors.
\end{abstract}

\maketitle
\keywords{\textbf{Keywords} - The Rössler Attractor, The Lorenz Attractor, Heteroclinic bifurcations, Template Theory, Topological Dynamics, Forcing Theory, Orbit Index Theory, Nielsen Theory}
\section{Introduction}

Consider a homeomorphism $f:D\to D$ where $D$ is an open disc, and let $\{x_1,...,x_n\}\subseteq D$, $n>1$ be some $f-$invariant set (for example, a collection of periodic orbits). Set $S=D\setminus\{x_1,...,x_n\}$ - it is well-known the dynamics of $f:S\to S$ are constrained by how $f$ permutes $\{x_1,...,x_n\}$, a result often referred to as the "Thurston-Nielsen Classification Theorem" or the "Betsvina-Handel Algorithm" (see \cite{Fat} and \cite{BeH}, respectively). In more detail, the action of $f$ on an embedded graph $\Gamma\subseteq D$ s.t. $\{x_1,...,x_n\}\subseteq \Gamma$ can teach us almost everything there is to know on the dynamics of $f$ - for example, it teaches us if $f$ generates complex dynamics which include infinitely many periodic orbits, which persist under isotopies.\\

Now, assume that instead of a two-dimensional surface homeomorphism we have a three-dimensional flow on $S^3$ generated by a smooth vector field $F$ - can the behavior of $F$ on some one dimensional set force the flow to have infinitely many periodic orbits? Even though heuristically flows are suspended homeomorphisms there is no analogue to the Betsvina-Handel algorithm for flows - and it is an open question how and whether we can actually generalize these ideas from discrete-time two-dimensional dynamics to three dimensional flows (see\cite{M} for a survey). It is precisely this question we address in this paper - inspired by the numerical results of \cite{MBKPS}, \cite{BBS} and \cite{Le} and by the results of \cite{Pi} in this paper we give a partial answer to the question above. In more detail, we prove that whenever $F$ is a smooth vector field of $S^3$ whose fixed points are all connected by a heteroclinic knot $H$, then under some topological assumptions on $H$ there exist complex dynamics for which persist under homotopies of $F$ in $S^3\setminus H$.\\

\begin{figure}[h]
\centering
\begin{overpic}[width=0.15\textwidth]{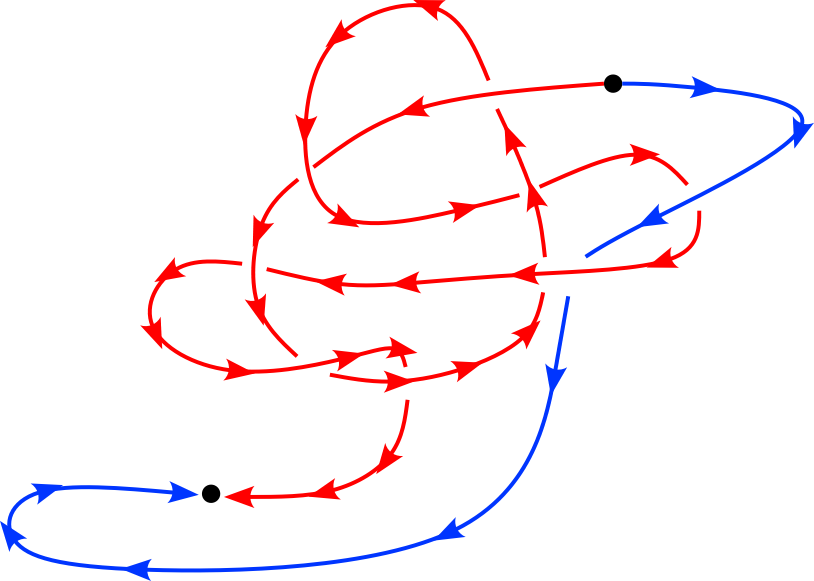}
\end{overpic}
\caption[Fig1]{\textit{A heteroclinic knot between two fixed points.}}\label{hetro1}
\end{figure} 

The importance of these results is that to our knowledge this is the first time such a persistence result had been proven for three-dimensional flows. In addition, our results imply a deep connection between the topology generated by a given three-dimensional flow $F$ and the dynamics it can generate - as such, these results can be seen as s step towards extending forcing theory from one and two-dimensional discrete-time dynamics (see \cite{Shar}, \cite{Yor} and \cite{Bo}) for three-dimensional flows. In a more applied context, our results can also be used to prove the existence of complex dynamics for flows where no hyperbolicity conditions are known to be satisfied - which we exemplify by studying the existence and persistence of complex dynamics in two famous models for chaos: the Rössler and the Lorenz attractors (see \cite{Ross76} and \cite{L}, respectively). As such, our methods can probably be applied to a large class of three-dimensional flows.\\

With these ideas in mind we are now ready to state the major results of this paper. To begin, let $F$ be a $C^k$ vector field of $S^3$ (where $k\geq3$) s.t. $F$ has a finite number of fixed points - all non-degenerate. By the \textbf{topological type} of a non-degenerate fixed point we refer to the local dynamics around it - i.e., a sink, source, a center, saddle, or a saddle focus. Moreover, assume $F$ generates a heteroclinic knot $H$ which connects all the fixed points of $F$ as in Fig.\ref{hetro1} - then, in Section \ref{perss} we prove the following result (see Th.\ref{orbipers} and Th.\ref{pers13}):

\begin{claim}
 \label{th1}   Let $F$ and $H$ be as above, and set $M=S^3\setminus H$. Moreover, assume there exists a smooth vector field $G$ on $S^3$ s.t. the following is satisfied:
\begin{itemize}
    \item $F$ is smoothly homotopic to $G$ in $M$, and for all $s\in M$ we have $G(s)\ne0$.
    \item If $x\in H$ is a fixed point for $F$ it is also a fixed point for $G$ and vice versa - and moreover, the topological type of $x$ is the same for both $F$ and $G$.
    \item There exists two cross-section $S_1\subseteq S_2$ (not necessarily connected) s.t. the following holds:
    \begin{enumerate}
        \item $\overline{S_1}\cap H$ is non-empty lies on the boundary of $S_1$ in $S_2$.
        \item The first-return map $g:S_1\to S_2$ is continuous and extends continuously to $\overline{S_1}$.
        \item Let $I$ denote the invariant set of $g$ - then, $g:I\to I$ is conjugate to either a Smale Horseshoe map or a Fake Horseshoe map.
    \end{enumerate}
\end{itemize}

Then, $F$ generates infinitely many periodic orbits. Moreover, for a generic choice of $F$ these periodic orbits persist under all sufficiently small $C^3$ perturbations of $F$ - and moreover, they do so without changing their knot type.
\end{claim}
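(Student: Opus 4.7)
The plan is to transfer the two-dimensional forcing statement provided by the horseshoe dynamics of the return map $g$ into a three-dimensional statement about the flow of $F$ using the smooth homotopy $F \simeq G$ in $M = S^3\setminus H$, and then to upgrade from existence to persistence by means of an appropriate orbit index that is invariant along the homotopy.

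I would first enumerate the periodic orbits of $g$ on its invariant set $I$ by their symbolic itineraries: each periodic point $p_n$ suspends in the flow of $G$ to a periodic orbit $\gamma_n\subset M$, and each $\gamma_n$ therefore carries a knot type in $S^3$ relative to $H$. A key observation is that distinct itineraries yield distinct knot types, which should follow from the Thurston--Nielsen picture applied to the two-dimensional return map and pushed through the suspension, together with the fact that $\overline{S_1}\cap H$ lies on the boundary of $S_1$ (so that horseshoe orbits link $H$ in a controlled way). Thus $\{\gamma_n\}$ forms an infinite family of pairwise distinct knots/links in $S^3\setminus H$.

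The heart of the argument is the transfer from $G$ to $F$. I would attach to each $\gamma_n$ an orbit index built to be invariant under smooth homotopies through vector fields on $S^3$ that are nowhere vanishing on $M$ and that preserve the topological type of each fixed point of $H$. Horseshoe periodic points are Nielsen-essential for the two-dimensional return map, so the corresponding $\gamma_n$ have non-trivial orbit index; invariance along the homotopy from $G$ to $F$ then forces, for each $n$, a periodic orbit $\gamma_n'$ of $F$ freely homotopic to $\gamma_n$ in $M$, and hence of the same knot type relative to $H$. Infinitely many distinct knot types produce infinitely many distinct periodic orbits of $F$. The persistence under small $C^3$ perturbations then follows for generic $F$: non-degeneracy of the $\gamma_n'$ gives a smooth continuation via the implicit function theorem, and since the continuation is a smooth isotopy in $M$ the knot type is automatically preserved.

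The main obstacle I expect is the third step, namely establishing the invariance of the orbit index along the homotopy. One must rule out bifurcations in which the relevant periodic orbits are absorbed into $H$, or into the stable/unstable manifolds of the fixed points lying on $H$, as the vector field is deformed from $G$ to $F$. The hypotheses that $G$ vanishes nowhere on $M$ and that the topological types of the fixed points of $H$ are preserved are precisely what should prevent such degenerations; formalising this, probably through a Conley-type index on $M$ adapted to the prescribed boundary behaviour at $H$, together with the Nielsen-class bookkeeping for the cross-section, is where I anticipate the bulk of the technical effort to lie.
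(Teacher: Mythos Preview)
Your overall architecture is right in spirit, but there is a genuine gap in the existence step, and the paper closes it with an idea that is absent from your sketch.

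You propose to transfer each horseshoe orbit $\gamma_n$ from $G$ to $F$ directly via a three-dimensional orbit index that is ``invariant along the homotopy.'' The orbit index in the sense of Mallet--Paret--Yorke does \emph{not} give you this: a nonzero index yields global continuability, which only says the branch of periodic orbits must terminate in one of a few prescribed ways (Hopf, period blow-up, escape to infinity). Along a \emph{large} homotopy $G\to F$ nothing prevents $\gamma_n$ from colliding with another orbit in a saddle-node and vanishing, with the total index on the slice preserved. So ``non-trivial orbit index for $\gamma_n$'' does not by itself force a periodic orbit for $F$ freely homotopic to $\gamma_n$; you would need an additional argument excluding saddle-node annihilation along the path, and your sketch does not supply one. (Your secondary claim that distinct itineraries give distinct knot types is also false in general---many horseshoe orbits are unknots---so distinguishing the $\gamma_n'$ by knot type alone will not work either.)

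The paper's missing ingredient is a reduction to two dimensions. Because the homotopy is rel $H$ (no new fixed points, topological types of the saddles on $H$ unchanged), one can carry the cross-sections $S_1\subseteq S_2$ along the deformation so that they remain transverse and the first-return maps $g_t:S_1\to S_2$ vary by a genuine \emph{isotopy} of surface maps, with the points of $\overline{S_1}\cap H$ playing the role of punctures that are permuted in the same pattern throughout. One then embeds $g=g_0$ in a Pseudo-Anosov map $P$ of a punctured surface and invokes Handel's unremovability/uncollapsibility theorem: every periodic point of $P$ persists under isotopy without period change or collision. This is what actually forces infinitely many periodic orbits for $F$, and simultaneously keeps them distinct and of fixed linking with $H$. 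The orbit index enters only afterwards, for persistence under small $C^3$ perturbations of $F$: along the (now known to exist) continuation $Per_T$ from $G$ to $F$ one argues, using the monotone parameterisation coming from the two-dimensional persistence, that no saddle-node can occur on $Per_T$, so the index stays $-1$ and Th.~\ref{contith} applies.

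In short: you correctly identified the obstacle (orbits being absorbed into $H$ or annihilated along the homotopy), but the mechanism that resolves it is not a three-dimensional Conley/orbit-index argument; it is the observation that the rel-$H$ homotopy descends to an isotopy of the cross-section map, after which two-dimensional Nielsen--Thurston forcing does all the heavy lifting.
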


Despite the somewhat technical phrasing, Th.\ref{th1} has the following meaning: whenever the dynamics around a heteroclinic knot $H$ can be smoothly deformed s.t. the heteroclinic orbits on $H$ suspend a horseshoe map, the dynamics of $F$ would also be essentially those of a "deformed" horseshoe map. The proof of Th.\ref{th1} is performed in two parts in Section \ref{perss} - we first prove $F$ generates infinitely many periodic orbits using the Betsvina Handel Algorithm, which we do by embedding the dynamics of $g$ inside those of a Pseudo-Anosov map (see Th.\ref{orbipers}). Following that we prove the persistence of periodicity by using the Orbit Index as introduced in \cite{PY}, \cite{PY2} and \cite{PY4}. At this point we remark the Orbit Index Theory had been applied extensively to study the existence and persistence of complex dynamics for flows (and their applications) - for a non-exhaustive list, see \cite{GG}, \cite{KW}, \cite{KB}, \cite{CS},\cite{B1},\cite{B2}, \cite{KY3}, \cite{B3}, \cite{J1}, \cite{J2} and the references therein. \\

\begin{figure}[h]
\centering
\begin{overpic}[width=0.5\textwidth]{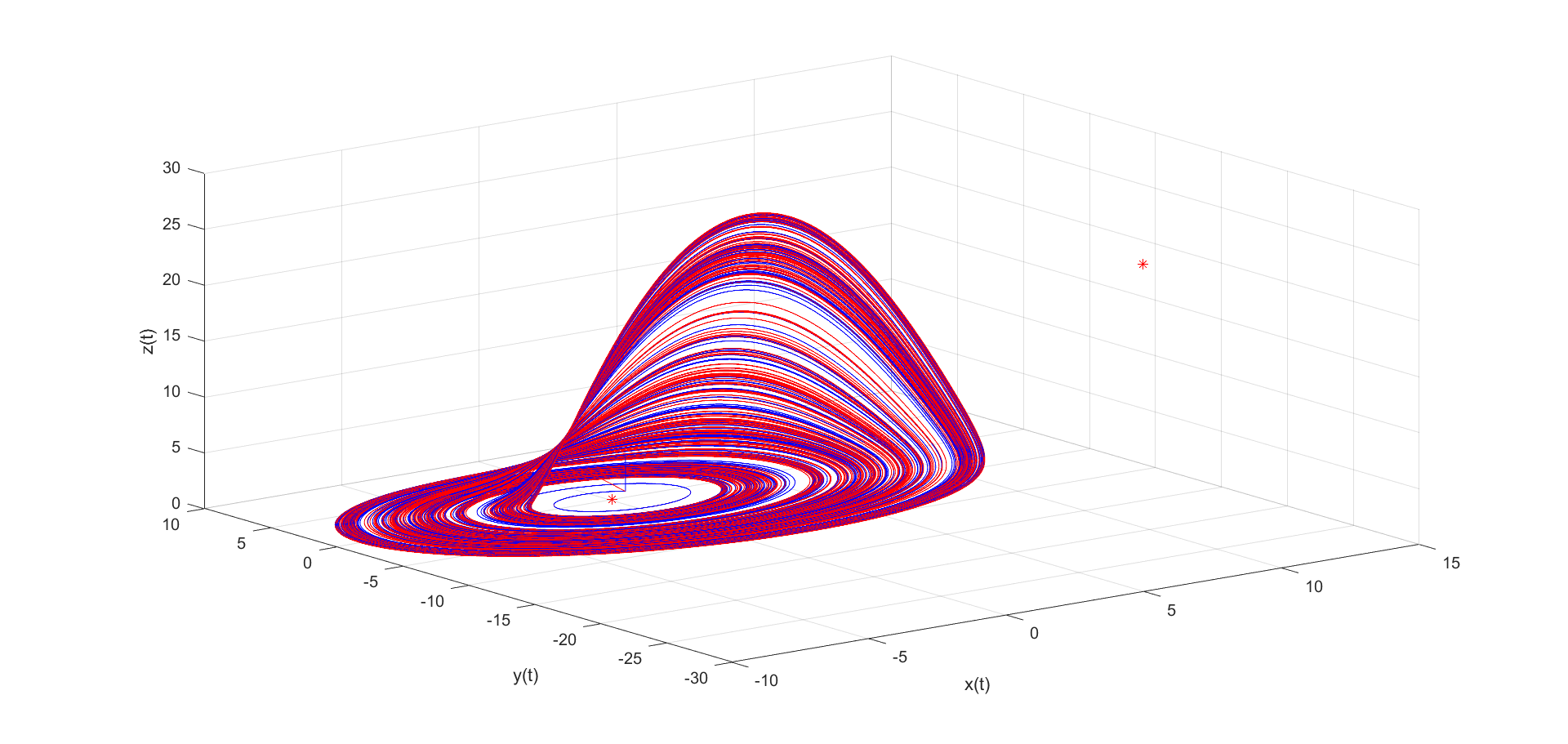}
\end{overpic}
\caption[Fig1]{\textit{The Rössler attractor in parameters $(a,b,c)=(0.2,0.2,5.7)$ (see Eq.\ref{Vect}).}}
\end{figure}
Having proven Th.\ref{th1}, we then give examples how it can be applied to study the dynamics of three dimensional flows. Inspired by \cite{MBKPS} we begin by applying Th.\ref{th1} to study the Rössler system (see \cite{Ross76}). In more detail, we apply Th.\ref{th1} to prove the following Theorem (see Th.\ref{trefoilor} and Th.\ref{trefoil1}):

\begin{claim}
    \label{th3}
    Assume $F$ is a smooth vector field of $S^3$ which generates a heteroclinic trefoil knot configured as in Fig.\ref{trefoil} - then, $F$ generates infinitely many periodic orbits, of infinitely many knot types. Moreover, whenever the Rössler system generates both a compact attractor which traps a heteroclinic orbit as in Fig.\ref{trefoil}, the same conclusion also holds for it.
\end{claim}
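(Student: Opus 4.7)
The plan is to reduce Theorem \ref{th3} to Theorem \ref{th1} by building a model vector field $G$ that realises a horseshoe first-return map compatible with a trefoil open book, and then to read off the knot-type statement from the associated template.

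First, I would exploit the fact that the trefoil $H$ is a fibered knot whose open-book decomposition of $S^3$ has pages diffeomorphic to a once-punctured torus and monodromy isotopic to the pseudo-Anosov map induced by $\bigl(\begin{smallmatrix}2&1\\1&1\end{smallmatrix}\bigr)$. I would take $G$ to be a smooth vector field transverse to every page, with its fixed points placed on the binding $H$ carrying the prescribed topological types (standard local models let one install sinks, sources, saddles or saddle foci on the binding of an open book). The page first-return map of $G$ is then isotopic to the trefoil monodromy, which restricts on a suitable invariant rectangle to a Smale horseshoe; choosing $S_2$ to be a page of the open book and $S_1$ to be this invariant rectangle verifies items (1)--(3) of Theorem \ref{th1}.

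Next I would check the homotopy hypothesis. Both $F$ and $G$ are non-vanishing on $M=S^3\setminus H$ and have the same fixed points with the same topological types, so the only obstruction to a homotopy rel $H$ lives in the component structure of the space of non-vanishing vector fields on $M$ with prescribed germs on $H$. The ``trefoil configuration'' of Fig.\ref{trefoil} fixes exactly the page-crossing pattern of the heteroclinic orbits relative to the open book, so $F\simeq G$ in $M$. Theorem \ref{th1} then yields infinitely many periodic orbits of $F$ that persist, without changing their knot type, under small $C^3$ perturbations preserving $H$.

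For the ``infinitely many knot types'' statement I would work entirely in the model flow $G$ (legal by knot-preserving persistence) and invoke the template $\mathcal{T}$ attached to the trefoil monodromy. A Birman--Williams style analysis of $\mathcal{T}$ shows that its periodic orbits realise infinitely many isotopy classes of knots in $S^3$: periodic horseshoe itineraries of increasing complexity close up to braids of unbounded genus on $\mathcal{T}$, hence cannot all be isotopic. Transferring back to $F$ via Theorem \ref{th1} gives infinitely many knot types among its periodic orbits. For the Rössler application, the assumed compact attractor lets us one-point compactify $\mathbb{R}^3$ to $S^3$ and apply the above with $H$ the trapped heteroclinic trefoil, all the new orbits automatically lying inside the attractor. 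The main obstacle I anticipate is constructing $G$: matching the prescribed saddle-focus data at the fixed points of $F$ (which for Rössler have complex eigenvalues) to the pseudo-Anosov action on the pages of the trefoil open book, while keeping $G$ smooth and non-vanishing on all of $S^3\setminus H$ - this is precisely where the non-degeneracy of the fixed points and the precise trefoil configuration of Fig.\ref{trefoil} are indispensable.
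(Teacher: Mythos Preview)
Your approach has a genuine error at its core: the monodromy of the trefoil fibration is \emph{periodic} (of order $6$), not pseudo-Anosov. The matrix $\bigl(\begin{smallmatrix}2&1\\1&1\end{smallmatrix}\bigr)$ you quote is the monodromy of the figure-eight knot, whose complement is hyperbolic; the trefoil complement is Seifert fibered, and Thurston's hyperbolization theorem for fibered $3$-manifolds forces its monodromy to be finite order. Consequently, if you build $G$ transverse to the pages of the trefoil open book, the page-return map is isotopic to a periodic homeomorphism and carries no horseshoe sub-rectangle at all. So items (1)--(3) of Theorem~\ref{th1} cannot be verified for this $G$, and the plan collapses at the first step.

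The paper circumvents this by \emph{not} using the fiber surface as the cross-section. Instead (see Theorem~\ref{trefoilor}) it takes $S_2=U$ to be a topological disc with the saddle-focus $P_{In}$ on its boundary and with the bounded heteroclinic orbit $\Theta$ hitting it transversely at a single interior point $p_0$; the region $S_1$ is a topological triangle with a vertex at $P_{In}$ and $p_0\in\partial S_1$. The first-return map folds $S_1$ over itself once around $p_0$, and this is embedded in a \emph{thrice-punctured disc} (punctures playing the roles of $P_{In}$, $p_0$, and a third point swapped with $p_0$), yielding a Bestvina--Handel transition matrix $\bigl(\begin{smallmatrix}2&1\\1&0\end{smallmatrix}\bigr)$ with spectral radius $1+\sqrt 2>1$. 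This is where the pseudo-Anosov structure actually comes from, and it has nothing to do with the natural trefoil monodromy. For the knot-type conclusion the paper then blows up the saddle-foci by Hopf bifurcations, identifies the resulting basic set with the suspended Smale horseshoe, and recognizes the associated template as $L(0,1)$, invoking the explicit result of \cite{Hol} that $L(0,1)$ carries infinitely many torus-knot types. Your generic Birman--Williams appeal would work here too, but the paper's route gives the sharper statement about torus knots.

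For the R\"ossler half, your one-line compactification is too quick: you must explain why the infinitely many periodic orbits produced in $S^3$ actually lie inside the given compact attractor rather than off near $\infty$. The paper (Theorem~\ref{trefoil1}) builds an explicit trapping region $K_1$ bounded by pieces of $W^s_{Out}$, the plane $\{z=0\}$, and flow-line walls, performs the entire $\mathrm{rel}\,H$ deformation \emph{inside} $K_1$ without altering the vector field on $\partial K_1$, and then argues that since $F_p$ never points outward along $\partial K_1$, no periodic orbit can escape during the deformation. This containment argument is essential and is missing from your sketch.
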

The main ingredient in the proof of Th.\ref{th3} is Th.\ref{orbipers} and Th.6.1 in \cite{Hol}, and more generally, Template Theory for three-dimensional flows (see \cite{KNOTBOOK} for a survey). As such, the proof of Th.\ref{th3} illustrates precisely how one can use Th.\ref{th1} to prove the existence of complex dynamics for vector fields of $\mathbf{R}^3$: first we isolate the dynamics inside the attractor away from $\infty$, after which we smoothly deform the flow inside the attractor to some idealized model whose dynamics are easier to analyze. Then, applying Th.\ref{th1} we conclude the everything proven for the idealized model also holds for the original flow.\\

\begin{figure}[h]
\centering
\begin{overpic}[width=0.4\textwidth]{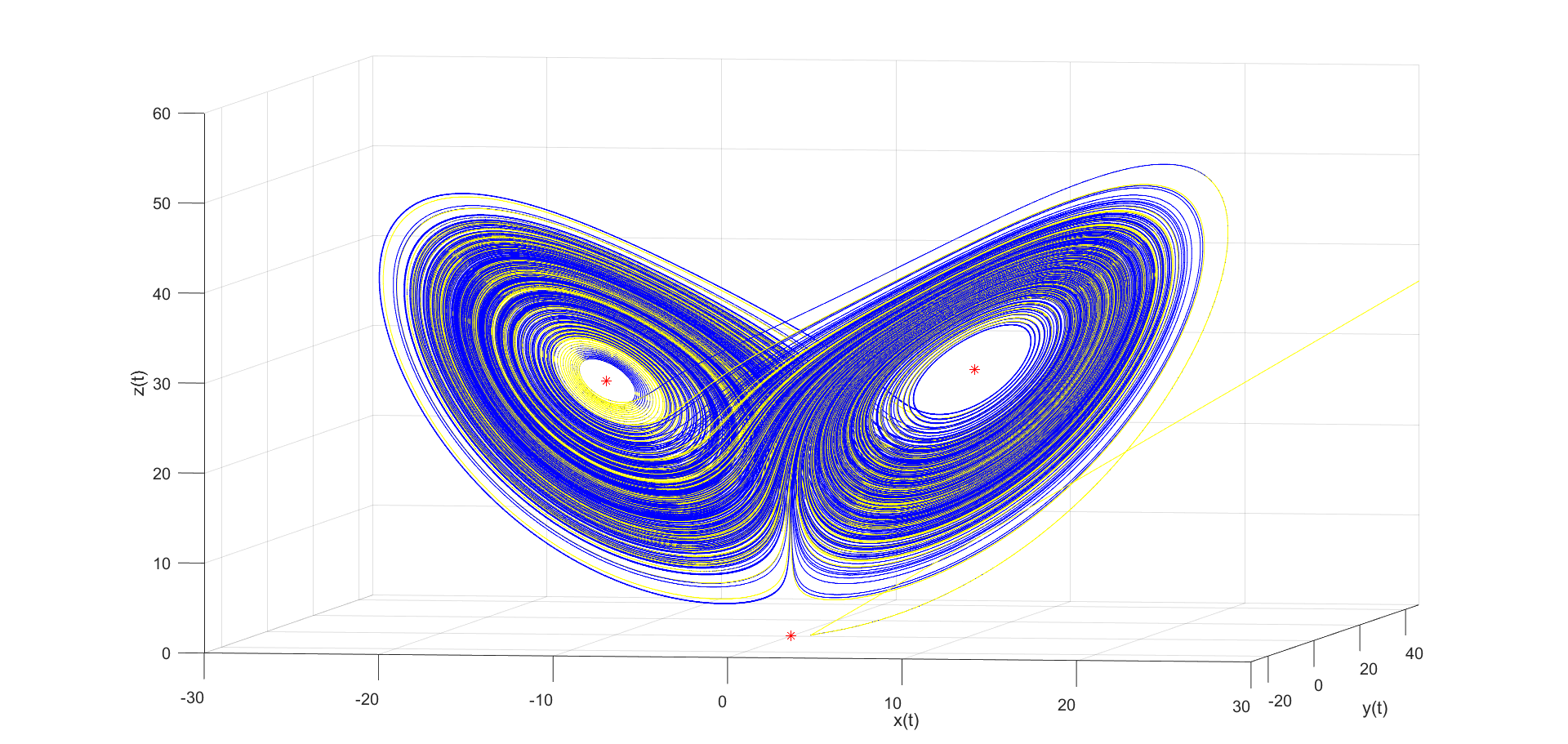}
\end{overpic}
\caption[Fig1]{\textit{The Lorenz attractor in parameters $(\beta,\rho,\sigma)=(3,30,12)$ (see Eq.\ref{Vect11}).}}
\end{figure} 

Having applied Th.\ref{orbipers} to study the Rössler system we then apply it (along with other, more general aspects of Orbit Index Theory) to study the Lorenz system (see \cite{L}). Using Th.\ref{th1}, the results of \cite{Pi} as well as Orbit Index Theory we prove the following (see Th.\ref{persistence} and Th.\ref{trefoil2}):
\begin{claim}
   \label{th4}
   Assume $F$ is a smooth vector field of $S^3$ which generates a heteroclinic trefoil knot as in Fig.\ref{heteroclo}. Then, $F$ generates infinitely many periodic orbits, of infinitely many different knot types - and furthermore, if $F$ is a Lorenz system given by Eq.\ref{Vect11}, every one of these periodic orbits persists under sufficiently small $C^k$ perturbations of $F$ (where $k\geq3$) - without changing its knot type. 
\end{claim}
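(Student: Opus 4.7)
The plan is to combine Theorem \ref{th1} with Template Theory and Orbit Index arguments, closely paralleling the proof of Theorem \ref{th3} but with an additional ingredient to handle the Lorenz-specific persistence claim. First, I would construct a smooth vector field $G$ on $S^3$, homotopic to $F$ in $S^3\setminus H$ and preserving the fixed points together with their topological types, whose dynamics near the heteroclinic trefoil take the form of an idealized model admitting an invariant cross-section on which the first-return map is conjugate to a (Smale or Fake) horseshoe. The trefoil geometry of $H$ depicted in Fig.\ref{heteroclo}, together with the normal-form analysis carried out in \cite{Pi} for the heteroclinic bifurcation appearing in the Lorenz equations, is what forces this horseshoe to appear on the cross-section. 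Applying Theorem \ref{th1} to $G$ then yields infinitely many periodic orbits for $F$, persistent under homotopies in $S^3\setminus H$.

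To upgrade this to infinitely many \emph{distinct} knot types, I would invoke Template Theory in the same spirit as in the Rössler case. The horseshoe suspension lifts to a template embedded in $S^3$, and the trefoil arrangement of the heteroclinic connections forces this template to be (a variant of) the Lorenz template; by the Birman--Williams theorem and the kind of analysis carried out in Th.6.1 of \cite{Hol}, such a template carries periodic orbits of infinitely many pairwise distinct knot types. Since the template faithfully records the isotopy classes of periodic orbits of the underlying flow, this transfers to $F$.

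For the Lorenz-specific persistence claim, the generic persistence given by Theorem \ref{th1} is insufficient, as an arbitrary $C^k$ perturbation of the Lorenz vector field destroys the exact heteroclinic trefoil $H$. The plan here is to appeal to Orbit Index Theory as developed in \cite{PY}, \cite{PY2}, \cite{PY4}: for each periodic orbit produced in the previous step one computes its orbit index from the horseshoe structure on the cross-section, shows that this index is nonzero, and concludes that the orbit cannot disappear under sufficiently small $C^k$ perturbations of $F$. Preservation of knot type then follows by continuity of the flow in the vector field: as long as the orbit survives, it varies by a continuous isotopy in $S^3$, so its knot type is locally constant in the perturbation parameter.

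The main obstacle is the orbit-index computation itself, namely ensuring that \emph{all} (or at least all the) periodic orbits extracted from the template carry a nonzero orbit index and can therefore be tracked through perturbations individually. This requires lifting the horseshoe analysis from the cross-section back to the flow level and reconciling it with the Birman--Williams template collapse, which identifies orbits up to isotopy but loses information about the transverse stable direction. The explicit description of the Lorenz heteroclinic bifurcation furnished by \cite{Pi}, combined with the uniform hyperbolicity of horseshoe periodic points, should supply enough control to carry out this computation orbit-by-orbit and thereby complete the proof.
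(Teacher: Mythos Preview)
Your plan for the first assertion (infinitely many periodic orbits of infinitely many knot types) matches the paper's Th.\ref{trefoil2} essentially step for step: deform $rel$ $H$ to a model whose first-return map on the universal cross-section is a fake horseshoe, embed this in a Pseudo-Anosov map (the paper does it on a punctured torus, following \cite{Pi}), apply Th.\ref{orbipers}, and then invoke the Lorenz-template identification (the paper uses Th.\ref{talitheorem} from \cite{Pi} rather than Birman--Williams directly, but the effect is the same).

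For the Lorenz persistence claim, however, you misidentify the obstacle. Computing the orbit index of the \emph{model}'s periodic orbits is not the difficulty: the fake horseshoe gives every orbit index $-1$ outright (Prop.\ref{dens1}), and there is no tension with the template collapse. The real problem is that the actual Lorenz vector field $L_p$ need not be generic --- its periodic orbits may fail to be isolated --- so you cannot simply say ``the orbit index is $-1$ for the model, hence $-1$ for $L_p$, hence globally continuable by Th.\ref{contith}''. The transfer of the index across the deformation from the model back to $L_p$ is exactly what has to be justified, and your proposal does not supply a mechanism for this beyond a vague appeal to \cite{Pi}.

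The paper's Th.\ref{persistence} handles this by working with the Fixed Point Index directly rather than the orbit index. For each periodic symbol $s$ one constructs a topological disc $N$ on the universal cross-section $R$ whose boundary is made entirely of arcs on $W^s(0)\cap R$ and forward iterates of $\partial R_{i,p}$. Neither the stable manifold of the origin nor these boundary arcs can contain periodic points of the return map, so no fixed points of $\psi^k$ appear on $\partial N$ during the entire deformation from the hyperbolic model back to $L_p$; by homotopy invariance (Th.\ref{lefschetztheorem}) the Fixed Point Index of $\psi_p^k$ on $N$ remains $-1$. One then tracks the tube of flow lines through $N$: it stays bounded, away from fixed points, and knotted the same way under small $C^k$ perturbations, which gives persistence; index $-1$ further rules out the eigenvalue $1$ needed for a period-multiplying bifurcation, so the knot type is preserved. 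The non-generic case is finished by approximation with generic curves. This $W^s(0)$-bounded-neighborhood device is the key idea your outline is missing.
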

 In the context of the Lorenz system Th.\ref{th4} implies the complex dynamics of the Lorenz attractor generated in the scenario of a heteroclinic trefoil knot persist under sufficiently small $C^k$ perturbations. As heteroclinic scenarios are known to exist for the Lorenz system (see Th.1.1 in \cite{Pi}) and are also known numerically to be the epicenter of complexity in the parameter space (see \cite{SR}), Th.\ref{th4} further clarifies how the importance of heteroclinic dynamics to the onset of chaos in the Lorenz model.\\

This paper is organized as follows: in Section $2$ we survey several preliminaries from the theory of Topological Surface Dynamics, Template Theory, and the basics of the basics of the Orbit Index Theory - all of which will be used throughout this paper. Following that, we devote Section \ref{perss} to the proof of Th.\ref{th1} (and its corollaries), after which we study the Rössler and Lorenz systems in Sections \ref{rossler} and \ref{lorenz} (respectively). Finally, in Section \ref{horsus} we study the implications of Th.\ref{th1} in "simpler" scenarios, i.e., where there is no infinite collection of homotopy-invariant periodic orbits. We conclude this paper by discussing how our results can be further generalized (see Conjecture \ref{genthunie}) and how they possibly relate to the well-known "Chaotic Hypothesis" (see \cite{gal}).\\

Before we begin we remark that in \cite{sixu} several results which mirror our own were proven. In more detail, it was proven that the growth rate of periodic orbits - a constant which measures the number of isolated periodic orbits for a given flow - remains constant under Lipschitz orbital equivalencies of vector fields. As such, this study (combined with \cite{Pi} and \cite{sixu}) further shows how periodic orbits can be used to study the complexity of given flows, even in the absence of any hyperbolicity conditions. In addition, we further remark that in \cite{CSS} related questions on periodic orbits were considered, albeit in the context of Anosov flows. Finally, we state that even though it does not appear so from the text our results were very much inspired by \cite{By}, \cite{GKP}, \cite{ST1} and \cite{Bely} (and more generally, by the theory of Homoclinic Bifurcations) - in fact, Th.\ref{th1} originated in an attempt to study how periodic orbits are destroyed after the breakdown of a structurally unstable heteroclinic knot connecting saddle-foci.

\subsection*{Acknowledgements}
The author would like to thank Tali Pinsky, Genadi Levin, Joshua Haim Mamou, Michael Faran, Łukasz Cholewa, and Noy Soffer-Aranov for their helpful comments and suggestions. 
\section{Preliminaries}
In this section we review the facts used to prove the results of this paper. This section is organized as follows - we begin with Section \ref{top2} where we review how topology can force isotopy-stable dynamics for surface homeomorphisms. Following that in Section \ref{templatere} we review the basic ideas of Template Theory, and recall several facts about the hyperbolic dynamics for three dimensional flows. Finally, in Section \ref{orbitin} we survey the Orbit Index Theory as given in \cite{PY}, \cite{PY2} and \cite{PY3} (among others). As will be made clear, the theme in every one of the topics surveyed below is how one can study a given dynamical system - either in discrete or continuous time - by treating its periodic dynamics as a topological invariant.

\subsection{Two-dimensional surface dynamics}
\label{top2}
As stated above in this section we give a brief survey for how topology forces dynamics for surface homeomorphism, following the ideas presented in \cite{BeH}, \cite{Han} and \cite{Bo}. We will only survey how topology can force the persistence of periodic dynamics within the isotopy class of a given surface homeomorphism - for a more extensive survey of this topic we refer the reader to \cite{Bo}. To begin, in this section $S$ will always denote a surface homeomorphic to an open disc $D$ punctured at some interior points $\gamma=\{x_1,...,x_n\},n\geq2$ - and $h:D\to D$ would always denote a homeomorphism permuting the elements of $\gamma$ (it is easy to see $h$ is also a homeomorphism of $D$ - see the illustration in Fig.\ref{relative}). We begin with the notion of a relative isotopy:

\begin{definition}
    \label{relp}
We say a homeomorphism $g:D\to D$ is \textbf{isotopic to $h$ relative to} $\gamma$ (or in short, $rel$ $\gamma$) if there exists an isotopy $f_t:D\to D$, $t\in[0,1]$ of continuous maps satisfying the following (see the illustration in Fig.\ref{relative}):
    \begin{itemize}
        \item $f_0=h$, $f_1=g$.
        \item For every $s,t\in[0,1]$ $f_t$ and $f_s$ permute $\gamma$ in the exact same way.
    \end{itemize}
\end{definition}
\begin{figure}[h]
\centering
\begin{overpic}[width=0.6\textwidth]{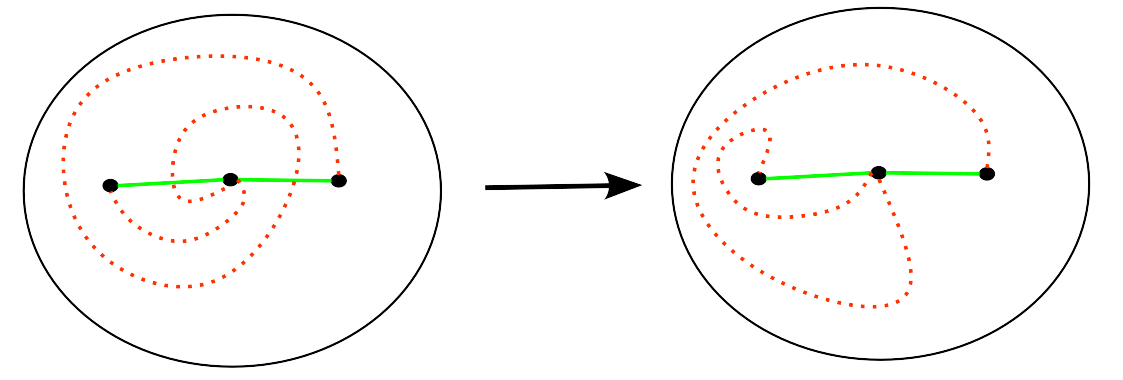}
\put(690,190){$x_1$}
\put(830,200){$x_3$}
\put(760,200){$x_2$}
\put(100,190){$x_1$}
\put(180,200){$x_{2}$}
\put(260,200){$x_3$}
\end{overpic}
\caption{\textit{The homeomorphisms $h$ (on the left) and $g$ (on the right) are isotopic on $D$ $rel$ $\gamma$, where $\gamma=\{x_1,x_2,x_3\}$ (in this scenario, $h(x_1)=g(x_1)=x_2$, $h(x_2)=g(x_2)=x_1$ and $x_3$ remains fixed). This is exemplified by how they distort the green curves connecting the elements in $\gamma$ (i.e., the spine of $S$ - see Def.\ref{spi}) -whose respective images under $h$ and $g$ are the dashed red lines.}}\label{relative}

\end{figure}

From now on, we treat relatively isotopic $h$ and $g$ as surface homeomorphisms, i.e., as homeomorphisms of $S$. We will now see that if $h$ and $g$ are isotopic $rel$ $\gamma$ as described above, their dynamics are constrained by the way they permute the elements in $\gamma$ (see the illustration in Fig.\ref{relative}). To do so, we first introduce the following definition:

\begin{definition}
    \label{pseanosov} A homeomorphism $h:S\to S$ is \textbf{Pseudu-Anosov} provided there exist two foliations of $S$, $F^u$ and $F^s$, transverse to one another throughout $S$ (but not necessarily at the punctures $\{x_1,...,x_n\}$ - see the illustration in Fig.\ref{trannn}) and some $\lambda>0$ s.t. the following is satisfied:
    \begin{itemize}
        \item Both $F^s$ and $F^u$ are measured - i.e., if we move some leaf $L_1$ of $F^i$ to another leaf $L_2$ of $F^i$ by some isotopy of $S$ (where $i\in\{u,s\}$), the Borel measure on $L_2$ is the pushforward of the Borel measure on $L_1$.
        \item $h(F^u)=\lambda F^u$ while $h(F^s)=\frac{1}{\lambda}F^s$ - i.e., $h$ stretches uniformly the unstable foliation $F^u$ and squeezes uniformly the stable foliation $F^s$. We refer to $\lambda$ as the \textbf{expansion constant}.
    \end{itemize}
\end{definition}

\begin{figure}[h]
\centering
\begin{overpic}[width=0.4\textwidth]{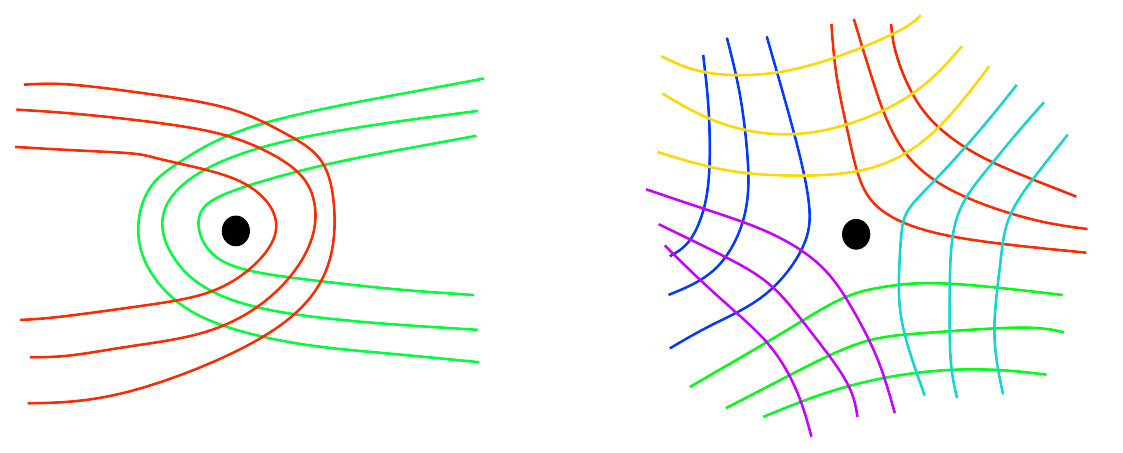}
\end{overpic}
\caption[Transverse folliations.]{\textit{Transverse foliations (with singularities) around the punctures of $S$ (i.e. the black dots). }}
\label{trannn}
\end{figure}

Pseudo-Anosov maps are essentially a generalized form of hyperbolic diffeomorphisms like Smale's Horseshoe (see \cite{S}) - that is, they contract uniformly in one direction and expand uniformly in another. The reason we are interested in Pseudo-Anosov maps is because as far as their dynamics are concerned we have the following result (see Th.$7.2$ in \cite{Bo}):

\begin{theorem}
    Assume $h:S\to S$ is Pseudo-Anosov - then, $h$ has infinitely many periodic orbits in $S$.
\end{theorem}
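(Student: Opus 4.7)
\emph{Proof proposal.} The plan is to exploit the expansion constant $\lambda>1$ together with a Markov partition for $h$, so as to reduce the counting of periodic orbits of $h$ to counting periodic orbits of a subshift of finite type whose number of period-$n$ points grows like $\lambda^n$.

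More concretely, since the foliations $F^u$ and $F^s$ are $h$-invariant (stretched by $\lambda$ and contracted by $1/\lambda$ respectively), a classical construction (due to Fathi--Laudenbach--Po\'enaru) yields a finite collection of ``rectangles'' $R_1,\dots,R_m\subseteq S$ whose sides are arcs of stable and unstable leaves, whose interiors cover $S$ up to a set of measure zero, and which form a Markov partition for $h$: whenever $h(\mathrm{int}(R_i))\cap\mathrm{int}(R_j)\ne\emptyset$, the image $h(R_i)\cap R_j$ stretches completely across $R_j$ in the unstable direction. Setting $A_{ij}=1$ in this case and $A_{ij}=0$ otherwise defines a $\{0,1\}$ transition matrix $A$ together with a continuous surjection $\pi:\Sigma_A\to S$ semiconjugating the shift $(\Sigma_A,\sigma)$ to $(S,h)$. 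The uniform stretching of $F^u$ by $\lambda$ forces the Perron--Frobenius eigenvalue of $A$ to be exactly $\lambda>1$.

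Periodic sequences of period $n$ in $\Sigma_A$ are counted by $\mathrm{tr}(A^n)$, which grows like $\lambda^n$ and therefore tends to infinity, and each such sequence projects via $\pi$ to a periodic orbit of $h$. The main obstacle is that $\pi$ is in general not injective, since points lying on the shared boundary of several rectangles admit more than one symbolic representative; one must therefore verify that this does not collapse the exponentially many symbolic orbits onto a bounded collection of $h$-orbits. This is handled in the standard way: the fibres of $\pi$ are uniformly finite -- their cardinalities are bounded by a constant $N$ depending only on the number of rectangles and the singularities of the foliation -- so the number of periodic $h$-orbits of period at most $n$ is bounded below by $\mathrm{tr}(A^n)/(nN)$, which still tends to infinity. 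Hence $h$ has infinitely many periodic orbits in $S$, as claimed.
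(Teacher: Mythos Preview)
Your argument is correct and follows the standard route: the existence of a Markov partition for a pseudo-Anosov map (as in Fathi--Laudenbach--Po\'enaru), the identification of the expansion constant $\lambda$ with the spectral radius of the transition matrix, and the bounded-to-one nature of the coding all combine to give exponential growth of periodic points, hence infinitely many periodic orbits.

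Note, however, that the paper does not supply its own proof of this statement. It is quoted as a known fact with a reference to Th.~7.2 in \cite{Bo} (Boyland's survey on topological methods in surface dynamics), and no argument is given in the text. So there is nothing in the paper to compare your proof against beyond confirming that your approach is one of the classical ones underlying the cited result.
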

In addition, Pseudo-Anosov maps are dynamically minimal in the following sense - if $h:S\to S$ is a homeomorphism isotopic to a Pseudo-Anosov map $g:S\to S$, then the dynamics of $g$ are complex at least like those of $h$. More precisely, we have the following result proven in Th.2 and Remark 2 in \cite{Han}:

\begin{theorem}
\label{stability}    Let $g:S\to S$ be a Pseudo-Anosov map and let $h:S\to S$ be a homeomorphism isotopic to $g$. Then, there exists a closed set $Y\subseteq S$ and a continuous, surjective $\pi:Y\to S$ s.t. $\pi\circ h=g\circ \pi$. In particular, we have the following:

\begin{itemize}
    \item If $x$ is periodic of minimal period $k$ for $g$, then as $g$ is isotoped to $h$ the point $x$ is continuously deformed to $y$ - a periodic point for $h$ of minimal period $k$. That is, the periodic dynamics of $g$ are \textbf{unremovable} - i.e., they are not destroyed by an isotopy.
    \item  If $x\in S$ is periodic of minimal period $n$ for $g$, then $\pi^{-1}(x)$ includes at least one periodic orbit of minimal period $n$ for $h$ - i.e., when we isotope $g$ to $h$, no two periodic orbits of $h$ collapse into one another by a bifurcation. That is, the periodic dynamics of $g$ are \textbf{uncollapsible}.
\end{itemize}
\end{theorem}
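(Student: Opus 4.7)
The plan is to prove Theorem \ref{stability} in two stages: first construct the semiconjugacy $\pi$ via Handel's global shadowing, and then deduce unremovability and uncollapsibility from the semiconjugacy combined with Nielsen fixed-point theory.

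For the semiconjugacy, I would lift the isotopy $g_t$ between $g=g_0$ and $h=g_1$ to the universal cover $\widetilde{S}$ (compactified appropriately at the punctures and at infinity), obtaining equivariant lifts $\widetilde g$ and $\widetilde h$ that differ by a bounded displacement $d_{\widetilde S}(\widetilde h(x), \widetilde g(x)) \le C$ thanks to compactness of the isotopy track. The pseudo-Anosov foliations $F^u, F^s$ endow $\widetilde S$ with a singular Euclidean (half-translation) structure in which $\widetilde g$ expands unstable leaves by $\lambda$ and contracts stable leaves by $1/\lambda$. Combining this uniform hyperbolicity with the bounded displacement, an iteration-and-intersection argument (intersect the forward stable-leaf family for $\widetilde g$ with the backward unstable-leaf family) produces, for every $x \in \widetilde S$, a unique point $\widetilde \pi(x)$ whose $\widetilde g$-orbit stays within bounded distance of the $\widetilde h$-orbit of $x$. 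Uniform shadowing bounds give continuity of $\widetilde \pi$, deck equivariance is automatic, and descending to $S$ yields a continuous map $\pi : Y \to S$ on the closed set $Y$ where shadowing is well-defined, satisfying $\pi \circ h = g \circ \pi$.

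For unremovability, fix a $g$-periodic orbit $O = \{x, g(x), \ldots, g^{k-1}(x)\}$ of minimal period $k$. Pseudo-Anosov fixed points of $g^k$ are topologically hyperbolic with nonzero Nielsen fixed-point index (in fact $\pm 1$, computable from the local prong structure of the foliations). Since $h^k$ is isotopic to $g^k$ through $g_t^k$, the Nielsen class of $x$ persists as a Nielsen class of $h^k$ with the same nonzero index, and the Nielsen--Lefschetz theorem then forces the existence of an $h^k$-fixed point $y$ in that class. The intertwining identity gives $\pi(h^j(y)) = g^j(\pi(y))$ for all $j$, so $\pi(y)$ lies in $O$ and the minimal $h$-period of $y$ must equal $k$ (any smaller period of $y$ would descend under $\pi$ to a smaller $g$-period for $\pi(y)$, contradicting minimality of $k$). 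Tracking this construction along $g_t$ realizes the continuous deformation $x \rightsquigarrow y$ of the first bullet point.

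For uncollapsibility, if two distinct $h$-periodic orbits $O_1 \neq O_2$ of minimal period $n$ both satisfied $\pi(O_1) = \pi(O_2)$ and corresponded to the same Nielsen class of $h^n$, then lifting to $\widetilde S$ they would be shadowed by the same $\widetilde g$-orbit, contradicting the uniqueness of the shadowing point $\widetilde \pi(\,\cdot\,)$ established above; distinct Nielsen classes of $g^n$ are thus in bijection with distinct preimage orbits of $h^n$ under $\pi$, which is precisely the stated uncollapsibility. The principal technical obstacle is the shadowing construction on $\widetilde S$, specifically the interplay between the bounded displacement of $\widetilde h$ from $\widetilde g$ and the exponential contraction/expansion along the singular foliations \emph{in the presence of prong singularities} (illustrated in Fig.\ref{trannn}); controlling the geometry near these singular points, and proving uniqueness of the shadowing orbit there, is the heart of Handel's argument. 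Once this is in place, both bullet points follow essentially formally from Nielsen fixed-point theory and the identity $\pi \circ h = g \circ \pi$.
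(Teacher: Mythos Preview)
The paper does not prove this theorem at all: it is quoted as a preliminary result and attributed to ``Th.2 and Remark 2 in \cite{Han}'' (Handel's paper). Your sketch is in fact a reasonable outline of Handel's own global-shadowing argument combined with Nielsen theory, so there is no discrepancy in approach to discuss---you have simply reconstructed the cited proof rather than something the present paper supplies.
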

In other words, per Th.\ref{stability} we know the dynamical complexity of a Pseudo-Anosov map in $g$ serves as a "lower bound" of sorts for the possible dynamical complexity of any $h$ isotopic to it - or, in other words, the dynamics of Pseudo-Anosov maps are \textbf{dynamically minimal}. This can be stated in a more precise manner - to do so we first introduce the following notion:

\begin{definition}
\label{essential2}    Let $h:S\to S$ be a surface homeomorphism - then the \textbf{Essential Class} of $h$ would be the collection of periodic orbits which persist under isotopies of $h$ without changing their minimal period or collapsing into one another.
\end{definition}
It is easy to see Th.\ref{stability} can be restated as follows: whenever $h:S\to S$ is isotopic the Pseudo-Anosov $g:S\to S$ then the essential class of $h$ includes infinitely many periodic orbits. With these ideas in mind we are now led to the following question - given a homeomorphism $h:S\to S$, can we tell if it is isotopic to a Pseudo-Anosov map? To state the answer to that question we first introduce several concepts. We begin with the notion of a spine:

\begin{definition}
  \label{spi}  Recall $S$ is a surface homeomorphic to a disc punctured at $n-$interior points, $n>1$. The \textbf{spine} of $S$ is a graph $\Gamma$ embedded in $\overline{S}$ with $n$ vertices and $n-1$ edges s.t. $\Gamma$ is a retract of $S$ (see the illustration in Fig.\ref{spine}).
\end{definition}

\begin{figure}[h]
\centering
\begin{overpic}[width=0.4\textwidth]{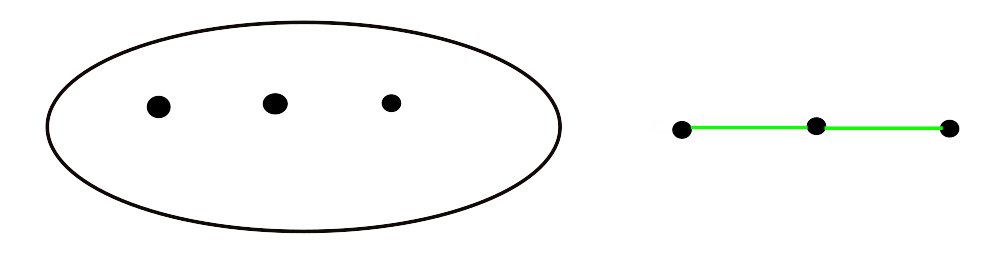}
\end{overpic}
\caption[A spine for a surface.]{\textit{On the left we have a surface $S$ homeomorphic to $D$ punctured at $3$ points, and on the right we have its spine, the graph $\Gamma$. As can be seen, $\Gamma$ has two edges and three vertices.} }
\label{spine}
\end{figure}

To continue, given a spine $\Gamma$ let us denote by $S_1,...,S_{n-1}$ the edges on it (see the illustration in Fig.\ref{spine}), and let $g':\Gamma\to \Gamma$ be some continuous map. We now define a matrix $A=\{a_{i,j}\}_{1\leq i,j\leq n-1}$ s.t. $a_{i,j}$ is the number of times $g(S_j)$ covers $S_i$ - and moreover, we define the \textbf{spectral radius} of $A$ as the maximal eigenvalue for $A$. Now, note that given a homeomorphism $h:S\to S$ by retracting $S$ to its spine $\Gamma$ we can collapse $h$ to a graph map $g':\Gamma\to\Gamma$ (see the illustration in Fig.\ref{cover2}). As a consequence, there exists a matrix $A$ as defined above associated with $h$ via $g'$, the induced graph map, with a spectral radius $\gamma$. With these ideas in mind we now state the following fact, proven in Sections $3.4$ and $4.4$ of \cite{BeH}:

\begin{theorem}
    \label{betshan} Let $h:S\to S$ be a homeomorphism. Whenever $\gamma>1$ $h$ is isotopic to a Pseudo-Anosov map $g:S\to S$. Moreover, if there exists an edge $S_i$, $1\leq i\leq n-1$ s.t. $g'(S_i)$ covers itself at least twice (i.e., $a_{i,i}>1$), the dynamics of $h$ in the regions of $S$ collapsed to $S_i$ include periodic orbits of all minimal periods.
\end{theorem}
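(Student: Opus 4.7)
The plan is to prove this in two stages: first construct a pseudo-Anosov representative in the isotopy class of $h$ by reducing the induced graph map to a combinatorially efficient form, and then extract periodic dynamics of all minimal periods in the regions associated with a self-covering edge.

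My starting point would be the induced graph map $g':\Gamma\to\Gamma$ obtained by retracting $h$ onto the spine. Since an isotopy of $h$ changes $g'$ only by a homotopy on $\Gamma$, the algebraic quantities tied to the transition matrix $A$ are quasi-invariants of this homotopy class. I would then run the Bestvina-Handel algorithm: a finite list of elementary moves (valence-one homotopy, valence-two isotopy, folding of overlapping edge images, subdivision at inverse images of vertices, and collapsing of invariant forests) applied to $(\Gamma,g')$, each of which preserves the isotopy class of $h$ on $S$ and is designed to decrease a lexicographically ordered complexity measure on the pair (essentially the Perron-Frobenius eigenvalue of $A$ together with the number of illegal turns). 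The combinatorial heart of the argument is to verify that this measure is well-founded and strictly decreases, so the process must terminate in an \emph{efficient train track map} $(\Gamma_\infty,g_\infty')$.

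Once the algorithm halts, I would split on whether the resulting transition matrix $A_\infty$ is reducible or irreducible. If $A$ already has spectral radius $\gamma>1$, then one shows the irreducible block containing the growth is essential: the algorithm cannot collapse it, because collapsing an invariant subgraph would lower the spectral radius below $\gamma$, contradicting isotopy-invariance of the growth rate. On the irreducible efficient piece, Perron-Frobenius provides a strictly positive eigenvector whose coordinates define a measured length on $\Gamma_\infty$ uniformly expanded by $g_\infty'$ by the factor $\gamma>1$. Suspending this train track together with its transverse measure, and using the fact that at every vertex the incoming and outgoing directions split into well-defined ``gates,'' one thickens $\Gamma_\infty$ inside $S$ to produce two transverse measured foliations $F^u$ and $F^s$ (with prong singularities at the punctures and possibly at some interior vertices) realizing Def.\ref{pseanosov} with expansion constant $\lambda=\gamma$. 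The map $g_\infty'$ then extends to a pseudo-Anosov $g:S\to S$ isotopic rel $\gamma$ to $h$.

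For the second assertion, suppose the efficient representative retains an edge $S_i$ with $a_{i,i}\geq 2$. Then on $S_i$ the map $g'$ admits (after subdividing into the Markov pieces forced by $g'^{-1}(\textrm{vertices})$) at least two disjoint subintervals each mapping monotonically onto $S_i$. This is a one-dimensional full shift on two symbols on $S_i$, and a standard symbolic coding produces periodic points of every minimal period $n\geq 1$; by Th.\ref{stability} applied to the pseudo-Anosov extension $g$, each of these periodic points persists in the isotopy class as an essential periodic orbit of $h$ located in the region of $S$ that retracts to $S_i$. The main obstacle I expect is the termination and bookkeeping argument in stage one: proving that the Bestvina-Handel moves actually reduce the complexity in the lexicographic order and that the collapse of reducible pieces does not erase the essential growth when $\gamma>1$. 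This is the combinatorial core of \cite{BeH} and is substantially more delicate than either the foliation construction or the horseshoe extraction, both of which are relatively standard once the train track representative is in hand.
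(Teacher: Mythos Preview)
The paper does not actually prove this statement: it is listed as a preliminary fact and attributed directly to Sections~3.4 and~4.4 of \cite{BeH}, with no argument given beyond the citation. Your proposal is essentially a sketch of the Bestvina--Handel proof itself, and as such it is the correct approach and matches what the cited reference does; there is nothing in the present paper to compare it against beyond that citation.
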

\begin{figure}[h]
\centering
\begin{overpic}[width=0.6\textwidth]{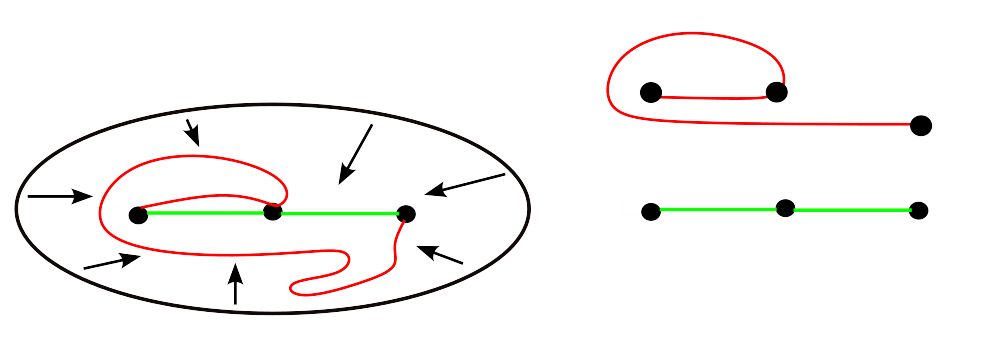}
\put(705,160){$S_1$}
\put(810,160){$S_2$}
\put(770,90){$x_0$}
\put(900,90){$x_1$}
\put(620,90){$x_{-1}$}
\put(960,220){$g'_(x_{-1})$}
\put(810,270){$g'(x_0)$}
\put(510,250){$g'(x_1)$}
\end{overpic}
\caption{\textit{ $h$ distorts the spine $\Gamma$ inside $S$ into the red curve, s.t. $h(x_{-1})=x_1$, $h(x_1)=x_{-1}$ and $h(x_0)=x_0$ (while the outer circle remains fixed). Consequentially, as we collapse $h$ to $g'$, $g'(S_1)$ covers itself twice while $g'(S_2)=S_1$. One can verify the spectral radius in this scenario is $1+\sqrt{2}$.}}\label{cover2}

\end{figure}

Before concluding this section we remark that Th.\ref{betshan} above is a private case of a more general result, often referred to as the \textbf{Thurston-Nielsen Classification Theorem} (see \cite{BeH} or \cite{Fat}). For completeness, we conclude this section with a statement of the said result in its full generality:

\begin{theorem}
\label{thurston-nielsen}    Let $S$ be a surface with a negative Euler characteristic and let $h:S\to S$ be a homeomorphism. Then, $h$ can be isotoped in $S$ to precisely one of the following possibilities:
    
\begin{enumerate}
    \item A periodic homeomorphism $g:S\to S$ - i.e., there exists some $k>0$ s.t. $g^k=Id$. 
    \item A Pseudo-Anosov homeomorphism $g:S\to S$ with infinitely many periodic orbits which persist under isotopies (see Th.\ref{stability}).
    \item A reducible homeomorphism $g:S\to S$ - i.e., $S$ can be decomposed to a finite number of surfaces $S_1,...,S_n$, glued to one another along some invariant curve(s), s.t. for every $i=1,...,n$ $g|_{S_i}$ is either periodic or Pseudo-Anosov.
\end{enumerate}

\end{theorem}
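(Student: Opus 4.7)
The plan is to follow the Bestvina-Handel algorithmic approach already invoked for Theorem \ref{betshan}, extending the trichotomy to cover all three possibilities exhaustively. Since $S$ has negative Euler characteristic, it admits a finite spine $\Gamma$ onto which it deformation retracts (compare Definition \ref{spi}), and the homeomorphism $h$ induces, up to free homotopy, a self-map $g':\Gamma\to\Gamma$. I would then run the Bestvina-Handel moves (folding, subdivision, collapsing of invariant forests, valence-$2$ isotopies, tightening) on $g'$ while controlling a Perron-Frobenius weighted length functional on the edges of $\Gamma$. The termination theorem of Bestvina-Handel guarantees that the procedure halts at a graph map $g''$ on a possibly modified graph $\Gamma'$ which is \emph{efficient}, meaning that no further simplification reduces the weighted total edge length. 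This efficient representative is then inspected to decide the isotopy class of $h$.

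Next I would read off the trichotomy from the efficient representative $g''$. If the transition matrix $A''$ of $g''$ is a permutation matrix (spectral radius equal to $1$), then some iterate of $g''$ restricts to the identity on every edge, and lifting back to $S$ produces a periodic homeomorphism in the isotopy class of $h$, covering case $(1)$. If $A''$ decomposes as a block-triangular matrix with a proper invariant subgraph that is not a forest, this subgraph carries a family of $h$-periodic essential simple closed curves which split $S$ along them into subsurfaces $S_1,\dots,S_n$; applying the algorithm recursively on each $S_i$ yields case $(3)$. Otherwise, $A''$ is irreducible with Perron eigenvalue $\lambda>1$, and I would thicken $\Gamma'$ to an invariant train track, use the left and right Perron eigenvectors to construct transverse measured foliations $F^u$ and $F^s$ as in Definition \ref{pseanosov}, and verify directly from the train-track combinatorics that $h$ acts by $\lambda$ on $F^u$ and by $\lambda^{-1}$ on $F^s$, placing us in case $(2)$.

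The mutual exclusivity clause hidden in the word ``precisely'' requires a short separate argument. For $(2)$ versus $(1)$ one combines Theorem \ref{stability} with the infinitude of periodic orbits of pseudo-Anosov maps; this is incompatible with a periodic homeomorphism, which has only polynomial growth of periodic points with the minimal period. For $(2)$ versus $(3)$ the key observation is that the invariant foliations of a pseudo-Anosov map are minimal and uniquely ergodic on each component, so no essential simple closed curve can be carried by either of them: such a curve would have its transverse length scaled simultaneously by $\lambda$ and by $\lambda^{-1}$, a contradiction.

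The hardest step, and the one I would not attempt to redo from scratch, is the termination of the Bestvina-Handel algorithm together with the promotion of the combinatorial train-track data to a genuine pair of transverse measured foliations. Termination is a delicate combinatorial argument that a carefully chosen complexity (involving the number of edges, the number of illegal turns, and the weighted length) strictly decreases under each non-trivial move; promotion requires verifying that the resulting train track carries foliations without saddle connections between prong singularities, and that the singularities occur only at vertices of $\Gamma'$. For both of these I would invoke Sections $3$ and $4$ of \cite{BeH} (and, for the analytic approach via Teichm\"uller theory, \cite{Fat}) rather than re-derive them, since the classification itself is classical and our interest here is in its use for flows.
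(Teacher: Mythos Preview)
The paper does not prove this theorem at all: it is stated ``for completeness'' as a classical result and simply refers the reader to \cite{BeH} and \cite{Fat}. Your outline of the Bestvina-Handel algorithmic approach is a reasonable sketch of what happens in \cite{BeH}, so in that sense you are aligned with the paper's citation, but there is nothing in the paper itself to compare against.
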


\subsection{Hyperbolic flows and Template Theory}
\label{templatere}
In this section we review several facts from both Template Theory and the topological dynamics of three-dimensional hyperbolic flows. We begin by recalling several notions from Knot Theory:

\begin{definition}
    \label{knot} A \textbf{knot} is an isotopy class of an embedding of the circle $S^1$ into $\mathbf{R}^3$. We say two knots in $S^3$, $T_1$ and $T_2$ \textbf{have the same knot type}, provided there exists an isotopy of $h_t:S^1\to\mathbf{R}^3$, $t\in[0,1]$ s.t. $h_0(S^1)=T_1$ and $h_1(S^1)=T_2$ (see the illustration in Fig.\ref{typek}). We often refer to such an isotopy as an \textbf{ambient isotopy}. 
\end{definition}

\begin{figure}[h]
\centering
\begin{overpic}[width=0.3\textwidth]{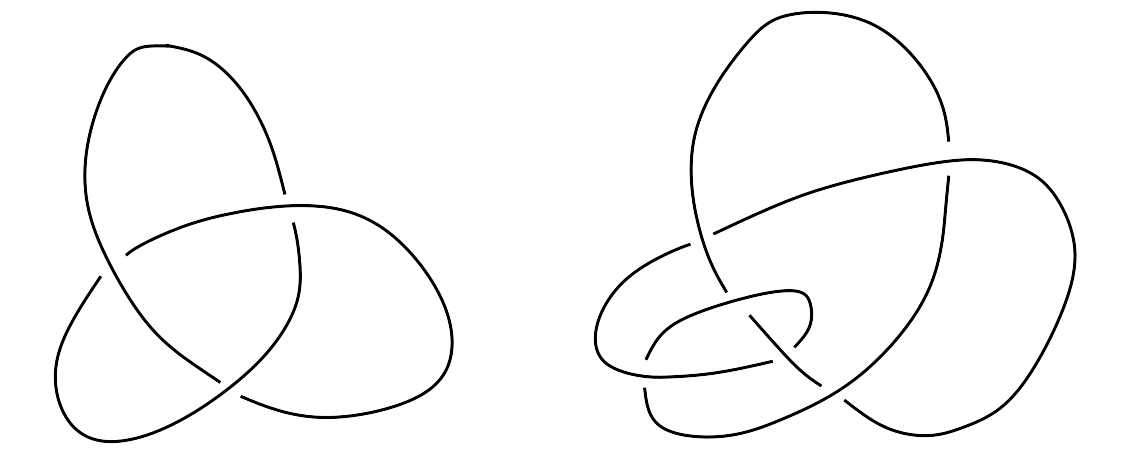}
\end{overpic}
\caption[Two knot types.]{\textit{Two knots, belonging to different knot types. The knot on the left has the same type as the trefoil knot, while that on the right is ambient isotopic to $S^1$, the unknot.}}
\label{typek}
\end{figure}

In addition, we will also need the definition of a Torus knot:

\begin{definition}
\label{torusknot}    A knot is said to be a \textbf{Torus Knot} provided it can be looped on a two-dimensional, unknotted Torus $\mathbf{T}=C_1\times C_2$ - where both $C_1,C_2$ are homeomorphic to $S^1$. We say a Torus knot is a $(p,q)$ \textbf{Torus knot} (where $p,q$ are co-prime integers) provided it winds around the $z-$axis $p$ times and around $C_1$ $q$ times (as illustrated in Fig.\ref{toru}). It is easy to see a $(p,q)$ Torus knot is ambient isotopic to a $(p',q')$ Torus knot if and only if $(p,q)=(p',q')$.
\end{definition}

\begin{figure}[h]
\centering
\begin{overpic}[width=0.5\textwidth]{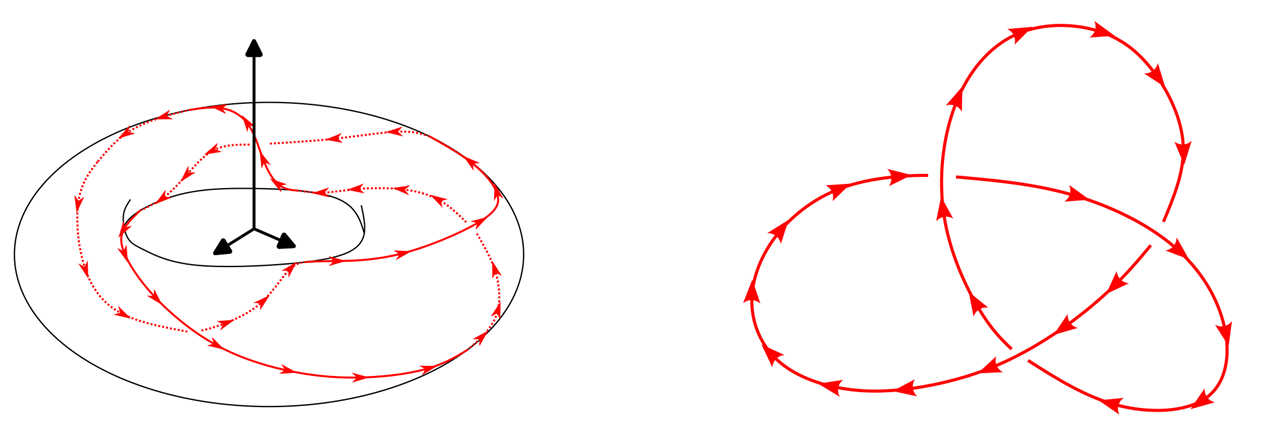}
\put(185,320){$z$}
\put(240,135){$y$}
\put(135,130){$x$}
\end{overpic}
\caption[A Template.]{\textit{A trefoil knot on the right and its representation as a loop on a Torus on the left. As the knot winds $3$ times around one circle and $2$ times around the $z-$axis, it follows the trefoil is a $(2,3)$ Torus knot.}}
\label{toru}
\end{figure}

It is immediate that given any smooth vector field $F$ on $\mathbf{R}^3$ or $S^3$ every periodic solution for the dynamical system $\dot{s}=F(s)$ is a knot. It is well-known in the theory of topological dynamics of surface homeomorphisms that one can study the dynamics by considering the braid types of periodic orbits (for a survey of these results, see \cite{Bo} and \cite{CH}). This leads us to ask the following question - can we do something similar for three-dimensional flows? It turns out that when we impose a some restrictions on the dynamics of a given vector field, this question can be answered - an answer which we review below. To do so we first introduce the notion of hyperbolicity:

\begin{definition}
    \label{hyperbolicvector} Let $M$ be a smooth $3-$manifold, let $F$ be a $C^k, k\geq1$ vector field on $M$, and denote by $\phi_t$ the resulting flow on $M$ (where $t\in\mathbf{R}$). An invariant set $\Lambda$ w.r.t. the flow is said to \textbf{have a} \textbf{hyperbolic structure w.r.t. $F$} or in short, \textbf{hyperbolic w.r.t. $F$}, provided at every $x\in\Lambda$ one can split the tangent space, $T_xM=E^s_x\oplus E^u_x\oplus E^c(x)$ s.t. the following is satisfied:
    \begin{itemize}
        \item $E^c(x)$ is spanned by $F(x)$.
        \item For every $t\in\mathbf{R}$, $E^s_x,E^u_x$ and $E^c_x$ vary continuously to $E^s_{\phi_t(x)},E^u_{\phi_t(x)}$ and $E^c_{\phi_t(x)}$ (respectively) as $x$ flows to $\phi_t(x)$, $t\in\mathbf{R}$. In particular, $D_{\phi_t}(x)E^j_x=E^j_{\phi_t(x)}$ - where $j\in\{s,u,c\}$ and $D_{\phi_t}(x)$ denotes the differential of the time $t$ map for the flow.
        \item There exist constants $C>0$ and $\lambda>1$ s.t. for every $t>0$ and every $x\in\Lambda$ we have: 
        
        \begin{enumerate}
            \item For $v\in E^s_x$, $||D_{\phi_t}(x)v||<Ce^{-\lambda t}||v||$.
            \item For $v\in E^u_x$, $||D_{\phi_t}(x)v||>Ce^{\lambda t}||v||$. 
        \end{enumerate}
    \end{itemize}

In other words, the flow expands (or stretches) uniformly in one invariant direction along the orbit on $\Lambda$, and contracts uniformly in the other invariant direction. 
\end{definition}
We will also need the following definition:

\begin{definition}
    \label{nonwanderchain}
    Let $M$ be a smooth $3-$manifold (possibly with boundary), let $F$ be a $C^k$, $k\geq1$ vector field on $M$, and denote by $\phi_t$ the resulting flow on $M$ (where $t\in\mathbf{R}$). We define the \textbf{chain-recurrent set in $M$}, $\Lambda$, as the collection of all points $x\in M$ s.t. for any $\epsilon>0$ there exists a sequence of points $\{x_1,x_2,...,x_n\}$, $x=x_1=x_n$ and times $\{t_1,...,t_{n-1}\}$ satisfying:
    \begin{itemize}
        \item $t_i>1$, $1\leq i\leq n-1$.
        \item For all $1\leq i\leq n-1$, $||\phi_{t_i}(x_i)-x_{{i+1}}||<\epsilon$.
    \end{itemize}

\end{definition}
When $M$ is a manifold with a boundary s.t. an orbit can escape $M$ in finite time under the flow, the chain-recurrent set includes the maximal set of initial conditions which never escape $M$ (in particular, it includes every periodic orbit in $M$). It is easy to see the chain recurrent set in $M$ is invariant under the flow.\\

\begin{figure}[h]
\centering
\begin{overpic}[width=0.3\textwidth]{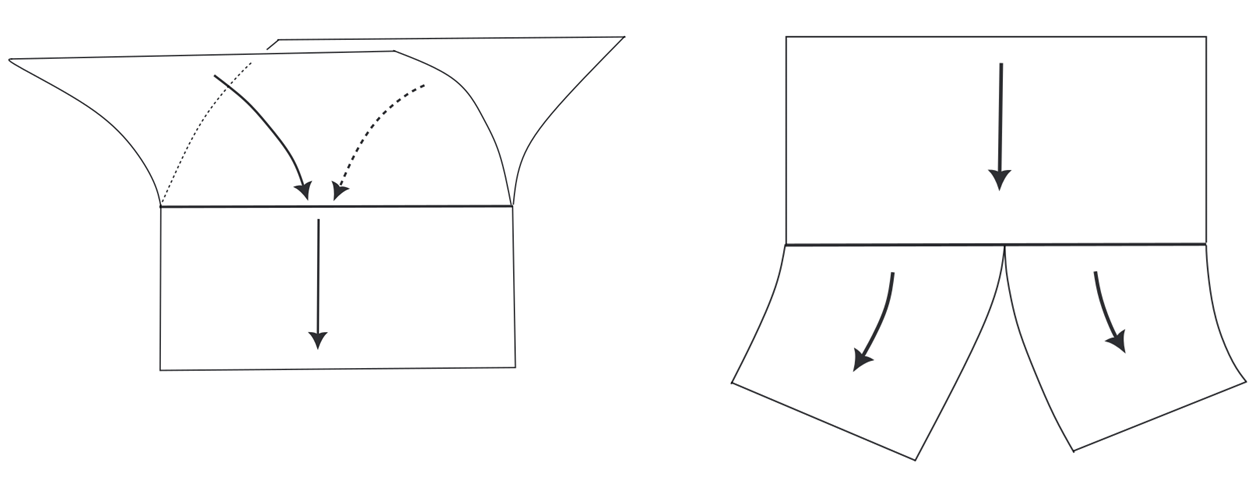}
\end{overpic}
\caption[Joining and splitting charts.]{\textit{A joining chart on the left, and a splitting chart on the right.}}
\label{TEMP1}
\end{figure}
Getting back on topic, as it turns out, given a $3-$manifold $M$ endowed with a flow $\phi_t$ hyperbolic on its chain-recurrent set in $M$ one can describe the dynamical complexity of $\phi_t$ in $M$ - i.e., we can answer the question which knot types $\phi_t$ can and cannot generate as periodic orbits. This is done by reducing the flow to an object called a Template (see Th.\ref{BIRW} below). We first define:

\begin{definition}\label{deftemp}
    A \textbf{Template} is a compact branched surface with a boundary endowed with a smooth expansive semiflow, built locally from two types of charts - \textbf{joining} and \textbf{splitting} (see the illustration in Fig.\ref{TEMP1}). Additionally, the gluing maps between charts all respect the semiflow, and act linearily on the edges (see Fig.\ref{TEMP2} for an illustration).
\end{definition}

\begin{figure}[h]
\centering
\begin{overpic}[width=0.2\textwidth]{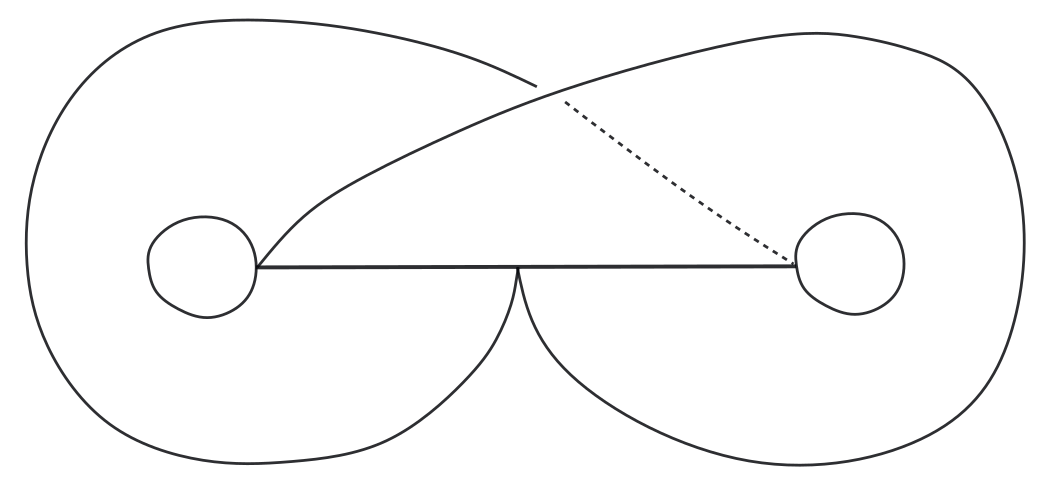}
\end{overpic}
\caption{\textit{A Template, often referred to as the "Lorenz Template" (see \cite{Hol}).}}
\label{TEMP2}
\end{figure}

That is, Templates encode an infinite collections of Knot types - which correspond to the periodic orbits of the semiflow defined on them as illustrated in Fig.\ref{tempknot} (in particular, every Template encodes infinitely many different knot types - see Cor.3.1.14 in \cite{KNOTBOOK}). Now, let $M$ be a three-manifold and let $\phi_t$ be flow with a hyperbolic chain-recurrent set $\Lambda\subseteq M$ (for example, consider a flow generated by some suspension of a Smale Horseshoe inside some solid torus). Moreover, let $P(\Lambda)$ denote the periodic orbits for $\phi_t$ in $\Lambda$ - then, under these assumptions we have the following result often called the \textbf{Birman-Williams Theorem}:
\begin{claim}{\textbf{The Birman-Williams Theorem} -}\label{BIRW}
    Given $M,\phi_t,\Lambda$ as above, there exists a unique Template $T$ embedded in $M$ and endowed with a smooth semiflow $\psi_t$. Moreover, letting $P(T)$ denote the periodic orbits of $\psi_t$ in $T$ there exists an injective projection $f:P(\Lambda)\to P(T)$ s.t. $P(T)\setminus f(P(\Lambda))$ contains at most two periodic orbits for $\psi_t$. 
\end{claim}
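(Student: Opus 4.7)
The plan is to carry out the classical Birman--Williams construction: collapse $\Lambda$ along its strong stable foliation to obtain $T$ as a quotient space, then verify that the flow $\phi_t$ descends to a well-defined smooth semiflow $\psi_t$ on $T$ which retains essentially all of the periodic data of $\phi_t$ on $\Lambda$.

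First I would exploit the hyperbolic splitting given in Definition \ref{hyperbolicvector}. At each $x \in \Lambda$ the invariant manifold theorem furnishes local stable and unstable manifolds $W^s_{\mathrm{loc}}(x)$ and $W^u_{\mathrm{loc}}(x)$ tangent to $E^s_x$ and $E^u_x$ respectively, varying continuously in $x$. I would then introduce the equivalence relation $x \sim y$ iff $d(\phi_t(x),\phi_t(y)) \to 0$ as $t \to +\infty$ - equivalently, $y$ lies on the strong stable leaf of $x$ - and set $T := \Lambda / \sim$ with quotient map $q\colon \Lambda \to T$. Because stable leaves are flow-invariant, $\phi_t$ respects $\sim$ and therefore descends to a well-defined semiflow $\psi_t$ on $T$; the loss of invertibility comes precisely from the fact that distinct backward trajectories can be identified forward in time.

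Second, I would verify that $T$ inherits the structure of a compact branched surface with boundary of the kind demanded by Definition \ref{deftemp}. Locally a flow box in $\Lambda$ factors as an unstable interval times a Cantor set of stable leaves times the flow direction; collapsing the Cantor factor yields a two-dimensional branched object. The only branching that appears corresponds to points where two distinct stable leaves in $\Lambda$ are identified but carry different unstable tangent directions - producing joining charts as in Fig. \ref{TEMP1} - while splitting charts arise dually by reversing time. Third, I would define $f\colon P(\Lambda) \to P(T)$ as the restriction of $q$ to periodic orbits. Injectivity follows from expansivity of hyperbolic flows: two distinct periodic orbits cannot share a stable equivalence class, since their points would have to converge forward in time while the orbits themselves remain a positive distance apart on the compact set $\Lambda$. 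To control $P(T) \setminus f(P(\Lambda))$, I would examine periodic orbits of $\psi_t$ that live on the branch locus; a local count using the joining/splitting chart structure shows that at most two such ``ghost'' orbits appear, corresponding to the identification of pairs of boundary components of the branched surface.

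The main obstacle will be the second step: ensuring that the quotient $T$ is a genuine template in the sense of Definition \ref{deftemp} rather than a pathological branched space. Stable foliations of hyperbolic flows are in general only transversally H\"older, while Definition \ref{deftemp} demands a smooth branched surface with gluings linear on edges. The standard workaround is to smooth $T$ within each stable leaf - a direction along which information is being discarded anyway - but one must then check that no chart types other than the two in Fig. \ref{TEMP1} can appear and that the smoothed object can be re-embedded into $M$ isotopically. Uniqueness of $T$ up to this ambient isotopy then follows because any two admissible smoothings differ by an isotopy supported inside the stable leaves, which is absorbed by the quotient; the sharp bound of two on the number of spurious orbits is the most delicate combinatorial part of the argument.
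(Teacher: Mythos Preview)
The paper does not supply its own proof of this statement; immediately after the claim it writes ``For a proof, see either \cite{BW} or Th.2.2.4 in \cite{KNOTBOOK}.'' So there is nothing to compare against except the classical construction, which is indeed what you outline.

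That said, your outline contains a genuine gap in the second step. You set $T := \Lambda/\sim$, but for a general hyperbolic basic set $\Lambda$ (e.g.\ a suspended Smale horseshoe) the local model of $\Lambda$ is a product of a Cantor set in the stable direction, a Cantor set in the unstable direction, and the flow direction. Collapsing the stable Cantor factor leaves a Cantor set times an interval, which is a one-dimensional lamination, not a branched surface. Your sentence ``an unstable interval times a Cantor set of stable leaves'' describes the special case where $\Lambda$ is an attractor (so that local unstable manifolds are contained in $\Lambda$); it is false for basic sets of saddle type, which are exactly the ones the theorem is most often applied to in this paper. The actual Birman--Williams construction takes the quotient of an isolating \emph{neighborhood} $N$ of $\Lambda$ (equivalently, of the local unstable set $W^u_{\mathrm{loc}}(\Lambda)$), where the local model is genuinely interval $\times$ interval $\times$ flow; collapsing the stable interval then yields a two-dimensional strip, and the joining/splitting charts arise from how these strips glue along the Markov partition of the flow.

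Your treatment of the ``at most two'' bound is also too vague to count as an argument. These extra orbits are the periodic orbits lying on the boundary of the branched surface $T$; they arise from the two boundary components of a Markov rectangle being glued to form closed loops, and need not lift to periodic orbits in $\Lambda$. The bound is not obtained by a ``local count'' at branch points but by a global analysis of how the boundary of $T$ closes up.
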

For a proof, see either \cite{BW} or Th.2.2.4 in \cite{KNOTBOOK}. In other words, the Birman-Williams Theorem states that whenever we have a flow hyperbolic on its chain-recurrent set $\Lambda$ the Knot types present as periodic orbits on $\Lambda$ are encoded by some Template.\\

\begin{figure}[h]
\centering
\begin{overpic}[width=0.4\textwidth]{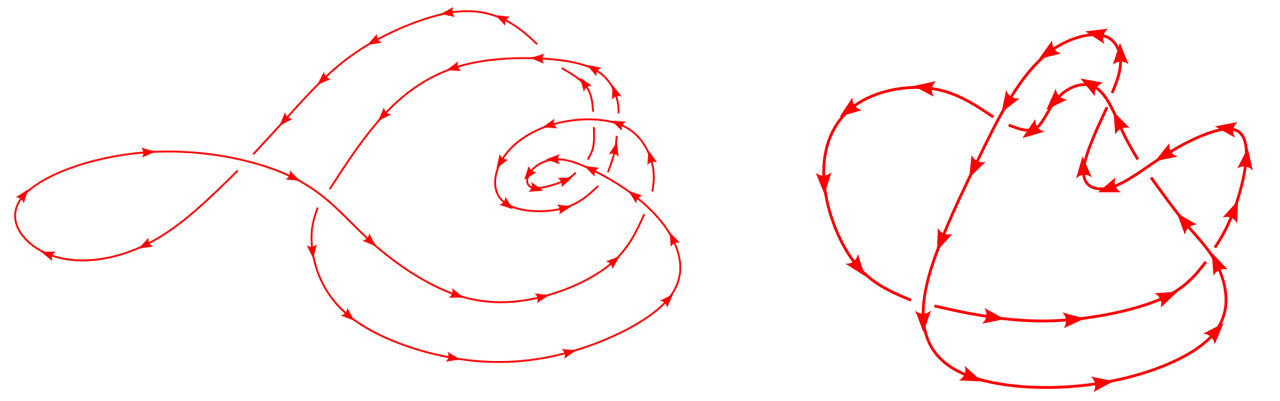}
\end{overpic}
\caption{\textit{A $(2,5)$ Torus knot on the right and its projection to the Lorenz Template from Fig.\ref{TEMP2} on the left (it exists there by Th.6.1.2.b in \cite{Hol}).}}
\label{tempknot}
\end{figure}

Having surveyed the basics of Template Theory we conclude this section with a uniqueness result for hyperbolic flows. To do so, we first introduce the following definitions:
\begin{definition}
\label{model} Let $\phi_t$, $t\in\mathbf{R}$ be a flow on a $3$-manifold $M$. A hyperbolic invariant set for $\phi_t$, $K\subseteq M$, is said to be a \textbf{basic set} if there are no fixed points in $K$, the periodic orbits are dense in $K$, and $K$ includes a dense orbit (by Def.\ref{nonwanderchain}, every basic set is at the very least a subset of the chain recurrent set in $M$). Given a basic set $K\subseteq M$ a \textit{\textbf{plug}} for $\phi_t$ is a connected, orientable $3$-manifold $N$ with boundary and a flow $\varphi_t$ defined on $N$ s.t. the following is satisfied:
\begin{itemize}
    \item $\varphi_t$ is transverse to the boundary of $N$.
    \item Let $K'$ denote the maximal invariant set of $\varphi_t$ in $N$. Then:
    \begin{enumerate}
        \item $K'$ is interior to $N$.
        \item $\varphi_t$ is hyperbolic on $K'$.
        \item The dynamics of $\varphi_t$ on a neighborhood of $K'$ are orbitally equivalent to those of $\phi_t$ on a neighborhood of $K$.
    \end{enumerate}
\end{itemize}    
  
Finally, recall the \textbf{total genus of $N$} is defined as the sum of the genus for all the components of $\partial N$. We refer to a plug with a minimal total genus as a \textbf{model}.
\end{definition}
As far as models are concerned, we have the following theorem proven in \cite{BB} (see Th.0.3):

\begin{claim}\label{begbon}
Given $\phi,M,K$ as above, there exists a model - and moreover, the model is unique up to orbital equivalence of the flows.
\end{claim}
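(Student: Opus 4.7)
The plan is to establish existence by constructing some plug and invoking well-ordering of the non-negative integers, and to prove uniqueness by extending an orbit equivalence from a neighborhood of the maximal invariant set across the \emph{flow-through region} to the boundary of each model.

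For existence, I would build a plug explicitly as follows. Since $K$ is a basic hyperbolic set, it admits a Markov partition into rectangles $R_1,\ldots,R_m$ with local product structure along the stable and unstable directions. Thickening each $R_i$ to a short flow box $R_i\times[-\epsilon,\epsilon]$ and gluing consecutive boxes along the Markov transitions produces a compact neighborhood $N_0\supset K$ in $M$. After smoothing the resulting corners and pushing the boundary slightly along the flow, $\partial N_0$ becomes transverse to $\phi_t$, so $(N_0,\phi_t|_{N_0})$ is a plug satisfying all items of Def.\ref{model}. Since the total genus takes values in the non-negative integers, the set of total genera of plugs has a minimum, and any plug attaining this minimum is by definition a model.

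For uniqueness, suppose $(N_1,\varphi_1)$ and $(N_2,\varphi_2)$ are two models with maximal invariant sets $K'_i\subset N_i$. By Def.\ref{model}, each $\varphi_i$ is orbitally equivalent to $\phi$ on some neighborhood of $K$ in $M$; composing these equivalences yields an orbit equivalence $h_0 : U_1 \to U_2$ between open neighborhoods $U_i \supset K'_i$ in $N_i$. I would then extend $h_0$ to a global orbit equivalence $N_1 \to N_2$ by exploiting the structure of the complement $N_i\setminus U_i$. Every point of $N_i\setminus K'_i$ must have at least one side of its orbit (forward or backward) exit $N_i$ through $\partial N_i$, since otherwise its $\omega$- or $\alpha$-limit would give invariant behaviour outside $K'_i$; by a compactness argument, after suitably shrinking $U_i$, both sides of the orbit of any point in $N_i\setminus U_i$ hit $\partial U_i\cup\partial N_i$ in uniformly bounded time. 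This presents $N_i\setminus U_i$ as a disjoint union of flow cylinders, each a product of a subsurface with an interval on which $\varphi_i$ acts by translation. The restriction of $h_0$ to $\partial U_i$ therefore extends canonically along these cylinders to a homeomorphism $\partial N_1\to\partial N_2$ intertwining the flows.

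The main obstacle is showing that the cylindrical structures of $N_1\setminus U_1$ and $N_2\setminus U_2$ actually match up, i.e.\ that the boundaries can be homeomorphically identified compatibly with the incoming and outgoing flow partitions. This is precisely where minimality of total genus enters the argument: any discrepancy between $\partial N_1$ and $\partial N_2$ would produce an "inessential" handle in the flow-through region of the model with larger total genus, and one could perform a genus-reducing surgery there without disturbing the dynamics near $K'$ (since the surgery is supported in a product flow region), contradicting that the model is of minimal total genus. Making this surgery argument rigorous while preserving both transversality of the flow to the boundary and orbital equivalence near $K'$ is the technical heart of the proof carried out in \cite{BB}, and I expect this to be the main hurdle in fleshing out the plan.
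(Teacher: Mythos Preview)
The paper does not prove this statement at all: it is quoted verbatim as Th.0.3 of \cite{BB} (B\'eguin--Bonatti) and used as a black box. So there is no ``paper's own proof'' to compare against beyond the citation, and your closing remark that the technical heart is carried out in \cite{BB} is exactly in line with how the paper treats the result.

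That said, your sketch is a reasonable caricature of the B\'eguin--Bonatti strategy, but a few steps deserve flagging. First, for the cylinder decomposition of $N_i\setminus U_i$ to work cleanly you need $\partial U_i$ to be transverse to the flow, not just $U_i$ to be an open neighborhood; otherwise ``hitting $\partial U_i$ in bounded time'' is not a well-posed exit condition and the pieces are not honest flow boxes. Second, the surgery/minimal-genus argument as you phrase it shows that a non-minimal plug can be simplified, but what you actually need for uniqueness is the converse implication: that \emph{any two} minimal-genus plugs have matching boundary patterns. Deducing this from ``discrepancy implies an inessential handle somewhere'' requires knowing that the handle lives entirely in the flow-through region and that the resulting surgery still yields a manifold with transverse boundary---both of which are genuine topological statements about how the entrance/exit laminations of $K'$ sit inside $\partial N_i$, and this is indeed where \cite{BB} does the real work (via the combinatorics of the ``germ'' of the flow near $K$). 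Your plan is honest about this being the hurdle, so as an outline it is fine; just be aware that the gap between the sketch and a proof is substantial.
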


\subsection{The Orbit Index}
\label{orbitin}
As stated in the introduction the proofs of all the results in this paper are heavily based on the notion of the Orbit Index for flows (or in short, the Orbit Index). Therefore, for the sake of completeness, in this section we survey the basics of the Orbit Index Theory as introduced in \cite{PY}, \cite{PY2}, \cite{PY3}, \cite{PY4}, \cite{KE} and \cite{KY2} (among others) - in particular we review how it can be used to study the persistence of periodic dynamics for three-dimensional flows. Even though we will only consider the Orbit Index in a three-dimensional context we remark that most of the results stated below also hold for higher dimensions (for more details, see the papers cited above and the references therein).\\

This section is organized as follows: for motivation, we begin by briefly recalling the Fixed Point Index from Algebraic Topology and its homotopy invariance properties (see Th.\ref{lefschetztheorem}) - after which we discuss the persistence of periodic dynamics for three-dimensional flows which we describe using a notion called "Global Continuability" (see Def.\ref{globalconti}). Following that, we use the Fixed Point Index to define the Orbit Index for generic families of vector fields (see Def.\ref{index2} and \ref{index1}) - and finally we generalize it to a more general context and connect it with the notion of global continuability mentioned above (see Def.\ref{index22} and Th.\ref{contith}). As will be made clear, whenever the Orbit Index of a given periodic orbit is non-zero that periodic orbit must persist under smooth perturbations - at least generically - and moreover, this persistence is analogous to the homotopy invariance of the Fixed Point Index (see Th.\ref{invar} later on in this section). Per the discussion above we begin with the following definition: 

\begin{definition}
    \label{lefschetz}
     Let $D$ be an open planar domain, and let $f:D\to \mathbf{R}^2$ be continuous. Assume $\{x\in D|f(x)=x\}$ is compact in $D$. Then, the \textbf{Fixed Point Index} or the \textbf{Lefschetz number} of $f$ at $D$ will be defined as the degree of $f(x)-x$ in $D$. 
\end{definition}
It is well-known that when the Lefschetz number of $f$ in $D$ is non-zero $f$ has a fixed point (see Prop.VII.5 in \cite{Dold}). Moreover, it is also well-known that the Fixed Point Index is invariant under "well-behaved" homotopies. More precisely, we have the following result:

\begin{theorem}
    \label{lefschetztheorem} With the notations above, assume $g_t:D\to\mathbf{R}^2, t\in[0,1]$ is a homotopy of continuous maps s.t. $\{(x,t)\in D\times[0,1]|g_t(x)=x\}$ is compact in $D\times[0,1]$. Then, if the Fixed Point Index of $g_0$ is $k$ the Fixed Point Index of $g_1$ is also $k$.
\end{theorem}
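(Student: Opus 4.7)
The plan is to reduce this to the well-known homotopy invariance of the Brouwer degree, which underlies the definition of the Fixed Point Index itself (see Def.\ref{lefschetz}). Concretely, set $H:D\times[0,1]\to\mathbf{R}^2$ by $H(x,t)=g_t(x)-x$. Fixed points of $g_t$ correspond precisely to zeros of $H(\cdot,t)$, and by definition the Fixed Point Index of $g_t$ in $D$ is the degree of $H(\cdot,t)$ at $0$.

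The first step is to localize the fixed point set inside a well-behaved bounded subdomain. Using the hypothesis that $K=\{(x,t)\in D\times[0,1]:g_t(x)=x\}$ is compact in $D\times[0,1]$, I would take its projection $\pi(K)\subseteq D$, which is compact and therefore has positive distance from $\partial D$. Hence I can choose a bounded open set $U$ with $\overline{U}\subset D$ and $\pi(K)\subset U$. By construction, $H(x,t)\ne 0$ for every $x\in\partial U$ and every $t\in[0,1]$, so $H$ is an admissible homotopy on $\overline{U}$ in the sense required by degree theory.

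The second step is then an immediate application of the classical homotopy invariance of the Brouwer degree on planar domains, which asserts that if $H:\overline{U}\times[0,1]\to\mathbf{R}^2$ is continuous and does not vanish on $\partial U\times[0,1]$, then $\deg(H(\cdot,0),U,0)=\deg(H(\cdot,1),U,0)$ (this is Prop.IV.1.3 type material in Dold's text cited in the excerpt). Combined with excision, since all zeros of $H(\cdot,t)$ lie in $U$, this degree equals the Fixed Point Index of $g_t$ in $D$. Chaining these equalities at $t=0$ and $t=1$ yields the theorem.

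The only real obstacle, and the one worth writing down carefully, is the first step: extracting the bounded neighborhood $U$ with $\overline{U}\subset D$ from the abstract compactness assumption on $K$, since $D$ itself is merely open and need not be bounded. Everything afterwards is a quotation of standard degree-theoretic facts. One subtlety worth flagging is that $D$ may be disconnected or non-simply-connected, in which case $U$ must be chosen as a finite disjoint union of bounded open pieces — each containing one of the components of $\pi(K)$ — but the additivity property of the degree absorbs this without trouble.
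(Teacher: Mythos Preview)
Your proposal is correct and follows the standard argument: localize the fixed-point set inside a bounded $U$ with $\overline{U}\subset D$ using compactness of $K$, then invoke homotopy invariance of the Brouwer degree for $H(x,t)=g_t(x)-x$ on $\overline{U}$. The paper does not actually give a proof of this statement; it simply cites Th.~VII.5.8 in Dold's \emph{Lectures on Algebraic Topology}, so there is nothing to compare against beyond noting that your argument is precisely the content of that reference specialized to the planar case.
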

For a proof, see Th.VII.5.8 in \cite{Dold}. Now, assume $f:S\to S$ is a local first-return map for a three-dimensional vector field $F$ - where $S$ is a cross-section transverse to some periodic orbit $T$. It is easy to see that if the Fixed-Point Index of $f$ on $S\cap T$ is non-zero by Th.\ref{lefschetztheorem} we would expect $T$ to persist under sufficiently small smooth perturbations of $F$. This leads us to ask the following question - just how much can we perturb $F$ s.t. $T$ persists as a periodic orbit? Obviously, by Th.\ref{lefschetztheorem} we know $T$ persists for sufficiently small $C^k$ perturbations of $F$ - but can we give a more global answer? \\

\begin{figure}[h]
\centering
\begin{overpic}[width=0.3\textwidth]{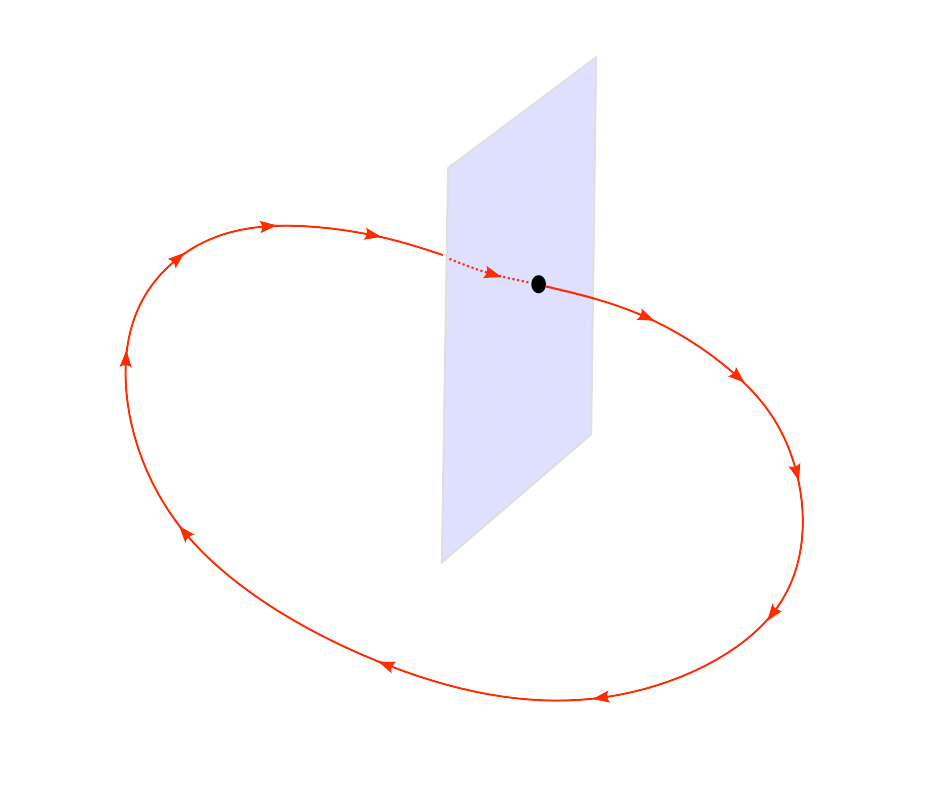}
\put(350,530){$T$}
\put(550,470){$x$}
\put(650,750){$S$}
\end{overpic}
\caption[A Template.]{\textit{A first return map for a periodic orbit $T$. $x$ is the sole point of intersection for $T$ with the cross-section $S$, hence $S$ is an isolating cross-section for $T$.}}
\label{iso}
\end{figure}

It is precisely this question that the Orbit Index Theory addresses, and to introduce it we first introduce several notations and conventions. To begin, consider a smooth three-manifold $M$. Moreover, from now on unless explicitly stated otherwise $F$ would always denote a $C^k$ vector field, $k\geq3$ - we will often refer to such a vector field $F$ as \textbf{smooth}, regardless of its differentiability class.\\

In addition, from now on, by a \textbf{periodic orbit} $T$ for $F$ we will always mean a periodic solution for $F$ that is not a fixed point, and the \textbf{period of $T$} would always denote its minimal period w.r.t. the flow - that is, if $\phi_t,t\in\mathbf{R}$ is the flow generated by $F$ and $x\in T$, $\tau>0$ is the minimal real number s.t. $\phi_\tau(x)=x$. Finally, from now on we say $T$ is \textbf{isolated} whenever the following is satisfied (see the illustration in Fig.\ref{iso}):
\begin{enumerate}
    \item There exists a cross-section $S$ transverse to $T$ s.t. $S\cap T=\{x\}$.
    \item If $f:S\to S$ is some local first-return map, then $\{y\in S|f(y)=y\}=\{x\}=T\cap S$.
\end{enumerate}

We will often refer to such a cross-section $S$ as an \textbf{isolating cross-section}. With these ideas in mind, we now define the differential of $T$ - 

\begin{definition}
    \label{dfT} Let $T$ be an isolated periodic orbit as above, and let $S$ be some isolating local cross-section s.t. $\{x\}=T\cap S$. The differential of the first-return map $f:S\to S$ will be referred to as the \textbf{differential of $T$}, and will be denoted by $D(T)$. We will always treat $D(T)$ as a $2\times2$ matrix - it is easy to see the eigenvalues of $D(T)$ are independent of the choice of $x,S$ and $f$.
\end{definition}

With these notations and conventions in mind, we now study the persistence of periodic orbits under smooth deformations. To do so let $M$ be a (smooth) three-manifold, let $t\in\mathbf{R}$ be a parameter and let $\dot{s}=F_t(s)$, $s\in M$ be a smooth curve of vector fields, varying smoothly in $(s,t)\in M\times\mathbf{R}$. Now, denote by $Per\subseteq M\times\mathbf{R}$ the collection of periodic orbits for the curve $\{F_t\}_t$, $t\in\mathbf{R}$ - that is, if $T\times\{t\}$ is a component of $Per\cap(M\times\{t\})$, then $T$ is periodic orbit for the vector field $F_{t}$ (see the illustration in Fig.\ref{period}). Now, endow $Per$ with the topology induced by $M\times\mathbf{R}$ and given a periodic orbit $T_0$ for the vector field $F_0$ we denote by $Per_0$ the component of $Per$ s.t. $T_0\times\{0\}\subseteq Per_0$. It is easy to see that whenever $Per_0\setminus T_0\times\{0\}$ contains more than two components the vector fields $\{F_t\}_t$ go through a bifurcation at $F_0$ - i.e., $T_0$ splits into two (or more) periodic orbits under perturbation of $t$ (see the illustration in Fig.\ref{period}). Similarly, if $(Per_0\setminus T_0\times\{0\})\cap M\times(0,\infty)=\emptyset$, it implies $T_0$ does not persist as we smoothly deform $F_0$ to $F_t,t>0$.\\

\begin{figure}[h]
\centering
\begin{overpic}[width=0.3\textwidth]{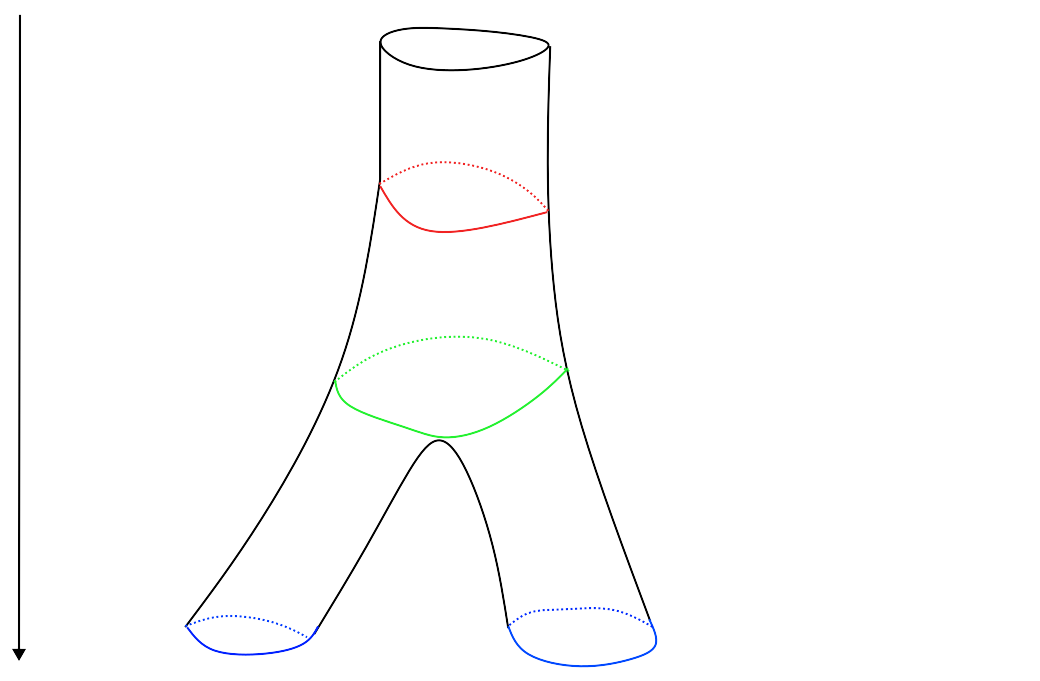}
\put(10,-40){$t$}
\put(560,450){$T_{-1}$}
\put(570,250){$T_0$}
\end{overpic}
\caption{\textit{A component of periodic orbits $Per_0$ for a curve of vector fields $\{F_t\}_{t\in\mathbf{R}}$, which, for simplicity, we sketch as a "cylinder", splitting at $T_0$. In this case, the periodic orbit $T_{-1}$ is smoothly deformed to $T_0$, where a bifurcation occurs causing $Per_0$ to split in two.}}
\label{period}
\end{figure}

As we will soon see, by studying periodic orbits for the curve $\dot{s}=F_t(s),t\in\mathbf{R}$ as components in the set $Per$ one can infer a large amount of information on their persistence and bifurcations. With this intuition in mind, we first define the notion of a globally continuable periodic orbit for a smooth curve of vector fields (see Def.2.1 in \cite{PY2}):

\begin{definition}
    \label{globalconti}
    With the notations above, we say $T_0$ is \textbf{globally continuable} provided $Per_0\setminus T_0\times\{0\}$ includes either one or two components, and in addition, one of the following holds:

    \begin{itemize}
        \item $Per_0\setminus T_0\times\{0\}$ is connected (see the illustration in Fig.\ref{con}). 
        \item  $Per_0\setminus T_0$ has precisely two components, $B_1$ and $B_2$, each satisfying precisely one of the following (where $i=1,2$):
        \begin{enumerate}
            \item $\overline{B_i}$ includes a center fixed point - i.e., a fixed point whose linearization has a pair of imaginary eigenvalues.
            \item The set $B_i$ is unbounded in $M\times\mathbf{R}$, i.e., it is not trapped inside any compact set.
            \item The periods of periodic orbits $T\times\{t\}$ on $B_i$ are unbounded.
        \end{enumerate}
\end{itemize}
\end{definition}
\begin{remark}
    We remark that in \cite{PY2} the notion of global continuability is slightly different, and involves the notion of virtual periods (see Def.3.5 in \cite{PY2} and Def.\ref{virtper} below). However, by Th.4.2 in \cite{PY2} it follows that for three-dimensional flows Def.\ref{globalconti} above and the Definition of global continuability in \cite{PY2} are one and the same.
\end{remark}
\begin{figure}[h]
\centering
\begin{overpic}[width=0.3\textwidth]{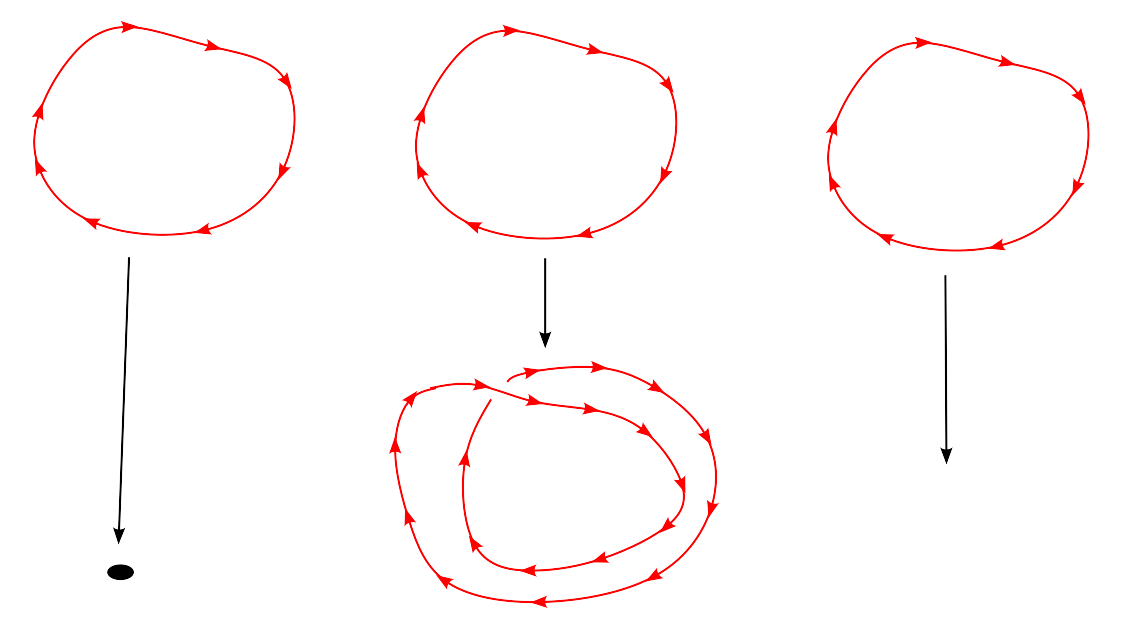}
\put(800,50){$\infty$}
\end{overpic}
\caption[A horseshoe template.]{\textit{The three ways a globally continuable periodic orbit is expected to be destroyed in $\mathbf{R}^3$ - by closing to a fixed point by a Hopf Bifurcation (left), collapsing to an a-periodic motion via a Period-Doubling Cascade (middle), or vanishing to $\infty$ in a Blue Sky Catastrophe (right).}}
\label{period2}
\end{figure}

In order to further motivate the definition above, note the definition of global continuability practically describes how the periodic orbit $T_0$ can be destroyed as we smoothly vary the vector fields $F_t,t\in\mathbf{R}$ along the curve. Informally, the definition above implies that as we vary $t\in\mathbf{R}$, $T_0$ can be destroyed in one of three ways, as illustrated in Fig.\ref{period2}: 
\begin{enumerate}
    \item By a Hopf-bifurcation (i.e., the case where $\overline{B_i}$ includes a center fixed point) - that is, as we vary $t$ away from $0$ in some direction $T_0$ closes to a fixed point by a Hopf bifurcation.
    \item  When $B_i$ is not compact in $M\times\mathbf{R}$, $T_0$ is possibly destroyed by collapsing into $\partial M$. To illustrate, when $M=\mathbf{R}^3$ this could imply a Blue Sky Catastrophe (see \cite{Ab} and \cite{ST}): that is, as we vary $t$ away from $0$ both the diameter and period of $T_0$ diverge to $\infty$ (i.e., the orbit "vanishes into the blue sky").
    \item By some period-doubling cascade, or by collapsing into a homoclinic (or a heteroclinic) orbit for some fixed point - which causes the period to explode.
\end{enumerate}

It is easy to see that globally continuable orbit persist under sufficiently small smooth perturbations - and more importantly, that heuristically they cannot be destroyed by arbitrarily small perturbations. This begs the following question - given an isolated periodic orbit $T_0$ for a smooth vector field $F_0$, can we tell when it is globally continuable? As proven at Th.3.1 in \cite{KY2} this question has the following answer:

\begin{claim}
    \label{continugeneral} Assume that for every $n>0$ the matrix $D(T_0)^n-Id$ is invertible and that $D(T_0)$ has no negative eigenvalues. Then, $T_0$ is globally continuable.
\end{claim}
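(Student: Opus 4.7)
The plan is to combine three ingredients: local continuation of $T_0$ by the implicit function theorem, homotopy invariance of the Fixed Point Index of the parametrized Poincaré map (Th.\ref{lefschetztheorem}), and a topological dichotomy for the component $Per_0 \subseteq M \times \mathbf{R}$ through $T_0 \times \{0\}$ that forces each of its branches to terminate in one of the three ways allowed by Def.\ref{globalconti}.

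First, I would fix a smooth curve of vector fields $\{F_t\}_{t\in\mathbf{R}}$ extending $F_0$, an isolating cross-section $S$ for $T_0$, and a smoothly $t$-dependent first-return map $f_t : S \to S$ with $f_0(x_0) = x_0$ where $\{x_0\} = T_0 \cap S$. The $n=1$ case of the hypothesis says $D(T_0) - Id$ is invertible, so the implicit function theorem applied to $(x,t) \mapsto f_t(x) - x$ produces a unique smooth curve of fixed points through $(x_0, 0)$. The hypothesis for general $n$ further ensures that no additional fixed points of any iterate $f_t^k$ accumulate at $(x_0, 0)$, so locally $Per_0$ is exactly this smooth arc and $Per_0 \setminus T_0 \times \{0\}$ has at most two components $B_1, B_2$.

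Second, I would attach to each non-degenerate orbit $T$ appearing on $Per_0$ the Fixed Point Index of its first-return map on an isolating cross-section (Def.\ref{lefschetz}). By Th.\ref{lefschetztheorem} this index is constant along any sub-arc of $Per_0$ on which the fixed-point set of $f_t$ remains compact in $S$, i.e., on which no bifurcation occurs. The assumption that $D(T_0)$ has no negative eigenvalues rules out a period-doubling bifurcation at $(x_0,0)$, which is the one mechanism that could flip the sign of the index while keeping $T_0$ itself isolated; consequently the index propagates with a consistent sign across $T_0 \times \{0\}$ from $B_1$ to $B_2$, and each $B_i$ inherits the non-degeneracy conditions of $T_0$ in a neighborhood of $T_0 \times \{0\}$.

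Third, I would show that each branch $B_i$ must exit in one of the three ways listed in Def.\ref{globalconti}. Arguing by contradiction, suppose $B_i$ is compact in $M \times \mathbf{R}$, has bounded periods, and $\overline{B_i}$ contains no center fixed point. Then one can extract from $B_i$ a sequence of periodic orbits converging (in period and as compact sets) to a limit periodic orbit $T^*$ for some $F_{t^*}$. Absence of periods blowing up and of a Hopf-type degeneration forces $T^*$ to still be non-degenerate, so the implicit function theorem applies at $T^*$ and the end of $B_i$ adjacent to $T^*$ is a smooth arc. But then $B_i \cup T^* \times \{t^*\}$ extends into a smooth arc through $T^*$, turning $Per_0$ into a loop that must re-enter $T_0 \times \{0\}$; this contradicts the local description of $Per_0$ near $T_0 \times \{0\}$ as a single smooth arc with only two sides, obtained in step one.

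The main obstacle, and the step I expect to be most delicate, is the sign-propagation argument in step two. One must verify that the Fixed Point Index attached at $T_0$ really does control both branches $B_1, B_2$ simultaneously, and this is precisely where the "no negative eigenvalues" condition enters: without it, a period-doubling eigenvalue hidden in $D(T_0)$ could let the index change sign across $T_0 \times \{0\}$ and permit $B_1$ or $B_2$ to terminate at a regular non-bifurcation point through an index cancellation, breaking the contradiction in step three. The full hypothesis $\det(D(T_0)^n - Id)\neq 0$ for all $n>0$ is exactly what is needed to rule out all the local saddle-node / period-$n$ bifurcations that could otherwise spoil the continuation.
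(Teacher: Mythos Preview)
The paper does not prove this statement; it is quoted as Th.3.1 in \cite{KY2}, so there is no in-paper proof to compare against. That said, your outline is missing the ingredient that the cited result actually relies on, and your step three has a genuine gap.

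You claim that a compact branch $B_i$ with bounded periods and no center fixed point in its closure yields a limit orbit $T^*$ that must be non-degenerate, and that extending through it contradicts the local arc structure at $T_0\times\{0\}$. Neither half of this holds. First, ruling out Hopf degeneration and period blow-up does not force $T^*$ to be non-degenerate: $T^*$ can be a saddle-node (eigenvalue $1$) or period-doubling (eigenvalue $-1$) bifurcation orbit for $F_{t^*}$, where the implicit function theorem fails and the branch can legitimately fold back. Second, even when $T^*$ is non-degenerate, extending through it keeps you inside the same component $B_i$ (or joins $B_1$ to $B_2$ into a single loop, which is precisely the \emph{first} allowed outcome in Def.\ref{globalconti}, not a contradiction). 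So your contradiction never materializes.

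What the argument in \cite{KY2} uses is not the bare Fixed Point Index of $f_t$ but the Orbit Index $i(T_0)$ of Def.\ref{index22} (equivalently the averaged index $\psi(T_0)$ of Def.\ref{index2}). Your hypotheses pin it down: the condition that $D(T_0)^n-Id$ is invertible for every $n$ says no eigenvalue is a root of unity, so $T_0$ is Type~$0$; the condition that $D(T_0)$ has no negative eigenvalues forces $k^-=0$ in Def.\ref{index1}, whence $\varphi(T_0)=(-1)^{k^+}\ne 0$. The global continuation then follows from the bifurcation invariance of this index (the mechanism behind Th.\ref{invar} and Cor.\ref{slice}, and stated in the general form as Th.\ref{contith}): along a compact branch with bounded periods the summed Orbit Index over each $t$-slice is constant, so a branch carrying nonzero index cannot terminate at an interior point. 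This is what replaces your step-three argument and, crucially, it survives the saddle-node and period-doubling bifurcations that your Fixed Point Index argument does not track.
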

\begin{remark}
    When $T_0$ is a saddle periodic orbit s.t. $D(T_0)$ has negative eigenvalues, even though $T$ would persist under sufficiently small perturbations of the flow it need not necessarily be globally continuable - see \cite{ky4} for a four-dimensional example. 
\end{remark}
\begin{figure}[h]
\centering
\begin{overpic}[width=0.25\textwidth]{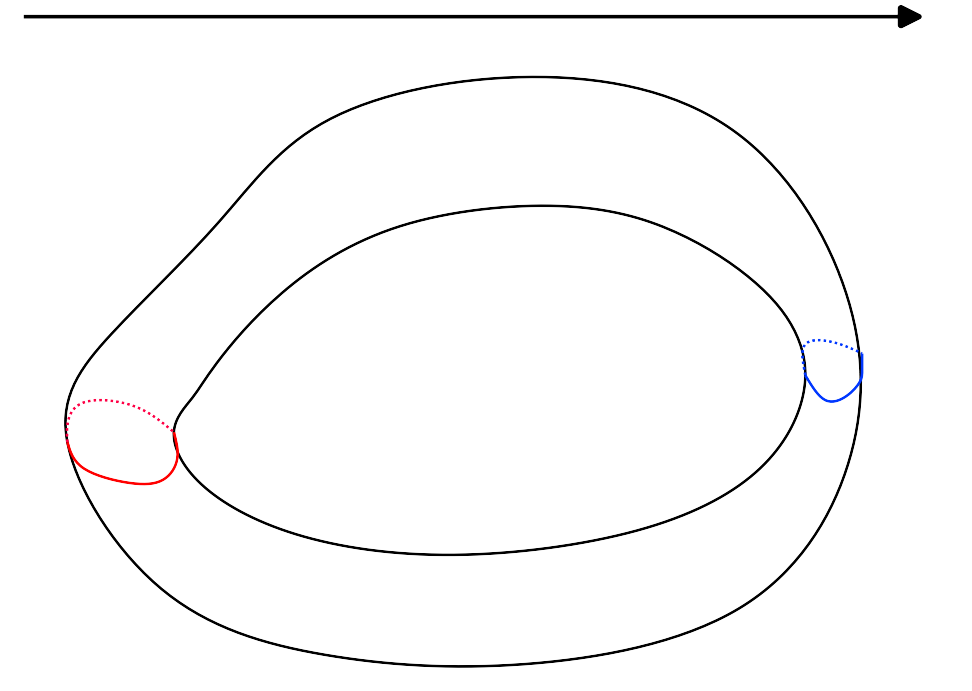}
\put(960,670){$t$}
\put(920,300){$T_{1}$}
\put(200,250){$T_0$}
\end{overpic}
\caption{\textit{An example of a globally continuable periodic orbit $T_0$ s.t. $Per_0\setminus T_0\times\{0\}$ is connected. In this scenario, $T_0$ is a saddle node bifurcation orbit (see Def.\ref{type}) which persists until terminating in $T_1$ (which is also a saddle-node bifurcation orbit).}}
\label{con}
\end{figure}

Even though Th.\ref{continugeneral} does give us some criterion to determine the global continuability of a given periodic orbit, alone it does not give us any concrete tool to describe how $T_0$ possibly bifurcates as the vector field $F_0$ is varied. To do that we will need the Orbit Index - which we now define. As stated at the beginning of this section, we first define the Orbit Index only for generic families of vector fields - after which we extend the definition to a general class of three-dimensional flows.\\

Therefore, we begin by defining the generic family of vector fields we will be working with. To do so, consider an isolated periodic orbit $T_0$ for a smooth vector field $F_0$ on $M$, and its differential $D(T_0)$ - see Def.\ref{dfT} and the discussion immediately before it. It is easy to see the situation where $D(T_0)$ has no eigenvalues on the unit circle $S^1$ is generic (and consequentially, so is the isolation of $T_0$). Now, let us consider again $\dot{s}=F_t(s)$, a smooth curve in $(s,t)\in M\times\mathbf{R}$, and recall we denote by $Per$ the collection of periodic orbits for the curve. Following Sect.2 in \cite{PY2}, we begin by defining three types of periodic orbits:

\begin{definition}
\label{type}    Let $T_0$ denote an isolated periodic orbit for $\dot{s}=F_0(s)$ as above, and let $Per_0$ denote the component of $Per$ s.t. $T_0\times\{0\}\subseteq Per_0$. The type of $T_0$ is defined as follows (see the illustration in Fig.\ref{period3}):
    \begin{itemize}
        \item $T_0$ is \textbf{Type} $0$ if the matrix $D(T_0)$ has no eigenvalues which are roots of unity.
        \item  $T_0$ is \textbf{Type} $I$ or a \textbf{Saddle-Node bifurcation orbit} if the following is satisfied: 
        \begin{enumerate}
            \item $1$ is a simple eigenvalue of $D(T_0)$, and $D(T_0)$ has no other eigenvalues which are roots of unity (see the illustration in Fig.\ref{period3}).
            \item $Per_0\setminus T_0$ has two components.
            \item The period of $T_0$ varies continuously as we vary periodic orbits on a neighborhood of $T_0$ on $Per_0$.

        \end{enumerate}
        \item We say $T_0$ is \textbf{Type} $II$ or \textbf{Period-Doubling bifurcation orbit} provided the following is satisfied (see the illustration in Fig.\ref{period3}):
        \begin{enumerate}
            \item $-1$ is a simple eigenvalue of $D(T_0)$, and the only eigenvalue of $D(T_0)$ which is a root of unity. 
            \item $Per_0\setminus T_0$ includes three components.
            \item There exists a component of $Per_1\subseteq Per_0\setminus T_0$ s.t. for periodic orbits on $Per_1$ sufficiently close to $T_0$, the period is approximately double that of $T_0$.
        \end{enumerate}
    \end{itemize}
\end{definition}
\begin{figure}[h]
\centering
\begin{overpic}[width=0.3\textwidth]{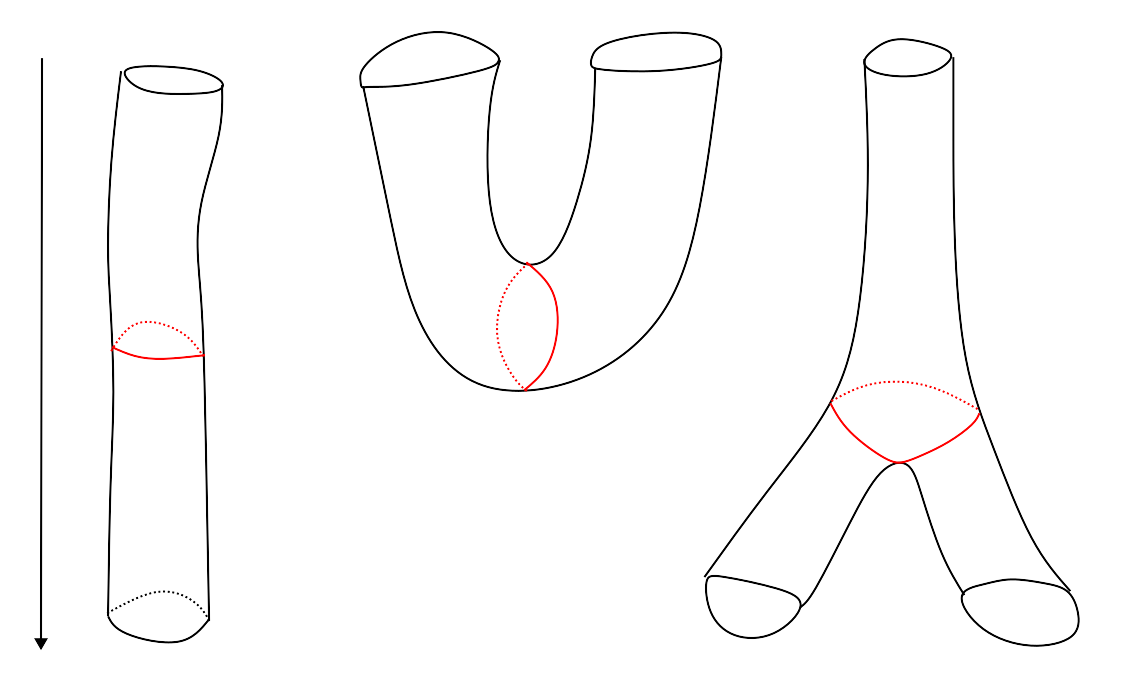}
\put(20,-30){$t$}
\end{overpic}
\caption[A horseshoe template.]{\textit{The three types of periodic orbits, represented by the red loop - Type $0$ which persist under perturbation (left), Type $I$ (middle), and Type $II$ (right), where one arm of periodic orbits has its period doubled.}}
\label{period3}
\end{figure}

At this point we remark given a Type $0$ periodic orbit $T_0$ it is common in literature that it is further classified as follows:\label{type2}

\begin{enumerate}
    \item  $T_0$ is said to \textbf{Hyperbolic} whenever $D(T_0)$ has one eigenvalue in $(0,1)$ and another at $(1,\infty)$ - by Th.\ref{continugeneral} it follows $T$ is globally continuable.
    \item When $D(T_0)$ has one eigenvalue at $(-\infty,-1)$ and another at $(-1,0)$ we say $T_0$ is \textbf{Möbius} - with the justification for the name being that for Möbius orbit the unstable manifold is non-orientable (see the illustration in Fig.\ref{MOB}).
    \item Otherwise, we say $T_0$ is \textbf{Elliptic} - for example, any attracting or repelling Type $0$ orbit is elliptic.
\end{enumerate}

However, for the sake of clarity we will avoid using this this terminology in this paper, and reserve the notion of hyperbolicity strictly for the dynamics of vector fields.\\

\begin{figure}[h]
\centering
\begin{overpic}[width=0.25\textwidth]{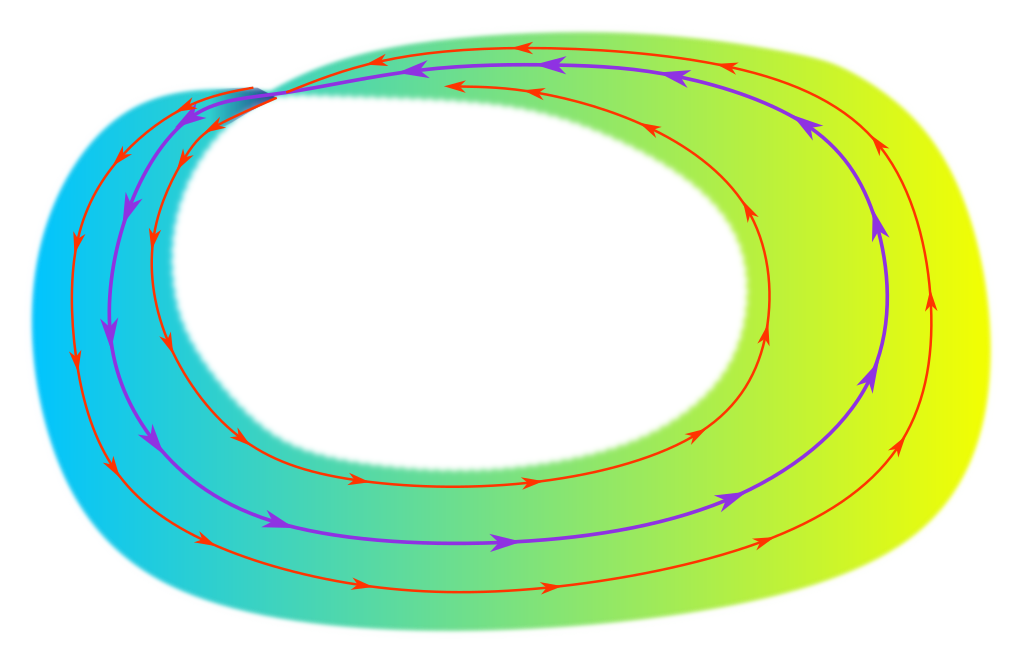}

\end{overpic}
\caption{\textit{A Möbius periodic orbit $T$ (the purple curve), whose knot type is the unknot. It is easy to see that due to the unstable manifold inorientability, locally around $T$ its shape is that of a Möbius strip.}}
\label{MOB}
\end{figure}

We are now ready to introduce a family of vector fields in which we define the Orbit Index. To do so, recall that as proven at the appendix of \cite{PY2} we have the following genericity result:
\begin{proposition}
  \label{propk}  Given a three-dimensional manifold $M$, let $K$ denote the collection of $C^k,k\geq3$ curves of vector fields $\dot{s}=F_t(s)$, $(s,t)\in M\times\mathbf{R}$, s.t. every periodic orbit for $F_t$, $t\in\mathbf{R}$ is either Type $0$, Type $I$ or Type $II$. Then, $K$ is generic in the space of $C^3-$maps $F:M\times\mathbf{R}\to M$.
\end{proposition}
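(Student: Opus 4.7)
The plan is to prove this as a parametric Kupka-Smale type genericity theorem: a one-parameter version of the classical fact that generic vector fields have only hyperbolic periodic orbits, extended to accommodate the codimension-one bifurcations (Types $I$ and $II$) that must appear when the parameter $t$ is varied. The overall structure is the standard residual-set scheme: for each upper bound $L$ on the period, define a ``bad'' set of families in function space; show its complement is residual by transversality; then intersect over $L \in \mathbb{N}$ to obtain a residual set $K$.

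First I would stratify the failure modes. A periodic orbit $T$ fails to be Type $0$, $I$ or $II$ precisely when $D(T)$ exhibits one of the following: (a) an eigenvalue that is a primitive $q$-th root of unity with $q \geq 3$; (b) an eigenvalue $\pm 1$ of algebraic multiplicity $\geq 2$; (c) a simple eigenvalue $+1$ together with vanishing of the quadratic normal-form coefficient of a saddle-node; or (d) a simple eigenvalue $-1$ together with vanishing of the cubic period-doubling coefficient. A direct count shows each of (a)--(d) is codimension $\geq 2$ in the appropriate jet bundle. Next, I would fix $L > 0$, cover a compact exhaustion of $M$ by smooth cross-sections $\Sigma_i$, and for each $i$ introduce the evaluation map
\[
\Psi_i \colon \Sigma_i \times \mathbb{R} \times (0,L] \times \mathcal{F} \longrightarrow \mathbb{R}^2, \qquad \Psi_i(x,t,\tau,F) = \phi^{F_t}_\tau(x) - x,
\]
in local coordinates on $\Sigma_i$, where $\mathcal{F}$ is the Banach space of $C^3$-families and $\phi^{F_t}$ is the flow of $F_t$. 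Zeros of $\Psi_i(\cdot, \cdot, \cdot, F)$ encode periodic orbits of $F_t$ of period at most $L$ meeting $\Sigma_i$. Because one can produce any prescribed infinitesimal perturbation of $\phi^{F_t}_\tau(x)$ by a bump-function perturbation of $F$ supported in a tubular neighbourhood of an orbit segment, $\Psi_i$ is a submersion in the $F$-direction. The Sard-Smale theorem then yields a residual $\mathcal{R}_{L,i} \subset \mathcal{F}$ on which the zero set $Z(F)$ is a smooth $2$-manifold, and a further residual refinement $\mathcal{R}'_{L,i}$ on which the bad strata (a)--(d) --- now viewed as additional defining equations --- are transverse to $Z(F)$. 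Since they have codimension $\geq 2$ in a $2$-dimensional $Z(F)$, transverse intersection is empty, and bad orbits are excluded for $F \in \mathcal{R}'_{L,i}$. Setting $K := \bigcap_{L \in \mathbb{N}} \bigcap_i \mathcal{R}'_{L,i}$ gives a countable intersection of residuals, hence a residual subset of $\mathcal{F}$, as required.

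The main obstacle is the uniformity of the argument over all periods, combined with two subtleties that make the bookkeeping delicate. The first is that iterates of a periodic orbit give additional zeros of $\Psi_i$ at multiples $n\tau$ and must be quotiented out to avoid counting a Type $0$ orbit of minimal period $\tau$ as a phantom Type $I$ orbit of period $2\tau$; this is what makes the separate analysis at eigenvalue $-1$ (where the $2\tau$ branch is a \emph{genuine} new orbit) work correctly. The second is that the perturbation of $F$ used to verify the $F$-submersion must be supported in a narrow neighbourhood of a single orbit segment while leaving the other periodic orbits of period $\leq L$ undisturbed; achieving this simultaneously for all relevant orbits requires a partition-of-unity argument along the flow. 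These technicalities are exactly what is carried out in the appendix of \cite{PY2}, and the plan above follows that blueprint.
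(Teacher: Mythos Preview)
Your proposal is correct and follows the standard parametric Kupka--Smale/transversality scheme; you even identify that this is what the appendix of \cite{PY2} does. Note that the paper itself does not supply a proof of this proposition: it simply cites the appendix of \cite{PY2} (remarking afterwards that the $\mathbf{R}^3$ argument there carries over verbatim to any three-manifold $M$), so your sketch is in fact more detailed than what the paper provides and is aligned with the cited source.
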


Before moving on we remark that in \cite{PY2} the assertion above is only proven for curves $F:\mathbf{R}^3\times\mathbf{R}\to\mathbf{R}^3$ - however, as the proof does not depend on the specific properties of $\mathbf{R}^3$ these arguments hold for every three-dimensional manifold $M$. Informally, Prop.\ref{propk} teaches us that vector fields whose periodic orbits can undergo only saddle node, period-doubling and Hopf bifurcations are generic - that is, we can approximate any bifurcation scenario with them. With these ideas in mind, we immediately conclude:

\begin{corollary}
    \label{hyper}
Assume $F$ is a vector field hyperbolic on its collection of periodic orbits (see Def.\ref{hyperbolicvector}) - then, $F$ lies on some curve in $ K$.
\end{corollary}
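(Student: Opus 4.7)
The approach I would take is to exhibit an explicit curve in $K$ through $F$, and the most natural candidate is the constant curve $F_t \equiv F$ for all $t \in \mathbf{R}$. This curve is trivially $C^k$ on $M \times \mathbf{R}$ and passes through $F$ at $t = 0$, so by Prop.\ref{propk}'s definition of $K$ it suffices to verify that every periodic orbit of the family falls into Type $0$, Type $I$, or Type $II$ (Def.\ref{type}). For the constant family, any periodic orbit $T$ of $F_{t_0}$ is simply a periodic orbit of $F$, and the differential $D(T)$ of Def.\ref{dfT} on any transverse section agrees with the ordinary first-return differential for $F$.

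The key step is then to argue that the hyperbolicity hypothesis forces each such $T$ to be Type $0$. By Def.\ref{hyperbolicvector} applied to the union of all periodic orbits, at every $x \in T$ the tangent space decomposes as $T_xM = E^s_x \oplus E^u_x \oplus E^c_x$ with $E^c_x$ spanned by $F(x)$, and the time-$t$ differential contracts uniformly on $E^s_x$ and expands uniformly on $E^u_x$. Quotienting out the flow direction $E^c$ (which is exactly what a Poincaré first-return map does), the induced $2 \times 2$ matrix $D(T)$ inherits one eigenvalue of modulus strictly less than $1$ and one of modulus strictly greater than $1$. In particular, no eigenvalue lies on the unit circle $S^1$, hence none can be a root of unity; moreover $D(T) - I$ is invertible, so the implicit function theorem guarantees that $T$ is isolated in the sense required by Def.\ref{type}. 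Thus $T$ is Type $0$.

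Putting these together, every periodic orbit of $F_t \equiv F$ is Type $0$, so the constant curve is an element of $K$ containing $F$, proving the corollary. The main (essentially the only) point requiring care is the identification of the eigenvalues of $D(T)$ with the transverse multipliers coming from the hyperbolic splitting — i.e., checking that the center direction really does drop out of the Poincaré section — but this is standard and follows directly from invariance of $E^c$ under the flow together with $E^c = \operatorname{span}(F)$.
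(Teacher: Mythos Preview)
Your proof is correct and is precisely the argument the paper has in mind: the corollary is stated immediately after Prop.~\ref{propk} with the phrase ``we immediately conclude'' and no further proof, so the constant curve $F_t\equiv F$ together with the observation that hyperbolicity forces all Floquet multipliers off the unit circle (hence every periodic orbit is Type~$0$) is exactly the intended one-line justification. The only minor imprecision is that you write ``one eigenvalue of modulus strictly less than $1$ and one of modulus strictly greater than $1$,'' which tacitly assumes both $E^s$ and $E^u$ are one-dimensional; but even if one of them were trivial the eigenvalues of $D(T)$ would still lie off $S^1$, so your conclusion is unaffected.
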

\begin{remark}
It is not always easy to see how vector fields in $K$ approximate bifurcation scenarios which are not permitted in $K$. For completeness, in Appendix $A$ we show how we can approximate a derived from Anosov bifurcation using vector fields from $K$. A more complex example of such approximations can be found at Section 2 in \cite{PY4}.
\end{remark}
Having defined the generic family of vector fields $K$ we are now ready to introduce our first definition of the Orbit Index. To do so, we first note that if $T$ is a Type $0$ periodic orbit for some vector field $F\in K$ it is isolated - that is, there exists an isolating cross-section $S$ for $T$ with a local first-return map $f:S\to S$ s.t. $S\cap T=\{x\in S|f(x)=x\}$. Moreover, since $T$ is Type $0$ by definition the eigenvalues of $D(T)$ are not roots of unity - hence the Fixed Point Index (see Def.\ref{lefschetz}) of $f^n$, the local $n-$first hit map at $x$, is well defined and non-zero as well. Consequentially, for every $n>0$ there exists an isolating cross-section $S_n$ s.t. $f^n:S_n\to S_n$ is a well-defined $n-$hit map, and. $\{x\in S_n|f^n(x)=x\}=T\cap S_n$ (in particular, $S_n\subseteq S_{n-1}$). With these ideas in mind we introduce our first Definition for the Orbit Index, as formulated in \cite{PY3}:

\begin{definition}
    \label{index2}
    Let $F,T,f$ and $S_n, n>0$ be as above and set $k_n$ as the Fixed Point Index for $f^n:S_n\to S_n$. We define the \textbf{$\psi$-index} of $T$, denoted by $\psi(T)$, as the average $\lim_{N\to\infty}\frac{1}{N}\sum_{i=1}^N k_i$.
\end{definition}
Per Prop.3.1 in \cite{PY3}, the $\psi-$index is well-defined and always forms an integer - it is easy to see that when a vector field is hyperbolic on a periodic orbit $T$, we have $k_n=-1$ for every $n$ - hence in that case $\psi(T)=-1$. By the discussion preceding Def.\ref{index2}, it is also easy to see that $\psi(T)$ depends more on the eigenvalues of $D(T)$ than it does on the iterates of $f$. And indeed, following \cite{PY} and \cite{PY2}, one can also define the Orbit Index alternatively using only the eigenvalues of $D(T)$:
\begin{definition}
\label{index1}     Consider a vector field $F\in K$ and some Type $0$ periodic orbit $T$ for $F$. Let $k^-$ and $k^+$ denote the (respective) number of eigenvalues for the two-dimensional matrix $D(T)$ in $(-\infty,-1)$ and $(1,\infty)$ (both counted with multiplicity). We define the $\phi-$\textbf{index} of $T$, as follows:
     $$
\phi(T)=\begin{cases}
			(-1)^{k^+}, & \text{if $k^-$ is even}\\
            0, & \text{otherwise}
		 \end{cases}
$$
 \end{definition}
Again, when the flow is hyperbolic on $T$ we have by definition $\phi(T)=-1$. In other words, for hyperbolic periodic orbits we have $\psi(T)=\phi(T)$. Motivated by this computation, for the sake of completeness we now quickly prove that Def.\ref{index2} and Def.\ref{index1} are actually one and the same:
\begin{lemma}
    \label{index3}
    Let $F\in K$ be a smooth vector field, and let $T$ be a Type $0$ periodic orbit for $F$ (see Def.\ref{type}). Then $\phi(T)=\psi(T)$ - and consequentially, we can define $\phi(T)=\psi(T)=\varphi(T)$, the \textbf{generic Orbit Index}.
\end{lemma}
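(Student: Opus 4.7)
The plan is to compute the Fixed Point Indices $k_n$ of the iterates $f^n$ directly in terms of the eigenvalues of $L := D(T)$ and then take the Cesàro average. Since $T$ is Type~$0$, no eigenvalue of $L$ is a root of unity, so for every $n>0$ the matrix $L^n - I$ is invertible and $x = T\cap S_n$ is an isolated fixed point of $f^n$. The first step will invoke the standard local formula for the Fixed Point Index at a non-degenerate fixed point (see \cite{Dold}) which, since $f$ is a surface diffeomorphism, identifies $k_n$ with the local degree of $f^n - \mathrm{id}$ at $x$; this degree equals $\mathrm{sign}\bigl(\det(L^n - I)\bigr)$.

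Next, I would factor $\det(L^n-I)=(\mu_1^n-1)(\mu_2^n-1)$, where $\mu_1,\mu_2$ are the eigenvalues of $L$ counted with multiplicity, and tabulate the sign contribution of each. An eigenvalue in $(1,\infty)$ always contributes $+1$; a real eigenvalue in $(-1,1)$ always contributes $-1$, since $|\mu_i^n|<1$; an eigenvalue in $(-\infty,-1)$ contributes $(-1)^n$, since $\mu_i^n>1$ for even $n$ and $\mu_i^n<-1$ for odd $n$; and a complex-conjugate pair contributes $|\mu_1^n-1|^2>0$, i.e.\ $+1$. Writing $k^0$ for the number of real eigenvalues of $L$ lying in $(-1,1)$, all of this collapses into the single compact formula $k_n = (-1)^{k^0 + n k^-}$.

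Taking the Cesàro average is then immediate: when $k^-$ is even the exponent is independent of $n$, giving $\psi(T)=(-1)^{k^0}$, while when $k^-$ is odd the signs alternate and the average vanishes, giving $\psi(T)=0$. The proof would finish with a parity check against Def.~\ref{index1}. If $L$ carries a complex-conjugate pair, then $k^+=k^0=k^-=0$ and both $\phi(T)$ and $\psi(T)$ equal $1$. Otherwise the two eigenvalues are real, so $k^++k^0+k^-=2$, forcing $k^0\equiv k^+\pmod 2$ whenever $k^-$ is even; hence $(-1)^{k^0}=(-1)^{k^+}$, and the two definitions agree in every case.

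The only step that demands more than bookkeeping is the first one, namely justifying $k_n=\mathrm{sign}\det(L^n-I)$ as the correct local formula for the Fixed Point Index of a $C^1$ surface map at an isolated non-degenerate fixed point. Once that is in hand, everything else reduces to the finite case analysis driven by the sign pattern $(-1)^{k^0+n k^-}$, so I expect this to be the main (and essentially the only) conceptual obstacle.
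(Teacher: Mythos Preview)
Your proposal is correct and follows essentially the same approach as the paper: both compute $k_n$ as $\mathrm{sign}\det(D(T)^n-I)$, analyze the sign by eigenvalue location, and take the Ces\`aro average. Your organization is slightly more streamlined---you package everything into the single formula $k_n=(-1)^{k^0+nk^-}$ and finish with a parity check, whereas the paper first invokes orientation-preservation of the flow to force real eigenvalues to share a sign and then treats the cases $\phi(T)=1,-1,0$ and the complex case separately; but the underlying computation is identical.
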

\begin{proof}
We prove that whenever $T$ is Type $0$ and satisfies $\phi(T)=s$ for some $s\in\{1,-1,0\}$ we also have $\psi(T)=s$ as well. To begin, let $S$ be some isolating cross-section for the local first-return map $f:S\to S$ at $T$. Since a $C^k$ flow on $\mathbf{R}^3$, $k\geq3$, is both orientation preserving and transverse to $S$, it follows that either $D(T)$ has two real eigenvalues of the same sign, or two complex conjugate eigenvalues - therefore, provided we prove the Lemma for the real and complex cases separately, we're done.\\

We first prove Lemma \ref{index3} for the case when there are only real eigenvalues. To do so, recall that since $T$ is Type $0$, whenever $\phi(T)=\pm 1$ the matrix $D(T)$ has no eigenvalues which are $\pm 1$. It is easy to see that whenever $\phi(T)=1$ we have two non-zero eigenvalues with the same sign, one of which lies at $(-1,1)$ and the other at $(-\infty,-1)\cup(1,\infty)$ - and conversely, when $\phi(T)=-1$ one eigenvalue for $D(T)$ lies in $(1,\infty)$ and another at $(0,1)$. Similarly, when $\phi(T)=0$ the matrix $D(T)$ has one eigenvalue in $(-\infty,-1)$ and another in $(-1,0)$. Now, let us denote by $deg(g)$ the degree of a given map $g:D\to\mathbf{R}^2$ (where $D$ is an open planar domain), and recall the Fixed Point Index of $g$ in $D$ is given by $deg(g(x)-x)$. By the discussion above it now follows that if $\phi(T)=1$ then for every $n>0$, $deg(f^n(x)-x)=1$ - and when $\phi(T)=-1$ then for every $n>0$, $deg(f^n(x)-x)=-1$. As a consequence, it follows that whenever $\phi(T)=\pm1$ we have $\psi(T)=\phi(T)$. Similarly, when $\phi(T)=0$ and $T$ is Type $0$, for every odd $n$ we have $deg(f^n(x)-x)=-1$ while for all even $n$ we have $deg(f^n(x)-x)=1$, which implies $\frac{1}{2N}\sum_{j=1}^{2N}k_j=0$. As the $\psi-$index is well defined as the limit of such sums, it follows again $\psi(T)=0$, i.e., again we have $\phi(T)=\psi(T)$.\\
    
All in all, we have proven that when $D(T)$ has two real eigenvalues then $\phi(T)=\psi(T)$ - therefore, it remains to prove the same is true when $D(T)$ has two complex conjugate eigenvalues. To do so, note that when $D(T)$ has complex conjugate eigenvalues $\lambda\pm i\omega$ which are not roots of unity, it is easy to see $\phi(T)=1$ - and that the matrix $D_{f^n}(T)-Id$ is invertible for all $n$. It is also easy to see the eigenvalues of $D_{f^n}(T)$ are two complex conjugate numbers, $\lambda_n\pm i\omega_n\ne 1$ - consequentially, $\lambda_n-1\pm i\omega_n$ are also complex-conjugate and non-zero, which implies $det(D_{f{^n}}(T)-Id)=||\lambda_n-1+i\omega_n||^{2}>0$. Therefore, since the degree of $f^n(x)-x$ is given by the sign of $det(D_{f{^n}}(T)-Id)$, it follows that with the notation of Def.\ref{index1} we have $k_n=1$ for all $n$ - which again implies $\psi(T)=1$, and again $\psi(T)=\phi(T)$. All in all, the proof of Lemma \ref{index3} is now complete.
\end{proof}
Having established the generic Orbit Index $\varphi(T)$ is well defined in generic scenarios, our current goal is to show one can generalize the $\varphi$-index from vector fields in $K$ to a general class of $C^k$ vector fields, $k\geq3$. However, before doing so we recall several of the properties of the $\varphi$-index which will be used throughout the proof - we begin with its bifurcation invariance property (see Th.5.5 in \cite{PY3}), which can be thought of as a continuous time analogue of the Lefschetz Fixed Point Theorem:

\begin{claim}
\label{invar} Assume $\dot{s}=F_t(s), t\in\mathbf{R}$ is a $C^3$-curve of vector fields in $K$. Let $T$ be a Type $I$ or Type $II$ periodic orbit for the vector field $F_0$, and let $Per_0$ denote the component of periodic orbits for the curve $\{F_t\}_t$ s.t. $T_0\times\{0\}\subseteq Per_0$. Now, let $Per_1,...,Per_n, n\in\{2,3\}$ denote the components of $Per_0\setminus T_0\times\{0\}$. Then, assuming every periodic orbit on $Per_0\setminus T_0\times\{0\}$ is Type $0$ we have the following:
    \begin{enumerate}
        \item   The $\varphi-$index is constant on every $Per_i$, $i=1,...,n$.
        \item Assume $Per_1,...,Per_k\subseteq\mathbf{R}^3\times(-\infty,0)$ and that $Per_{k+1},...,Per_n\subseteq\mathbf{R}^3\times(0,\infty)$. Then, denoting by $\varphi_i$ the Orbit Index on $per_i$, we have the following equality (see the illustration in Fig.\ref{coll223} and Fig.\ref{colli}): 
        \begin{equation*}
            \sum_{i=1}^k\varphi_i=\sum_{i=k+1}^r\varphi_i
        \end{equation*}
        
    \end{enumerate}
In other words, the $\varphi-$ index is a bifurcation invariant of $Per_0$ (see the illustration in Fig.\ref{coll223}). Moreover, by using the formula above we extend the Orbit Index to Type $I$ and $II$ orbits.
\end{claim}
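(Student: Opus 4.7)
The plan is to leverage two ingredients already present in the excerpt: the homotopy invariance of the classical Fixed Point Index (Theorem \ref{lefschetztheorem}), and the identification $\varphi(T) = \phi(T) = \psi(T)$ supplied by Lemma \ref{index3}, which rewrites the orbit index as a Cesàro average of iterate Fixed Point Indices. Together these convert a bifurcation statement about $\varphi$ into an asymptotic statement about the Lefschetz numbers of high iterates of a first-return map.

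I would dispatch Part (1) first. Fix a connected component $Per_i \subseteq Per_0 \setminus T_0 \times \{0\}$ on which every periodic orbit is Type $0$. Along any path in $Per_i$ the differential $D(T)$ varies continuously in $T$, and so do its eigenvalues. By Definition \ref{index1}, $\phi(T)$ is determined by the multiplicities $k^\pm$ of eigenvalues of $D(T)$ in $(-\infty,-1)$ and $(1,\infty)$; these integers can only change when an eigenvalue crosses the unit circle, but that would force $D(T)$ to admit a root-of-unity eigenvalue, turning the orbit into a Type $I$, Type $II$, or Hopf bifurcation orbit, contrary to the standing assumption. Hence $\varphi$ is locally constant along $Per_i$, and constant by connectedness.

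For Part (2) I would first localize near $T_0\times\{0\}$. Since $T_0$ is isolated and the branches $Per_1,\dots,Per_n$ emanate from it, one can pick an isolating cross-section $S$ transverse to $T_0$ and a parameter neighborhood $(-\delta,\delta)$ such that, for each prescribed $N$ and every $t\in(-\delta,\delta)$, all fixed points of $(f_t)^n|_S$ with $n\le N$ sit on periodic orbits from the branches $Per_1,\dots,Per_n$ close to $T_0\times\{0\}$. The fixed-point set $\{(x,t)\colon (f_t)^n(x)=x\}$ is then compact in $S\times(-\delta,\delta)$, so Theorem \ref{lefschetztheorem} applies to the homotopy $\{(f_t)^n\}_t$ and yields an integer $I_n$ independent of $t$. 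For $t$ slightly negative the fixed points distribute only over $Per_1,\dots,Per_k$, for $t$ slightly positive only over $Per_{k+1},\dots,Per_n$, and each contribution equals a summand $k_n$ in Definition \ref{index2}. Taking the Cesàro average $\frac{1}{N}\sum_{n=1}^{N} I_n$ on each side converts the totals into $\sum_{i=1}^k\varphi_i$ and $\sum_{i=k+1}^n\varphi_i$, and the $t$-independence of every $I_n$ delivers their equality.

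The main obstacle will be the local bookkeeping at the bifurcation itself, especially for Type $II$. The doubled-period branch intersects $S$ in two distinct points whose contributions enter only the even iterates $(f_t)^{2m}$, so their effect on the Cesàro average is halved, which is precisely what is needed to balance the contribution of the primitive continuation across $t=0$. For Type $I$, the two coalescing branches on one side must contribute opposite Fixed Point Indices in order to offset the empty sum on the other side. Both facts reduce to the standard normal-form analyses of two-dimensional saddle-node and period-doubling bifurcations already present in \cite{PY,PY2}; the additional content of the theorem is that, once those local formulas are in hand, the Cesàro averaging mechanism transfers the Lefschetz-type conservation law all the way up to the level of the $\varphi$-index.
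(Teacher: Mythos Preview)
The paper does not prove this statement at all: it is stated as a recalled result from the literature, with the proof deferred to Th.~5.5 in \cite{PY3} (see the sentence immediately preceding the claim and the remark immediately following it). So there is no ``paper's own proof'' to compare against.

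Your proposal is a correct sketch, and in fact it is precisely the strategy behind the original Mallet--Paret--Yorke argument: the $\psi$-formulation of the index (Definition~\ref{index2}) was introduced exactly so that the bifurcation invariance reduces to the homotopy invariance of the Fixed Point Index applied iterate by iterate. Your handling of Part~(1) via continuity of eigenvalues is clean, and your bookkeeping for the Type~II case is right---the doubled branch hits the section in a period-$2$ orbit for $f_t$, hence contributes two fixed points to $(f_t)^n$ only for even $n$, and the factor $2\cdot\tfrac{1}{2}$ in the Ces\`aro average gives exactly the contribution $\varphi_3$ you need. One point worth making explicit: the parameter window $\delta$ in your localization step will generally depend on $N$ (to exclude spurious low-period orbits entering $S$), but this is harmless since the identity $I_n(\text{left})=I_n(\text{right})$ is established for each $n$ separately before passing to the average.
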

\begin{remark}
    In fact, Th.\ref{invar} holds in a more general setting. For more details, see Th.5.5 in \cite{PY3}.
\end{remark}
\begin{figure}[h]
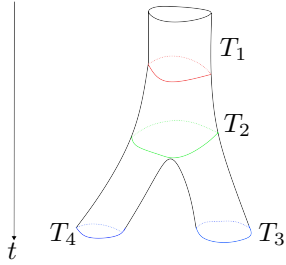

\centering
\begin{overpic}[width=0.3\textwidth]{images/coll223.png}
\put(0,-30){$t$}
\put(650,15){$T_3$}
\put(110,15){$T_4$}
\put(550,500){$T_1$}
\put(560,300){$T_2$}
\end{overpic}
\caption[A horseshoe template.]{\textit{The invariance of the $\varphi-$index under regular bifurcations, an illustration: assume $\varphi(T_1)=-1$ - consequentially, by Th.\ref{invar} we have $\varphi(T_3)=-1$ and $\varphi(T_4)=0$. It is easy to see that $T_2$ is a Type $II$ orbit (see Def.\ref{type}). }}
\label{coll223}
\end{figure}

It is easy to see Th.\ref{invar} strongly constrains the way periodic orbits for vector fields in $K$ can bifurcate. As noted in Prop.3.2 at \cite{Perd}, Th.\ref{invar} implies the following fact:

\begin{corollary}
    \label{slice} Let $Per$ be the collection periodic orbit for $\{F_t\}_{t\in[0,1}$ as in Th.\ref{invar}, and for any $t\in[0,1]$ and $Per_0$ a component of $Per$, set $S_t=Per_0\cap M\times\{t\}$. Then, provided the periods of periods in $Per_0$ are bounded $S_t$ includes a finite number of orbits and $\lambda=\sum_{T\in S_t}\varphi(T)$ is independent of $t$ and depends only on $Per_0$.
\end{corollary}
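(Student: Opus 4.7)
The plan is to decompose $[0,1]$ into a finite number of subintervals separated by bifurcation parameters, and then match the $\varphi$-index across each bifurcation by invoking Th.\ref{invar}.

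First, for the finiteness of $S_t$: since the curve $\{F_t\}_t$ lies in $K$, every periodic orbit for the fixed vector field $F_t$ is Type $0$, $I$, or $II$, and in each case the associated non-degeneracy condition on $D(T)$ (or on an appropriate iterate) forces $T$ to be isolated as a periodic orbit of $F_t$ in $M$. Combined with the hypothesis that periods remain bounded on $Per_0$, the orbits comprising $S_t$ lie in a compact subset of $M$, so isolation precludes accumulation and $S_t$ must be finite.

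Next, let $B\subseteq[0,1]$ denote the set of parameters at which $S_t$ contains a Type $I$ or Type $II$ orbit. By genericity (Prop.\ref{propk}) together with the bounded-period hypothesis, $B$ is discrete, and hence finite. On each component $J$ of $[0,1]\setminus B$, every orbit in $S_t$ is Type $0$, so it continues smoothly under variation of $t\in J$ via the Implicit Function Theorem applied to the first-return map, and since $Per_0$ is a connected component of $Per$, no new orbit can enter or exit $S_t$ over $J$ without branching through some $t_0\in B$. Because $\varphi(T)$ is integer-valued and, by Def.\ref{index1}, determined purely by the location of the eigenvalues of $D(T)$ relative to $\pm 1$, it is locally constant under smooth variation of $t$ inside $J$; hence $\lambda(t)$ is constant on each component of $[0,1]\setminus B$.

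Finally, at each $t_0\in B$ I would invoke Th.\ref{invar} orbit by orbit: every Type $I$ or Type $II$ orbit $T\in S_{t_0}$ sits at the center of a local bifurcation diagram whose left-branches and right-branches lie in $Per_0$, and Th.\ref{invar} equates the sums of the $\varphi$-indices on the two sides. Summing these identities over all bifurcating orbits in $S_{t_0}$, together with the unchanged contributions of the Type $0$ orbits in $S_{t_0}$ (which carry over to both sides by the previous paragraph), yields $\lambda(t_0-\varepsilon)=\lambda(t_0+\varepsilon)$ for all sufficiently small $\varepsilon>0$. The main obstacle I anticipate is the bookkeeping at a single $t_0\in B$ when several bifurcating orbits lie on $Per_0$ simultaneously: one must verify that every branch emerging from each such bifurcation remains inside $Per_0$ rather than drifting off into a neighboring component of $Per$, which ultimately follows from the connectedness of $Per_0$ but requires care.
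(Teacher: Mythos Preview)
Your proposal is correct and follows precisely the route the paper intends: the paper does not spell out a proof at all but simply states that the corollary follows from Th.\ref{invar} and cites Prop.~3.2 in \cite{Perd}, and your argument is the standard way to unpack that implication. One small caveat: your finiteness step tacitly assumes the orbits in $S_t$ lie in a compact region of $M$, which is not part of the stated hypotheses (only boundedness of periods is); in practice this is harmless in the paper's applications on $S^3$, but strictly speaking it is an additional assumption.
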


One immediate implication of Cor.\ref{slice} is the following - if $T_0$ is a Type $II$ Orbit for a smooth vector field $F_0$ which lies on some curve of vector fields $\dot{s}=F_t(s)$, $(s,t)\in M\times\mathbf{R}$, then there are precisely two ways the bifurcation can unfold. More precisely, if $Per_0$ is the component of periodic orbits s.t. $T_0\times\{0\}\subseteq Per_0$ then $Per_0\setminus T_0\times\{0\}$ must intersect \textbf{both} $M\cap(0,\infty)$ and $M\cap(-\infty,0)$ (see the illustration in Fig.\ref{posib}). Similarly, it also follows that if $T_0$ is a saddle node bifurcation then Cor.\ref{slice} implies that all periodic orbit on $Per_0\setminus\{T_0\}$ sufficiently close to $T_0\times\{0\}$ are trapped in the same component of $M\times\mathbf{R}\setminus M\times\{0\}$ (see the illustration in Fig.\ref{posib}).\\

\begin{figure}[h]
\centering
\begin{overpic}[width=0.3\textwidth]{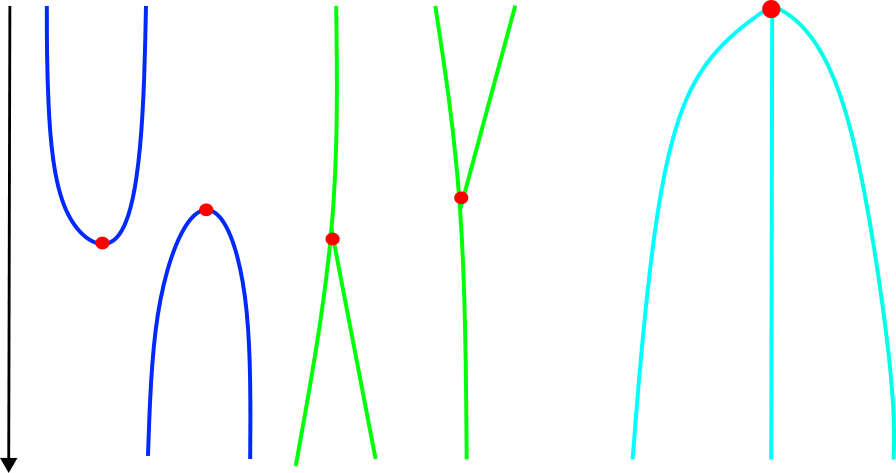}
\put(-10,-50){$t$}
\end{overpic}
\caption{\textit{How possible bifurcation diagrams of periodic orbits for vector fields in $K$ can (and cannot) look like, where the red dots denote the bifurcation points. The blue diagrams represent two periodic orbits colliding in a saddle-node bifurcations, while the green represent period-doubling bifurcation. By Cor.\ref{slice}, the cyan diagram does not describe any admissible period-doubling bifurcation in $K$. }}
\label{posib}
\end{figure}

Another tool we will use to rule out certain bifurcations is the following technical Lemma, which is a slightly generalized form of a phenomenon originally observed in \cite{Perd} and \cite{KE} (for more details, see the beginning of Sect.4 in \cite{KE} or Lemma 3.4 in \cite{Perd}):

\begin{lemma}
    \label{notyp2}
    Let $T$ be a type-$2$ orbit for some $F\in K$. Then, we cannot approximate $F$ in the $C^k$ metric (for $k\geq3$) with vector fields $\{F_n\}_n\subseteq K$ which generate periodic orbits $\{T_n\}_n$ (respectively) satisfying the following:
    \begin{enumerate}
        \item $\varphi(T_n)=-1$ for all $n$.
        \item $T_n\to T$ (as a limit set).
    \end{enumerate}
    
Consequentially, if $T$ is a Type $II$ orbit then given a sequence of $F_n$ and $T_n$ as above s.t. $F_n\to F$ and $T_n\to T$,  for every $n$ we have either $\varphi(T_n)=0$ or $\varphi(T_n)=1$.
\end{lemma}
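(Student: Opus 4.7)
I would argue by contradiction: suppose such a sequence $(F_n,T_n)$ exists, and combine the rigid local structure forced by $\varphi(T_n)=-1$ with the homotopy invariance of the Fixed Point Index (Th.\ref{lefschetztheorem}) to derive an inconsistency.

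First, I would extract the structural consequences of $\varphi(T_n)=-1$. By Def.\ref{index1} and the proof of Lemma \ref{index3}, this forces $D(T_n)$ to have one eigenvalue in $(1,\infty)$ and the other in $(-1,1)$; in particular each $T_n$ is hyperbolic, and at each intersection of $T_n$ with a small isolating cross-section $S_n$ the local Fixed Point Index of every iterate of the first-return map $f_n$ equals $-1$.

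Next, I would fix a cross-section $S$ for $F$ transverse to $T$ at the single point $\{x\}=T\cap S$ and an isolating neighborhood $U\subseteq S$ of $x$ so that $x$ is the only fixed point of $f$ in $U$. Since $T_n\to T$ as limit sets and $F_n\to F$ in $C^k$, for $n$ large the map $f_n$ is defined on $U$, all intersections of $T_n$ with $S$ lie inside $U$, and every fixed point of $f_n^N$ in $U$ (for each fixed $N$) corresponds to a periodic orbit of $F_n$ trapped in a small tubular neighborhood of $T$. The core step is then to apply Th.\ref{lefschetztheorem} to the homotopy from $f_n^N$ to $f^N$ coming from $F_n\to F$: this equates the Fixed Point Index of $f_n^N$ on $U$ with the local Fixed Point Index of $f^N$ at $x$. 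I would compute the latter from the eigenstructure $D(T)=\mathrm{diag}(-1,\mu)$ (with $\mu$ non-resonant since $T$ is Type II) by reducing $f$ to a decoupled period-doubling normal form $x\mapsto -x+ax^3+O(x^5)$ in the critical direction, and running a Brouwer-degree sign analysis of $f^N-\mathrm{id}$ uniformly in $N$. In the Type II scenarios used in this paper, the resulting local index belongs to $\{0,+1\}$ for every $N$.

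To close the argument I would expand the Fixed Point Index of $f_n^N$ on $U$ as a sum over its fixed points in $U$. The orbit $T_n$ contributes $-1$ at each of its intersections with $S$ (first step), while every other periodic orbit of $F_n$ trapped in $U$ must sit on one of the at most two further branches of $Per_0\setminus(T\times\{0\})$ permitted by the Type II bifurcation structure (Def.\ref{type}); the $\varphi$-values on these branches are pinned down by the sum formula of Th.\ref{invar}. A short case check then shows that no admissible combination of these branch contributions can absorb the $-1$ coming from $T_n$ and still produce a total inside $\{0,+1\}$, contradicting the value dictated by homotopy invariance. The hardest part will be the normal-form computation in the previous paragraph: one must control the sign of the leading nonlinear coefficient in the critical direction together with the sign of $\mu^N-1$ in the transverse direction, uniformly in $N$, so as to rule out the value $-1$ regardless of how many times the approximating orbits $T_n$ wind around $T$.
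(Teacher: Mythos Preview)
Your plan is vastly more elaborate than what the lemma actually requires, and the heavy machinery you invoke (normal forms, uniform-in-$N$ degree computations, branch accounting via Th.\ref{invar}) introduces genuine difficulties that simply do not arise in the paper's argument. The paper's proof is a two-line eigenvalue continuity observation: if $\varphi(T_n)=-1$ then by Def.\ref{index1} together with the orientation-preserving constraint established in Lemma \ref{index3}, the matrix $D(T_n)$ has one eigenvalue in $(0,1)$ and another in $(1,\infty)$ --- in particular both eigenvalues are strictly positive. Since $T$ is Type II, $D(T)$ has $-1$ as an eigenvalue. But $F_n\to F$ in $C^k$ and $T_n\to T$ force $D(T_n)\to D(T)$, so the eigenvalues of $D(T_n)$ must accumulate on those of $D(T)$; strictly positive numbers cannot accumulate at $-1$, and that is the entire contradiction.

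Your route via homotopy invariance of the Fixed Point Index could perhaps be pushed through, but it is not obviously correct as stated: the ``hardest part'' you identify --- controlling the sign of the local index of $f^N$ at a Type II point uniformly in $N$, while simultaneously accounting for how many times $T_n$ winds around $T$ --- is a real obstacle, and your sketch does not make clear how it is resolved. Note also a small but telling slip: you place the second eigenvalue of $D(T_n)$ in $(-1,1)$ rather than $(0,1)$. The sharper interval $(0,1)$ (forced by orientation preservation) is precisely what makes the direct eigenvalue argument immediate, and recognizing it would have short-circuited the entire Fixed Point Index detour.
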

\begin{proof}
Consider a sequence of vector fields $F_n$ which generate periodic orbits $T_n$ s.t. for every $n$ we have $\varphi(T_n)=-1$ by Def.\ref{index1} it follows the matrix $D(T_n)$ has two positive eigenvalues - one in $(1,\infty)$ and another in $(0,1)$. Since $T$ is a Type $II$ orbit, $D(T)$ admits $-1$ as an eigenvalue (see Def.\ref{type}) - therefore, by $\varphi(T_n)=-1$ for all $n$ we cannot have $T_n\to T$ and the assertion follows.
\end{proof}
Lemma \ref{notyp2} leads us to a very important heuristic: if a periodic orbit $T$ satisfies $\varphi(T)=-1$, then generically, in order for $T$ to undergo a period doubling bifurcation it must first change its stability and become either an attractor or a repeller. Or, in other words, $T$ must first bifurcate in a way which changes its $\varphi-$index to $1$. We conclude our discussion about bifurcations of periodic orbits for vector fields in $K$ by recalling the following Lemma (see Prop.4.1.1 in \cite{KNOTBOOK} and the illustration in Fig.\ref{colli}):

\begin{lemma}
\label{perserve} Let $\dot{x}=F_t(x)$ be a smooth curve of vector fields, $t\in\mathbf{R}$, and assume $T_0$ is a saddle-node bifurcation orbit for $F_0$ at which two periodic orbits collide and vanish (see the illustration in Fig.\ref{colli}). Then, the index of one such orbit is $-1$, while the other is $1$. Moreover, the knot type of $T_0$ does not change when it undergoes a saddle node bifurcation.
\end{lemma}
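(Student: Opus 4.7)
The plan is to deduce both assertions from Theorem \ref{invar} together with a short local analysis of the saddle-node. For the index claim, I would apply Theorem \ref{invar} to the component $Per_0$ of $Per$ containing $T_0\times\{0\}$: by Def.\ref{type} and the discussion following Cor.\ref{slice}, $Per_0\setminus T_0\times\{0\}$ consists of exactly two components $Per_1,Per_2$, both lying in the same half-space $M\times(-\infty,0)$ or $M\times(0,\infty)$. Thus on one side of $\{t=0\}$ no orbit of $Per_0$ is present, while on the other the two colliding orbits $T_1\subseteq Per_1$, $T_2\subseteq Per_2$ are. The invariance across the bifurcation yields $\varphi(T_1)+\varphi(T_2)=0$, so it remains only to rule out $\varphi(T_i)=0$.

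To do so I would use the orientation-preservation of smooth three-dimensional flows: the first-return map on a cross-section transverse to $T_0$ is an orientation-preserving local diffeomorphism, so $\det D(T_0)>0$. Since $1$ is a simple eigenvalue of $D(T_0)$ by Def.\ref{type}, the other eigenvalue $\mu$ must also be positive, non-zero, and distinct from $1$. By continuity of eigenvalues, for $t$ near $0$ the second eigenvalue of $D(T_i)$ remains positive and close to $\mu$, while the first eigenvalue is close to $1$ and lies on opposite sides of $1$ for $T_1$ and $T_2$. Applying Def.\ref{index1} with $k^-=0$, exactly one of $k^+(T_1),k^+(T_2)$ exceeds the other by one, so one index equals $+1$ and the other $-1$. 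The main obstacle is precisely this step: without orientation-preservation a negative second eigenvalue would produce a M\"obius-type saddle-node in which both $\varphi$ indices vanish and the stated dichotomy would fail, so the positivity of $\mu$ must be invoked with care.

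For the knot-type claim I would exploit the local nature of the bifurcation. In the same cross-section $S$, fixed points of $f_t$ near $x_0=T_0\cap S$ solve $f_t(x)-x=0$, and since $1$ is a simple eigenvalue of $Df_0(x_0)$ the standard Lyapunov--Schmidt unfolding of a one-dimensional saddle-node produces two smooth branches $x_1(t),x_2(t)\to x_0$ as $t\to 0$. The corresponding periodic orbits $T_1(t),T_2(t)$ then converge to $T_0$ in the $C^0$ topology, so for $t$ sufficiently small they lie inside an arbitrarily thin tubular neighborhood $N$ of $T_0$, each meeting $S$ at a single point and winding once around $N$. An ambient isotopy supported in $N$ then carries each onto $T_0$, so $T_0,T_1(t)$ and $T_2(t)$ all share the same knot type.
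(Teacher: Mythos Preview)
The paper does not actually prove this lemma; it simply recalls it with a citation to Prop.~4.1.1 in \cite{KNOTBOOK}. Your argument is correct and is the standard one. The crucial observation you make --- that orientation-preservation of the first-return map forces $\det D(T_0)>0$, so the second eigenvalue $\mu$ is positive and neither branch can be M\"obius --- is exactly the point. One minor remark: once you have established $\mu>0$ and carried out the center-manifold/Lyapunov--Schmidt reduction showing the near-$1$ eigenvalue lies on opposite sides of $1$ for the two branches, the direct computation from Def.~\ref{index1} already yields $\varphi(T_1)=-\varphi(T_2)\in\{\pm1\}$; the appeal to Theorem~\ref{invar} is then a consistency check rather than a necessary step. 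Your knot-type argument is also the standard one and is correct: the continuity of the period built into Def.~\ref{type} guarantees each branch meets the cross-section exactly once near $x_0$, so each winds once around a thin tube about $T_0$ and is ambient-isotopic to it.
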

\begin{remark}
    For more details about the connection between the Orbit Index and the persistence of Knot types under bifurcations, see \cite{PY4} and Ch.4.1 in \cite{KNOTBOOK}.
\end{remark}
\begin{figure}[h]
\centering
\begin{overpic}[width=0.3\textwidth]{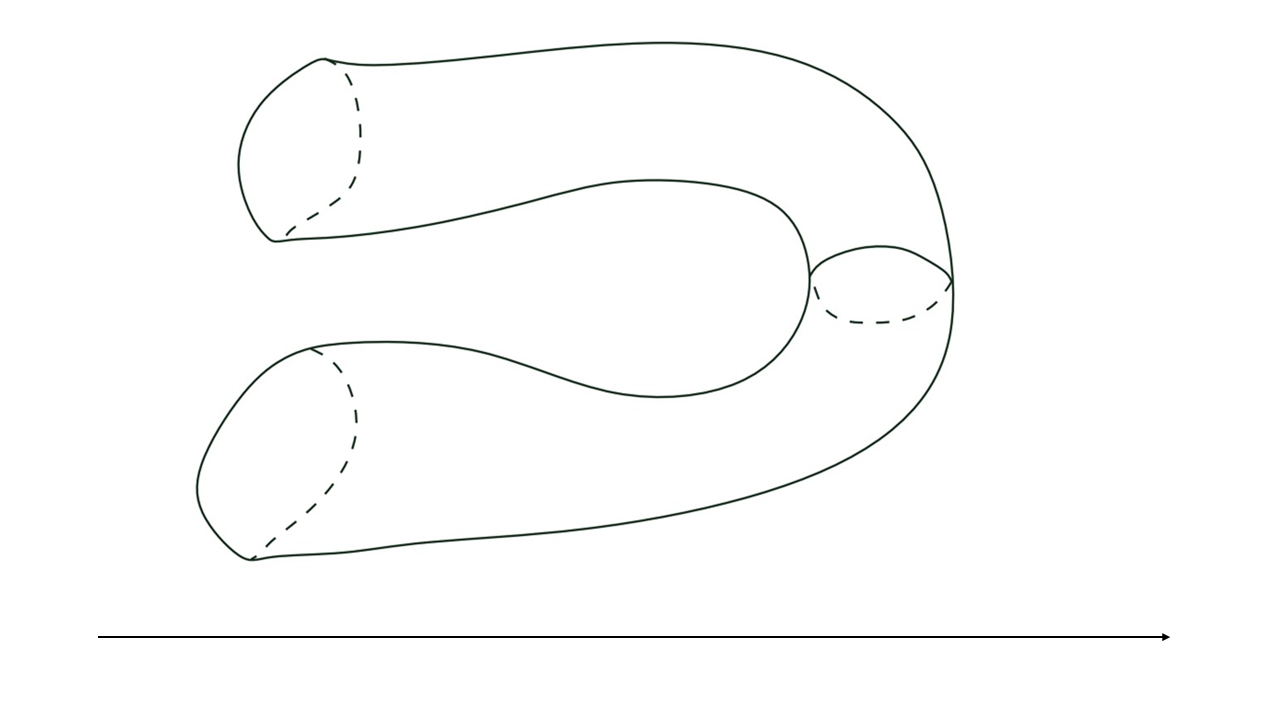}
\put(950,50){$t$}
\put(650,130){$T_1$}
\put(760,330){$T$}
\put(650,530){$T_2$}
\end{overpic}
\caption{\textit{A saddle node bifurcation at a the Type $I$ orbit $T$. By Prop.\ref{perserve}, the Orbit Index of $T_1$ is $1$ while that of $T_2$ is $-1$.}}
\label{colli}
\end{figure}

Having studied the basic properties of the Orbit Index in generic situations, we now proceed to define it for periodic orbits in a general setting - that is, we now extend the Orbit Index for vector fields not in $K$. The main difficulty we will have to contend with is that in general, given a smooth vector field $F$ of $M$ (where $M$ is some smooth three-dimensional manifold) there is no general reason to assume its periodic orbits are isolated - which prevents us from directly generalizing Def.\ref{index1} or \ref{index2} to such scenarios. To bypass this problem, following Section 3 in \cite{PY2} we first introduce the notion of a virtual period:\
\begin{definition}
    \label{virtper} Let $F$ be a smooth vector field on $M$, let $T_0$ be a periodic orbit for $F$ of period $\tau$ (not necessarily isolated), and recall $D(T_0)$ denotes the differential of some local first-return map for $T_0$. Assume there exists some vector $v\in\mathbf{R}^2$ and some minimal $m\geq1$ s.t. $D^m_f(T_0)v=v$. In that case, we say $\overline\tau=m\tau$ is a \textbf{virtual period} of $T_0$, whose order is $m$.
\end{definition}
It is easy to see that $v=(0,0)$ satisfies $D(T_0)v=v$, hence the period of $T_0$ is always also a virtual period. It is also easy to see that if there exists some $m>1$ s.t. $m\tau$ is a virtual period for $T$, then $D(T_0)$ admits an eigenvalue which is an $m-$th root of unity (and vice versa) - consequentially $T_0$ has at most a finite number of virtual periods. Using the virtual period, following Section 3 in \cite{PY2} we now define the notion of an isolated collection of periodic orbits:
\begin{definition}
     \label{relis} Let $M$ be a smooth three-dimensional manifold. We say a collection $C$ of periodic orbits for some $C^3$ vector field $F$ of $M$ is \textbf{relatively isolated} provided the following is satisfied:
     \begin{enumerate}
         \item $C$ is compact.
         \item The set of virtual periods for periodic orbits in $C$ is uniformly bounded.
         \item If $\{\beta_i\}_i$ is a sequence of periodic orbits for $F$ which lies outside $C$ and accumulates on $C$, the periods of orbit in $\{\beta_i\}_i$ are unbounded.
     \end{enumerate}
 \end{definition}

In other words, relatively isolated sets of periodic orbits are compact sets of periodic orbits which cannot be approximated from outside by low-period orbits (it is easy to see every isolated periodic orbit is also relatively isolated). Now, given a relatively isolated set $C$ of periodic orbits for a vector field $F$ as above we have the following result (see Prop.3.2 in \cite{PY2}):

\begin{lemma}
\label{isol} With previous notations, let $p_0>0$ denote the maximum over all the virtual periods in $C$, and let $p_1$ and $z$ be some real numbers s.t. $p_0<p_1<2p_0$ and $z>4p_0$. Then, there exists a neighborhood $N$ of $C$ in $M$ s.t. if $T$ is a periodic orbit for $F$ satisfying $T\subseteq N$, the period of $T$ lies outside $[p_1,z]$.  
\end{lemma}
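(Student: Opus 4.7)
My plan is to argue by contradiction. Suppose no neighborhood $N$ with the claimed property exists. Fixing a Riemannian metric on $M$ and setting $N_k=\{x\in M:\mathrm{dist}(x,C)<1/k\}$, the compactness of $C$ guarantees that $\{N_k\}_k$ is a nested family of open neighborhoods of $C$ with $\bigcap_k N_k=C$. The contradiction hypothesis then produces, for every $k$, a periodic orbit $T_k$ of $F$ with $T_k\subseteq N_k$ and period $\tau_k\in[p_1,z]$.

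Next, I would exploit the fact that $C$ is by hypothesis a union of periodic orbits of $F$: since the orbits of a flow partition the invariant part of $M$, each $T_k$ is either entirely contained in $C$ or entirely disjoint from $C$. After passing to a subsequence I may assume that one of these two alternatives holds uniformly in $k$, and I would treat the two cases separately.

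If $T_k\subseteq C$ for every $k$, the remark immediately after Def.\ref{virtper} (take $v=0$ and $m=1$) shows that the period $\tau_k$ is itself a virtual period of $T_k$; the uniform bound (2) in Def.\ref{relis} then yields $\tau_k\leq p_0$, which contradicts $\tau_k\geq p_1>p_0$. If instead $T_k\cap C=\emptyset$ for every $k$, then $T_k\subseteq N_k\setminus C$ with $\bigcap_k N_k=C$, so $\{T_k\}_k$ is a sequence of periodic orbits lying outside $C$ that accumulates on $C$. Condition (3) in Def.\ref{relis} then forces $\tau_k\to\infty$, contradicting $\tau_k\leq z$. Either outcome yields the desired contradiction and the lemma follows.

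The only delicate point I anticipate is reconciling the statement "$\{T_k\}_k$ lies outside $C$ and accumulates on $C$" with the formulation of (3) in Def.\ref{relis}; this should be immediate from $T_k\subseteq N_k$ and the fact that $N_k$ is chosen as a distance neighborhood shrinking onto the compact set $C$. I note in passing that neither the exact inequality $p_1<2p_0$ nor $z>4p_0$ is actually used in this proof; these specific constants are presumably chosen so that the neighborhood $N$ is compatible with the later Orbit Index computations, for which one needs room to double periods (via period-doubling bifurcations) and to compare several iterates of first-return maps.
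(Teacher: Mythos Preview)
Your argument is correct. The paper does not actually prove this lemma; it simply cites Prop.~3.2 in \cite{PY2} and moves on. Your contradiction argument, splitting into the cases $T_k\subseteq C$ versus $T_k\cap C=\emptyset$ and invoking conditions (2) and (3) of Def.~\ref{relis} respectively, is exactly the natural route and is sound as written. Your observation that the constraints $p_1<2p_0$ and $z>4p_0$ play no role in the proof itself is also correct: those constants are chosen so that the resulting neighborhood $N$ interacts well with the Orbit Index construction that follows (one needs the interval $[p_1,z]$ to be wide enough to trap any period-doubled orbit that might appear under the generic approximation in Def.~\ref{index22}).
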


With these ideas in mind, following Def.3.5 in \cite{PY2} we are finally ready to give a general definition of the Orbit Index. To do so, let $F$ be a smooth vector field of $M$, and let $C$ denote a relatively isolated set of periodic orbits for $F$. Now, let $G_t$, $t\in(0,1]$ be a smooth curve of vector fields s.t. for every $t>0$, $G_t\in K$, and $G_0=F$, and let $N\subseteq M$ be a neighborhood of $C$ as in Lemma \ref{isol}. For every $t\in(0,1]$ set $J_t=\sum_{T\in Per_t}\varphi(T)$ - where $Per_t$ denotes the collection of all periodic orbits for the vector field $G_t$ inside $N$ whose period less than $p_1$, and $\varphi$ is the generic Orbit Index (see Lemma \ref{index3}). Set $i(C)=\lim_{t\to0} J_t$ - as proven at Prop.3.3 in \cite{PY2}, $i(C)$ is independent of the choice of $\{G_t\}_{t\in(0,1]}$ (for more details, see Prop.3.3 in \cite{PY2}). We now define the general Orbit Index as follows: 
\begin{definition}
     \label{index22}
     Given a relatively isolated collection of periodic orbits $C$ for the vector field $F_0$, its \textbf{Orbit Index} will be defined as the limit $i(C)$ above.  \end{definition}
As mentioned at the beginning of this Section, the Orbit Index is strongly related to the notion of global continuability (see Def.\ref{globalconti}) - to be precise, by Th.4.2 in \cite{PY2} we have the following result:

\begin{claim}
\label{contith}   Let $M$ be a smooth three-manifold and let $\dot{x}=F_t(x)$ denote a curve of vector fields varying smoothly in $(x,t)\in M\times[0,\infty)$. Assume $C$ is a relatively isolated collection of periodic orbits for $F_0$ s.t. $i(C)\ne0$ - then, $C$ includes a globally continuable periodic orbit for $F_0$.
\end{claim}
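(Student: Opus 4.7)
The plan is to reduce to the generic setting $K$ (via Prop.\ref{propk}), use the bifurcation invariance of the $\varphi$-index from Th.\ref{invar} and Cor.\ref{slice}, and then pass to a limit to extract a globally continuable orbit for the original (non-generic) family $\{F_t\}$.

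\textbf{Setup and reduction to $K$.} By Lemma \ref{isol}, I would pick a bounded neighborhood $N$ of $C$ in $M$ and constants $p_1, z$ with $p_0 < p_1 < 2p_0 < z$, where $p_0$ is the maximum of virtual periods in $C$, such that no periodic orbit of $F_0$ inside $N$ has period in $[p_1, z]$. Because $F_t$ depends smoothly on $t$, after shrinking $N$ and restricting to some $t \in [0,\delta]$, this ``period gap'' is preserved: any periodic orbit of $F_t$ lying in $N$ has period either strictly less than $p_1$ or strictly greater than $z$. By Prop.\ref{propk}, I would then choose a sequence of $C^3$-curves $\{G^{(n)}_t\}_{t \in [0,\delta]} \subseteq K$ converging to $F_t$ in the $C^3$ topology. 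After slight additional perturbation this gap persists for $G^{(n)}$ when $n$ is large enough.

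\textbf{Using index invariance on generic approximations.} For each $n$ and each $t \in [0,\delta]$ set $S_t^{(n)}$ to be the collection of periodic orbits of $G^{(n)}_t$ contained in $N$ with period less than $p_1$, and let $J_t^{(n)} = \sum_{T \in S_t^{(n)}} \varphi(T)$. The construction of $i(C)$ in Def.\ref{index22} gives $J_0^{(n)} \to i(C)$, and since $J_0^{(n)}$ is an integer, $J_0^{(n)} = i(C) \neq 0$ for all large $n$. Now consider the connected component $\mathcal{P}^{(n)}$ in the set of periodic orbits of $\{G^{(n)}_t\}_{t \in [0,\delta]}$ that contains the slice $S_0^{(n)} \times \{0\}$. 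By Th.\ref{invar} and Cor.\ref{slice}, the sum of $\varphi$ over the slice $\mathcal{P}^{(n)} \cap (M \times \{t\})$ is an invariant of $t$ along $\mathcal{P}^{(n)}$ provided this slice remains finite and contained in $N$ with periods bounded by $p_1$. In particular, the nonzero index $i(C)$ cannot cancel via saddle-node or period-doubling bifurcations internal to $N$ unless at least one of the following happens: a periodic orbit in $\mathcal{P}^{(n)}$ escapes $\overline{N}$, collides with a center fixed point, has its period blow up, or simply persists out to $t=\delta$. In each case, at least one periodic orbit $T_0^{(n)} \in S_0^{(n)}$ admits a continuation that matches one of the three terminating modes in Def.\ref{globalconti}.

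\textbf{Passing to the limit.} The orbits $T_0^{(n)}$ all lie in $\overline{N}$ with periods bounded by $p_1$, so by Arzelà-Ascoli on closed trajectories a subsequence Hausdorff-converges to a periodic orbit $T_\infty$ of $F_0$ in $\overline{N}$ with period $\leq p_1$. The period gap from Lemma \ref{isol} together with the defining property of a relatively isolated set (Def.\ref{relis}) forces $T_\infty \subseteq C$. The key remaining point is that the terminating mode for $T_0^{(n)}$ in the generic curve $G^{(n)}$ passes to a terminating mode for $T_\infty$ in the original curve $\{F_t\}$: escape from $\overline{N}$ remains escape in the limit, center fixed points of $G^{(n)}$ converge to center fixed points of $F_0$, and unbounded-period branches remain unbounded-period branches. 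This yields global continuability of $T_\infty$ in the sense of Def.\ref{globalconti}.

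\textbf{Main obstacle.} The hard step is the limit argument: one must verify that the termination-type classification of the component $\mathcal{P}^{(n)}$ is stable under $C^3$-convergence $G^{(n)} \to F$. In the three-dimensional setting this is manageable because the only admissible generic bifurcations in $K$ are Types $0$, $I$, and $II$ (Prop.\ref{propk}), and the $\varphi$-index tracks through them cleanly by Th.\ref{invar} and Lemma \ref{perserve}. This essentially recapitulates the argument of Th.4.2 in \cite{PY2}, whose application to the present three-dimensional setting is what the proof requires.
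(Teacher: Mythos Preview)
The paper does not supply its own proof of this statement: it is stated as a citation, ``by Th.4.2 in \cite{PY2} we have the following result,'' and then used as a black box throughout. So there is no ``paper's own proof'' to compare against --- what you have written is an attempted reconstruction of the Mallet-Paret--Yorke argument from \cite{PY2}, which you correctly identify at the end.

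Your outline captures the right architecture (perturb to $K$, use the $\varphi$-index invariance, extract a limit orbit), but there is a genuine gap in the middle step. You argue that along the generic curve $G^{(n)}$, the component $\mathcal{P}^{(n)}$ must either escape $\overline{N}$, hit a center, have period blow up, or persist to $t=\delta$; and you then assert that ``in each case'' this matches one of the terminating modes in Def.\ref{globalconti}. But escaping the local neighborhood $N$ is not one of the modes of Def.\ref{globalconti} --- global continuability concerns the full component $Per_0 \subseteq M \times \mathbf{R}$, and an orbit leaving $N$ may remain perfectly bounded in $M$ with bounded period and never touch a center. Likewise, ``persisting to $t=\delta$'' only buys you continuation on $[0,\delta]$, not the global alternative. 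The actual argument in \cite{PY2} is not local to $N$: one shows directly that in a generic one-parameter family an isolated orbit with $\varphi \neq 0$ lies on a ``snake'' whose structure forces one of the three alternatives globally, and the role of $N$ and the period gap is only to pin down which limit orbit lies in $C$, not to control where the snake goes.

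The limit step also needs more care than you give it: the claim that ``escape from $\overline{N}$ remains escape in the limit'' and that termination modes are $C^3$-stable is exactly the delicate part of Th.4.2 in \cite{PY2}, and it requires the virtual-period machinery (Def.\ref{virtper}) to control how the component $Per_0$ for $F_t$ relates to the components $\mathcal{P}^{(n)}$ for $G^{(n)}_t$. Your sketch gestures at this but does not supply it.
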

\begin{figure}[h]
\centering
\begin{overpic}[width=0.5\textwidth]{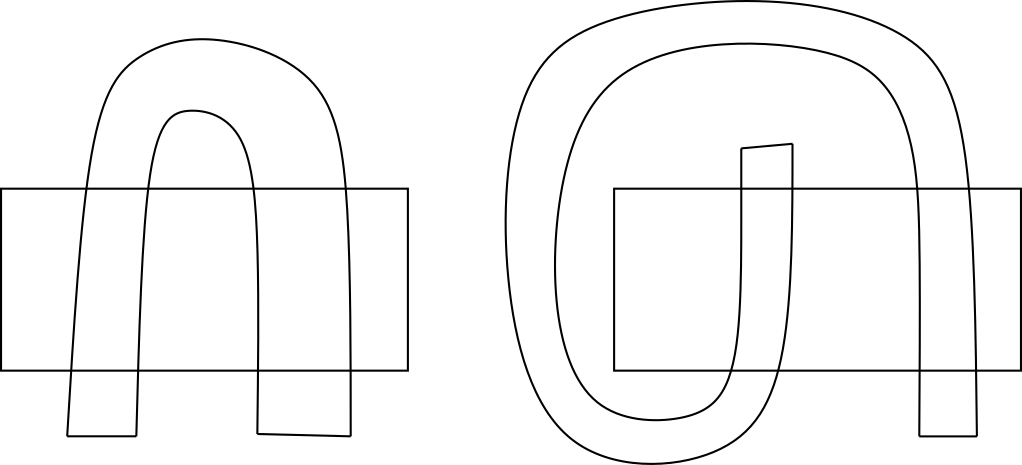}
\put(980,60){$B$}
\put(700,330){$h(CD)$}
\put(980,280){$D$}
\put(900,-30){$h(AB)$}
\put(590,60){$A$}
\put(590,280){$C$}
\put(380,60){$B$}
\put(40,-20){$h(CD)$}
\put(380,280){$D$}
\put(240,-20){$h(AB)$}
\put(-10,60){$A$}
\put(-10,280){$C$}
\end{overpic}
\caption{\textit{A Smale Horseshoe map on the left and a Fake Horseshoe on the right.}}
\label{horss}
\end{figure}

At this point we remark one can prove global continuability results even for some cases in which $i(C)=0$ - this can be achieved by studying the Linking Numbers of periodic orbits of Orbit Index $0$ (for more details, see Section $5$ in \cite{PY4}). Moreover, we conclude this section with the following fact which we will also use throughout this paper:

\begin{proposition}
    \label{dens1}
    Let $ABCD$ be a topological rectangle and assume $h:ABCD\to\mathbf{R}^2$ is either a Smale Horseshoe map or a Fake Horseshoe map (see the illustration in Fig.\ref{horss}). Then, the following is satisfied:
    \begin{itemize}
        \item If $h$ is a Smale Horseshoe map then any suspension of $h$ creates a countable collection of periodic orbits whose Orbit Index is $-1$. The Orbit Index of all other periodic orbits is $0$.
        \item  If $h$ is a Fake Horseshoe every periodic orbit generated by suspending $h$ has Orbit Index $-1$.
    \end{itemize}
\end{proposition}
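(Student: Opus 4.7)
The plan is to reduce the claim to a direct eigenvalue computation for $Dh^n$ at every periodic point of $h$, and then apply Def.\ref{index1}. Since the orbit index of a periodic orbit $T$ of the suspension depends only on the eigenvalues of the differential of any first-return map at $T\cap S$ (see Def.\ref{dfT}), and any suspension of $h$ admits $ABCD$ itself as a global cross-section whose first-return map is (conjugate to) $h$, every periodic orbit $T$ of the suspension satisfies $D(T) = Dh^n(x)$ for an appropriate periodic point $x$ of minimal $h$-period $n$. In particular, the statement is independent of the specific suspension chosen and reduces to a computation inside the planar model.

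Next I would set up the standard symbolic picture: let $V_0, V_1 \subseteq ABCD$ be the two vertical subrectangles whose images under $h$ are the horizontal strips $H_0, H_1 \subseteq ABCD$, and on each $V_i$ write $h$ in affine form. Reading off Fig.\ref{horss}, for the Smale horseshoe the presence of a single fold forces exactly one branch, say $V_1$, to reverse the vertical orientation, giving
\[
Dh|_{V_0} = \begin{pmatrix} \mu & 0 \\ 0 & \lambda \end{pmatrix}, \qquad Dh|_{V_1} = \begin{pmatrix} \mu & 0 \\ 0 & -\lambda \end{pmatrix}, \qquad 0 < \mu < 1 < \lambda,
\]
while for the Fake horseshoe no fold is present and both branches act orientation-preservingly, so $Dh|_{V_0} = Dh|_{V_1} = \mathrm{diag}(\mu, \lambda)$ with the same bounds on $\mu, \lambda$.

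With these normal forms in hand the computation is immediate. For any periodic orbit with symbolic itinerary $s_0 s_1 \cdots s_{n-1} \in \{0,1\}^n$, the chain rule yields
\[
Dh^n \;=\; \prod_{i=0}^{n-1} Dh|_{V_{s_i}} \;=\; \mathrm{diag}\!\left(\mu^n, \; \varepsilon \, \lambda^n\right),
\]
where $\varepsilon = (-1)^{k}$ with $k = \#\{i : s_i = 1\}$ in the Smale case, and $\varepsilon = 1$ in the Fake case. For the Fake horseshoe both eigenvalues are then positive with one in $(0,1)$ and one in $(1,\infty)$, so in the notation of Def.\ref{index1} we have $k^- = 0$, $k^+ = 1$, and $\varphi(T) = -1$ for every $T$. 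For the Smale horseshoe, when $k$ is even the same conclusion holds and $\varphi(T) = -1$, while if $k$ is odd the second eigenvalue lies in $(-\infty,-1)$, so $k^- = 1$ is odd and Def.\ref{index1} returns $\varphi(T) = 0$. Both the set of periodic binary itineraries with an even number of $1$'s and its complement are countably infinite, which gives the required dichotomy.

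The main obstacle is isolating the correct orientation pattern on each branch of Fig.\ref{horss} -- this is a careful bookkeeping of where the corners $A, B, C, D$ are sent by the two model maps, but once it is settled the remainder of the argument is the one-line application of Def.\ref{index1} above, and both halves of the proposition are obtained simultaneously.
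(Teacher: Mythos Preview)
Your approach is essentially the same as the paper's: reduce to the differential $Dh^n$ at a periodic point via the chain rule along the symbolic itinerary, then read off $k^-,k^+$ and apply Def.\ref{index1}. There is, however, a genuine slip in your normal form on the folded branch. You write $Dh|_{V_1}=\mathrm{diag}(\mu,-\lambda)$, which has negative determinant and hence is orientation-reversing. A Smale horseshoe is an orientation-preserving planar diffeomorphism, so $\det Dh$ has constant sign on $ABCD$; the fold on $V_1$ is a rotation by $\pi$, and the correct differential is $Dh|_{V_1}=\mathrm{diag}(-\mu,-\lambda)$ with \emph{both} eigenvalues negative (exactly as the paper records: on the folded strip the eigenvalues lie in $(-1,0)$ and $(-\infty,-1)$). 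With your model, an itinerary with an odd number of $1$'s produces $Dh^n=\mathrm{diag}(\mu^n,-\lambda^n)$, i.e.\ one positive and one negative eigenvalue --- but this is precisely the situation that \emph{cannot} occur for the return map of a three-dimensional flow (cf.\ the proof of Lemma~\ref{index3}: $D(T)$ must have two real eigenvalues of the same sign or a complex-conjugate pair), so strictly speaking your ``suspension'' does not exist.

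The fix is immediate: replace $\mu$ by $-\mu$ on $V_1$. Then $Dh^n=\mathrm{diag}((-1)^k\mu^n,(-1)^k\lambda^n)$, and the dichotomy $k$ even $\Rightarrow$ both eigenvalues positive $\Rightarrow$ index $-1$, $k$ odd $\Rightarrow$ both negative $\Rightarrow$ $k^-=1$ odd $\Rightarrow$ index $0$, goes through exactly as you wrote. The Fake-horseshoe case and the density/countability remarks are fine as stated.
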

In order not to interrupt the flow of ideas we defer the proof of Prop.\ref{dens1} to Appendix $B$ (see Prop.\ref{dens}), where we will discuss in greater detail what possible types of periodic orbits can be created under the suspension of either stretch-and-fold or tearing mechanisms. We will state however that similar results can also be obtained for many other maps $f:ABCD\to\mathbf{R}^2$ whose dynamics on their invariant sets are conjugate to the double-sided shift $\sigma:\{1,2,...,n\}^\mathbf{Z}\to\{1,2,...,n\}^\mathbf{Z}$ - for more details, see Appendix $B$.

\section{On the general persistence of periodic orbits under three-dimensional perturbation}
\label{perss}
In this section we prove the main result of this paper: Th.\ref{th1} and its corollaries, where we study the persistence of periodic dynamics under smooth deformations that preserve a certain heteroclinic condition fixed (we do so in Th.\ref{orbipers} and Th.\ref{pers13}). This section is organized as follows: we begin by generalizing the definition of relative isotopy from two-dimensional homeomorphisms to flows (see Def.\ref{relt}) - after which we briefly discuss how it manifests in a heteroclinic context. We then state and prove Th.\ref{orbipers} where we give sufficient conditions for the persistence of periodic dynamics under such "relative isotopies" (or more precisely, homotopies) of vector fields. Following that we use the Orbit Index Theory as sketched above to prove Th.\ref{pers13}, which together with Th.\ref{orbipers} completes the proof of Th.\ref{th1}. Finally, using Th.\ref{orbipers} we conclude this section by proving Cor.\ref{templateth} which allows us in some scenarios to associate a Template with a flow generating complex dynamics - even when no hyperbolicity conditions are satisfied by the flow.\\

Per the paragraph above we begin by discussing the generalization of relative isotopies from surface homeomorphisms to three-dimensional flows. To motivate the said generalization let us first consider some heuristics: assume $F$ is a smooth vector field of $S^3$, and let $H$ denote some invariant one-dimensional set for the flow - say, a periodic orbit, or a collection of fixed points and their one-dimensional invariant manifolds. Naively, one would generalize Def.\ref{relp} by fixing the behavior of $F$ on $H$ alone, and consider how this behavior constrains the possible smooth deformations of $F$ on $S^3\setminus H$. It is easy to see that under these (very) mild restrictions one can change the type of the fixed points on $H$, or allow periodic orbits to bifurcate from $H$ (or the fixed points on it - see the illustration in Fig.\ref{naive}).\\

\begin{figure}[h]
\centering
\begin{overpic}[width=0.35\textwidth]{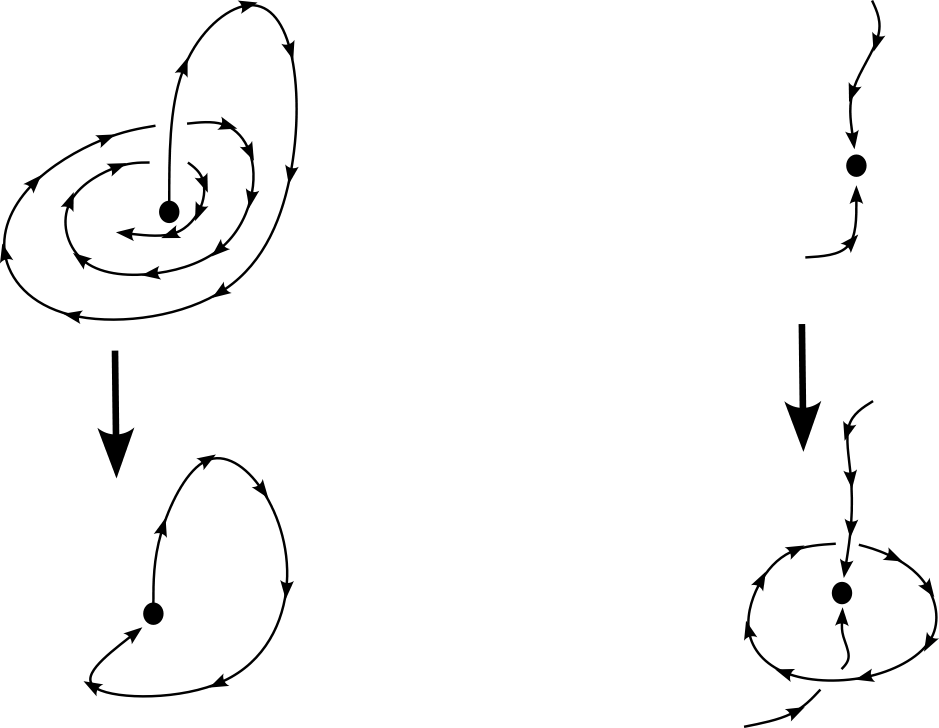}

\end{overpic}
\caption{\textit{Two deformations of $F$ in $S^3\setminus H$ which do not change the behavior on $H$ (or its topology) while changing the local dynamics around it - a saddle-focus becoming a real saddle on the left, and a Hopf bifurcation on the right.}}
\label{naive}
\end{figure}

Unfortunately, this approach is too naive to work with - mostly because it gives us too much freedom to radically alter the dynamics around and away from $H$. For example, we can change a complex saddle on $H$ to a real one, add fixed points away from $H$, make the invariant manifolds of $H$ intersect transversely (when $H$ is a periodic orbit), induce Hopf bifurcations, and remove fixed points of Poincare Index $0$ from $H$ - to name a few (see the illustration in Fig.\ref{remover}). These actions, even though they do not change too much the "essential" behavior of $F$ on $H$ can greatly change the behavior of $F$ in $S^3\setminus H$ - therefore, under such mild restrictions it is probably pointless to talk about any homotopy invariant properties of $F$ in $S^3\setminus H$.\\

\begin{figure}[h]
\centering
\begin{overpic}[width=0.4\textwidth]{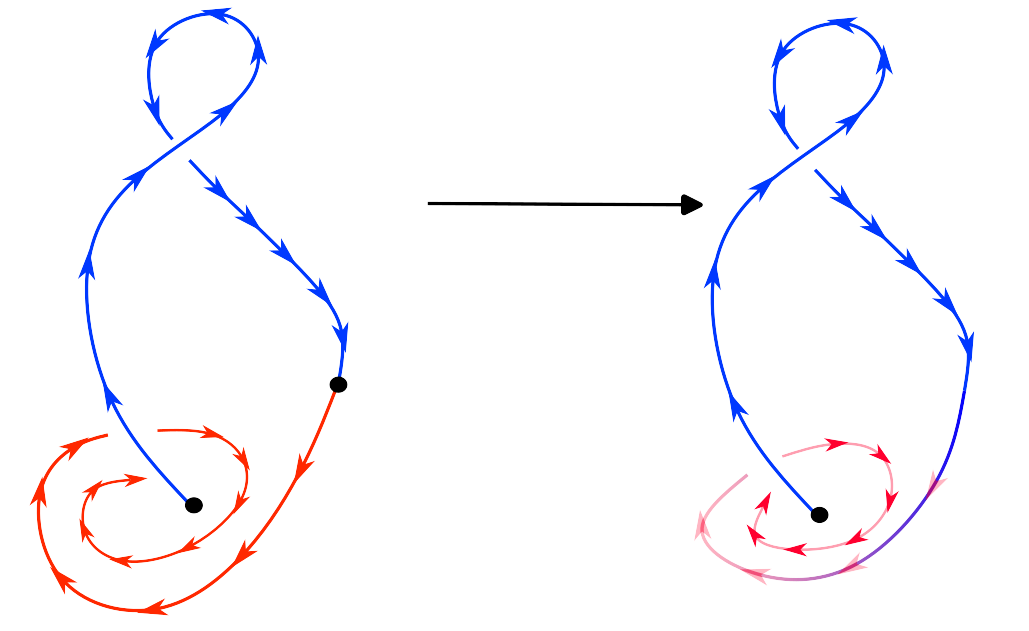}

\put(115,90){$O_1$}
\put(795,130){$O_1$}
\put(355,200){$O_2$}

\end{overpic}
\caption{\textit{In this illustration $H$ is composed from two fixed points (a saddle-focus $O_1$ and $O_2$ of Poincare Index $0$), and the heteroclinic orbit connecting them. A deformation of the vector field on the left removes $O_2$ (as it is of Poincare Index $0$) and turns $H$ into a homoclinic orbit to $O_1$ (on the right).}}
\label{remover}
\end{figure}

These examples show that if we wish to meaningfully generalize the notion of relative isotopy from two-dimensional dynamics to three-dimensional flows we must take into account two things:

\begin{enumerate}
    \item First, we must take the local dynamics of $F$ around $H$ into account - that is, the type of the fixed points for the flow which lies on $H$ matters. That is, we cannot change a complex saddle into a real one, or allow Hopf Bifurcations.
    \item Second, we cannot consider $F$ s.t. it has fixed points away from $H$, nor can we deform the dynamics s.t. fixed points are added in $S^3\setminus H$, as this could change the constrains placed by $H$ on the dynamics in $S^3\setminus H$.
\end{enumerate}

With these ideas in mind, we now generalize the notion of relative isotopies from surface homeomorphisms (see Def.\ref{relp}) to flows as follows:

\begin{definition}
\label{relt}    Let $H\subseteq S^3$ be an invariant, one-dimensional set for a $C^k$ vector field $F$ of $S^3$, where $k\geq3$. Let $G$ be another smooth vector field - we say $F$ can be deformed to $G$ relative to $H$ (or in short, $rel$ $H$) if there exists a homotopy of vector fields $\{G_t\}_{t\in[0,1]}$ varying smoothly in $S^3\times[0,1]$ s.t. the following is satisfied:
    \begin{itemize}
        \item $G_0=F$ and $G_1=G$.
        \item For every $t\in[0,1]$ the set $H$ is invariant under $G_t$.
        \item No fixed points on $H$ are added (or removed) from $H$ during the deformation - that is, for every $t_1,t_2\in[0,1]$, the vector field $G_{t_1}$ has precisely as many fixed points on $H$ as $G_{t_2}$. 
        \item For every $t_1,t_2\in [0,1]$, if $x\in H$ is a fixed point of some topological type for $G_{t_1}$ it remains a fixed point of the same topological type for $G_{t_2}$. Moreover, for all $t\in[0,1]$ the vector field $G_t$ has no fixed points in $S^3\setminus H$.
    \end{itemize}
\end{definition}
\begin{figure}[h]
\centering
\begin{overpic}[width=0.4\textwidth]{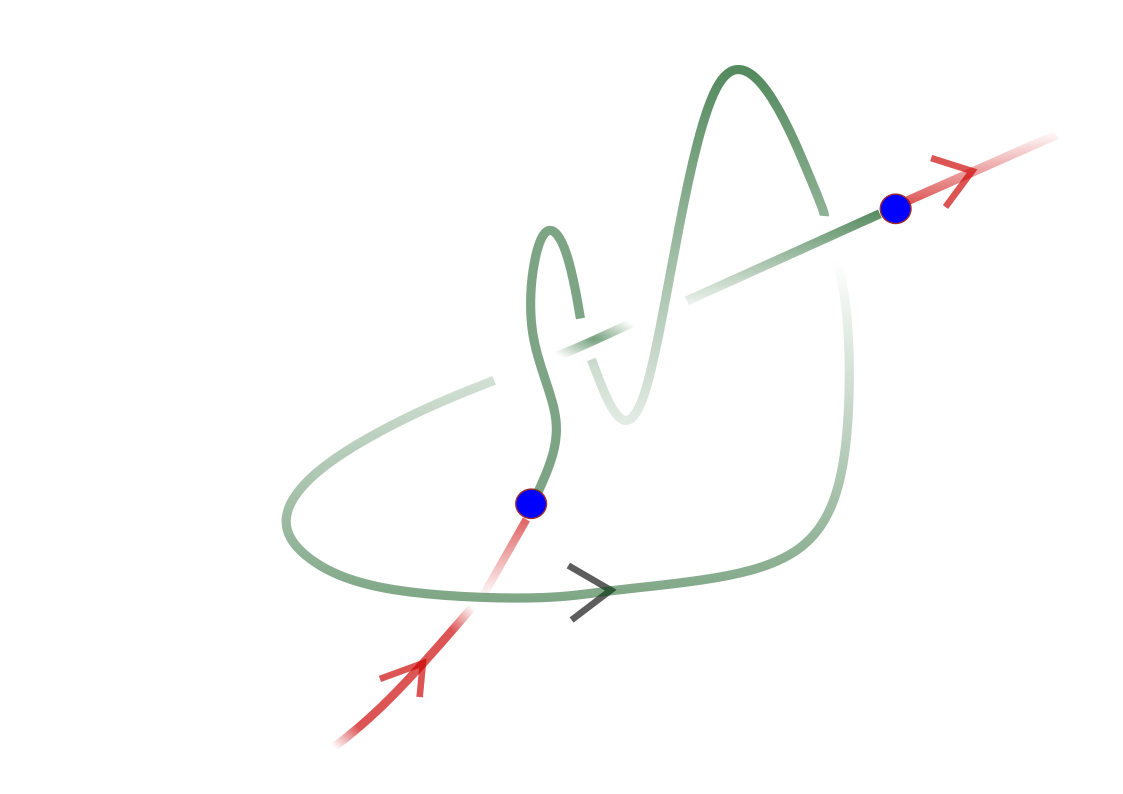}
\put(350,245){$P_{1}$}
\put(780,440){$P_{2}$}
\end{overpic}
\caption[Fig39]{\textit{A heteroclinic knot $H$ generated by joining two finite fixed points, $P_1$ and $P_2$, a fixed point at $\infty$, and the heteroclinic orbit connecting them. }}
\label{heterock}
\end{figure}

Put simply, when we homotopically deform flows $rel$ $H$ in addition to fixing the qualitative behavior of $F$ on $H$ we also require that no bifurcations on the fixed-points in $H$ take place - in particular, no fixed points can be removed or added to the flow, and no Hopf bifurcations can take place either. Moreover, we also fix the behavior of $F$ away from $H$, in the sense that we cannot add fixed points to it - or in other words, we can at most "stretch or bend" flow lines away from $H$. With these ideas in mind, inspired by the notion of Essential Dynamics for surface homeomorphisms (see Def.\ref{essential2}) we now introduce the following definition:

\begin{definition}
    \label{essentialclass} Let $F$ be a smooth vector field in $S^3$, and let $H$ denote some invariant, one-dimensional set for $F$ s.t. whenever $F$ has fixed points they all lie on $H$. Then, the \textbf{Essential Class} \textbf{of} $F$ \textbf{w.r.t.} $H$ will be defined as the collection of periodic orbits for $F$ which persist under smooth deformations $rel$ $H$ without changing their knot type (see Def.\ref{knot}) or collapsing to one another by some bifurcation. We will denote the said set by $Ess(F)$.
    \end{definition}
\begin{figure}[h]
\centering
\begin{overpic}[width=0.35\textwidth]{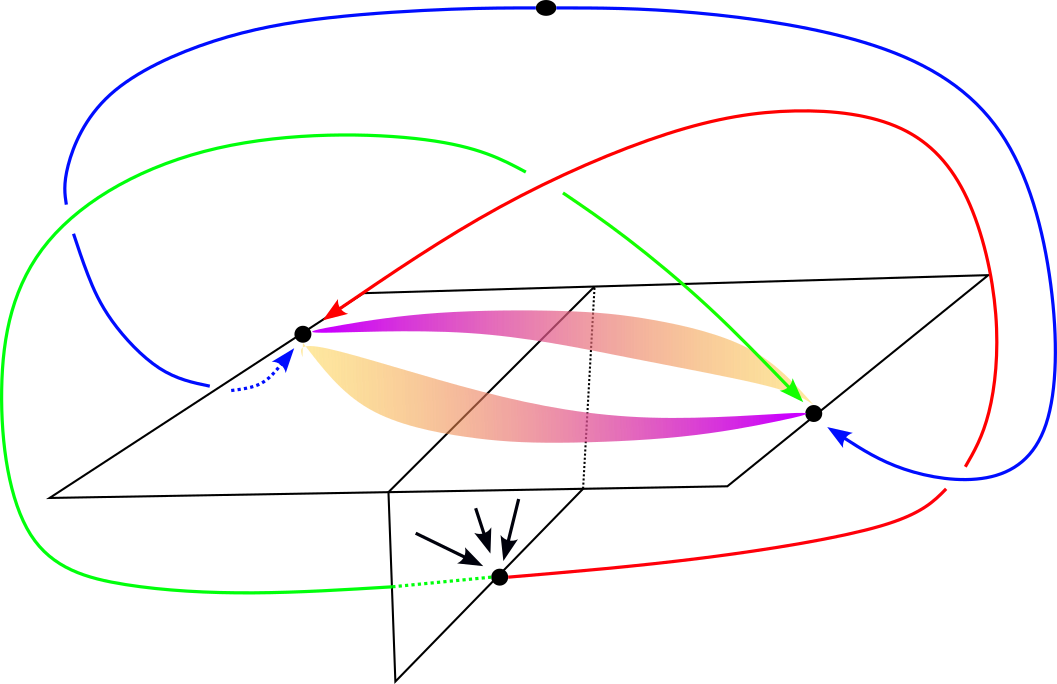}
\put(810,250){$p_-$}
\put(470,50){$o$}
\put(210,340){$p_+$}
\put(490,590){$\infty$}

\end{overpic}
\caption{\textit{A variant on the Geometric Lorenz Model (see \cite{SAB}), where we have a heteroclinic knot connecting two saddle foci $p_\pm$, a real saddle $o$, and a repeller at $\infty$. In the next section we will prove using Th.\ref{orbipers} the essential class of this vector field is infinite.}}
\label{suspendda}
\end{figure}

Needless to say, given an arbitrary $F-$ invariant one dimensional set $H$ as above, in general there is no good reason to assume $Ess(F)$ is non-empty, and this would very much depend on the choice of $H$. However, regardless of whether $Ess(F)$ is empty or not it is easy to see that if $G$ can be deformed $rel$ $H$ to $F$, we have $Ess(F)=Ess(G)$ - i.e., at any case the Essential Class is a topological invariant depending only on $H$. In this spirit we state that for the remainder of this paper we will only study the Essential Class of a vector field $F$ when $H$ is a curve composed of heteroclinic orbits, connecting all the fixed points of $F$. To make these notions clear, we now define the following:

\begin{definition}
    \label{heteroknot}
   Let $F$ be a $C^k$, $k\geq1$ vector field of $S^3$ s.t. $F$ has finitely many fixed points in $S^3$, $x_1,...,x_n$, $n\geq2$, where every $x_i$ is connected to $x_{i+1}$ by a heteroclinic orbit $H_i$ (where $x_{n+1}=x_1$). Then, the set $H=\{x_1,...,x_n\}\cup H_1\cup...\cup H_n$ is said to be a \textbf{heteroclinic knot} (see the illustrations in Fig.\ref{suspendda} and \ref{suspenddb}).
\end{definition}

To justify our choice of words observe that given a heteroclinic knot $H$ by definition it is homeomorphic to $S^1$ - hence it is a knot per Def.\ref{knot}. We further remark that similarly to Def.\ref{relt} one can think of a heteroclinic knot $H$ and the constrains it places on the $rel$ $H$ deformations of $F$ as an analogue to how the permutation a surface homeomorphism $f:S\to S$ induces on $\partial S$ constrains the dynamics of $f$ in $S$. With these ideas in mind, we are finally ready to state and prove Th.\ref{orbipers}:
\begin{theorem}
    \label{orbipers}
    Assume $F$ is a $C^k$ vector field of $S^3$, $k\geq3$, which generates a heteroclinic knot $H$ connecting all the fixed points of $F$- all of which are non-degenerate. Moreover, assume $F$ can be smoothly deformed $rel$ $H$ to a $C^3$ vector field $G$ s.t. there exists a surface $R$ of negative Euler characteristic endowed with a Pseudo-Anosov map $P:R\to R$ and two cross-sections $S_1\subseteq S_2\subseteq S^3$ for which the following is satisfied (see Fig.\ref{suspendda} and Fig.\ref{suspenddb})
            \begin{enumerate}
            \item $S_1$ is open in $S_2$ and transverse to $G$ - moreover, $S_1\cap H=\emptyset$ while $\overline{S_1}\cap H\ne\emptyset$. Similarly, $S_2\setminus H$ is transverse to the $G$ at its interior (note we do not require $S_1$ to be connected).
            \item The first-return map w.r.t. $G$, $g:{S_1}\to S_2$, is a homeomorphism. Moreover, $g$ extends continuously to $\overline{S_1}\cap H$ (not necessarily injectively - see Fig.\ref{suspenddb} or Fig.\ref{suspendda}).
            \item There exists some open set $R_1\subseteq R$ s.t. the first return map $g:S_1\to S_2$ is conjugate to $P:R_1\to R$ away from $\partial S_1$ (where the boundary is taken in $S_2$). 
            \item Let $\Gamma$ denote the spine of the surface $R$ which is embedded in $R$ (see Def.\ref{spi}), let $\gamma\subseteq\Gamma$ denote the sub-graph s.t. $R_1$ is retracted to $\gamma$, and let $p':\Gamma\to\Gamma$ denote the induced graph map (see the discussion immediately before Th.\ref{betshan}). Then, there exists $E$, a collection of edges in $\gamma$ s.t. $p':\gamma\to\gamma$ covers $E$ at least twice. 
            \item  Finally, if $h:S_1\to R_1$ is the homeomorphism s.t. $h^{-1}\circ P\circ h=g$, then given a vertex $p\in \gamma$ there exists some $x\in \partial S_1\cap H$ s.t. if $\{x_n\}_n\subseteq S_1$ satisfies $x_n\to x$ then $h(x_n)\to p$ (that is, the points in $\partial S_1\cap H$ correspond to the vertices of $\gamma$). 
        \end{enumerate} 
 
Then, under these assumptions $Ess(F)$ includes infinitely many periodic orbits. Moreover, as $F$ is deformed $rel$ $H$ all these periodic orbits retain their linking type with one another and with $H$.
\end{theorem}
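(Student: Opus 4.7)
The plan is to pass from the three-dimensional flow to two-dimensional surface dynamics via the first-return map $g$, apply the Thurston--Nielsen / Bestvina--Handel machinery surveyed in Section \ref{top2}, and then transfer the persistence of Pseudo-Anosov periodic orbits back up to the suspended flow. Since $F$ and $G$ are homotopic $rel\ H$, Definition \ref{essentialclass} gives $Ess(F)=Ess(G)$, so it suffices to analyse $G$, and by the same token to prove persistence for $G$ under arbitrary further $rel\ H$ deformations.

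First I would fix a punctured-surface model. Via the homeomorphism $h:S_1\to R_1$, hypothesis (5) identifies each point of $\overline{S_1}\cap H$ with a vertex of $\gamma$; the continuous, possibly non-injective extension of $g$ to this boundary is precisely the topological shadow of the heteroclinic dynamics of $H$ and corresponds to the collapsing of Pseudo-Anosov leaves at singularities. Consequently $g$ induces a homeomorphism of a punctured surface whose retracted spine map is exactly $p':\gamma\to\gamma$. By hypothesis (4), the transition matrix of $p'|_\gamma$ covers at least one edge twice, so its spectral radius exceeds $1$. Theorem \ref{betshan} then produces a Pseudo-Anosov map $q$ isotopic to $g$ rel the punctures in $\overline{S_1}\cap H$, and Theorem \ref{stability} furnishes a semiconjugacy $\pi:Y\to R$ with $\pi\circ g = q\circ\pi$, so every periodic point of $q$ of minimal period $n$ gives rise to at least one periodic point of $g$ of minimal period $n$, and these cannot collapse or vanish within the isotopy class of $g$.

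These periodic points of $g$ suspend to periodic orbits of $G$ (and hence, via the $rel\ H$ homotopy from $F$ to $G$, of $F$) lying in $S^3\setminus H$. To establish membership in $Ess(F)$ I would take an arbitrary $rel\ H$ homotopy $\{F_s\}_{s\in[0,1]}$ with $F_0=F$ and concatenate it with the homotopy from $F$ to $G$, obtaining a smooth family of vector fields all agreeing with $F$ on $H$ and preserving the topological types of the fixed points on $H$. Transversality of $S_1,S_2$ to $G$ together with continuous dependence on parameters lets one isotope the cross-sections inside $S^3\setminus H$ so they remain transverse throughout the family, yielding a continuous family of first-return maps $g_s$ which realises an isotopy of the punctured surface rel punctures from $g$ to the first-return map of $F_s$. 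Theorem \ref{stability} applied once more guarantees that every periodic orbit constructed above survives the entire family with its minimal period intact and without collapsing into another orbit. Since the whole deformation happens inside $S^3\setminus H$, linking numbers with $H$ and between the orbits are preserved, giving the claimed invariance of knot and link type.

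The principal obstacle is making the punctured-surface identification rigorous. One must show that collapsing each set of orbits of $g$ on $\overline{S_1}\cap H$ limiting to a common vertex of $\gamma$ yields an honest surface of negative Euler characteristic on which $g$ descends to a genuine homeomorphism, and that this surface together with its isotopy class is invariant under $rel\ H$ homotopies of the vector field; this is where conditions (1), (2), and (5) must be applied in concert. A secondary technical obstacle is checking that the cross-sections $S_1,S_2$ can be chosen uniformly throughout the $rel\ H$ family so that $\{g_s\}$ genuinely realises an isotopy rather than merely a family of topologically conjugate homeomorphisms; this should follow from standard transversality and compactness arguments, but must be phrased carefully so that the discrete-time conclusions of Theorem \ref{stability} transfer cleanly to the suspended continuous-time dynamics.
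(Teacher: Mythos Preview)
Your proposal follows essentially the same strategy as the paper: deform the cross-sections along the $rel\ H$ homotopy so that they remain transverse, obtain an induced isotopy $g_t:S_1\to S_2$ of first-return maps, embed this as an isotopy $P_t:R\to R$ starting from the given Pseudo-Anosov $P$, and invoke Theorem~\ref{stability} for persistence.

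Two remarks. First, your appeal to Theorem~\ref{betshan} to produce a Pseudo-Anosov representative $q$ is redundant: hypothesis~(3) already hands you the Pseudo-Anosov map $P$ with $g$ conjugate to $P|_{R_1}$, so the paper works directly with $P$ and extends the isotopy of $g$ to an isotopy $P_t$ of the full surface $R$ with $P_0=P$. Second, the obstacle you flag---that the persistent periodic points must remain interior to $R_1$ throughout the isotopy and not drift to $\partial R_1$---is the one genuinely new ingredient the paper supplies beyond the cited machinery. The argument is short: if some essential periodic point of $P$ reached $\partial R_1$ under $P_t$, one could further isotope $P_t$ on $R$ so as to push that point into a vertex of $\gamma$, i.e.\ into a puncture of $R$, thereby destroying it; this contradicts the unremovability of essential periodic orbits of a Pseudo-Anosov map given by Theorem~\ref{stability}. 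With this observation your outline is complete and matches the paper.
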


Despite its technical formulation Th.\ref{orbipers} has a clear geometric meaning. To clarify it assume $F$ is a smooth vector field of $S^3$ which generates a bounded chaotic attractor $A$ inside inside $S^3\setminus H$. Then, Th.\ref{orbipers} tells us that if we can "straighten" the dynamics of $F$ $rel$ $H$ around $A$ s.t. the dynamics on $A$ become either those of a suspended Smale Horseshoe, a Fake Horseshoe, a Baker's Map (or any other such mechanism), the original dynamics of $F$ on $A$ must also be very complex. Or, put simply, Th.\ref{orbipers} implies two things:

\begin{itemize}
    \item First - that the dynamics of $F$ on $A$ are at most those of a deformed hyperbolic attractor - and in particular, the dynamics of the hyperbolic attractor (i.e., the vector field $G$) serve as a "topological lower bound" for the complexity of $F$.
    \item Second - that in order to verify the complex nature of the dynamics on $A$, it is enough to verify it for some idealized model $G$ s.t. $F$ can be deformed $rel$ $H$ to $G$. 
\end{itemize}

With these ideas in mind, we are now ready to prove Th.\ref{orbipers}.
\begin{proof}

The proof of Th.\ref{orbipers} will be mostly geometric - therefore, before giving a sketch of proof we give the basic intuition behind it. To begin, recall that given a surface homeomorphism $f:S\to S$ s.t. $S$ has negative Euler characteristic (for example, when $S$ is a disc punctured at three - or more - interior points) the behavior of $f$ on $\partial S$ determines the isotopy class of $f$ (see the illustration in Fig.\ref{relative}). In other words, the way $f$ permutes the punctures determines completely its essential class of periodic dynamics in $S$. The idea here is similar, in the sense that in our case instead of $f$ we have a flow generated by a vector field $F$. Since we can deform $F$ $rel$ $H$ to $G$ it is easy to see that heuristically $F$ and $G$ "permute" the fixed points in the same way - as such, since $F$ is at most a "deformed" version of $G$ away from $H$ we would expect it to also generate whatever interesting dynamics $G$ generates.\\

With this intuition in mind we now give a sketch of proof. To do so, recall the first return map $g:S_1\to S_2$ for $G$ as defined above (in particular, recall $\partial{S_1}\cap H\ne\emptyset$ -where that boundary is taken in $S$). We first prove that as we deform $G$ $rel$ $H$ back to $F$ the cross-sections $S_1$ and $S$ also persist, thus inducing an isotopy of the first return map $g:S_1\to S_2$ to $f:S_1\to S_2$, a first-return map for the original vector field $F$. Second, we show that since by assumption the dynamics of $g$ on $S_1$ can be embedded as a subset of a Pseudo-Anosov map it will follow the dynamics of $f$ on $S_1$ can be embedded inside those of a map that can be isotoped to a Pseudo-Anosov map. Finally, by Th.\ref{stability} we will conclude that whatever periodic dynamics exist at the invariant set of $g$ in $S_1$, these dynamics persist as $g$ is isotoped to $f$ - thus implying Th.\ref{orbipers}.\\

\begin{figure}[h]
\centering
\begin{overpic}[width=0.4\textwidth]{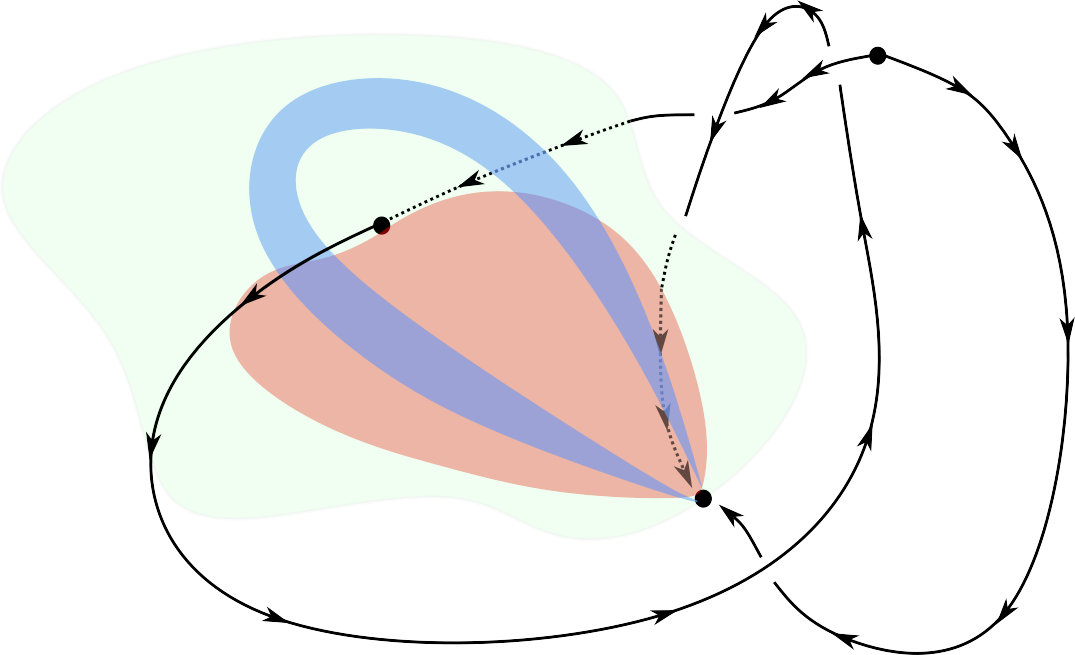}
\put(700,170){$p_2$}
\put(980,50){$H$}
\put(820,590){$p_3$}
\put(230,80){$\Theta$}
\put(330,430){$p_1$}
\put(150,480){$S$}
\put(450,350){$S_1$}
\end{overpic}
\caption{\textit{A cross-section $S$ transverse to a heteroclinic trajectory $\Theta\subseteq H$ connecting the fixed points $p_2$ and $p_3$, which stretches and folds $S_1\subseteq S$ on itself as in Th.\ref{orbipers} (the blue region denotes the image of the first return map). Using Th.\ref{orbipers}, in the next section we will prove the essential class of this flow is infinite. Note that in this scenario the first-return map extends continuously to $p_1$ and maps it to the fixed point $p_2$ (it is not injective at $p_2$).}}
\label{suspenddb}
\end{figure}

To begin, note that as we deform $G$ $rel$ $H$ back to $F$ we can assume the cross-section $S_2\setminus H$ remains transverse to the flow - to see why, recall that as remarked earlier deformations $rel$ $H$ are very strict: that is, we only deform the flow away from some solid, possibly knotted torus $T$ which includes $H$ in its interior. It is easy to see that as we deform the flow inside $T$ back to $F$ we can deform it s.t. around points in $S_2\cap H$ the cross-section remains transverse to the flow (see the illustration in Fig.\ref{deform221}). In more detail, by noting the vector field $G$ is transverse to $S_2\setminus H$ (per assumption) and by moving $S_2$ along flow lines and adding small, two-dimensional sets to it which are transverse to the flow (if necessary), we can ensure that as we deform $G$ back to $F$ the vector fields all remain transverse to $S_2$ (see the illustration in Fig.\ref{deform221}). That is, we deform homotopically the surface $S_2\setminus H$ as we smoothly deform $G$ back to $F$(if necessary) s.t. $S_2\setminus H$ remains transverse to $H$.\\

Therefore, because no fixed points are add to the flow outside of $T$ as we deform $G$ $rel$ $H$ back to $F$ it is easy to see the only ways we are allowed to deform the flow away from $T$ are to either move, stretch, and fold flow lines intersecting $S_2\setminus H$ - or alternatively, change the velocities along solution curves (without adding any new fixed points). This implies we can deform $S_2\setminus T$ s.t. it remains transverse to the flow as we deform $G$ $rel$ $H$ back to $F$ - and in particular, we can homotopically deform $S_2\setminus H$ s.t. $S_1$ also remains transverse to the flow throughout the deformation (see the illustration in Fig.\ref{deform221}).\\ 

\begin{figure}[h]
\centering
\begin{overpic}[width=0.4\textwidth]{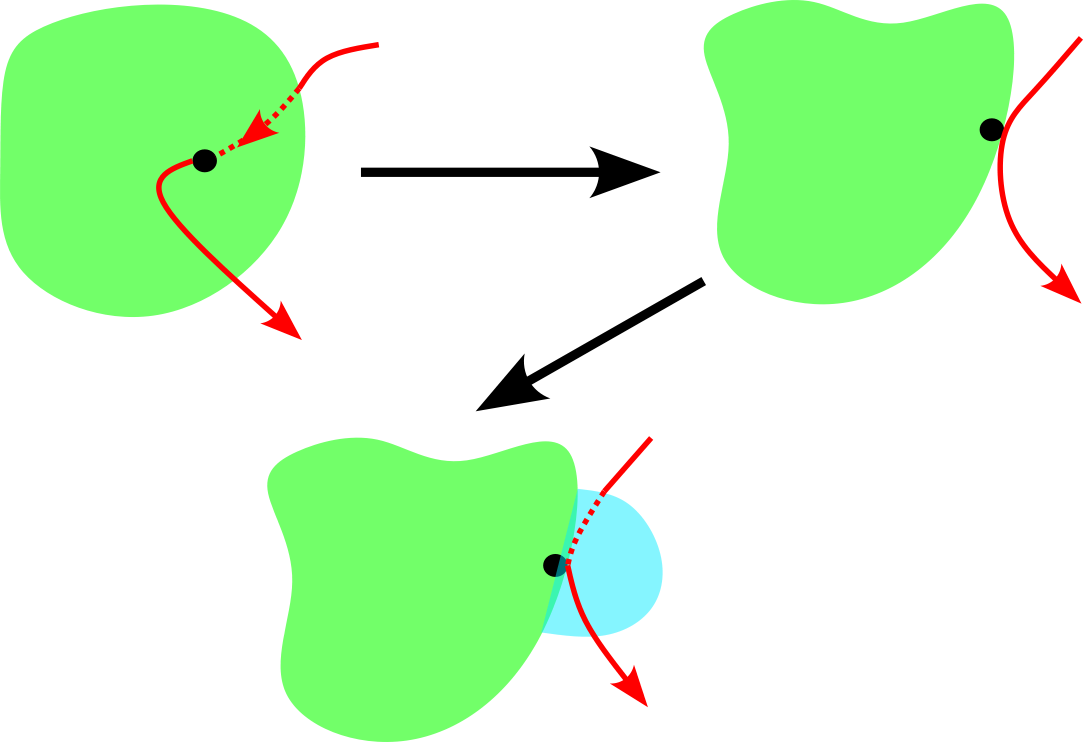}

\end{overpic}
\caption{\textit{Deforming the cross-section with the flow - on the upper left side we have a flow line transverse to $S_2$ (w.r.t. $G$) that as we deform $G$ $rel$ $H$ back to $F$ moves to the boundary of $S_2$. We extend $S_2$ as appears below (the cyan region which we add to the cross-section) s.t. the said flow line remains transverse to $S_2$. By $S_1\subseteq S_2$ this implies $g:S_1\to S_2$ varies homotopically as well.}}
\label{deform221}
\end{figure}

We now apply this argument to the first-return map $g:S_1\to S_2$. To do so, note that as the first return map is constrained by the heteroclinic orbits on $H$ which permutes the points of $\overline{S_1}\cap H$, it is easy to see that as we deform $G$ $rel$ $H$ back to $F$ we can homotopically deform the cross-sections $S_1\setminus H$ and $S_2\setminus H$ s.t. the first-return map from $S_1$ to $S_2$ remains continuous (see the illustration in Fig.\ref{deform22}). Or in other words, the deformation of $G$ to $F$ $rel$ $H$ induces an isotopy of continuous maps $g_t:S_1\to S_2$, $t\in[0,1]$, s.t. $g_0=g$ and $g_1=f$, where $f:S_1\to S_2$ is the first-return map corresponding to $F$ (see the illustration in Fig.\ref{deform22}). Moreover, it is easy to see that since per assumption $g$ extends continuously to $\partial{S_1}\cap H$ (where the closure is taken in $S_1$) the same is true for all $g_t,t\in[0,1]$ (although that extension needs not be injective on $\overline{S_1}$). As such given $p\in\partial{S_1}\cap H$ and $\{x_n\}_n\subseteq S_1$ s.t. $x_n\to p$ the limit $\lim_{n\to\infty}g_t(x_n)=g_t(p)$ is independent of $t\in[0,1]$, and depends only on how $G$ (and consequentially $F$) permutes the points of $H\cap\overline{S_1}$.\\

\begin{figure}[h]
\centering
\begin{overpic}[width=0.5\textwidth]{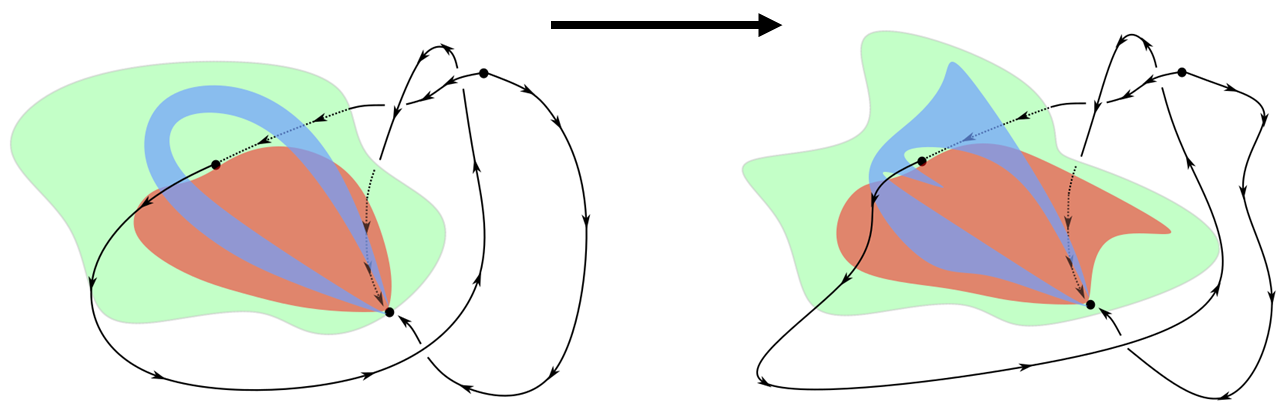}
\end{overpic}
\caption{\textit{Deforming the cross-section and the first-return map $rel$ $H$ as $G$ is deformed back to $F$. Note this deformation induces an isotopy on the first-return maps.}}
\label{deform22}
\end{figure}

To continue set $V=\overline{S_1}\cap H$ and recall the map $g:S_1\to S$ can be embedded inside some Pseudo-Anosov map $P:R\to R$ (where $R$ has negative Euler characteristic) s.t. the map $g:S_1\to S_2$ is conjugate to $P:R_1\to R$ (where $R_1$ is some open subset of $R$). In addition, recall that per assumption there exists some sub-graph $\gamma\subseteq\Gamma$ (where $\Gamma$ is the spine of $S$) s.t. $\gamma$ is the retract of $R_1$ and the graph map induced by $P$, $p':\gamma\to\Gamma$, includes an edge in $\gamma$ which covers itself twice. Further recalling that per assumption every vertex in $\gamma$ corresponds to a point in $\partial S_1\cap H$, this implies the isotopy $g_t:S_1\to S$ induces an isotopy $P_t:R\to R$, $t\in[0,1]$ s.t. the following is satisfied (see the illustration in Fig.\ref{deform23}):
\begin{enumerate}
    \item $P_0=P$.
    \item For every $t$, $g_t|_{S_1}$ is conjugate to $P_t|_{R_1}$.
    \item For every $P_t$, the induced graph map $p'_t:\Gamma\to\Gamma$, $p'_0=p'$ satisfies that $p_t(\gamma)$ covers some collection of edges in $\gamma$ at least twice.
\end{enumerate}

\begin{figure}[h]
\centering
\begin{overpic}[width=0.7\textwidth]{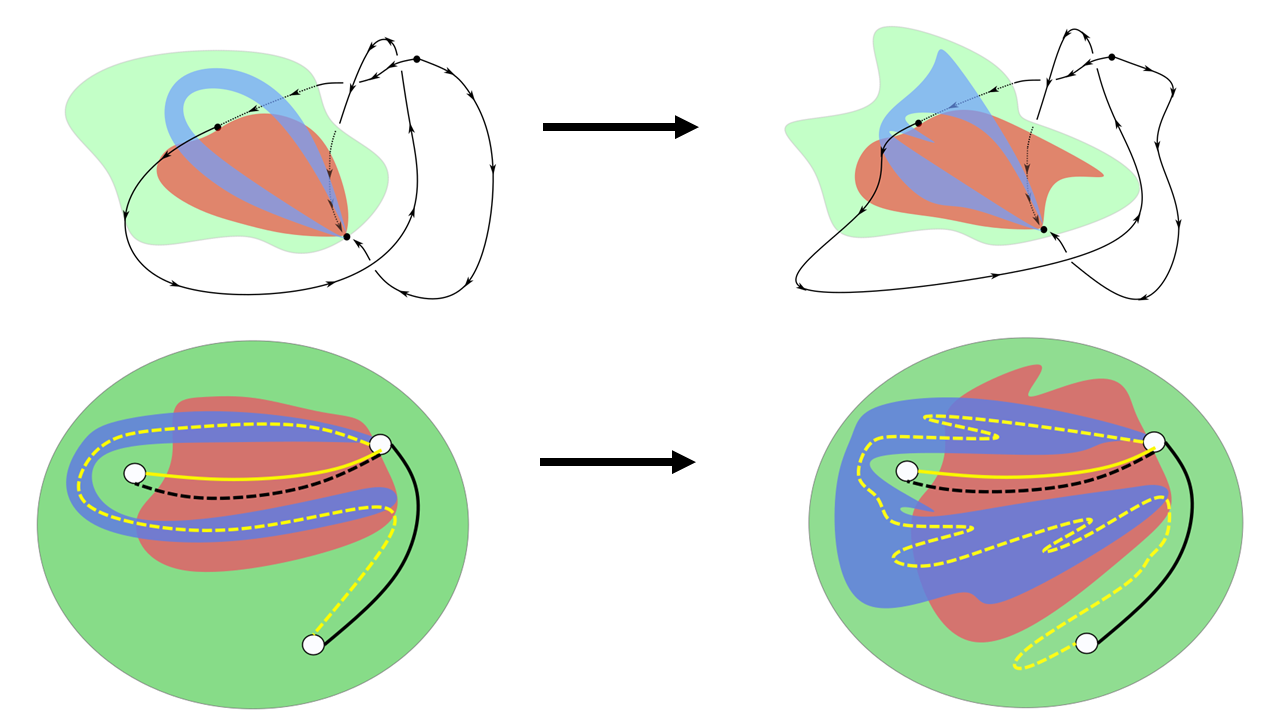}
\put(770,195){$\Gamma_1$}
\put(150,195){$\Gamma_1$}
\put(320,100){$\Gamma_2$}
\put(925,100){$\Gamma_2$}
\end{overpic}
\caption{\textit{Embedding of the first return maps inside $R$ (in this case, a thrice punctured disc) and the relative isotopies their deformation induces by permuting the punctures. The spine of $R$ is $\Gamma_1\cup\Gamma_2$, and $R_1$ is represented by the red set (n this scenario $\Gamma_1=\gamma$). The action on the spine is denoted by the dashed lines (note $\Gamma_1$ is always covered twice). }}
\label{deform23}
\end{figure}

Since $P:R\to R$ is a Pseudo-Anosov map by Th.\ref{stability} and by $P=P_0$ we know all its periodic orbits persist under the isotopy $P_t:R\to R$, $t\in[0,1]$ - and moreover, they do so without collapsing into one another or changing their (discrete-time) minimal periods w.r.t. $P_t$. In addition, since the induced graph maps $p'_t:\gamma\to\Gamma$  all cover some collection of edges in $\gamma$ twice throughout the isotopy, we know there exists an infinite class of periodic orbits in $R_1$ which persist throughout the isotopy. We remark that as we vary $P_t,t\in[0,1]$ none of these periodic orbits can lie on $\partial R_1$ - for if that was the case we could always isotopically deform $P_t$ by some other isotopy on $R$ s.t. this periodic orbit collapses to some vertex of $\gamma$ (i.e., into some boundary component of $R_1$ - see the illustration in Fig.\ref{dest}). Since by Th.\ref{stability} all the periodic orbits of $P$ are essential hence unremovable (as $P$ is Pseudo-Anosov), we conclude no periodic orbit for $P$ in $R_1$ escapes to $\partial R_1$ along the isotopy.\\  

\begin{figure}[h]
\centering
\begin{overpic}[width=0.5\textwidth]{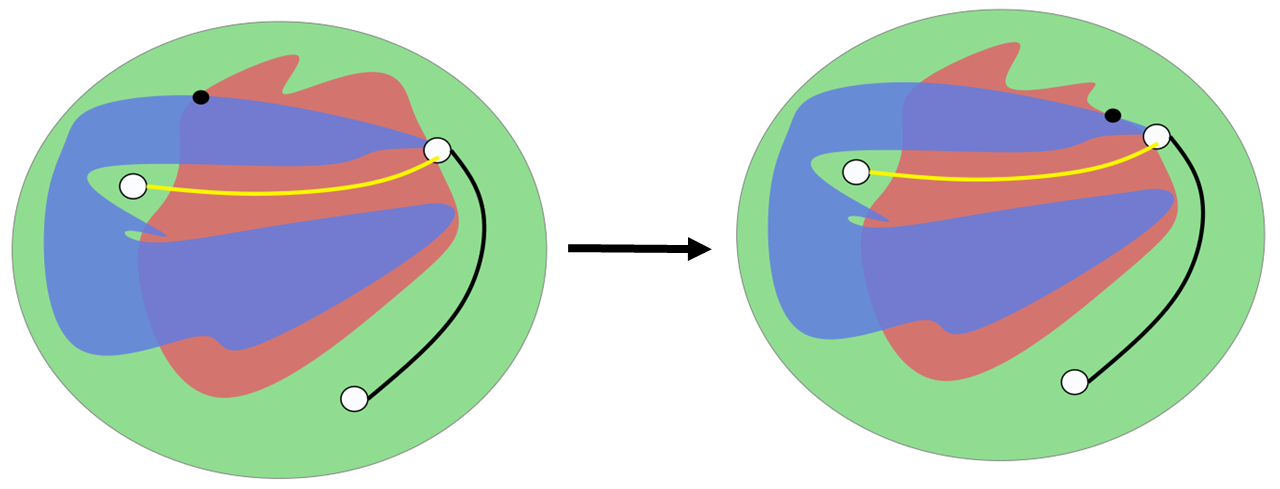}
\end{overpic}
\caption{\textit{Removing a periodic point on $\partial R_1$ by pushing it to a component of $\partial R$ by an isotopy. As this proccess destroys a periodic point, throughout the isotopy periodic points in the essential class of $P$remain away from $\partial R_1$.}}
\label{dest}
\end{figure}

By the arguments above it follows all the periodic points for $P$ in $R_1$ remain strictly interior to $R_1$ throughout the isotopy $P_t:R_1\to R$, $t\in[0,1]$ - and since for all $t\in[0,1]$ the map $P_t:R_1\to R$ is conjugate to the first-return map $g_t:S_1\to S_2$ we conclude that for all $t\in[0,1]$ the map $g_t$ has infinitely many periodic points in $S_1$, which vary smoothly with $t$. Moreover, as the period of these periodic orbits is independent of $t$ and since they do not collapse to one another we further conclude the following:

\begin{enumerate}
    \item Every periodic orbit for $G$ which intersects transversely $S_1$ persists as we deform $G$ $rel$ $H$ back to $F$, without changing its knot type or its linking with $H$.
    \item No two periodic orbits for $G$ which intersect transversely with $S_1$ collapse into one another as we return $rel$ $H$ to $F$ - and again, their linking with one another persists as well. 
\end{enumerate}

Finally, since $R_1$ has infinitely many periodic points for $P$ we conclude the same is true for $G$, i.e., there exist infinitely many periodic orbits for $G$ which intersect $S_1$ and satisfy the above - i.e., $F$ also generates infinitely many periodic orbits which intersect $S_1$ transversely. Moreover, since we chose $F$ s.t. $F$ can be deformed $rel$ $H$ to $G$ by Def.\ref{essentialclass} it follows all these periodic orbits all lie in $Ess(F)$. The proof of Th.\ref{orbipers} is now complete.
\end{proof}
\begin{remark}
    In fact, we can say a little more - by Th.\ref{stability} we know all the dynamics of $P:R_1\to R$ persist, i.e., for every $t\in(0,1]$ there exists some $R'_1\subseteq R_1$ and a continuous, surjective $\pi:R'_1\to R_1$ s.t. $\pi\circ P_t=P\circ\pi$ (in particular, the pre-image of a periodic point under $\pi$ would include a fixed point). As such, something similar remains true for the isotopy $g_t:S_1\to S_2$ (and consequentially, for the resulting deformation of the flows). 
\end{remark}
\begin{remark}
The assumption that the cross-section $S$ intersects the heteroclinic knot cannot be removed, as exemplified in Fig.\ref{removv}. In that image we show how complex Horseshoe dynamics generated by a first-return map that is not constrained by a heteroclinic knot can be removed by a very simple deformation of the flow (for more details on the destruction of Horseshoes in such non-constrained a scenario, see \cite{Perd}). 
\end{remark}
\begin{figure}[h]
\centering
\begin{overpic}[width=0.5\textwidth]{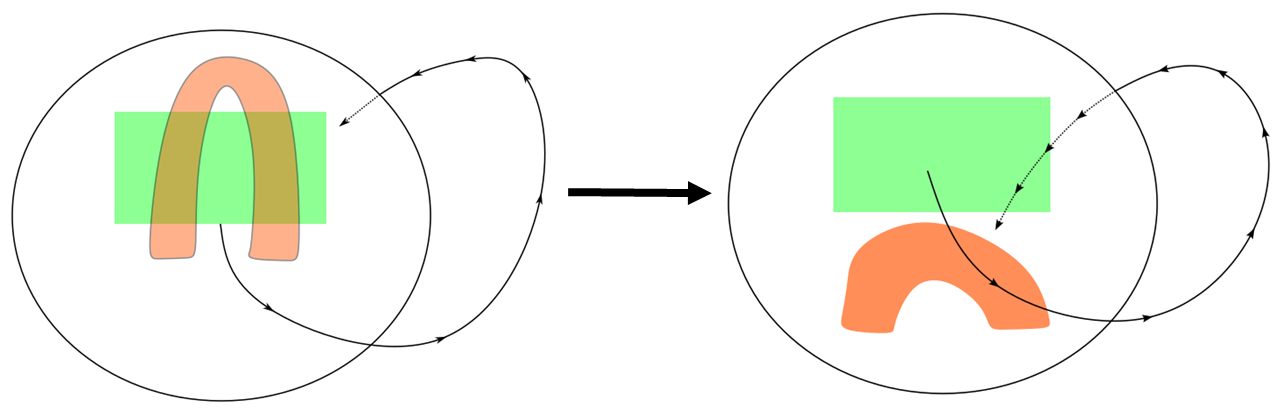}
\end{overpic}
\caption{\textit{On the left we have a suspension of the disc which creates a first-return map that includes a horseshoe. Since there is no constrain placed upon that suspension, we can deform it as described above and destroy these horseshoe dynamics.}}
\label{removv}
\end{figure}

Having proven Th.\ref{orbipers}, for the remainder of this section we will be concerned with how it can be applied to study both the bifurcations and the topology of periodic dynamics for three-dimensional flows - which we do by combining the conclusions of Th.\ref{orbipers} both with the Orbit Index Theory and Template Theory. To begin, recall $K$, the family of vector fields considered in Prop.\ref{propk} - i.e., $K$ is the family of vector fields where periodic orbits can undergo precisely three types of bifurcations:

\begin{enumerate}
    \item Saddle node bifurcations.
    \item Period-doubling bifurcations.
    \item Hopf bifurcations.
\end{enumerate}

In particular, for every vector field $F\in K$ and any periodic orbit $P$ for $F$ either the Orbit Index of $P$ is well-defined (see Def.\ref{index1}), or $P$ is a bifurcation orbit (w.r.t. perturbations of $F$ in $K$). In addition, recall $K$ is generic (see Prop.\ref{propk}). We begin with the following corollary of Th.\ref{orbipers} about the persistence of periodic dynamics in the breakdown of a heteroclinic knot:

\begin{proposition}
\label{pers11}    Let $F, H$ and $G$ be as in Th.\ref{orbipers} and assume that in addition if $T$ is a periodic orbit for $G$ then $i(T)\in\{0,-1\}$ (where $i$ is the Orbit Index). Now, let $P\in Ess(F)$ be a periodic orbit for $F$ s.t. when $F$ is deformed to $G$ $rel$ $H$, $P$ is deformed to a periodic orbit $T$ s.t. $i(T)=-1$. Then, for a generic choice of such $F$ the periodic orbit $P$ persists under all sufficiently small $C^k$ perturbations of $F$ (where $k\geq3$) - and moreover, it does so without changing its knot type. 
\end{proposition}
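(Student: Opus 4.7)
The plan is to identify the Orbit Index of $P$ with that of $T$, deduce that $P$ is hyperbolic, and then conclude both persistence and knot-type preservation via Theorem \ref{continugeneral}.

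First, Theorem \ref{orbipers} guarantees that the smooth homotopy of $F$ to $G$ $rel$ $H$ carries $P$ continuously along a one-parameter family $\{P_t\}_{t\in[0,1]}$ of periodic orbits with $P_0=P$ and $P_1=T$, none of which changes knot type or collides with another periodic orbit via a bifurcation. For a generic choice of $F$ I would use Proposition \ref{propk} and the genericity of $K$ to perturb this homotopy slightly (while preserving the $rel$ $H$ structure of Definition \ref{relt}) so that the resulting smooth curve of vector fields lies in $K$; Theorem \ref{orbipers} still applies to the perturbed family, so the continuation $P_t$ still exists.

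Next I would show that $i(P_t)$ is constant in $t$. Within $K$ the only permitted bifurcations of periodic orbits are saddle-node and period-doubling (see Definition \ref{type}). The essential class condition rules out both for the orbit $P_t$: a saddle-node would annihilate $P_t$ together with a colliding companion, and a period-doubling would make $P_t$ collapse with its period-doubled partner — both being forbidden by Definition \ref{essentialclass}. Hence every $P_t$ is Type $0$, and part $(1)$ of Theorem \ref{invar} yields $i(P_t)\equiv i(T)=-1$; in particular $i(P)=-1$.

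By Definition \ref{index1}, a Type $0$ orbit with Orbit Index $-1$ must have $D(P)$ with one real eigenvalue in $(0,1)$ and another in $(1,\infty)$; equivalently $P$ is hyperbolic in the classical sense (see the classification following Definition \ref{type}). Hence the hypotheses of Theorem \ref{continugeneral} are satisfied — $D(P)$ has no negative eigenvalues and $D(P)^n-Id$ is invertible for every $n>0$ — so $P$ is globally continuable and in particular persists under all sufficiently small $C^k$ perturbations of $F$. Because the continuation varies smoothly with the perturbation parameter, it realises an ambient isotopy of $P$ in $S^3$, which preserves the knot type of $P$ by Definition \ref{knot}.

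The step I expect to be the main obstacle is the generic approximation of the homotopy $rel$ $H$ by one lying in $K$ while preserving all the structural data of Theorem \ref{orbipers}. Proposition \ref{propk} only yields genericity of $K$ inside the space of unconstrained smooth curves of vector fields, so one must establish an analogous genericity statement in the subspace of homotopies satisfying the Definition \ref{relt} constraints (heteroclinic knot invariant, fixed points of prescribed topological type, no new fixed points), and check that the cross-section and Pseudo-Anosov embedding of Theorem \ref{orbipers} survive the approximation. This should follow from a transversality argument in the appropriate function space parallel to the proof of Proposition \ref{propk} in \cite{PY2}, but carried out relative to the constraints imposed by the heteroclinic knot.
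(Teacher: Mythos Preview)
Your overall strategy matches the paper's: assume the homotopy lies in $K$ by genericity, show the Orbit Index is constant along the continuation $P_t$, conclude $i(P)=-1$, and deduce persistence and knot-type preservation. The saddle-node exclusion is correct and is essentially how the paper argues it as well (via the monotone-in-$t$ path guaranteed by Theorem \ref{orbipers}).

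The gap is in your exclusion of period-doubling. A period-doubling bifurcation does \emph{not} make $P_t$ collapse with anything: the primary orbit $P_t$ passes straight through a Type II point, while a separate period-doubled orbit branches off (or merges in) on one side. Nothing in Definition \ref{essentialclass} forbids this --- $P_t$ neither disappears nor changes knot type at such a point, and the doubled orbit that branches off is not a priori in $Ess(F)$. So the essential class condition alone does not force every $P_t$ to be Type $0$, and your deduction that $i(P_t)\equiv -1$ via Theorem \ref{invar}(1) is unjustified as stated.

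The fix, which is what the paper actually uses, is Lemma \ref{notyp2}: if $\varphi(P_t)=-1$ then by Definition \ref{index1} both eigenvalues of $D(P_t)$ are real and positive (one in $(0,1)$, one in $(1,\infty)$), so $P_t$ cannot converge to an orbit having $-1$ as an eigenvalue. Hence, starting from $i(T)=-1$, the only way the index can change along $Per_T$ is through a saddle-node; once saddle-nodes are excluded, period-doublings are ruled out automatically and the index stays at $-1$. With this correction your argument goes through and coincides with the paper's. Your worry about establishing genericity of $K$ within the $rel$ $H$ constraint is legitimate, but the paper treats it just as casually as you do; it is not the crux here.
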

\begin{proof}
To begin, let $\{G_t\}_{t\in[0,1]}$ denote a smooth curve of vector fields in $S^3\times[0,1]$ s.t. $G_0=G$ and $G_1=F$ which smoothly deforms $G$ $rel$ $H$ to $F$ - we will prove Prop.\ref{pers11} under the assumption $\{G_t\}_{t\in[0,1]}\subseteq K$. By the $C^3$-density of $K$ in the space of $C^3$ vector fields on $S^3\setminus H$ (or alternatively, just $S^3$ - see Prop.\ref{propk}) this would suffice to imply Prop.\ref{pers11}. To continue, let $Per$ denote the collection of periodic orbits for the curve $\{G_t\}_{t\in[0,1]}$ - i.e., every component of $Per\cap S^3\times\{t\}$ is a periodic orbit for the vector field $G_t$. Now, given any periodic orbit $T$ for $G_0=G$ s.t. $i(T)=-1$ set $Per_T$ as the component of $Per$ s.t. $T\times\{0\}\subseteq Per_T$. Since we know by Th.\ref{orbipers} that $T$ persists as we vary $G=G_0$ to $F=G_1$ along the curve, it follows there are no center fixed points (i.e., Hopf bifurcations) on $\overline{Per_T}$. Therefore, by $\{G_t\}_{t\in[0,1]}\subseteq K$ we know there are precisely two types of bifurcation orbits which can exist on $Per_T$: period-doubling and saddle node (see Def.\ref{type} and the illustration in Fig.\ref{nosn}).\\

\begin{figure}[h]
\centering
\begin{overpic}[width=0.3\textwidth]{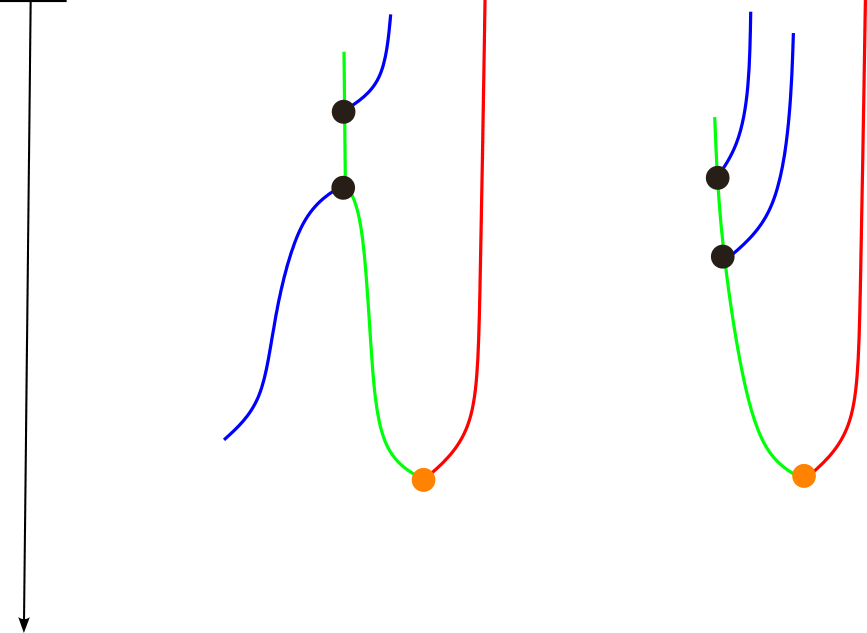}
\put(10,735){$0$}
\put(10,-50){$t$}
\end{overpic}
\caption{\textit{Two (partial) bifurcation diagrams for two distinct periodic orbits with Index $-1$ which begin at $G_0$ - the red curves denote when the Orbit Index is $-1$, the green when it is $1$, and the blue when it is $0$. The orange dots denote saddle node bifurcation orbits while the black denote period-doubling bifurcation orbits. We prove both these scenarios cannot occur on $Per_T$. }}
\label{nosn}
\end{figure}

We now prove that if $T'\times\{t\}$ is a periodic orbit for $G_t$ on $Per_T$ (for some $t\in[0,1]$) then $i(T')=-1$ - i.e., we prove the Orbit Index is constantly $-1$ on $Per_T$. To do so observe that since $i(T)=-1$ by Th.\ref{invar} we know the Orbit Index of any periodic orbit on $Per_T$ sufficiently close to $T\times\{0\}$ would also be $-1$ - and by Lemma \ref{notyp2} we know that in order for the Orbit Index on $Per_T$ to change from $-1$ to either $0$ or $1$ a saddle-node bifurcation must occur first, as illustrated in Fig.\ref{nosn}. This discussion shows that if we rule out the existence of saddle node bifurcation orbits on $Per_T$ it will immediately imply the Orbit Index has be $-1$ throughout $Per_T\setminus S^3\times\{1\}$ - therefore, we now proceed to rule out the existence of saddle node bifurcation orbits on $Per_T$.\\

To do so recall that if $T_0$ is a saddle node bifurcation for some $G_{t_0}$, $t_0\in(0,1)$, then two periodic orbits collide at $T_0$, of Orbit Indices $0$ and $-1$ - and moreover, that $T_0$ does not persist as we vary $G_{t_0}$ to $G_t$, $t>t_0$ (see the illustration in Fig.\ref{nosn}). In addition, as observed in the proof of Th.\ref{orbipers} there exists a connected subset of periodic orbits $Per'\subseteq Per_T$ which connects $T\times\{0\}$ with some $P\times\{1\}$ s.t. where $P$ is a periodic orbit for $F$). In more detail, the smooth deformation of $T\times\{0\}$ to $P\times\{1\}$ implies the existence of a smooth map $\gamma:[0,1]\to Per'$ s.t. $\gamma(t)=(x(t),y(t))$ is injective in both $x(t)\in S^3$ and $y(t)\in[0,1]$. By the discussion above this implies there can be no saddle-node bifurcation orbits on $Per'$ - and since $Per'$ includes every periodic orbit on $Per_T$ from $T\times\{0\}$ up to at least the first saddle-node bifurcation point (as $Per_T$ cannot split in a period-doubling bifurcation before such a bifurcation occurs - see Lemma \ref{notyp2}), we must have $Per'=Per_T$. Or in other words, there are no saddle node bifurcations on $Per_T$. By the discussion above we conclude the Orbit Index of every periodic orbit on $Per_T\setminus S^3\times\{1\}$ is $-1$.\\

We now prove $i(P)=-1$ as well, where $P$ denotes the periodic orbit for $G_1=F$ s.t. $T\times\{0\}$ connects with $P\times\{1\}$ through $Per_T$. To do so, note that since by assumption $F=G_1$ is a vector field in $K$ we know $P$ is isolated - hence by $P$ also relatively isolated (see Def.\ref{relative}). Now smoothly extend the curve $\{G_t\}_{t\in[0,1]}$ to $S^3\times[1,\frac{3}{2}]$ s.t. the extended curve $\{G_t\}_{t\in[0,\frac{3}{2}]}$ is also a deformation $rel$ $H$ through $K$ - using similar arguments to those used above it is easy to see that under this extension $Per_T$ connects $S^3\times\{0\}$ to $S^3\times\{\frac{3}{2}\}$ through $P\times\{1\}$. Therefore, since $P\times\{1\}$ is a periodic orbit on $Per_T$ w.r.t. this extension we conclude $i(P)=-1$ - and by Def.\ref{index22} we know this property is independent of our choice of curve $\{G_t\}_{t\in[0,1]}$. By Th.\ref{contith} and by $F=G_1$ we conclude $P$ is also globally continuable, i.e., it persists under sufficiently small $C^k$ perturbations of $F$, where $k\geq3$.\\

To conclude the proof it now remains to prove $P$ persists without changing its knot type under sufficiently small generic perturbations. To do so, note that by $F\in K$ we know there exists some cross-section $S$ transverse to $P$ s.t. the first-return map $f:S\to S$ is well-defined and satisfies $P\cap S=\{x\in S|f(x)=x\}$. By $i(P)=-1$, by Def.\ref{lefschetz} and Def.\ref{index1} we conclude the Fixed Point Index of $f$ in $x$ is also $-1$. Now, note that if the knot type of $P$ would change for all sufficiently small perturbations of $F$ it would follow $P$ is a period-multiplying bifurcation (see the illustration in Fig.\ref{cable}) - however, at such bifurcation orbits the Fixed Point Index of $f$ at $x$ would have to be $0$. Since by $i(P)=-1$ we know the Fixed Point Index of $f$ at $x$ is also $-1$ - i.e., one eigenvalue of the differential $D_f(x)$ is in $(1,\infty)$ and another is in $(0,1)$ - we conclude no such period-multiplying bifurcation takes place, i.e., $P$ persists without changing its knot type. The proof of Cor.\ref{pers11} is now complete.
\end{proof}
Having proven Prop.\ref{pers11} in the case when $i(T)=-1$ we now extend it to the case where it is not - i.e., we now prove the Conclusion of Prop.\ref{pers11} holds even when the periodic orbit $P\in Ess(F)$ is deformed to a periodic orbit $T$ for $G$ s.t. $i(T)=0$. It turns out that even though we can no longer apply Th.\ref{contith} in this setting, $P$ still persists in generic scenarios - without changing its knot type. In more detail we prove the following Corollary of Th.\ref{orbipers} and Prop.\ref{pers11}: 
\begin{corollary}
    \label{pers12} Let $F,G$ and $H$ be as in Prop.\ref{pers11}, and assume $P\in Ess(F)$ is a periodic orbit s.t. a deformation of $F$ to $G$ $rel$ $F$ deforms $P$ to $T$, a periodic orbit for $G$ s.t. $i(T)=0$. Then for a generic choice of $F$ the orbit $P$ and its knot type persist under sufficiently small, generic perturbations of $F$.
\end{corollary}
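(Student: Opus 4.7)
The plan is to reduce, via the genericity of $F$, to the case where $P$ is a Type $0$ periodic orbit, after which standard transversal-persistence arguments and the isotopy-invariance of knot type finish the proof. As a preliminary setup I would follow the proof of Prop.\ref{pers11} and choose a smooth curve $\{G_t\}_{t\in[0,1]} \subseteq K$ deforming $G_0 = G$ to $G_1 = F$ $rel$ $H$. Th.\ref{orbipers} then supplies a connected component $Per_T$ of periodic orbits for this curve joining $T \times \{0\}$ to $P \times \{1\}$, along which the knot type of the orbit is constant.

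The key step is to invoke the genericity of $F$ inside $K$ (Prop.\ref{propk}): since the Type $I$ (saddle-node) and Type $II$ (period-doubling) loci are codimension-one in $K$, for a generic choice of $F$ satisfying the hypotheses of the corollary the orbit $P$ is Type $0$. In particular, $D(P)$ admits no eigenvalue that is a root of unity; hence $1$ is not an eigenvalue and $D_f(x) - Id$ is invertible at the fixed point $x = P \cap S$ of a local first-return map $f : S \to S$ on some isolating cross-section. The implicit function theorem applied to the perturbed first-return map then produces, for every vector field $F_\eta$ sufficiently $C^k$-close to $F$ (with $k \geq 3$), a unique nearby fixed point, yielding a smooth continuation $P_\eta$ of $P$. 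Because $\{P_\eta\}$ is a smooth family of embedded loops in $S^3$ depending continuously on $\eta$, it constitutes an ambient isotopy from $P$ to $P_\eta$, and by Def.\ref{knot} the knot type is preserved.

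The main obstacle will be making the genericity argument fully rigorous within the constrained class of vector fields preserving the heteroclinic knot $H$. Concretely, one must check that perturbations supported in $S^3 \setminus H$ suffice to move $D(P)$ off the Type $I$ and Type $II$ loci; this follows from Prop.\ref{propk} together with the observation that $P$ lies in $S^3 \setminus H$ and therefore admits a tubular neighborhood disjoint from $H$ on which the vector field may be perturbed freely without disturbing the hypotheses of Th.\ref{orbipers}. Once this is in place, neither global continuability (as in Prop.\ref{pers11}) nor a Fixed Point Index calculation is needed - the entire conclusion reduces to the hyperbolic persistence and isotopy argument above, which is precisely why only local (and not global) continuability can be asserted in the $i(T)=0$ case.
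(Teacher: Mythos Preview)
Your argument is correct and takes a genuinely more direct route than the paper's proof. The paper proceeds by analysing the connecting component $Per'\subseteq Per_T$ in detail: it observes that since $i(T)=0$ only period-doubling bifurcations can occur along $Per'$ (the Orbit Index may change from $0$ to $1$ at such points), rules out that $P$ is a saddle-node orbit via the same smooth-curve argument as in Prop.~\ref{pers11}, and then treats separately the cases where $P$ is Type~$0$ (Fixed-Point-Index persistence) and where $P$ is Type~$II$ (the period-doubled branch retains the knot type). You bypass all of this by invoking genericity of $F$ at the outset to force $P$ to be Type~$0$, after which the implicit function theorem and the isotopy argument finish the job immediately.

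What each approach buys: your argument is shorter and avoids any case analysis, and your handling of the genericity obstacle (perturbing in a tubular neighbourhood of $P$ disjoint from $H$) is the right observation. The paper's longer route, on the other hand, extracts extra structural information about how the Orbit Index behaves along $Per'$ and what $i(P)$ can be; this information is not needed for Cor.~\ref{pers12} itself but is used immediately afterwards in the summary Th.~\ref{pers13}, where the paper asserts $i(P)\in\{0,-1\}$ whenever $i(P)$ is defined. So the paper's detour is doing double duty, whereas your proof is tailored precisely to the corollary as stated.
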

 \begin{proof}
As in the case of Cor.\ref{pers11}, it would suffice to prove the assertion for a curve of vector fields $\{G_t\}_{t\in[0,1]}$ s.t. the following holds:

\begin{enumerate}
    \item The curve is a deformation $rel$ $H$ and satisfies  $\{G_t\}_{t\in[0,1]}\subseteq K$.
    \item $F=G_1$, $G=G_0$ (per our assumption we already know $G\in K$ - see Prop.\ref{pers11}).
\end{enumerate}

As the proof is very similar to the previous one, so we only give a sketch of proof in broad strokes and avoid the technical details. To begin, let $T$ be a periodic orbit for $G$ s.t. $i(T)=0$. By the proof of Th.\ref{orbipers} we know that as we deform $G$ back to $F$ $rel$ $H$ there exists a set of periodic orbits $Per_T$ connecting $T\times\{0\}$ and $P\times\{1\}$, where $P$ is a periodic orbit for $F$ of the same knot type.\\

Similarly to the proof of the previous corollary we know there exists some subset $Per'\subseteq Per_T$ and a smooth map $\gamma(t):[0,1]\to Per'$ s.t. $\gamma(t)=(x(t),y(t))$ is injective in both $x(t)$ and $y(t)$. Furthermore, since $i(T)=0$ and since by definition $Per'\setminus S^3\times\{t\}$ includes precisely two components for all $t\in(0,1)$ we know the only possible bifurcations on $Per'$ are period-doubling bifurcations - as illustrated in Fig.\ref{nosn3} . Note that in this scenario, at the period-doubling bifurcations on $Per'$ there may be a change of Orbit Index from $0$ to $1$ as we vary $T\times\{0\}$ to $P\times\{1\}$ along $Per'$ (and that in general, we need not expect $Per'=Per_T$ - see the illustration in Fig.\ref{nosn3}). However, regardless of whether $Per'=Per_T$ or not, a similar argument to the one used in the proof of Cor.\ref{pers11} shows $P$ is not a saddle-node bifurcation orbit for $F$ when we extend the curve $\{G_t\}_{t\in[0,1]}$ to some deformation $rel$ $H$, $\{G_t\}_{t\in[0,\frac{3}{2}]}\subseteq K$. This implies either $P$ is Type $0$ (in which case its Orbit Index, $i(P)$, is well-defined), or it is a Type $II$ Orbit, i.e., a period-doubling bifurcation point.\\

\begin{figure}[h]
\centering
\begin{overpic}[width=0.2\textwidth]{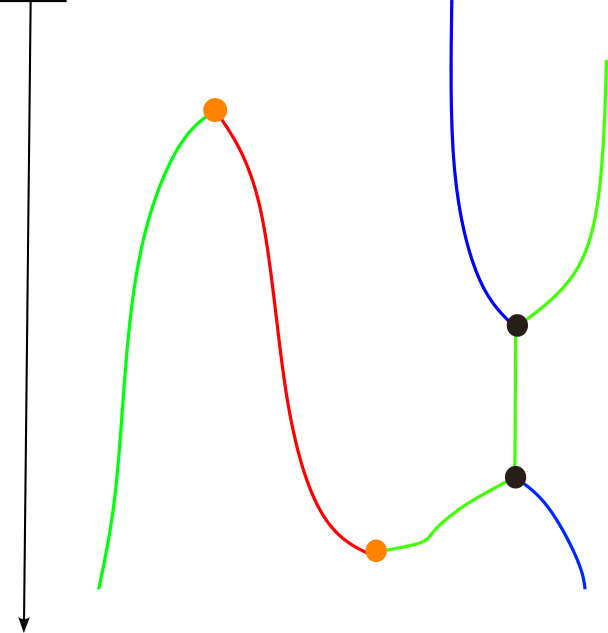}
\put(10,1025){$0$}
\put(10,-70){$t$}
\put(730,780){$\Gamma_1$}
\put(840,350){$\Gamma_2$}
\put(950,50){$\Gamma_3$}
\end{overpic}
\caption{\textit{A (partial) bifurcation diagrams for a periodic orbit with Index $0$ which begin at $G_0$, where $\Gamma_1\cup\Gamma_2\cup\Gamma_3$ corresponds to $Per'$ - again the red curves denote when the Orbit Index is $-1$, the green when it is $1$, and the blue when it is $0$ (note the Orbit Index is not constant along $Per'$). The orange dots denote saddle node bifurcation orbits while the black denote period-doubling bifurcation orbits.} }
\label{nosn3}
\end{figure}

We therefore differentiate between the cases when $i(P)$ is well defined and when $P$ is a period-doubling bifurcation orbit. To do so, choose any smooth extension of $\{G_t\}_{t\in[0,1]}$ to $K$, i.e., some $\{G_t\}_{t\in[0,\frac{3}{2}]}\subseteq K$ where the deformation is $rel$ $H$ from $G_0=G$ up to $G_1=F$. When $i(P)$ is well-defined, from $F\in K$ we know $i(P)=\phi(P)$ (see Def.\ref{index1}), hence the Fixed Point Index for any local first-return map is also well-defined (see Def.\ref{lefschetz}): that is, there exists an isolating cross-section $S$ transverse to $P$ and a first-return map $f:S\to S$ s.t. $P\cap S=\{x\in S|f(x)=x\}$ and the Fixed-Point Index of $f$ in $S$ is non-zero - i.e. the differential $D_f(x)$ has one eigenvalue in $(-\infty,-1)$ and another in $(-1,0)$. When this is the case it is easy to see $P$ persists under all sufficiently small $C^k$ perturbations of $F$, $k\geq3$ - moreover, as $D_f(x)$ has no eigenvalues on the circle $S^1$ $P$ persists without bifurcating, i.e., it does not change its knot type as $F$ is perturbed in sufficiently small $C^k$ perturbations of $F$. In particular, it persists without changing its knot type as we vary $F=G_1$ to $G_t,t>1$.\\

It remains to prove the generic persistence of the knot type of $P$ when $P$ is a bifurcation orbit for $F$, i.e., when it is a period-doubling bifurcation point. In this case, by Def.\ref{type} this implies that as we vary $F$ in $K$ (say, by breaking the heteroclinic knot in some way sufficiently away from $P$) the periodic orbit $P$ persists as it splits into two orbits - one of which retains the original knot type of $P$. All in all the proof of Cor.\ref{pers12} is now complete.
\end{proof}
All in all, Prop.\ref{pers11} and Cor.\ref{pers12} can be summarized in the following theorem:

\begin{theorem}
  \label{pers13} Let $F,G$ and $H$ be as in the setting of Th.\ref{orbipers}, and assume in addition every periodic orbit for $G$ has Orbit Index which is either $-1$ or $0$. Then, for a generic choice of $F$ if $P$ is a periodic orbit in $Ess(F)$, the following holds:
  \begin{enumerate}
      \item If $P$ can be deformed into a periodic orbit $T$ for $G$ s.t. $i(T)=-1$ then $i(P)=-1$ as well.
      \item If the Orbit Index $i(P)$ is well-defined then $i(P)\in\{0,-1\}$. In that case, $P$ persists under all sufficiently small $C^k$ perturbations of $F$ (where $k\geq3$).
      \item If $i(P)$ is undefined it persists under all sufficiently small generic $C^k$ perturbations of $F$, where $k\geq3$.
  \end{enumerate}
  
\end{theorem}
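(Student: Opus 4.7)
The plan is to observe that Th.\ref{pers13} is essentially a consolidation of Prop.\ref{pers11} and Cor.\ref{pers12}, supplemented with an explicit statement about the value of the Orbit Index at $P$. I would prove it clause by clause, tracking the Orbit Index along the homotopy arc from $T\times\{0\}$ to $P\times\{1\}$. Throughout, I would reduce (by density of $K$, Prop.\ref{propk}) to a curve $\{G_t\}_{t\in[0,1]}\subseteq K$ with $G_0=G$, $G_1=F$, and work with the component $Per_T$ of periodic orbits of the curve containing $T\times\{0\}$.

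For clause (1), I would extract from the proof of Prop.\ref{pers11} the stronger intermediate conclusion that the Orbit Index is constantly $-1$ along the injectively parametrized arc $Per'\subseteq Per_T$ which Th.\ref{orbipers} supplies between $T\times\{0\}$ and $P\times\{1\}$. The key ingredients are Lemma \ref{notyp2} (a period-doubling cannot be directly approached from an index $-1$ branch) and the observation that a saddle-node on $Per'$ would contradict the injectivity of the parametrization, since at a saddle-node two arcs of periodic orbits collide. Consequently the index cannot leave the value $-1$ along $Per'$, giving $i(P)=-1$.

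For clause (2), the case $i(T)=-1$ is immediate from clause (1), and then Prop.\ref{pers11}'s Fixed-Point-Index argument yields persistence under $C^k$ perturbations. The case $i(T)=0$ is handled via Cor.\ref{pers12}: the same injectivity excludes saddle-nodes on $Per'$, leaving only period-doubling bifurcations which by Th.\ref{invar} merely exchange index values between the branches. I would then use genericity of $F$ to arrange that the terminal orbit $P$ lies on an index-$0$ branch (the M\"obius configuration asserted in Cor.\ref{pers12}, with one eigenvalue in $(-\infty,-1)$ and one in $(-1,0)$); since $D(P)$ then has no eigenvalue on $S^1$, the Fixed-Point Index is non-zero and persistence follows. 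In either subcase $i(P)\in\{0,-1\}$, as claimed.

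For clause (3), an undefined $i(P)$ forces $P$ to be a bifurcation orbit, and the injective-arc argument once more excludes the saddle-node (Type $I$) case; hence $P$ is a Type $II$ period-doubling orbit. Standard unfolding of a Type $II$ orbit under generic $C^k$ perturbations preserves $P$ as one of the emerging branches, retaining its knot type. The main obstacle, as in Cor.\ref{pers12}, is the $i(T)=0$ subcase of clause (2): ruling out that cascading period-doublings along $Per'$ leave $P$ on an index-$+1$ branch. I expect this to be the step requiring the most delicate use of the genericity hypothesis on $F$ together with the additive bifurcation formula of Th.\ref{invar}, matching the eigenvalue description given in Cor.\ref{pers12}.
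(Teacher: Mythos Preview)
Your proposal is correct and matches the paper's approach exactly: the paper presents Th.\ref{pers13} without a separate proof, stating only that it ``summarizes'' Prop.\ref{pers11} and Cor.\ref{pers12}, and your clause-by-clause mapping to those two results (including the extraction of $i(P)=-1$ from the intermediate step in Prop.\ref{pers11}'s proof, and the Type $II$ branch argument from Cor.\ref{pers12}) is precisely how the consolidation works. Your identification of the index-$+1$ exclusion in the $i(T)=0$ subcase as the delicate point is accurate and is handled in the paper's Cor.\ref{pers12} at the same level of detail you propose.
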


Th.\ref{pers13} has the following heuristic meaning - assume $G$ is a smooth vector field of $S^3$ which generates a heteroclinic knot $H$ s.t. every periodic orbit of $S^3$ has Orbit Index $-1$ or $0$ (for example, this is the case when there exists some singular Horseshoe suspended around $H$ - see Prop.\ref{dens1}). Further assume $F$ is a smooth vector field of $S^3$ s.t. $F$ can be deformed $rel$ $H$ to $G$ - then, per Th.\ref{pers13} we know that for a generic $F$ the $C^k-$closer a given vector $F'$ is to $F$ the more periodic orbits $F'$ is expected to generate. In other words, Th.\ref{pers13} implies that despite the extreme generic instability of heteroclinic knots, their periodic dynamics are maximal in the sense that their complexity "wears off" on nearby vector fields.\\

We further remark Th.\ref{pers13} probably cannot be generalized to the case where the vector field is non-generic - as in that case there is no reason to assume the Orbit Index for $P$ is even well-defined, and moreover, the possibilities of degenerate bifurcations for $P$ can further complicate matters. That being said, later in this paper we will prove that at some specific cases one can use similar methods to derive general persistence results which hold even in a non-generic context - for more details, see Th.\ref{persistence} in the next section, where we prove such a result for the Lorenz attractor.\\

Having studied the persistence properties of periodic dynamics in the essential class, we conclude this section by studying their topology. To do so first recall that by the Birman-Williams Theorem (see Th.\ref{BIRW}) provided the vector field $G$ in Th.\ref{orbipers} is hyperbolic on its chain recurrent set there exists a Template for the flow - i.e., a branched surface which encodes all the knot types realized as periodic orbits for $G$ (save possibly for two extra knots). This begs the following questions - assuming we can associate a Template with $G$, can we somehow associate the same Template with all vector fields which can be smoothly deformed $rel$ $H$ to $G$? If yes, this would imply that in such scenarios $G$ can be thought of as an idealized model for the dynamics of $F$.\\

Under the general assumptions of Th.\ref{orbipers} it is not at all clear the answer should be positive - mostly because it is not at all clear whether the periodic orbits for $G$ even lie in the same connected invariant subset, or that $G$ is even singular hyperbolic (let alone hyperbolic) on that set. However, as we will now prove, provided we impose some extra assumptions on $G$ we can answer this question in the affirmative. To this end let $G$ be as in Th.\ref{orbipers}, let $V$ denote the collection of all essential periodic orbits for $G$ (i.e., $V=Ess(G)$ w.r.t. the heteroclinic knot $H$) and set $\Phi=\overline V$. We now prove the following Corollary of Th.\ref{orbipers}, with which we conclude this section:

\begin{corollary}
    \label{templateth}
Let $F$, $G$ and $H$ be as in Th.\ref{orbipers}, and let $V$ and $\Phi$ be as above. Assume that in addition to the assumptions of Th.\ref{orbipers} the following additional assumptions are also satisfied: 

    \begin{enumerate}
        \item The Orbit Index of every periodic orbit in $\Phi$ is well defined, and it is either $0$ or $-1$.
        \item Every periodic orbit for $G$ is in $V$.
        \item $\Phi$ includes a dense orbit.
        \item Every saddle fixed point on $H$ is a saddle focus, and lies in $\Phi$.
        \item $S^3\setminus\Phi$ is open and connected.

    \end{enumerate}
    
Then, there exists a unique Template $\Theta$ s.t. every knot type in $\Theta$ (save possibly for a finite number) corresponds to some periodic orbit in $Ess(F)$ - in particular, $F$ generates periodic orbits of infinitely many different knot types.
\end{corollary}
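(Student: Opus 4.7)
The plan is to construct $\Theta$ from the deformed flow $G$ using the Birman--Williams Theorem (Th.\ref{BIRW}), and then transfer every knot type encoded by $\Theta$ to a periodic orbit of $F$ via Th.\ref{orbipers}. First I would verify that $\Phi$ is (up to its saddle-focus equilibria) a hyperbolic basic set for $G$. The orbit-index hypothesis forces every periodic orbit $P \subseteq \Phi$ of $G$ to have $D(P)$ with two real, non-unit eigenvalues of the same sign (positive when $i(P) = -1$, negative when $i(P) = 0$), so each such $P$ is hyperbolic with one-dimensional stable and unstable manifolds. Combined with the density of periodic orbits in $\Phi = \overline V$ (immediate from the definition and assumption (2)) and the existence of a dense orbit in $\Phi$ (assumption (3)), this yields a hyperbolic splitting on $\Phi \setminus \mathrm{Fix}(G)$. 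Assumption (4) then allows one to extend this splitting to a singular-hyperbolic structure in a neighborhood of each saddle focus, using the fact that the local dynamics at a saddle focus have a well-defined strong stable (or unstable) direction compatible with the splitting.

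Next I would invoke Th.\ref{BIRW} --- in the variant adapted for singular-hyperbolic flows, handled by excising small tubular neighborhoods of the saddle foci in $\Phi$, applying the standard BW construction to the resulting genuinely hyperbolic basic set, and then re-gluing along the strong-stable manifolds of the foci --- to collapse the flow on $\Phi$ along its stable manifolds onto a branched semiflow on a compact branched surface $\Theta$ embedded in $S^{3}$. This construction comes equipped with an injection $f\colon P(\Phi) \to P(\Theta)$ whose complement $P(\Theta) \setminus f(P(\Phi))$ contains at most two orbits. Assumption (5), that $S^{3} \setminus \Phi$ is open and connected, means that the plug around $\Phi$ has a well-defined total genus; uniqueness of $\Theta$ up to orbital equivalence of the induced semiflow then follows from Th.\ref{begbon} applied to this plug.

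Finally I would transfer the knot-theoretic information to $F$. Assumption (2) says that every periodic orbit of $G$ lies in $V = Ess(G)$; by Th.\ref{orbipers}, as $G$ is deformed $rel\ H$ back to $F$ each such periodic orbit is continuously deformed to a periodic orbit of $F$ in $Ess(F)$ with the same knot type and the same linking with $H$. Composing with the Birman--Williams injection $f$, each knot type appearing on $\Theta$ --- save for the at most two exceptional orbits produced by BW --- is realised as a periodic orbit in $Ess(F)$ of the same knot type, and since any template encodes infinitely many distinct knot types (Cor.~3.1.14 of \cite{KNOTBOOK}), this forces $Ess(F)$ to contain infinitely many distinct knot types. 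The main obstacle is rigorously establishing the singular-hyperbolic structure on $\Phi$ from the orbit-index, density, and dense-orbit hypotheses alone, since the standard BW (Th.\ref{BIRW}) requires strict hyperbolicity on the chain-recurrent set while here we have equilibria (the saddle foci) sitting inside $\Phi$; the technical heart of the argument is checking that the density of hyperbolic periodic orbits with real, uniformly separated eigenvalues really does propagate to a continuous splitting on all of $\Phi \setminus \mathrm{Fix}(G)$, and that this splitting glues compatibly with the local splittings at the saddle foci.
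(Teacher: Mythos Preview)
Your overall architecture (apply Birman--Williams to the dynamics of $G$, use Th.\ref{begbon} for uniqueness, then transfer knot types to $F$ via Th.\ref{orbipers}) matches the paper's, but the central technical step is handled very differently, and your version has a real gap.

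You try to show that $\Phi$ itself carries a hyperbolic (or singular-hyperbolic) structure, arguing that the orbit-index hypothesis forces each periodic orbit to be individually hyperbolic, and then hoping that density of such orbits propagates to a continuous splitting on all of $\Phi\setminus\mathrm{Fix}(G)$. You correctly flag this as the ``main obstacle,'' and indeed it is one you do not overcome: there is no general principle that a dense set of individually hyperbolic periodic orbits yields a uniform hyperbolic splitting on their closure, and the paper does not claim or prove anything of the sort. You also appeal to a ``singular-hyperbolic variant'' of Birman--Williams, which is not among the tools the paper provides (Th.\ref{BIRW} as stated requires genuine hyperbolicity with no fixed points in the chain-recurrent set).

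The paper sidesteps this entirely by \emph{deforming the flow} rather than proving hyperbolicity for $G$. Concretely: it first blows up every saddle focus on $H$ by a Hopf bifurcation, replacing each by a sink or source together with one new saddle periodic orbit; this pushes $\Phi$ to a set $\Phi'$ lying strictly away from all fixed points, at the cost of finitely many extra periodic orbits. It then removes small balls around the (now attracting/repelling) fixed points to obtain a manifold $M$ with boundary, uses assumption (5) to deform the flow so that $\Phi'$ is the maximal invariant set in $M$, and finally --- because every periodic orbit already has index $0$ or $-1$ --- isotopes the first-return map to make the flow genuinely hyperbolic on $\Phi'$. Only then does it invoke Th.\ref{BIRW} and Th.\ref{begbon}. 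The orbit-index hypothesis is used not to \emph{detect} hyperbolicity but to guarantee that such a hyperbolizing deformation exists without changing knot types; assumption (4) is used to permit the Hopf blow-ups, not to glue splittings. The finitely many exceptional knot types in the statement come from the at most two BW exceptions \emph{plus} the finitely many orbits created by the Hopf bifurcations.
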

In other words, Cor.\ref{templateth} says the following thing: assume we can deform $F$ $rel$ $H$ to some dynamically minimal vector field as above, then we can associate a Template with the essential dynamics of $F$.
\begin{proof}

We will prove Cor.\ref{templateth} as follows: we first prove we can push the set $\Phi$ away from the fixed points by adding (at most) a finite number of periodic orbits - which we do by performing Hopf bifurcations on the fixed points in $H$ (if necessary). This will deform $\Phi$ to a new set $\Phi'$ which differs from $\Phi$ by at most a finite number of periodic orbits - and consequentially, this will allow us to endow $\Phi'$ with a hyperbolic structure. Following that we apply both Th.\ref{BIRW} and Th.\ref{begbon} (along with Th.\ref{orbipers}) from which the assertion would follow.\\

As indicated above we begin by smoothly deforming $G$ to the vector field $G'$ - which we do by expanding all the fixed-points of saddle focus type on $H$ by Hopf bifurcations, thus adding a finite number of periodic orbits to the flow, all of which are saddle periodic orbits (recall that by assumption, there are no fixed points whose type os a real saddle on $H$). In addition, we do so s.t. the dynamics in $\Phi$ all persist as $G$ is deformed to $G'$ - that is, no periodic orbit in $\Phi$ is destroyed or changes its knot type under this deformation. It is east to see this deformation replaces every saddle-focus type fixed point with either a sink or a source, while adding at most a finite number of periodic orbits to the flow - one per each saddle focus (this deformation also expands the heteroclinic knot $H$ to a knotted tube - see the illustration in Fig.\ref{blow}). Now, let $x_1,...,x_n$ denote the fixed points of $G'$, $n>1$ - it is easy to see all of them are sources and sinks . Consequentially, there exist sufficiently small (closed) balls $S_1,...,S_k$ centered at $x_1,...,x_k$ s.t. $G'$ is transverse to the boundary of $M=S^3\setminus(\cup_{i=1}^kS_k)$ (in particular, every periodic orbit for $G$ is deformed to a periodic orbit inside $M$).\\

\begin{figure}[h]
\centering
\begin{overpic}[width=0.6\textwidth]{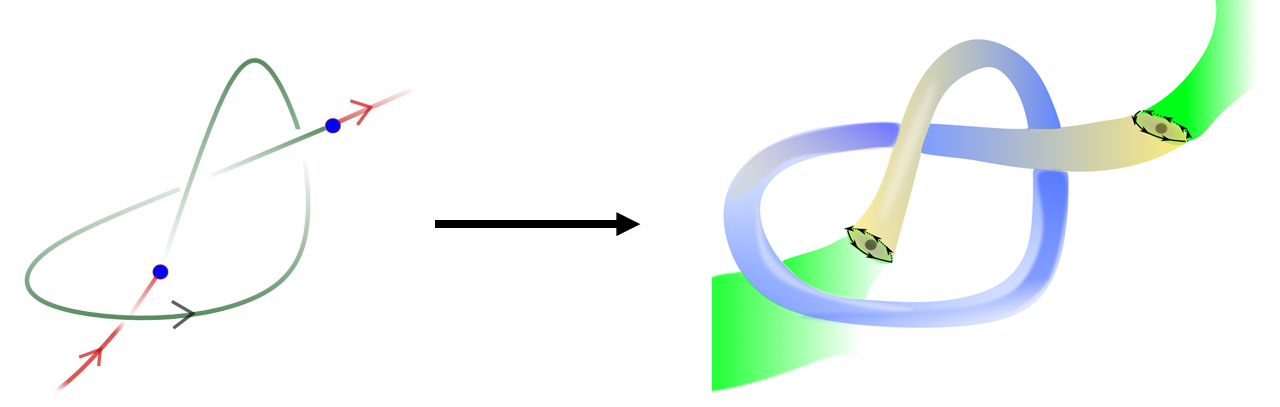}

\end{overpic}
\caption[Fig39]{\textit{Deforming a heteroclinic knot to a knotted tube via Hopf bifurcations. }}
\label{blow}
\end{figure}

To continue, let $V'$ denote the collection of periodic orbits for $G'$, and set $\Phi'=\overline{V'}$. We note that since $V'$ is created from $V$ by adding a finite number of periodic orbits (all of which are saddles) it follows every periodic orbit in $V'$ has Orbit Index which is either $-1$ or $0$ - it is also easy to see that since $V$ is dense in $\Phi$ the set $V'$ is also dense in $\Phi'$, and that $\Phi'$ includes a dense orbit. Moreover, by construction we have $\Phi'\subseteq M$ - that is, $\Phi'$ lies away from the fixed points of $G'$.\\

Now, recall that by construction on every component of $\partial M$ the vector field $G'$ points either outside or inside $M$. In addition, note that since the Euler characteristic of $S^3$ is $0$, by the Poincare-Hopf Theorem there must exist at least one source and one sink among the fixed points of $G'$ in $S^3$ (see Th.6.1 in \cite{Mil}). With these ideas in mind we now deform $G'$ to a new vector field $G''$ s.t. for all $x\in M\setminus \Phi'$ either its backward orbit accumulates on some source or its forward orbit eventually accumulates on some sink (or both) - i.e., we deform the flow s.t. $\Phi'$ becomes the maximal invariant set for the flow in $M$ (we can do so since $S^3\setminus\Phi$ - and hence also $M\setminus\Phi'$ - is open and connected). As $\Phi'$ includes a dense orbit, it follows that after this deformation $\Phi'$ becomes becomes the chain-recurrent set for $G''$ in $M$.\\

Having done that, our new goal is to smoothly deform $G''$ to a vector field $\Gamma$ which is hyperbolic on its maximal invariant set in $M$, i.e., on $\Phi'$. We do so as follows: we first note that by assumption, since every periodic orbit for $G$ has Orbit Index which is either $-1$ or $0$ by definition the vector field $G$ is in $K$ (see Prop.\ref{propk}) - and that in addition, the deformation of $G$ to $G''$ via $G'$ was done without adding or bifurcating any periodic orbits, save for the saddle periodic orbits created by the Hopf bifurcations. As such, it is easy to see that locally, around any periodic orbit in $\Phi'$, the deformation of $G$ to $G''$ is a deformation through $K$ - Consequentially, by Th.\ref{invar} we know the Orbit Index of every periodic orbit in $\Phi$ persists, unchanged, as $\Phi$ is transformed to $\Phi'$. Moreover, since $V$ and $V'$ differ at most by a finite number of saddle periodic orbits (whose Orbit Index is either $-1$ or $0$ - see Def.\ref{index1}), it follows the Orbit Index of every periodic orbit in $\Phi'$ is either $-1$ or $0$.\\

To continue, recall that since there are no fixed points in $\Phi'$ by Lemma $7$ in \cite{BWW} we know there exists a finite collection of cross-sections $S$ transverse to $\Phi'$ s.t. the first-return map $f:S\to S$ is well-defined and continuous at $S\cap\Phi'$. As the Orbit Index of every periodic orbit in $\Phi'$ is already either $-1$ or $0$, this allows us to smoothly deform $G''$ to some vector field $\Gamma$ by an isotopy of $f:S\to S$, s.t. the following is satisfied:

    \begin{itemize}
        \item For every $x\in\Phi'$, the tangent space of $x$ can be split into $E^c_x\oplus E^u_x\oplus E^s_x$, where $E^c_x$ is spanned by $\Gamma(x)$.
        \item $E^s_x,E^u_x$ and $E^c_x$ vary continuously to $E^s_{\phi_t(x)},E^u_{\phi_t(x)}$ and $E^c_{\phi_t(x)}$ (respectively) as $x$ flows to $\phi_t(x)$, $t\in\mathbf{R}$ (where $\phi_t$ denotes the flow corresponding to $\Gamma$). In particular, $D_{\phi_t}(x)E^j_x=E^j_{\phi_t(x)}$ where $j\in\{s,u,c\}$ and $D_{\phi_t}(x)$ denotes the differential of the time $t$ map w.r.t. the flow.
        \item There exist constants $C>0$ and $\lambda>1$ s.t. for every $t>0$ and every $x\in\Lambda$ we have: 
        
        \begin{enumerate}
            \item For $v\in E^s_x$, $||D_{\phi_t}(x)v||<Ce^{-\lambda t}||v||$.
            \item For $v\in E^u_x$, $||D_{\phi_t}(x)v||>Ce^{\lambda t}||v||$. 
        \end{enumerate}
    \end{itemize}

In other words, we deform the flow by making the dynamics hyperbolic on $\Phi'$. As previously remarked, $\Phi'$ forms the chain-recurrent set for $\Gamma$ in $M$ due to the existence of a dense orbit in $\Phi'$ - consequentially, by Th.\ref{BIRW} we can associate a Template $\Theta$ with the dynamics of $V$ on $\Phi'$ s.t. every knot type encoded by $\Theta$ (save possibly for two periodic orbits) is realized as a periodic orbit for $\Gamma$. And since every periodic orbit for $\Gamma$ can be deformed to a periodic orbit for $G$ without changing its knot type (save possibly for a finite collection of periodic orbits generated by Hopf bifurcations), it follows every knot type on $\Theta$ is realized as a periodic orbit for $G$ (again, save possibly for some finite collection of knot types on $\Theta$). Finally, since by Th.\ref{orbipers} every periodic orbit for $G$ is deformed to a periodic orbit for $F$ without changing its knot type the same is true for $F$. All in all, it follows every periodic orbit in $Ess(F)$ corresponds to some knot on $\Theta$ - and since every Template encodes infinitely many knot types (see Cor.3.1.14 in \cite{KNOTBOOK}) we conclude $Ess(F)$ includes periodic orbits of infinitely many distinct knot types.\\

It remains to prove the Template $\Theta$ is unique (up to isotopy) - or in other words, that the Template $\Theta$ does not depend on how we construct $\Gamma$. To do so note that no matter how we construct the vector field $\Gamma$ it is transverse to $\partial M$ hence it is a plug on $M$ (see Def.\ref{model}). In fact, since every component in $\partial M$ is a sphere it follows the total genus of $M$ is $0$ - which, per Def.\ref{model}, makes $\Gamma$ a model flow on $M$. It is easy to see by our construction that if $\Gamma$ and $\Gamma'$ are two vector fields obtained as described above, they both suspend the same basic set (namely, $\Phi'$) in the exact same way around $H$ - therefore by Th.\ref{begbon} it follows $\Gamma$ and $\Gamma'$ are orbitally equivalent on $\Phi'$. This argument proves $\Gamma$ and $\Gamma'$ define the same Template $\Theta$ and the proof of Cor.\ref{templateth} is now complete.
\end{proof}

\section{The applications:}

In this section we apply Th.\ref{orbipers} and Th.\ref{pers13} (along with the results in Sect.\ref{orbitin}) to study the dynamics of three-dimensional flows via three concrete models - which we do to showcase how these tools can be applied to study three-dimensional real-life chaotic dynamical systems. This section is organized as follows - 

\begin{itemize}
    \item In subsection \ref{rossler} using Th.\ref{orbipers} we state and prove a sufficient condition for the existence of a chaotic attractor in the Rössler system (see Th.\ref{trefoil1}). In the proccess we will also prove the existence of smooth vector fields $F$ of $S^3$ s.t. $Ess(F)$ includes infinitely many periodic orbits (see Th.\ref{trefoilor}).
    \item In subsection \ref{lorenz} we use similar tools and ideas to study the Lorenz attractor. Following that, using the unique properties of the Lorenz system as analyzed in \cite{Pi} we will sharpen the proof of Th.\ref{pers13} to derive a general persistence theorem for the periodic orbits on the Lorenz attractor (see Th.\ref{persistence}).
    \item Finally, inspired by the Moore-Spiegel Oscillator (see \cite{SM}) in subsection \ref{horsus} we study a model flow $F$ for which the essential class is at most a finite set (see Prop.\ref{nohyp}). As we will prove, the fact $Ess(F)$ is finite constrains the type of complex dynamics $F$ can generate (see Th.\ref{nohyp2})- which will lead us to conjecture about the general state of affairs when the Essential Class is finite.
\end{itemize}

\subsection{Heteroclinic chaos in the Rössler attractor}
\label{rossler}
As described above, in this section (and the next) we apply Th.\ref{orbipers} to study the periodic dynamics on chaotic attractors. In this specific (sub)section we will apply these results to study the Rössler attractor. To begin, recall we define the Rössler system as the flow generated by the following system of ordinary differential equations (where $a,b,c>0$):

\begin{equation} \label{Vect}
\begin{cases}
\dot{x} = -y-z \\
 \dot{y} = x+ay\\
 \dot{z}=b+z(x-c)
\end{cases}
\end{equation}

\begin{figure}[h]
\centering
\begin{overpic}[width=0.7\textwidth]{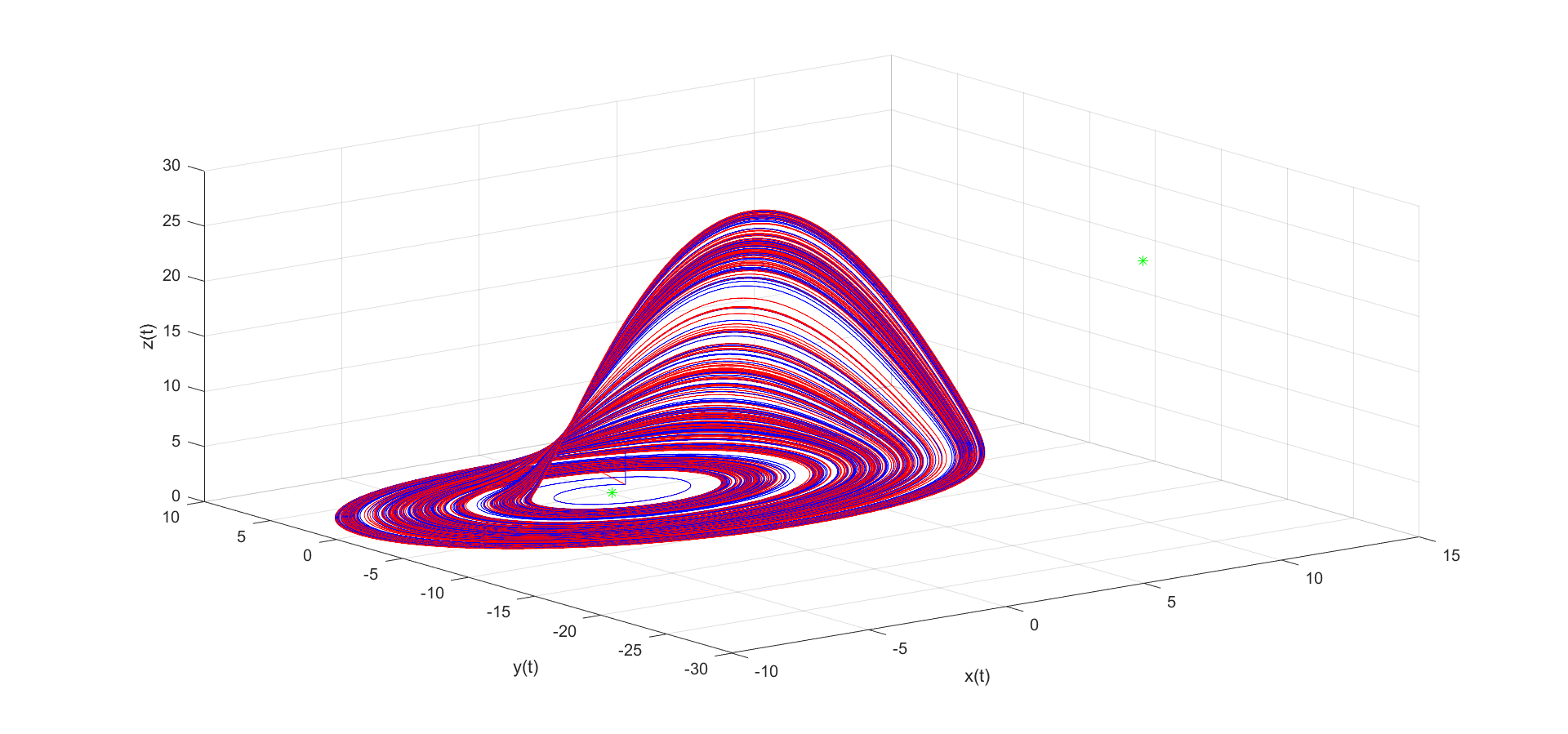}
\end{overpic}
\caption{\textit{The Rössler attractor at $(a,b,c)=(0.2,0.2,5.7)$}}
\end{figure}

As shown numerically at \cite{Ross76} (and later proven in \cite{Zgli97} and \cite{XSYS03}) at $(a,b,c)=(0.2,0.2,5.7)$ the Rössler system generates chaotic dynamics - i.e., there exists a bounded invariant set on which first-return map can be factored to a shift on two symbols (in particular, that set includes infinitely many periodic orbits).\\

To begin our analysis of the Rössler system we first remark that whenever $c^2-4ab>0$ the flow generates precisely two fixed points, which we always denote as follows - $P_{In}=(\frac{c-\sqrt{c^2-4ab}}{2},-\frac{c-\sqrt{c^2-4ab}}{2a},\frac{c-\sqrt{c^2-4ab}}{2a})$ and $P_{Out}=(\frac{c+\sqrt{c^2-4ab}}{2},-\frac{c+\sqrt{c^2-4ab}}{2a},\frac{c+\sqrt{c^2-4ab}}{2a})$. In addition, to conform with the parameter space considered in \cite{MBKPS}, \cite{Le}, \cite{BBS} (and the references therein), from now we will only consider the dynamics of the Rössler system at the three-dimensional parameter space $P$ - i.e., the collection of $(a,b,c)$ parameters s.t. the following is satisfied\label{parspace}:

\begin{enumerate}
    \item For every $p=(a,b,c)$ we have $a,b\in(0,1)$, $c>1$ and $c^2-4ab>0$.
    \item For every $p\in P$ the fixed points $P_{In}$ and $P_{Out}$ are both saddle foci of opposing indices (see the illustration in Fig.\ref{local}).
    \item For all $p\in P$, $P_{In}$ generates a two dimensional unstable manifold $W^u_{In}$, and a one-dimensional stable manifold $W^s_{In}$. Conversely, $P_{Out}$ generates a two-dimensional stable manifold $W^s_{Out}$ and a one-dimensional unstable manifold $W^u_{Out}$ (see the illustration in Fig.\ref{local}).
\end{enumerate}

\begin{figure}[h]
\centering
\begin{overpic}[width=0.4\textwidth]{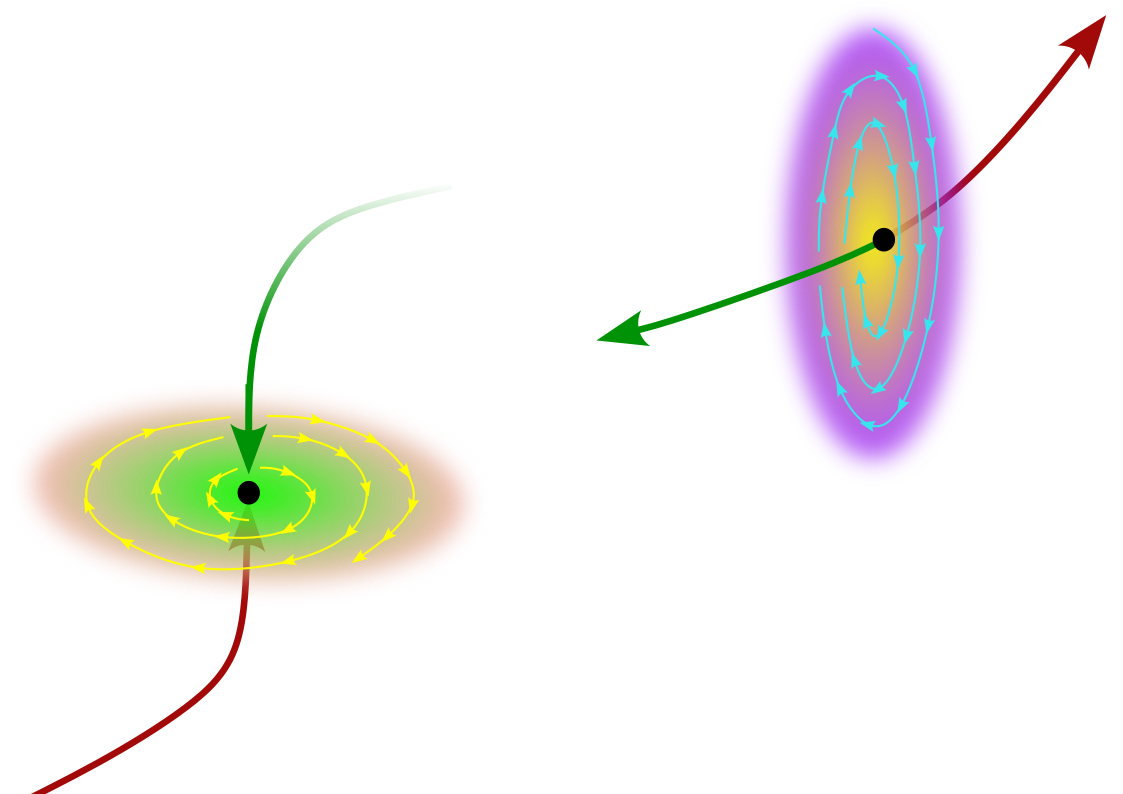}%
\put(600,360){$W^s_{Out}$}
\put(850,350){$W^u_{Out}$}
\put(760,440){$P_{Out}$}
\put(940,560){$W^s_{Out}$}
\put(350,170){$W^u_{In}$}
\put(230,250){$P_{In}$}
\put(130,400){$W^s_{In}$}
\put(230,110){$W^s_{In}$}
\end{overpic}
\caption{\textit{The local dynamics for the flow around the fixed points $P_{In}$ and $P_{Out}$.}}\label{local}
\end{figure}

It is easy to see $P$ forms a three-dimensional open set of parameters for the Rössler system. Moreover, it is also easy to verify it is non-empty - for example, the parameter $(a,b,c)=(0.2,0.2,5.7)$ lies in $P$. Since the introduction of the Rössler system, the parameter space $P$ was the object of many numerical studies - it is well known to exhibit period-doubling cascades and homoclinic bifurcations (among other nonlinear phenomena). As far as the general dynamics of the Rössler system in $P$ goes, we have the following collection of results proven both in \cite{LiLl} and by the author in \cite{I}:

\begin{theorem}
    \label{generaldynamics}
    For every $p\in P$, the corresponding Rössler system satisfies the following:
    \begin{itemize}
        \item There exists a cross-section $U_p$ transverse to the flow and homeomorphic to a half-plane, s.t. if $s\in \mathbf{R}^3$ is an initial condition whose orbit does not limit to a fixed point (or to $\infty$), its trajectory under the flow intersects $U_p$ transversely. In particular, every periodic orbit intersects with $U_p$ transversely at least once (see the illustration in Fig.\ref{cross11}). 
        \item The two-dimensional invariant manifolds $W^u_{In}$ and $W^s_{Out}$ are transverse to $U_p$ at the fixed-points (see the illustration in Fig.\ref{cross11}).
        \item The Rössler system extends continuously to $S^3$, where $\infty$ is added as a fixed point for the flow. In particular, $\infty$ is a fixed-point for the flow of Poincare-index $0$. 
        \item The one-dimensional invariant manifolds $W^u_{Out}$ and $W^s_{In}$ both include respective heteroclinic orbits $\Gamma_{Out}$ and $\Gamma_{In}$ which connect $\infty$ to both $P_{Out}$ and $P_{In}$ (see the illustration in Fig.\ref{cross11}). Moreover, $\Gamma_{In}$ and $\Gamma_{Out}$ are not linked with one another.
        \item Given any sufficiently large $r>0$, there exists a smooth vector field $F$ of $S^3$ which coincides with the Rössler system corresponding to $p$ in $B_r(0)$. Moreover, $F$ satisfies the following properties:
        \begin{enumerate}
            \item $F$   has precisely two fixed points in $S^3$, $P_{In}$ and $P_{Out}$, connected by a heteroclinic orbit $\Gamma$ which passes through $\infty$.
            \item $U_p$ is deformed into $U$, a cross-section homeomorphic to a disc. Moreover, $P_{In}$ and $P_{Out}$ lie on $\partial U$ and both two dimensional invariant manifolds $W^u_{In}$ and $W^s_{Out}$ are transverse to $U$ at $P_{In}$ and $P_{Out}$ (respectively). In addition, $\Gamma\cap U=\emptyset$.
            \item Set $H'=W^s_{In}\cup W^u_{Out}$ (i.e., the union of the invariant manifolds - then the orbit of any initial condition $s\in S^3\setminus H'$ eventually hits $U$ transversely. Consequentially, the first-return map $f:U\to U$ is well defined.
        \end{enumerate}
            \end{itemize}
\end{theorem}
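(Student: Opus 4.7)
The plan is to assemble the theorem by combining several facts about the Rössler vector field. Since the statement is a synthesis of results established independently in \cite{LiLl} and by the author in \cite{I}, my proposal is to organize these ingredients rather than prove them from scratch; I sketch below the geometric reasoning behind each item and flag where the real work lies.

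For items (1) and (2), I would construct $U_p$ as a half-plane adapted to the linearization data of $P_{In}$ and $P_{Out}$. The natural candidate is a half-plane passing through both fixed points whose normal is transverse to both $W^u_{In}$ and $W^s_{Out}$ at the linear models near these fixed points; global transversality on $U_p$ then reduces to a finite computation using Eq.\ref{Vect} combined with the sign structure of the coefficients forced by $p \in P$. To establish that every orbit not tending to a fixed point or to $\infty$ intersects $U_p$ transversely, the key observation is that the $(x,y)$-subsystem has a dominant rotational part: in polar coordinates $(\rho,\theta)$ on the $xy$-plane one computes $\dot\theta = 1 + (axy + yz)/\rho^2$, which stays close to $1$ away from the $z$-axis. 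Since recurrent orbits are bounded and are repelled from the $z$-axis by the local geometry near $P_{In}, P_{Out}$, this forces monotone winding around an appropriate axis and therefore repeated transverse intersections with $U_p$.

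For items (3) and (4), I would extend the Rössler system to $S^3$ by one-point compactification. At $\infty$ a chart change shows that the cubic term $xz$ dominates, so $\infty$ is a highly degenerate fixed point; its Poincar\'e index is forced to be $0$ by the Poincar\'e--Hopf theorem, since $\chi(S^3)=0$ and the indices at $P_{In}$ and $P_{Out}$ cancel (being saddle-foci of opposing index). The heteroclinic orbits $\Gamma_{In},\Gamma_{Out}$ then arise by following the one-dimensional branches of $W^s_{In}$ in backward time and $W^u_{Out}$ in forward time and showing the corresponding trajectories are unbounded; this pairs the explicit direction of the relevant eigenvectors near the fixed points with a Lyapunov-type estimate showing these branches leave every bounded ball. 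Unlinkedness is verified by isotoping both $\Gamma_{In}$ and $\Gamma_{Out}$ within $S^3\setminus\{P_{In},P_{Out}\}$ into disjoint arcs lying in a common half-space, as in \cite{I}.

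For item (5), the construction of $F$ is a controlled modification of the Rössler field outside a sufficiently large ball $B_r(0)$. Concretely, I would fix $r$ so that every recurrent trajectory is trapped in $B_r(0)$, and smoothly deform the field on an annular shell $B_{r+1}(0)\setminus B_r(0)$ into a radial-type field whose only orbit reaching $\infty$ is a single trajectory $\Gamma$ routing $P_{Out}$ to $P_{In}$ through $\infty$. The cross-section $U_p$ is extended along the modification region into the disc $U$, and since $\Gamma$ is chosen to bypass $U_p$, we have $\Gamma\cap U=\emptyset$. The main obstacle is the global hitting property (5.3), namely that every initial condition in $S^3\setminus H'$ eventually hits $U$: this requires ruling out any forward-invariant region disjoint from $U$ other than $W^s_{In}$ itself, and is obtained by combining the monotone winding argument above with a careful analysis of the attracting basin around $\Gamma$ inside the modification shell. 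This is the geometric heart of the argument carried out explicitly in \cite{I}.
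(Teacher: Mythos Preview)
Your proposal is essentially correct and matches the paper's treatment: the paper does not prove this theorem at all but simply writes ``For a proof, see Lemma 2.1, Lemma 2.2, Lemma 2.6 and Th.2.8 in \cite{I} and Th.1 in \cite{LiLl}.'' You correctly identified that the statement is a synthesis of results from those references, and your sketches of the underlying geometric mechanisms (the rotational winding forcing returns to $U_p$, the Poincar\'e--Hopf computation forcing index $0$ at $\infty$, the Lyapunov-type escape of the one-dimensional branches, and the smooth modification outside a large ball) are consistent with the arguments actually carried out in \cite{I} and \cite{LiLl}; in fact you have supplied more detail than the paper itself.
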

For a proof, see Lemma 2.1, Lemma 2.2, Lemma 2.6 and Th.2.8 in \cite{I} and Th.1 in \cite{LiLl}.\\

\begin{figure}[h]
\centering
    \begin{overpic}[width=0.5\textwidth]{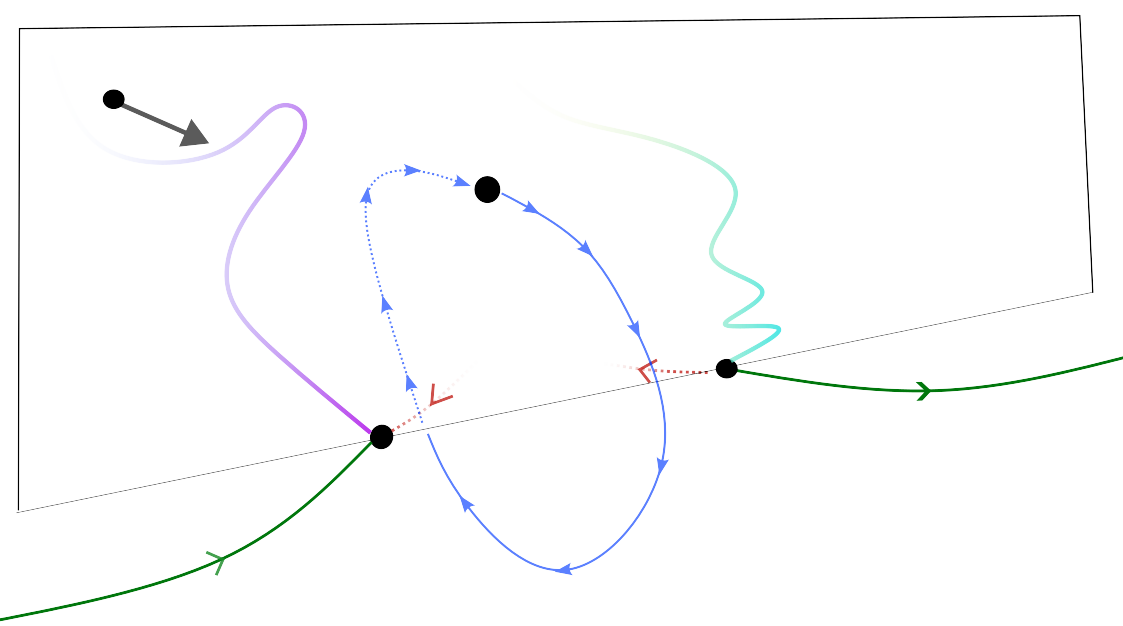}
\put(830,160){$\Gamma_{Out}$}
\put(640,180){$P_{Out}$}
\put(350,140){$P_{In}$}
\put(740,450){$U_p$}
\put(480,420){$W^s_{Out}$}
\put(290,430){$W^u_{In}$}
\put(250,45){$\Gamma_{In}$}
\put(150,150){$l_p$}
\end{overpic}
\caption{\textit{The cross-section $U_p$ and the heteroclinic orbit $\Gamma_{Out}$ and $\Gamma_{In}$. The purple and blue curves denote the intersection of the two-dimensional invariant manifolds $W^u_{In}$ and $W^s_{Out}$ with $U_p$.}}\label{cross11}
\end{figure}

To continue, recall it was observed numerically there exist parameters $p\in P$ at which the Rössler system generates a bounded heteroclinic orbit $\Theta$, which connects $P_{In}$ and $P_{Out}$ as in Fig.\ref{trefoil} (see Fig.5.B1 in \cite{MBKPS}). It is easy to see that whenever $\Theta$ is not linked with either $\Gamma_{Out}$ or $\Gamma_{In}$, the set $H=\Theta\cup\Gamma_{In}\cup\Gamma_{Out}\cup\{P_{In},P_{Out},\infty\}$ forms a trefoil heteroclinic knot in $S^3$ - see Def.\ref{heteroknot} the illustrations in Fig.\ref{trefoil} and Fig.\ref{type}.\\

\begin{figure}[h]
\centering
\begin{overpic}[width=0.4\textwidth]{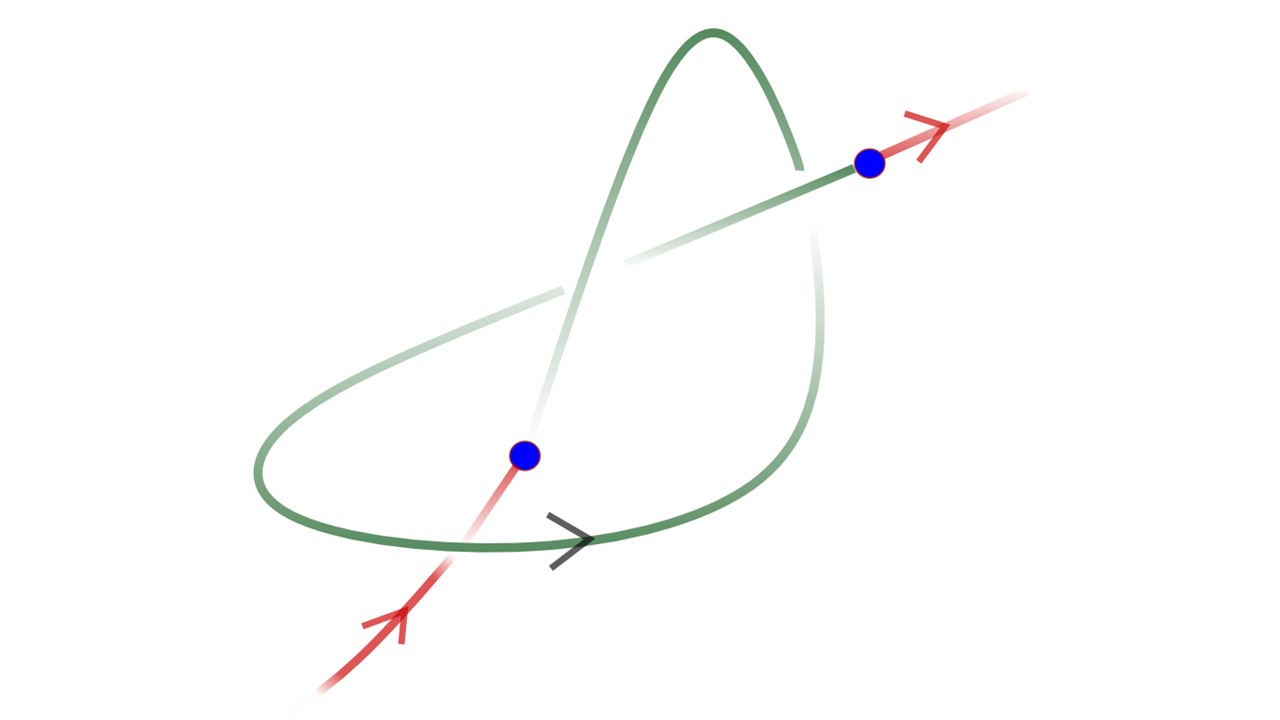}
\put(440,210){$P_{In}$}
\put(340,70){$\Gamma_{In}$}
\put(400,350){$\Theta$}
\put(660,370){$P_{Out}$}
\put(780,430){$\Gamma_{Out}$}
\end{overpic}
\caption{\textit{A heteroclinic trefoil knot. $\Theta$ denotes the bounded heteroclinic orbit, while $\Gamma_{In},\Gamma_{Out}$ denote the unbounded heteroclinic orbit given by Th.\ref{generaldynamics}. This heteroclinic knot includes three fixed points - two saddle foci ($P_{In}$ and $P_{Out}$), and one degenerate fixed point at $\infty$ of Poincare index $0$.}}
\label{trefoil}
\end{figure}
Unfortunately, the existence of a heteroclinic trefoil knot in itself is probably not sufficient to force the existence of an observable chaotic attractor for the Rössler system: as observed numerically in \cite{MBKPS} there exist parameter values where the flow generates intricate homoclinic orbits and complex dynamics (per Shilnikov's Theorem - see \cite{LeS}) - yet there is no attracting invariant set and the orbits of most initial conditions diverge to $\infty$. However, as we will now prove, given any smooth vector field $F$ of $S^3$ which generates a heteroclinic trefoil knot configured like the one in Fig.\ref{trefoil} the dynamics of $F$ include periodic orbits of infinitely many different knot types (later on in this section we will show how these results can be "pulled back" to prove a similar result about existence of a chaotic attractor for the Rössler system).\\

To do so, we first consider an idealized version of the Rössler system. In more detail, we consider $F$, a smooth vector field of $S^3$ which generates precisely two fixed points in $S^3$, both saddle foci of opposing indices - which, for simplicity, we will also denote as $P_{In}$ and $P_{Out}$. We further assume these fixed points form a heteroclinic trefoil knot configured as in Fig.\ref{treff} - that is, $F$ is an idealized form of the Rössler system in the sense that the main difference between the two vector fields is that $F$ is smooth at $\infty$ (in particular, one can think of $F$ as orbitally equivalent to the Rössler system away from $\infty$). With these ideas in mind, we now prove:

\begin{theorem}
    \label{trefoilor}
    Let $F$ be a smooth vector field of $S^3$ which generates a heteroclinic knot $H$ connecting two saddle foci as in Fig.\ref{treff}. Then, the essential class of $F$ w.r.t. $H$ includes infinitely many Torus knots, of infinitely many different knot types (see Def.\ref{torusknot}).
\end{theorem}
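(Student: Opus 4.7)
The plan is to apply Th.\ref{orbipers} with an idealized model $G$ whose first-return map to a suitable cross-section realizes a Smale horseshoe wrapped around the trefoil, and then to invoke Template Theory (specifically Th.6.1 in \cite{Hol}) to identify infinitely many distinct torus knot types among the periodic orbits in $Ess(F)$.

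First, I would construct the idealized vector field $G$ on $S^3$ as follows. Let $N(H)$ denote a tubular neighborhood of the trefoil heteroclinic knot $H$. Inside $S^3\setminus N(H)$, design $G$ so that it admits a cross-section $S_2$ transverse to the flow and an open subset $S_1\subseteq S_2$ with $S_1\cap H=\emptyset$ but $\overline{S_1}\cap H\neq\emptyset$, such that the first-return map $g:S_1\to S_2$ is a Smale horseshoe. The fixed points $P_{In}$ and $P_{Out}$ remain saddle foci of the same indices as for $F$, and the heteroclinic trefoil $H$ is preserved. Because the flowlines wind around a trefoil-knotted tube, the induced first-return map folds in a way forced by the trefoil linking. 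Using the Bestvina--Handel machinery (Th.\ref{betshan}), I would then embed $g$ inside a Pseudo-Anosov map $P:R\to R$, where $R$ is a punctured disc whose punctures correspond exactly to the points of $\overline{S_1}\cap H$; the induced graph map on the spine covers at least one edge doubly, precisely as illustrated in Fig.\ref{cover2}.

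Second, I would verify that $F$ can be deformed $rel$ $H$ to $G$. Since both vector fields have precisely the same two saddle foci on $H$, no fixed points off $H$, and an identical heteroclinic trefoil, an obstruction-theoretic argument shows that the space of non-vanishing smooth vector fields on $S^3\setminus H$ satisfying the conditions of Def.\ref{relt} is path-connected, so a smooth homotopy from $F$ to $G$ exists. Having verified every hypothesis of Th.\ref{orbipers}, I conclude that $Ess(F)$ contains infinitely many periodic orbits, each of which retains its knot type as $F$ is varied $rel$ $H$.

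Third, for the torus knot conclusion I would pass to the template of $G$. After a possible small homotopy through the generic family $K$ (which does not affect the conclusions of Th.\ref{orbipers}), $G$ becomes hyperbolic on its chain-recurrent set; by the Birman--Williams Theorem (Th.\ref{BIRW}) there is an associated Template $\Theta$ embedded in $S^3\setminus H$, whose periodic orbits correspond (save for at most two) to those of $G$. Because $\Theta$ is a horseshoe template wrapped around the trefoil $H$, Th.6.1 in \cite{Hol} guarantees that $\Theta$ realizes infinitely many distinct $(p,q)$ torus knots as periodic orbits. Combined with the knot-type preservation part of Th.\ref{orbipers}, each such torus knot descends to a periodic orbit in $Ess(F)$ of the same knot type, proving the claim.

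The main obstacle is the construction of $G$ — specifically, verifying that the horseshoe mechanism, when suspended around a trefoil-knotted tube, produces a template supporting infinitely many distinct torus knot types. This is the step where the non-triviality of the trefoil (as opposed to the unknot) enters decisively, and it is here that one must carefully apply Th.6.1 in \cite{Hol}. A secondary technical point is the existence of the $rel$ $H$ homotopy between $F$ and $G$; while Def.\ref{relt} is flexible, the obstruction-theoretic argument on the trefoil complement must be checked in detail, and one must ensure that the cross-section $S_1$ can be consistently tracked along the homotopy as in the proof of Th.\ref{orbipers} itself.
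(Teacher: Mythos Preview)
Your overall strategy matches the paper's: construct an idealized $G$ with horseshoe first-return dynamics, verify the Bestvina--Handel / Pseudo-Anosov hypotheses of Th.\ref{orbipers}, then pass to a template and invoke \cite{Hol}. The paper carries out the Pseudo-Anosov step concretely by embedding the first-return map in a thrice-punctured disc (punctures corresponding to $P_{In}$, $p_0=\Theta\cap U$, and $P_{Out}$), computing the transition matrix $\begin{pmatrix}2&1\\1&0\end{pmatrix}$, and checking the spectral radius exceeds $1$; your sketch is compatible with this.

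The genuine gap in your proposal is the template identification. You write that ``$\Theta$ is a horseshoe template wrapped around the trefoil $H$'' and then invoke Th.6.1 in \cite{Hol}, but the Birman--Williams Theorem only gives you \emph{some} template --- it does not tell you which one, and the embedded isotopy type of the template depends on exactly how the suspended horseshoe sits in $S^3\setminus H$. The paper does substantial extra work here that you are missing: it first destroys any stray periodic orbits of Orbit Index $1$ using Kuperberg plugs, then opens the saddle foci $P_{In},P_{Out}$ by Hopf bifurcations so the fixed points become a sink and a source (pushing the chain-recurrent set $\Phi'$ away from the singularities), and finally argues that the resulting flow on $S^3\setminus(B_r(P'_{Out})\cup B_r(P'_{In}))$ is a \emph{model} in the sense of Def.\ref{model}. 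Only then does Th.\ref{begbon} (uniqueness of models up to orbital equivalence) allow one to compare with an explicit embedding of the $L(0,1)$ template in the trefoil complement and conclude $\tau=L(0,1)$, after which Th.6.1.2.a in \cite{Hol} yields the infinitely many torus knots. Your phrase ``after a possible small homotopy through $K$, $G$ becomes hyperbolic on its chain-recurrent set'' does not accomplish any of this: you have not controlled what else lies in the chain-recurrent set, nor have you produced a model, nor have you pinned down the template up to ambient isotopy.

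A secondary point: your obstruction-theoretic claim that the space of admissible vector fields on $S^3\setminus H$ is path-connected is left unverified in the paper as well --- the paper instead simply constructs one explicit $F$ (via Th.\ref{generaldynamics}) and deforms \emph{that} $F$ step by step, relying on the remark that $Ess(F)=Ess(G)$ whenever $F$ and $G$ are homotopic $rel$ $H$. So you are not worse off than the paper here, but you should be aware this step is not as routine as you suggest.
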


\begin{proof}

By Th.\ref{orbipers} all we need to do is find a specific example of a smooth vector field $F$ defined in $S^3\setminus H$ which satisfies the assumptions of Th.\ref{orbipers}. To do so, let $F$ be a smooth vector field of $S^3$ which generates both a heteroclinic orbit $H$ as in Fig.\ref{treff} and a cross-section $U$, homeomorphic to a half-plane and transverse to $F$, s.t. the following is satisfied (see the illustration in Fig.\ref{fig16}):

\begin{itemize}
    \item $\Gamma\cap U=\emptyset$.
    \item $\Theta\cap U=\{p_0\}$.
    \item $P_{In}$ and $P_{Out}$ both lie on the boundary of $U$, and their two dimensional invariant manifolds, $W^u_{In}$ and $W^s_{Out}$, are transverse to $U$ at the fixed points.
\end{itemize}

        \begin{figure}[h]
   \centering
    \begin{overpic}[width=0.3\textwidth]{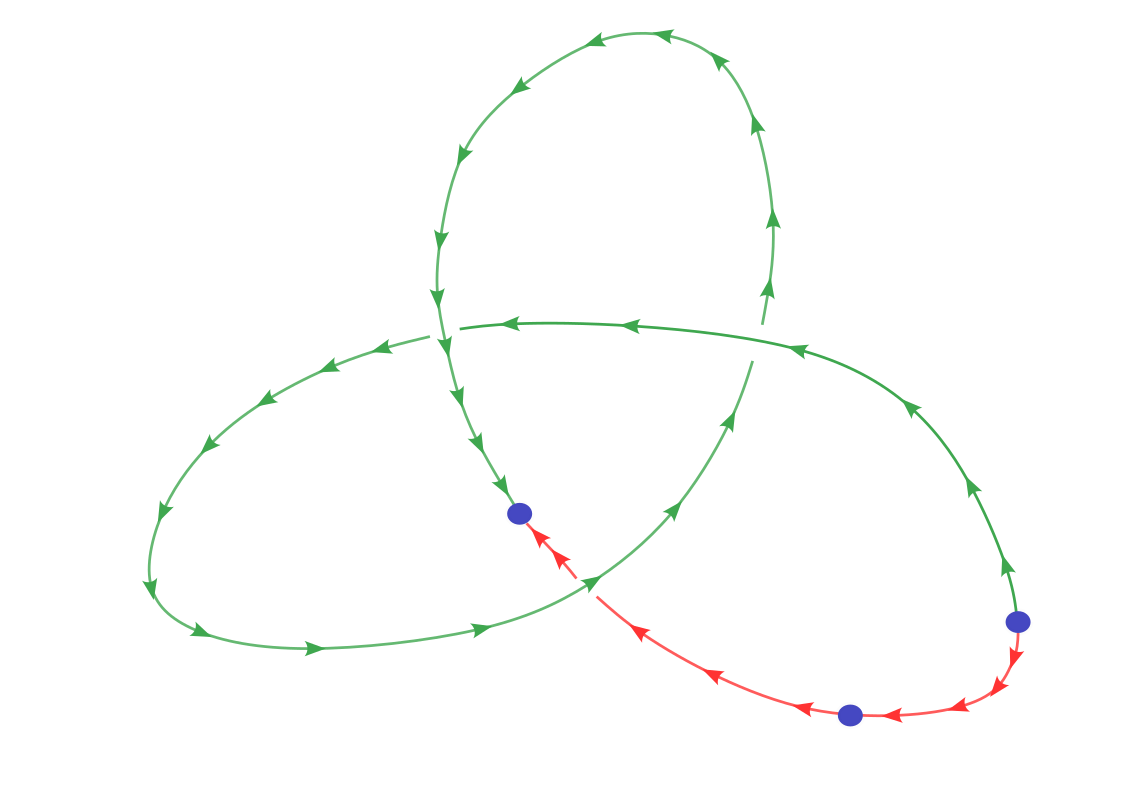}%
\put(380,200){$P_{In}$}
\put(80,250){$\Theta$}
\put(530,90){$\Gamma$}
\put(930,140){$P_{Out}$}
\put(740,90){$\infty$}
\end{overpic}
\caption{\textit{A heteroclinic trefoil knot inspired by the Rössler system (and Fig.\ref{trefoil}) connecting two saddle-foci, $P_{In}$ and $P_{Out}$ with two heteroclinic orbit: $\Theta$ and $\Gamma$. In this scenario $\infty$ is a regular point on the heteroclinic orbit connecting $P_{In}$ and $P_{Out}$.}}
\label{treff}
    \end{figure}

In addition, we further choose $F$ s.t. $U$ is a universal cross-section for the flow, homeomorphic to a disc (or a half plane) - that is, given any initial condition $s\in S^3\setminus H$ the orbit of $s$ under the flow eventually hits $U$ transversely. We remark that by Th.\ref{generaldynamics}, such a vector field $F$ exists - and moreover, by the same theorem we also know the first-return map $f:U\to U$ w.r.t. $F$ is well defined. To continue let $\gamma\subseteq U$ be curve connecting $P_{In}$ and $p_0$ and let us suspend $\gamma$ with the flow generated by $F$ - by possibly deforming $F$ $rel$ $H$ (if necessary) we can ensure $\overline{f(\gamma)}$ is a closed curve which winds around $p_0$ precisely once, as illustrated in Fig.\ref{fig16}.\\

\begin{figure}[h]
\centering
\begin{overpic}[width=0.35\textwidth]{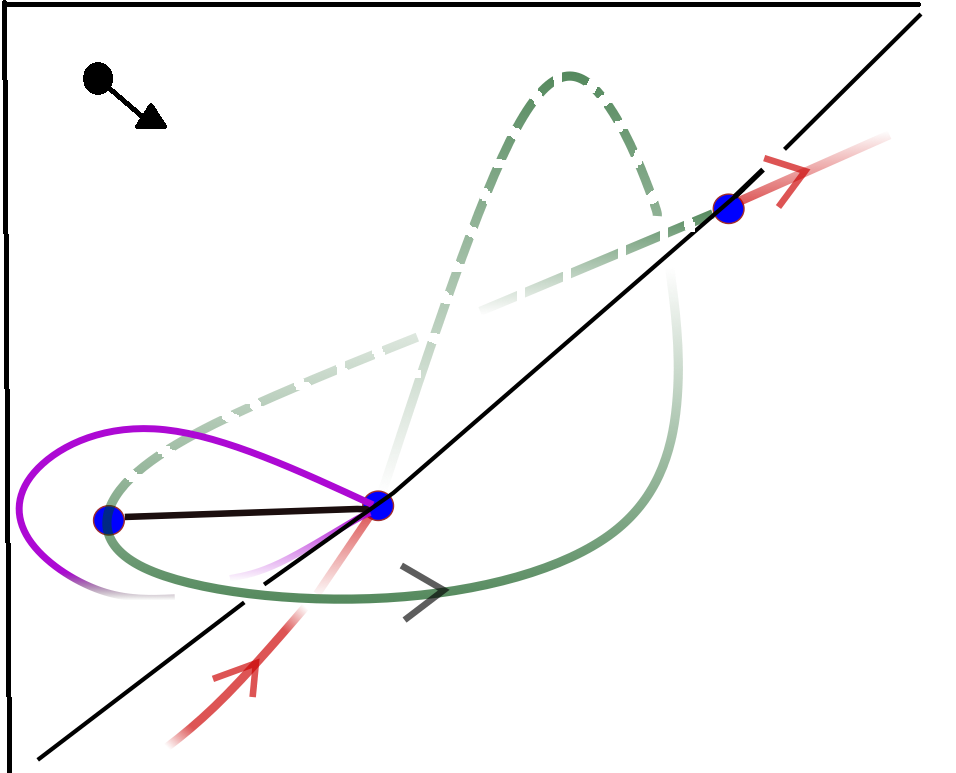}
\put(420,250){$P_{In}$}
\put(230,300){$\gamma$}
\put(80,300){$p_0$}
\put(100,390){$f(\gamma)$}
\put(300,650){$U$}
\put(800,550){$P_{Out}$}
\put(560,470){$\Theta$}
\end{overpic}
\caption{\textit{Flowing $\gamma$ along the trefoil. $\Theta$ is the green heteroclinic orbit while $\Gamma$ corresponds to the red curve(s). The vector field is transverse to the half-plane $U$.}}
\label{fig16}
\end{figure}

We now choose some domain $S_1\subseteq U$, a topological triangle with vertices $P_{In},p_1$ and $p_2$ as in Fig.\ref{D}, s.t. $P_{In},p_0\in\partial S_1$ and $P_{Out}\not\in\overline{S_1}$. By further deforming $F$ $rel$ $H$ (if necessary), we can ensure the flow folds $S_1$ on itself. That is, we smoothly deform the flow such that the first-return map $f:\overline{S_1}\to \overline U$ is well-defined and continuous with $f(S_1)$ as appears as in Fig.\ref{D} - in particular, $p'_i=f(p_i)$, $i=1,2$ are as indicated in Fig.\ref{D}.\\

\begin{figure}[h]
\centering
\begin{overpic}[width=0.4\textwidth]{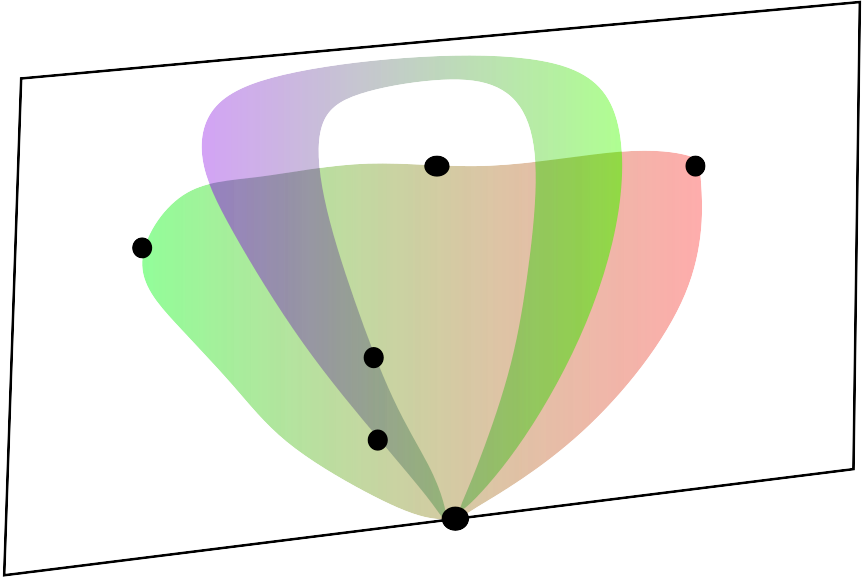}
\put(600,340){$f(S_1)$}
\put(1050,200){$U$}
\put(450,280){$p'_2$}
\put(790,520){$p_1$}
\put(450,510){$p_0$}
\put(480,20){$P_{In}$}
\put(110,410){$p_2$}
\put(380,140){$p'_1$}
\end{overpic}
\caption[The disc $D_\alpha$.]{\textit{The domain $S_1$ and its image under $f$. $p'_i$ and $v'_i$, $i=1,2$ are the images of $p_i$ and $v_i$ under $f$.}}
\label{D}

\end{figure}

We now embed the first-return map $f:S_1\to U$ inside a disc map as appears in Fig.\ref{embedd} - that is, we consider a homeomorphism of $\mathbf{R}^2$ punctured at $3$ points: $P_1$, $P_2$ and $P_3$, s.t. $P_2$ and $P_3$ lie on some sphere. In more detail we choose some rectangle $R_1\subseteq\mathbf{R}^2\setminus\{P_1,P_2,P_3\}$ s.t. the following holds (see the illustration in Fig.\ref{embedd}):

\begin{itemize}
    \item $P_1\in\partial R_1$.
    \item There exists  homeomorphism $P:\mathbf{R}^2\setminus\{P_1,P_2,P_3\}\to\mathbf{R}^2\setminus\{P_1,P_2,P_3\}$ s.t. $P(P_1)=P_3$, $P(P_3)=P_1$ and $P(P_2)=P_2$.
    \item $P|_{R_1}$ is conjugate to $f:S_1\to U$ away from $P_{In}$ and $p_0$.
\end{itemize}

\begin{figure}[h]
\centering
\begin{overpic}[width=0.4\textwidth]{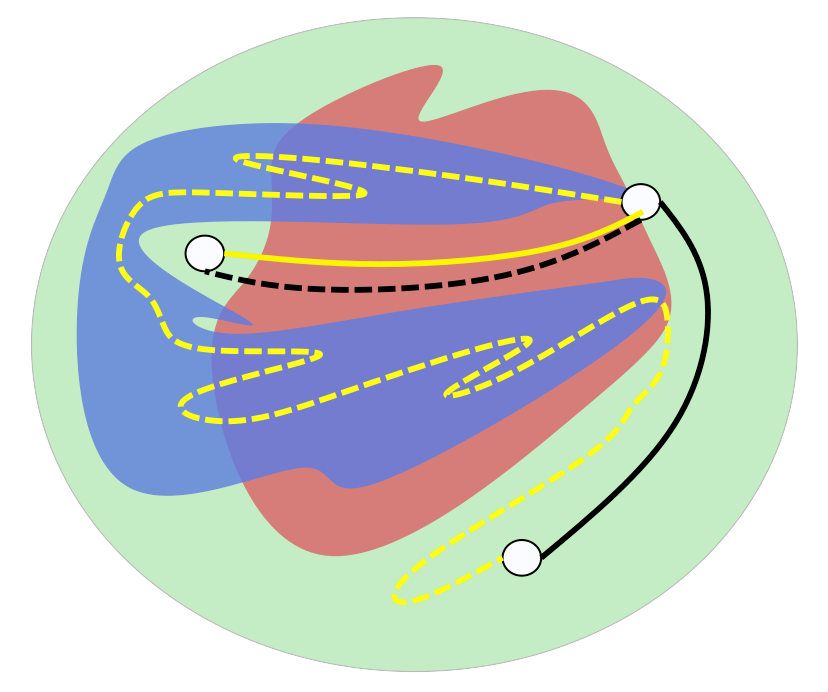}
\put(800,590){$P_2$}
\put(280,510){$P_1$}
\put(550,90){$P_{3}$}
\end{overpic}
\caption[The disc $D_\alpha$.]{\textit{A homeomorphism of $P:\mathbf{R}^2\to\mathbf{R}^2$ punctured at $P_1,P_2$ and $P_3$ (where for simplicity we sketch it as a disc). $R_1$ corresponds to the red region while $P(R_1)$ is the red region. Note $P(P_1)=P_3$, $P(P_3)=P_1$ and $P(P_2)=P_2$. The yellow and black lines denote $\Gamma_1$ and $\Gamma_2$ (which together compose the spine of $R$), while the dashed lines denote their respective images under $P$.}}
\label{embedd}

\end{figure}

Now, let us apply the Betsvina Handel Algorithm to $P$ (see Th.\ref{betshan}). That is, consider the spine of $\mathbf{R}^2\setminus\{P_1,P_2,P_3\}$, a graph $\Gamma$ with two edges - $\Gamma_1$, connecting $P_1$ and $P_3$ and $\Gamma_2$ connecting $P_2$ and $P_3$ (see the illustration in Fig.\ref{embedd2}). It is easy to see that as we collapse $P$ to a graph map $g':\Gamma\to \Gamma$, $g'(\Gamma_1)$ covers $\Gamma_1$ twice (see Fig.\ref{embedd2}), and hence the matrix corresponding to it is the following:
\begin{equation*}
\begin{pmatrix}
    2&1\\
    1&0
\end{pmatrix}
\end{equation*}

By computation it follows the spectral radius (i.e., maximal eigenvalue) of this matrix is greater than $1$ - consequentially, by Th.\ref{betshan} we conclude the invariant set in the region collapsed to $\Gamma_1$, i.e., the rectangle $R_1$, includes infinitely many periodic orbits, all of which persist as $P:R\to R$ is isotoped to a Pseudo-Anosov map $P'':R\to R$. It is easy to see that as we isotope $P$ to a Pseudo-Anosov map $P''$ we also induce an isotopy on $f:S_1\to U$, which in turn induces a deformation $rel$ $H$ on the dynamics of $F'$ in $S_1$ - thus deforming $F'$ $rel$ $H$ to some vector field $F''$ s.t. $F''$ satisfies the assumptions of Th.\ref{orbipers}. In particular, $F''$ generates infinitely many periodic orbits which intersect $S_1$ transversely. Finally, since all these periodic orbits are in the essential class of $F''$ (w.r.t. $H$) and because we can deform $F''$ $rel$ $H$ back to the original vector field $F$ it now follows $Ess(F)$ w.r.t. $H$ includes infinitely many periodic orbits.\\

\begin{figure}[h]
\centering
\begin{overpic}[width=0.4\textwidth]{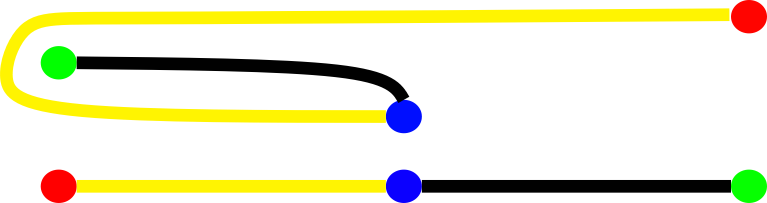}
\put(950,-50){$P_3$}
\put(750,-50){$\Gamma_2$}
\put(50,-50){$P_1$}
\put(280,-50){$\Gamma_1$}
\put(500,-50){$P_2$}
\end{overpic}
\caption[The disc $D_\alpha$.]{\textit{The induced graph map. It is easy to see $\Gamma_1$ covers itself twice under this map.}}
\label{embedd2}

\end{figure}

Having proven $Ess(F)$ w.r.t. $H$ includes infinitely many periodic orbits our next goal is to prove $Ess(F)$ includes periodic orbits of infinitely different many knot types. In fact, we will do more - that is, we will prove we can associate a Template with the dynamics of $F$, using a similar argument to the one used to prove Cor.\ref{templateth} (however, we will not use Cor.\ref{templateth} directly). To begin, let $V$ denote the collection of periodic orbits for $F''$ intersecting $R$ and set $\Phi=\overline{V}$. We first deform $F''$ $rel$ $H$ s.t. every periodic orbit $T$ for $F''$ which lies away from $V$ is generic - i.e., we deform the flow s.t. every eigenvalue of the differential $D(T)$ does not lie on $S^1$ (see Def.\ref{dfT} - note that by construction every periodic orbit in $V$ is already generic). Following that, given any periodic orbit $T$ s.t. $i(T)=1$ we enclose it inside some solid, knotted Torus $\mathbf{T}$ (there exists such a Torus, as by $i(T)=1$ it follows $T$ is either repelling or attracting) - after which we destroy $T$ by deforming the dynamics around it in $\mathbf{T}$ to those of the Kuperberg plug, as given in \cite{Kup}. This proccess removes all the periodic orbits of Orbit Index $1$ - and consequentially the new vector field $F'''$ only has periodic orbits of Orbit Indices $0$ and $-1$.\\

\begin{figure}[h]
\centering
\begin{overpic}[width=0.6\textwidth]{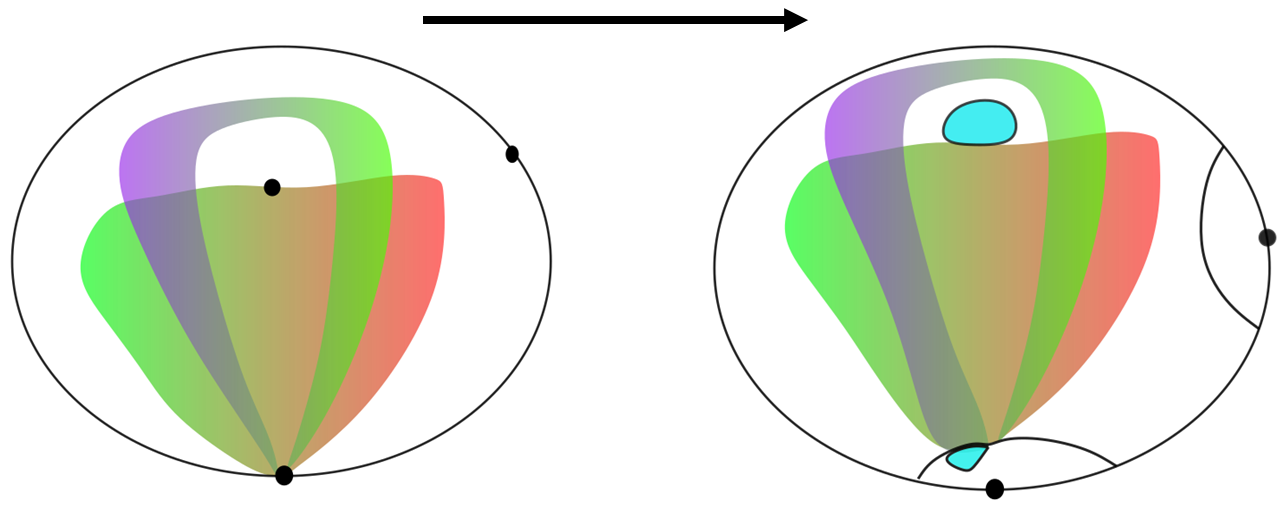}

\put(350,260){$P_{Out}$}
\put(200,280){$p_0$}
\put(230,-10){$P_{In}$}

\put(1000,200){$P'_{Out}$}
\put(790,-10){$P'_{In}$}
\put(620,30){$f'''(P_0)$}
\put(750,295){$P_0$}
\end{overpic}
\caption{\textit{Blowing up the fixed points by a Hopf bifurcation - the first-return map $f''':R\to U$ is deformed to $f'''':R\to U$, while $P_{In}$ and $P_{Out}$ are deformed to the fixed point $P'_{In}$ and $P'_{Out}$ (in this illustration, $U$ is sketched as a disc instead of a half plane). Consequentially, $p_0$ is opened to the disc $P_0$ which flows towards the sink $P'_{In}$.}}
\label{hopf}

\end{figure}

To continue, with previous notations let $f''':S_1\to U$ denote the first-return map for $F'''$.  We first note the invariant set of $f'''$ in $S_1$ lies away from the fixed point $P_{Out}$, i.e., $\Phi$ is disjoint from some ball $B_r(P_{Out})$ (for some sufficiently small $r$). We further note that by definition $P_{Out}$ is a fixed point with a one-dimensional unstable manifold and a two-dimensional stable manifold - hence its Poincare Index is $1$. By the Poincare-Hopf Theorem this implies we can deform the dynamics of $F$ on $\partial B_r(P_{out})$ s.t. $F$ points out of $B_r(P_{Out})$. Moreover, as the invariant set of $f'''$ in $S_1$ lies away from $P_{Out}$ it follows we can do so s.t. no periodic orbit in $\Phi$ is destroyed.\\

Having deformed the local dynamics around $P_{Out}$ to those of a repeller, we continue by opening $P_{In}$ to a periodic orbit by a Hopf bifurcation as illustrated in Fig.\ref{hopf}, thus replacing $P_{In}$ with a sink $P'_{In}$. This proccess adds precisely one periodic orbit to $\Phi$, and similarly to the proof of Cor.\ref{templateth} we do so s.t. no periodic orbit in $\Phi$ is destroyed. In particular, this deformation of $F'''$ to a new vector field $F''''$ induces an isotopy of the first return map $f''':S_1\to U$ to some $f'''':S_1\to U$, s.t. $f'''':S_1\to U$ acts as a Smale Horseshoe map on $S_1$. At this point we remark that as we deform $F'''$ to $F''''$ the heteroclinic knot $H$ persists - but since we change the type of the fixed points on it the deformation is no longer $rel$ $H$. \\

\begin{figure}[h]
\centering
\begin{overpic}[width=0.3\textwidth]{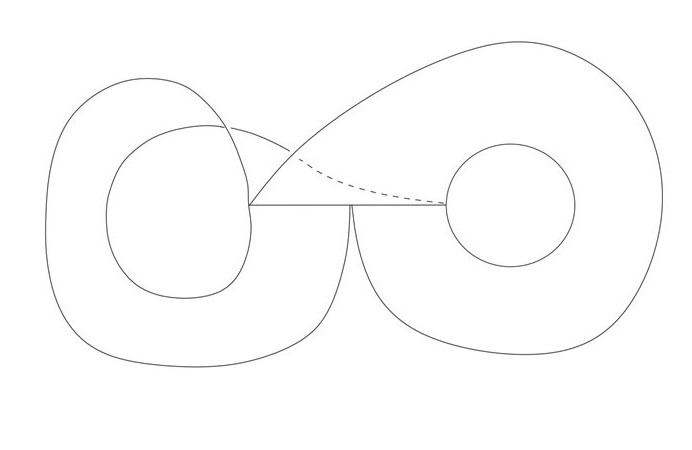}
\end{overpic}
\caption{\textit{The $L(0,1)$ Template.}}
\label{tenmplate}

\end{figure}

By construction, it is easy to see the periodic orbits in $\Phi$ do not bifurcate and keep their knot type as we smoothly vary $F'''$ to $F''''$. We now deform $F''''$ to $F'''''$ by moving the orbit of any initial condition in $S^3\setminus(\Phi\cup B_r(P_{Out}))$ into the basin of attraction of $P'_{In}$ - which leaves $\Phi$ as the maximal invariant set of $F'''''$ in $S^3\setminus (B_r(P_{Out})\cup B_r(P'_{In}))$ (for some sufficiently small $r>0$). As such, similarly to the proof of Cor.\ref{templateth} by the Birman-Williams Theorem we conclude we can associate a Template $\tau$ with the periodic dynamics of $\Phi$. Moreover, since those periodic dynamics are generated by suspending a Smale Horseshoe around $H$ by Def.\ref{model} we know $\Phi$ is a basic set - hence similarly to the proof of Cor.\ref{templateth} we further infer $F'''''$ is a model flow on $S^3\setminus (B_r(P_{Out})\cup B_r(P'_{In}))$.\\

We now note one can embed the $L(0,1)$ Template (see Fig.\ref{tenmplate}) inside $S^3\setminus (B_r(P_{Out})\cup B_r(P'_{In})\cup H)$ as illustrated in Fig.\ref{embed}. Or, in other words, we can suspend a Smale Horseshoe map inside $S^3\setminus (B_r(P_{Out})\cup B_r(P'_{In})\cup H)$ s.t. the resulting flow is both a model flow and generates the $L(0,1)$ Template. It is easy to see $F'''$ and this embedding of the $L(0,1)$ Template are orbitally equivalent around their maximal invariant sets in $S^3\setminus (B_r(P_{Out})\cup B_r(P'_{In})\cup H)$ - therefore by Th.\ref{begbon} we conclude $\tau=L(0,1)$. This yields every knot type on the $L(0,1)$ Template (save possibly for some finite number) is realized as a periodic orbit for $F'''''$ - and since by construction every periodic orbit for $F'''''$ (save possibly for a finite number) can be deformed to a periodic orbit for $F$ without changing its knot type we conclude every knot on the $L(0,1)$ Template (save possibly for a finite set) is realized as a periodic orbit for $F$, the original vector field. Therefore, since $L(0,1)$ includes infinitely many Torus knots of different knot types (see Th.6.1.2.a in \cite{Hol}) we conclude the same is true for $Ess(F)$. The proof of Th.\ref{trefoilor} is now complete.
\end{proof}
\begin{remark}
    As observed numerically in \cite{Le}, there are regions in the parameter space of the Rössler system at which the Template corresponding to the attractor is the Horseshoe Template from the proof above. Moreover, using similar ideas and arguments to those above one can also prove directly the $L(0,1)$ Template describes the dynamics of an idealized model for the Rössler system (see \cite{I1} for details).
\end{remark}

\begin{figure}[h]
\centering
\begin{overpic}[width=0.3\textwidth]{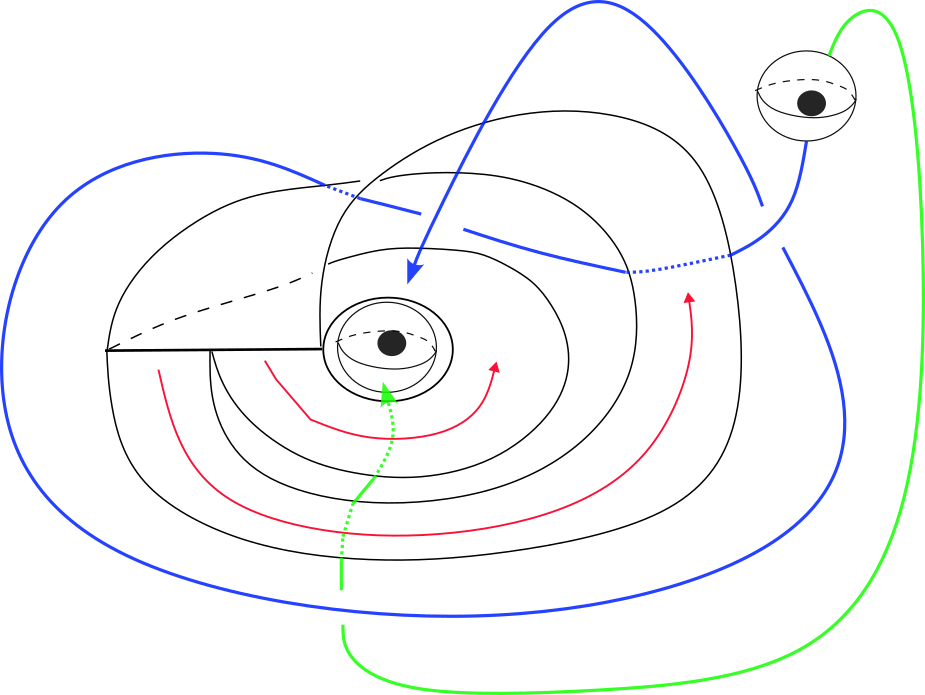}
\put(300,270){$P'_{In}$}
\put(900,570){$P'_{Out}$}
\end{overpic}
\caption{\textit{A horseshoe template, embedded on the complement of a trefoil knot connecting the sink $P'_{In}$ and the source $P'_{Out}$. The vector field points out of the sphere encasing $P'_{Out}$ and inside the sphere enclosing $P'_{In}$. It is easy to see the Templates in Fig.\ref{tenmplate} and the one above are ambient isotopic, hence the same.}}
\label{embed}
\end{figure}

Having proven Th.\ref{trefoilor} we now use it to study the dynamics of the Rössler attractor. To do so, recall we denote by $P$ the parameter space of the Rössler system (see page \ref{parspace}), and that $W^s_{Out}$ denotes the two-dimensional stable manifold of the saddle focus $P_{Out}$ (see the illustration in Fig.\ref{local}). Inspired by Th.\ref{stability} and Th.\ref{thurston-nielsen} we now prove:

\begin{theorem}
\label{trefoil1}   Assume $p\in P$ is a parameter at which the Rössler system satisfies the following:

\begin{enumerate}
    \item There exists a a heteroclinic knot $H$ as in Fig.\ref{trefoil}.
    \item The backwards orbit of any initial condition $s\in W^s_{Out}$ either intersects transversely with the plane $O=\{(x,y,0)|x,y\in\mathbf{R}\}$ or tends to $\infty$ through $\{(x,y,z)|z>0\}$ in backwards time. 
\end{enumerate}

Then, the corresponding Rössler system generates a chaotic attractor which includes periodic orbits of infinitely different Torus knots.
\end{theorem}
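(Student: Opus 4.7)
The plan is to reduce Theorem~\ref{trefoil1} to Theorem~\ref{trefoilor} by verifying that the Rössler system at $p$ lies in the setting of that theorem, and then by using assumption~(2) to trap the essential periodic orbits inside a compact attractor. Applying Theorem~\ref{generaldynamics}, for $r>0$ sufficiently large I would obtain a smooth vector field $F$ on $S^3$ that coincides with the Rössler field on $B_r(0)$ and has exactly two fixed points, $P_{In}$ and $P_{Out}$ — both saddle-foci of opposing index — connected by a heteroclinic orbit passing through $\infty$. Together with the bounded heteroclinic orbit $\Theta$ from assumption~(1), this yields a heteroclinic knot $H \subseteq S^3$ of trefoil type, configured exactly as in Fig.~\ref{treff}. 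Applying Theorem~\ref{trefoilor} to $F$ and $H$ shows that $\mathrm{Ess}(F)$ contains periodic orbits of infinitely many distinct Torus knot types, realized as periodic orbits intersecting a cross-section $U$ chosen near the heteroclinic orbit $\Theta$.

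Next, using assumption~(2) I would construct a compact trapping region $A \subseteq B_r(0)$ for the genuine Rössler flow that contains both $U$ and the finite part of $H$. Since the backwards orbit of every point of $W^s_{Out}$ either crosses the plane $O = \{z=0\}$ transversely or escapes to $\infty$ through $\{z>0\}$, a disc $D \subseteq W^s_{Out}$ bounded by a curve in $O$ acts as a separating surface for the flow. Patching $D$ with a bounded planar region of $O$ and a piece of a large sphere $\partial B_r(0)$ produces the boundary of a solid region $A$ through which the Rössler field is inward on $\partial A \setminus D$, while $D \subseteq W^s_{Out}$ is invariant. Hence $A$ is a Rössler trapping region, and its maximal forward invariant set $\Lambda$ is a compact attractor containing $\Theta \cup \{P_{In}, P_{Out}\}$ in its closure.

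Since $F$ coincides with the Rössler vector field on $B_r(0) \supseteq A$, the region $A$ is also $F$-trapping, so any $F$-periodic orbit entering $A$ stays entirely in $A$. I would arrange the cross-section $U$ from the proof of Theorem~\ref{trefoilor} to sit inside $A$, so that every essential periodic orbit forced by that theorem must meet $A$ and therefore live in $A$. On $A$ the flows of $F$ and of the Rössler system are identical, so each such periodic orbit is a genuine Rössler periodic orbit sitting on the attractor $\Lambda$. In particular, $\Lambda$ contains periodic orbits of infinitely many distinct Torus knot types, giving the claimed chaotic attractor.

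The main obstacle is the rigorous construction of the trapping region $A$ in the second step. Assumption~(2) controls only the backward behavior of $W^s_{Out}$, so one must verify that a carefully chosen disc in $W^s_{Out}$, patched with portions of $O$ and of $\partial B_r(0)$, glues along transverse curves into a piecewise-smooth closed surface bounding a solid region through which the Rössler flow is uniformly inward. One also needs to confirm that $W^s_{Out}$ does not re-enter $A$ in a pathological way, and that the cross-section $U$ used in Theorem~\ref{trefoilor} may indeed be located inside $A$. Estimates of this kind for the Rössler field have been carried out in \cite{I} for related questions, so the strategy is natural, but the delicate technical step lies precisely in the inward-transversality verification along the planar and spherical boundary pieces of $\partial A$.
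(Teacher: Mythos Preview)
Your overall strategy matches the paper's: smoothen the Rössler field at $\infty$ to obtain a smooth vector field $F'$ on $S^3$ generating the trefoil heteroclinic knot $H$, invoke Theorem~\ref{trefoilor} to produce infinitely many Torus-knot periodic orbits in $Ess(F')$, build a compact trapping region on which $F'$ and the original Rössler field $F_p$ coincide, and conclude that those periodic orbits are genuine Rössler orbits inside an attractor. The order of operations differs slightly (the paper builds the trapping region $K_1$ first, then performs the $rel$ $H$ deformation \emph{inside} $K_1$ and argues the orbits cannot escape through $\partial K_1$ during the reverse deformation), but this is not a substantive difference.

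The genuine gap is in your construction of the trapping region. You propose to bound $A$ by a disc $D\subseteq W^s_{Out}$, a piece of the plane $O$, and a piece of a large sphere $\partial B_r(0)$. The first two pieces are fine: $W^s_{Out}$ is invariant and on $O$ one computes $F_p\bullet(0,0,1)=b>0$, so the flow points into $\{z>0\}$. But the spherical piece cannot work: the Rössler field is not inward-transverse on any portion of a large sphere that would close off the region near $P_{Out}$, since $\Gamma_{Out}\subseteq W^u_{Out}$ escapes to $\infty$ and nearby orbits do as well. The paper avoids this obstruction entirely by a different device: it takes a small cross-section $S_1$ transverse to $\Theta$ close to $P_{Out}$, chooses a curve $\gamma\subseteq O\cap\partial C_1$ whose forward orbits hit $S_1$, and lets the remaining piece of $\partial K_1$ consist of the \emph{flow lines} connecting $\gamma$ to $S_1$. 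On those flow lines $F_p$ is tangent, so the inward condition holds automatically, and the resulting $K_1$ is a bounded topological ball with $P_{In}\in K_1$ and $P_{Out}$ arbitrarily close to $\partial K_1$. This flow-line boundary is the technical idea you are missing; once you replace the spherical cap by it, the rest of your argument goes through essentially as written.
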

\begin{proof}
We prove Th.\ref{trefoil1} as follows - first we prove that under the assumptions above there exists a bounded trapping set $K_1$ for the flow (this is based on ideas originally introduced in \cite{I2} - which, for completeness, we include here as well). After that, we deform the flow inside $K_1$, after which we use both Th.\ref{orbipers} and Th.\ref{trefoilor} to prove the dynamics in $K_1$ include infinitely many periodic orbits - thus concluding the proof of Th.\ref{trefoil1}. To begin, recall that by Th.\ref{generaldynamics} there exist two one-dimensional invariant manifolds, $\Gamma_{In}\subseteq W^s_{In}$ and $\Gamma_{Out}\subseteq W^u_{Out}$, which connect the fixed points $P_{In}$ and $P_{Out}$ to $\infty$ - and in particular, $H=\{P_{In},P_{Out},\infty\}\cup\Theta\cup\Gamma_{In}\cup\Gamma_{Out}$ (where $\Theta$ is the bounded heteroclinic orbit - see the illustration in Fig.\ref{trefoil}).\\

In addition, from now on throughout the proof we denote the vector field generating the Rössler system corresponding to the parameter $p=(a,b,c)$ by $F_p$ (see Eq.\ref{Vect}). It is easy to see the normal vector to the set $O\setminus\{\infty\}$ is $(0,0,1)$ - hence by computation for all $s\in O\setminus\{\infty\}$ we have $F_p(s)\bullet(0,0,1)=b>0$ (see the illustration in Fig.\ref{P}) - consequentially, once the orbit of an initial condition $s\in\mathbf{R}^3$ enters the region $\{(x,y,z)|z>0\}$ it cannot escape it under the flow.\\ 

\begin{figure}[h]
\centering
\begin{overpic}[width=0.4\textwidth]{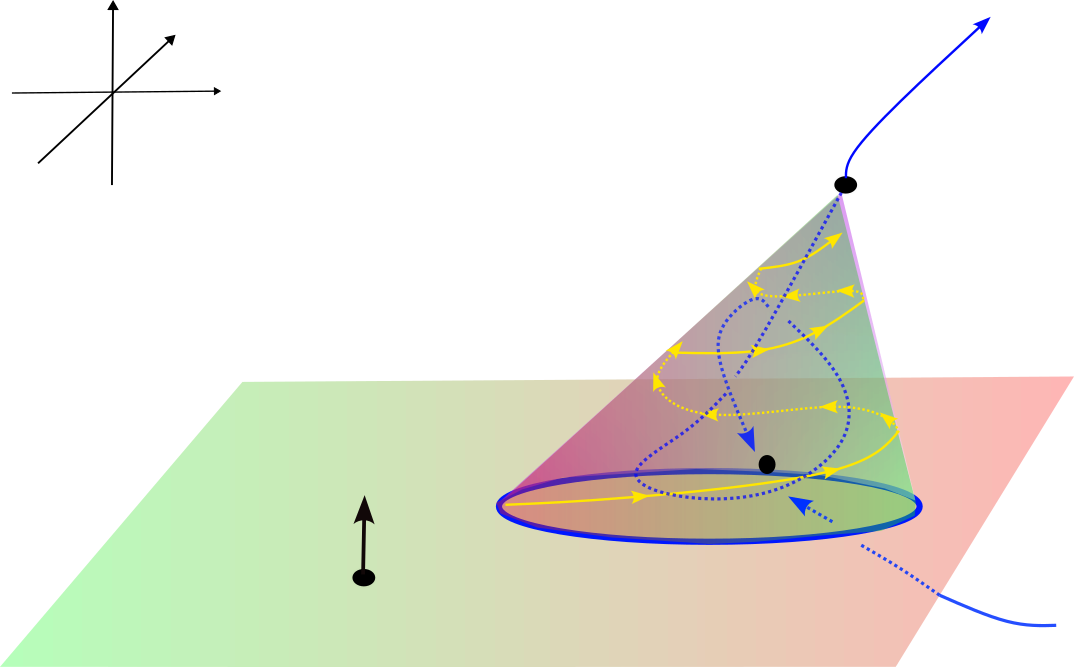}
\put(70,10){$O$}
\put(900,0){$\Gamma_{In}$}
\put(900,530){$\Gamma_{Out}$}
\put(800,450){$P_{Out}$}
\put(830,270){$W^s_{Out}$}
\put(600,180){$P_{In}$}
\put(560,250){$\Theta$}
\put(565,60){$\delta$}
\put(220,530){$x$}
\put(160,610){$y$}
\put(90,635){$z$}
\end{overpic}
\caption[Fig31]{The plane $O$, the intersection $\delta=W^s_{Out}\cap O$, and the region $C_1$ trapped inside the cone (note $\Theta,P_{In}\subseteq K_1$). On $O$ the vector field $F_p$ points into $\{(x,y,z)|z>0\}$.}
\label{P}
\end{figure}

We now recall that since for $p=(a,b,c)$ we have both $P_{In}=(\frac{c-\sqrt{c^2-4ab}}{2},-\frac{c-\sqrt{c^2-4ab}}{2a},\frac{c-\sqrt{c^2-4ab}}{2a})$ and $P_{Out}=(\frac{c+\sqrt{c^2-4ab}}{2},-\frac{c+\sqrt{c^2-4ab}}{2a},\frac{c+\sqrt{c^2-4ab}}{2a})$. Since $a,b\in(0,1),c>1$ (see the discussion in page \ref{parspace}) we have $c^2-4ab>0$ - therefore we conclude $P_{In},P_{Out}\in\{(x,y,z)|z>0\}$ (see the illustration in Fig.\ref{P}). Now, let us consider the set $C=\{(x,y,z)|z>0\}\setminus \overline{W^s_{Out}}$ - by our assumption it is easy to see $\overline{W^s_{Out}}\cap \overline{O}$ is a curve on $\overline{O}$ homotopic to $S_1$ (where the closure of $O$ is taken in $S^3$). This implies $C$ is composed of two components - $C_1$ and $C_2$, as illustrated in Fig.\ref{sb1} and \ref{P}. As such, since the one-dimensional unstable manifold $W^u_{Out}$ is transverse to $W^s_{Out}$ at the saddle-focus $P_{Out}$ by $W^s_{Out}=\Gamma_{Out}\cap\Theta$ it follows $W^u_{Out}$ separates $\Gamma_{Out}$ and $\Delta_{Out}$ in $\{(x,y,z)|z>0\}$ (see the illustration in Fig.\ref{sb1} and Fig.\ref{P}). Therefore, let us denote by $C_1$ the component of $C$ s.t. $\Theta\subseteq C_1$ (see the illustration in Fig.\ref{sb1} and Fig.\ref{P}).\\

\begin{figure}[h]
\centering
\begin{overpic}[width=0.3\textwidth]{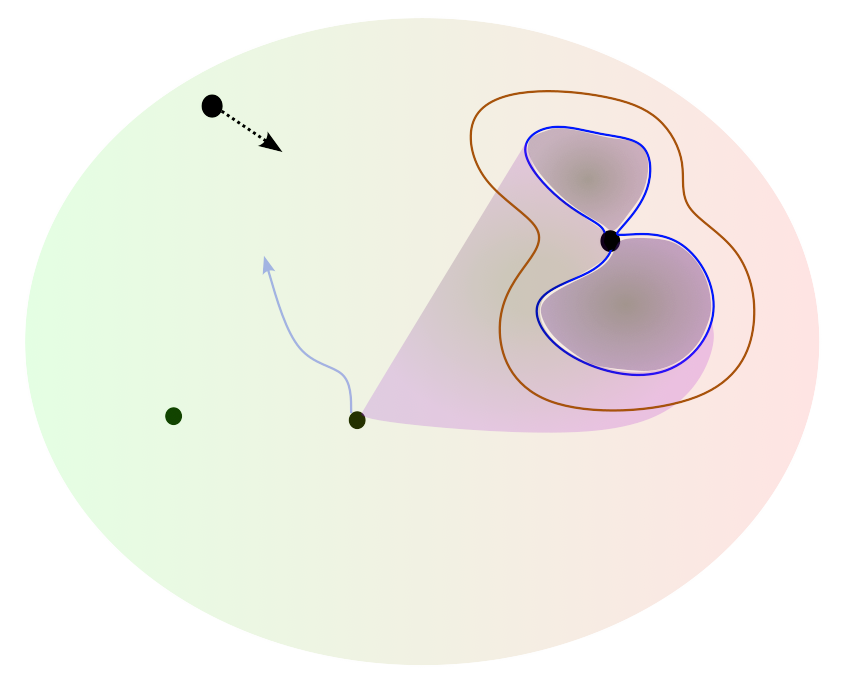}
\put(370,240){$P_{Out}$}
\put(150,270){$P_{In}$}
\put(720,560){$\delta$}
\put(750,490){$\infty$}
\put(830,270){$\gamma$}
\put(235,430){$\Theta$}
\put(555,230){$W^s_{Out}$}
\end{overpic}
\caption{\textit{The plane $\overline{O}$, sketched as a sphere in $S^3$ (with $\infty$ on it). The region $Q=\{(x,y,z)|z>0\}$ is the region trapped inside the sphere, while $W^s_{Out}$ is the purple surface. The set $C_1$ is the component of ${Q}\setminus \overline{W^s_{Out}}$ which includes $\Theta$. In this scenario, $\overline{W^s_{Out}}\cap \overline{O}$ is the blue curve $\delta$ which is homotopic (and not homeomorphic) to $S^1$, with singularities at $\infty$. The dark orange curve is $\gamma$.}}
\label{sb1}
\end{figure}

To continue, let $S_1$ denote a local cross-section transverse to $\Theta$ - moreover, we choose $S_1$ to be sufficiently small s.t. it is transverse to the flow. Since $\overline{W^s_{Out}}\cap \overline{S_b}$ is a curve homotopic to $S^1$ it follows there exists a curve $\gamma\subseteq {{O}}\cap\partial C_1$ (where both boundaries are taken in $\mathbf{R}^3$) s.t. the forward orbit of any $s\in\gamma$ hits $S_1$ transversely (see the illustration in Fig.\ref{sb1} and Fig.\ref{sb2}). Now, consider the region $K_1$, trapped between $O$ and the flow lines connecting $\gamma$ and $S_1$, as illustrated in Fig.\ref{sb2}. Since on $\partial K_1\cap O$ the vector field $F_p$ points inside $\{(x,y,z)|z>0\}$, summarizing our discussion above it is easy to see the following is satisfied:

    \begin{enumerate}
        \item    At every $s\in\partial K_1$ the vector $F_p(s)$ is either tangent to $\partial K_1$ or points inside $K_1$ (depending on whether $s$ is on a flow line connecting $\gamma$ to $S_1$ or $s\in O$). In particular, no orbit can escape $K_1$ under the flow.
        \item  $\Theta$ enters $K_1$ - and once it does it never escapes its interior. Moreover, we can choose $K_1$ s.t. it includes an arbitrarily large subset of $\Theta$ (i.e., $S_1$ is arbitrarily close to the fixed point $P_{Out}$).
        \item $K_1$ is bounded in $\mathbf{R}^3$, and homeomorphic to a ball. Moreover, $\partial K_1$ can be chosen to be arbitrarily and uniformly close to $\partial C_1\setminus O$.
        \item $F_p$ has no fixed points on $\partial K_1$ - i.e., for all $s\in\partial K_1$ we have $F_p(s)\ne0$.
        \item $P_{In}\in K_1$, and we can choose $K_1$ s.t. $P_{Out}$ is arbitrarily close to $\partial K_1$.
    \end{enumerate}

\begin{figure}[h]
\centering
\begin{overpic}[width=0.3\textwidth]{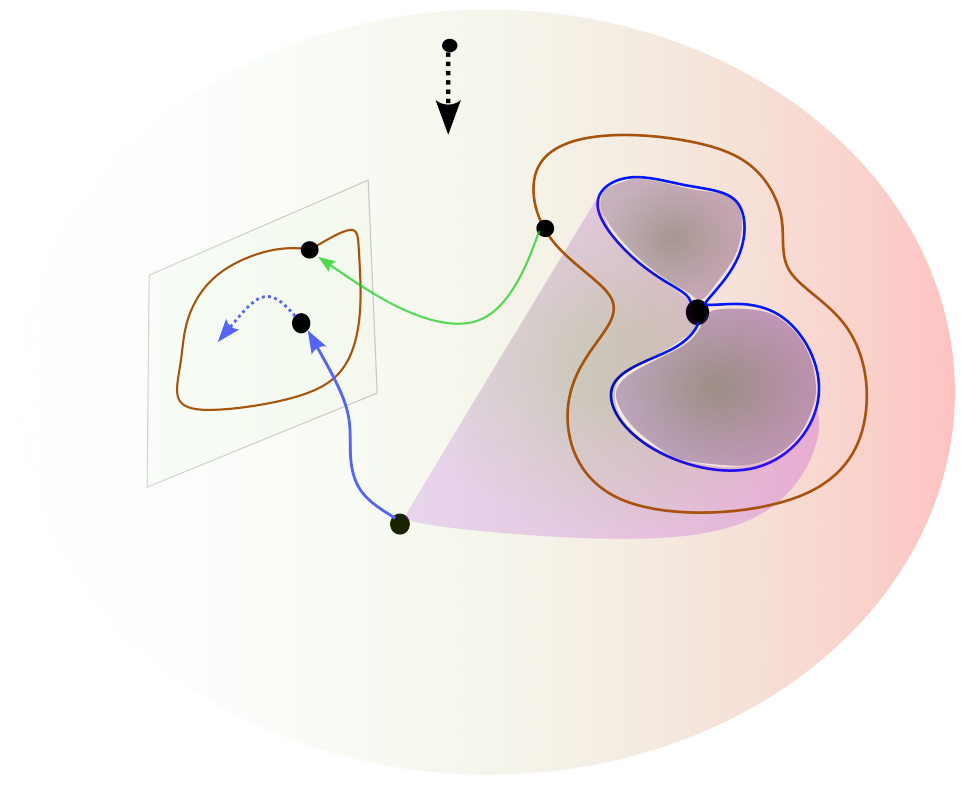}
\put(370,220){$P_{Out}$}
\put(720,560){$\delta$}
\put(750,490){$\infty$}
\put(830,270){$\gamma$}
\put(245,350){$\Theta$}
\put(215,570){$\gamma_1$}
\put(385,600){$S_1$}
\put(545,200){$W^s_{Out}$}
\end{overpic}
\caption{\textit{The cross section $S_1$ - by the continuity of the flow, the orbit of every initial condition on $\gamma$ flows to $\gamma_1$, some curve on $S_1$ - where $S_1$ is a cross-section transverse to the flow and to $\Theta$. Consequentially, there exists a connected region $K_1\subseteq C_1$ from which no orbit can escape.}}
\label{sb2}
\end{figure}

Having proven the existence of a trapping set $K_1$ for $F_p$ we are now ready to prove Theorem \ref{trefoil1}. We begin by deforming $F_p$ around the point at $\infty$ as follows: we smoothen $F_p$ around some arbitrarily small neighborhood of $\infty$ in $S^3$, $N_\infty$, by removing the fixed point at $\infty$ (we can do so by Th.\ref{generaldynamics}) - which we do by connecting the unbounded heteroclinic orbits $\Gamma_{Out}$ and $\Gamma_{In}$ to form $\Gamma$, a heteroclinic orbit connecting $P_{In}$ and $P_{Out}$ through $\infty$ (see the illustration in Fig.\ref{inftyy}). As a consequence, $F_p$ is deformed to $F'$, a smooth vector field of $S^3$ which creates a heteroclinic knot $\{P_{In}, P_{Out}\}\cup\Theta\cup\Gamma=H$ satisfying the assumptions of Th.\ref{orbipers} and Th.\ref{trefoilor}. Consequentially, $F'$ generates periodic orbits of infinitely many different Torus knot types. In addition, by Th.\ref{generaldynamics} we now we can choose the deformation s.t. the vector fields $F_p$ and $F'$ coincide on $\overline{K_1}$.\\

\begin{figure}[h]
\centering
\begin{overpic}[width=0.45\textwidth]{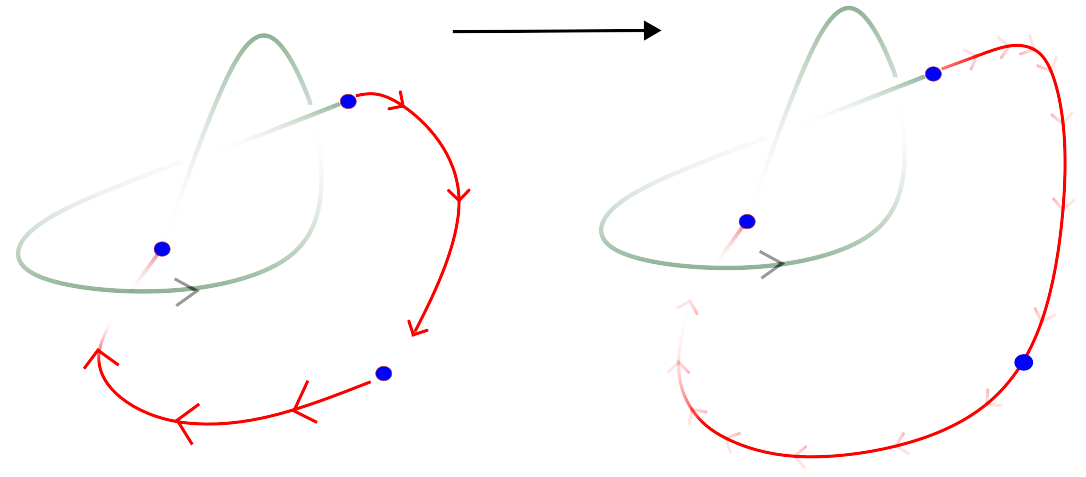}%
\put(930,280){$\Gamma$}
\put(840,410){$P_{Out}$}
\put(320,380){$P_{Out}$}
\put(390,80){$\infty$}
\put(60,200){$P_{In}$}
\put(600,230){$P_{In}$}
\put(970,120){$\infty$}
\put(420,180){$\Gamma_{Out}$}
\put(250,110){$\Gamma_{In}$}
\end{overpic}
\caption[Deforming $F_p$ to $R_p$ for trefoil parameters.]{\textit{The deformation of $F_p$ to $R_p$ - $\Gamma_{Out},\Gamma_{In}$ are connected by the deformation to a heteroclinic orbit $\Gamma$ (thus destroying the fixed point at $\infty$).}}\label{inftyy}
\end{figure}

Having defined $F'$, our next goal is to prove its periodic dynamics given by Th.\ref{orbipers} and Th.\ref{trefoilor} lie in $K_1$. By the previous paragraph this will immediately imply all these periodic orbits are also generated by the vector field $F_p$ - i.e., by the Rössler system corresponding to the parameter $p$. To do so, we deform the dynamics of $F'$ $rel$ $H$ on the interior of $K_1$ s.t. there exists a cross-section $R$, a topological triangle with a tip at $P_{In}$ which is suspended with the flow as appears in Fig.\ref{ross1} (we can do so since $K_1$ is homeomorphic to a ball). Moreover, we do so s.t. the deformation does not change the behavior of $F'$ on $\partial K_1$.\\

\begin{figure}[h]
\centering
\begin{overpic}[width=0.4\textwidth]{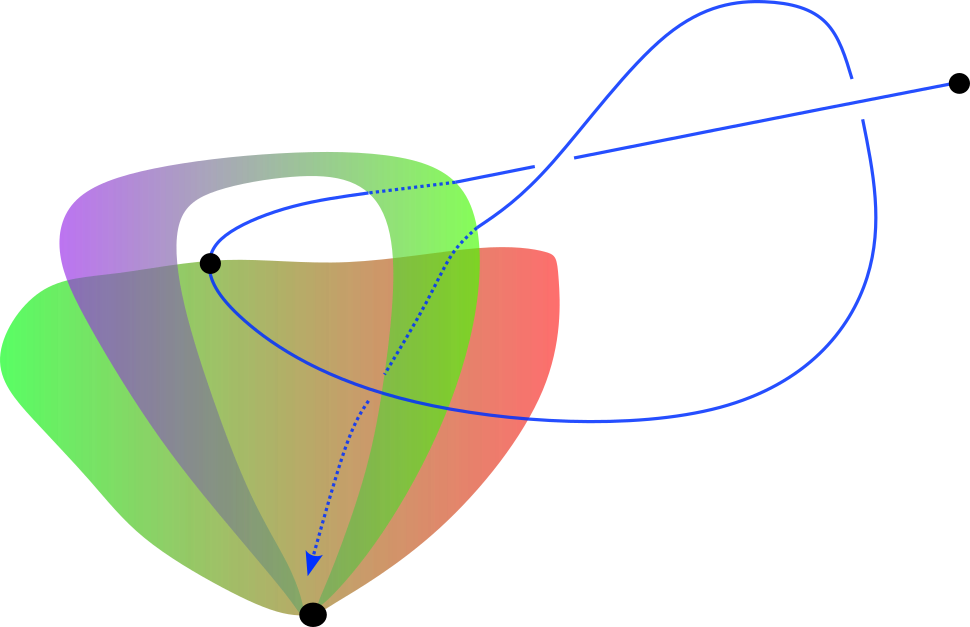}%
\put(940,500){$P_{Out}$}
\put(250,380){$p_0$}
\put(70,200){$R$}
\put(600,230){$\Theta$}
\put(340,0){$P_{In}$}
\end{overpic}
\caption{\textit{The suspension of the cross-section $R$ along $\Theta$ w.r.t. $F$ inside $K_1$.}}\label{ross1}
\end{figure}

Denote the corresponding vector field generated after the deformation described above by $F$ - it is easy to see $F$ satisfies the assumptions of Th.\ref{trefoilor}, and that per the proof of Th.\ref{trefoilor} $R$ intersects periodic orbits of infinitely many knot type - all of which must be interior to $K_1$. Moreover, since the deformation from $F'$ to $F$ takes place away from $\partial K_1$ (i.e., the two vector fields coincide on $\partial K_1$), it is easy to see that as we return from $F$ to $F'$ no periodic orbit $T$ for $F$ in $K_1$ can escape $K_1$ by colliding with $\partial K_1$ - since that would necessitate the creation of a surface on $\partial K_1$ on which the vector field points outside of $K_1$, and by our construction of $K_1$ there is no such surface. Consequentially, if $T\subseteq K_1$ is a periodic orbit in $Ess(F)$ (w.r.t. $H$), $T$ persists as a periodic orbit in $K_1$ without changing its knot type as we return from $F$ to $F'$.\\

To summarize, we have just proven $K_1$ includes infinitely many periodic orbits for $F'$, of infinitely many different knot types - and since by construction $F'$ and $F_p$ coincide on $K_1$ the same is true for the vector field $F_p$. Or, in other words, we have just proven the dynamics of the Rössler system corresponding to $p$ include periodic orbits of infinitely many knot types in $K_1$  - moreover, by the proof of Th.\ref{trefoil}, we know these periodic orbits are all Torus knots on the $L(0,1)$ Template. Finally, since $K_1$ is a trapping set for $F_p$ we conclude these periodic orbits all lie inside and attracting invariant set, and the proof of Th.\ref{trefoil1} is now complete.
    \end{proof}

\begin{figure}[h]
\centering
\begin{overpic}[width=0.5\textwidth]{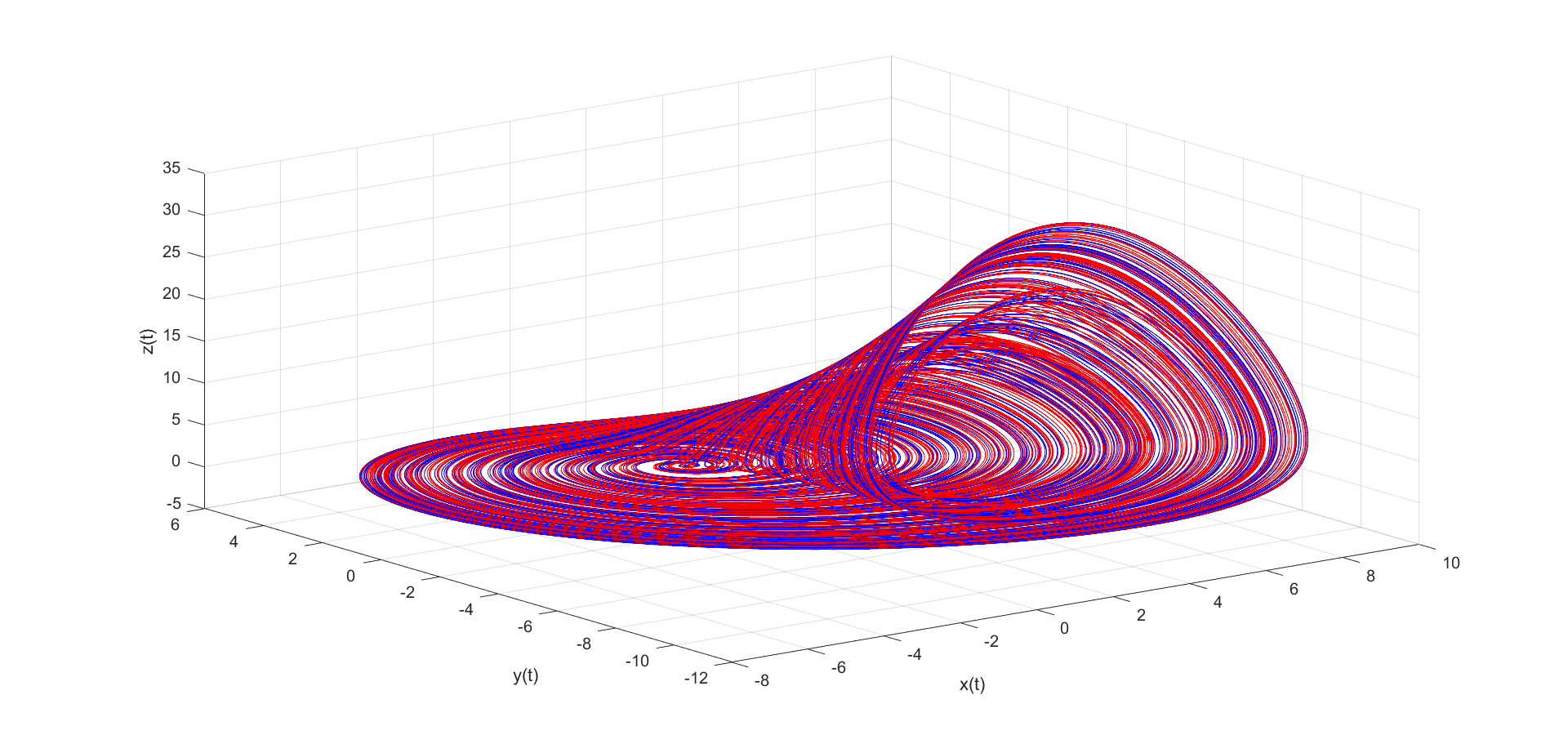}
\end{overpic}
\caption[A heteroclinic attractor.]{\textit{The Rössler attractor at parameter values at which the vector field was observed numerically to generate a heteroclinic trefoil knot as in Fig.\ref{trefoil} (see Fig.B.1 \cite{MBKPS}).}}
\label{attractortref}
\end{figure}

Before we conclude this section we remark there is some evidence that heteroclinic dynamics play a role in the bifurcations of the Rössler system. As observed numerically in \cite{MBKPS}, \cite{BBS} and \cite{G} (among others), $P$ includes spiral bifurcation structures centered around parameters $p$ which are often called"Periodicity Hubs" or "Spiral Centers". It would be an interesting question for a future study to check whether Th.\ref{trefoilor} and Th.\ref{trefoil1} are somehow connected to such bifurcation phenomena - especially since some periodicity hubs were observed to possibly correspond to heteroclinic dynamics (see \cite{SR}).

\subsection{The persistence of periodicity in the Lorenz attractor}
\label{lorenz}
Having applied Th.\ref{orbipers} to study the Rössler system, in this section we perform a similar analysis on the Lorenz system - however, this time our result would be much more dependent on the Orbit Index Theory (see Sect.\ref{orbitin}). To begin, recall we define the Lorenz system by the following system of differential equations:

\begin{equation} \label{Vect11}
\begin{cases}
\dot{x} = \sigma(y-x) \\
 \dot{y} = x(\rho-z)-y\\
 \dot{z}=xy-\beta z
\end{cases}
\end{equation}

Where $\sigma,\rho,\beta>0$. From now on, given $(\sigma,\rho,\beta)\in\mathbf{R}^3$ we will always denote the corresponding vector field by $L_{\sigma,\rho,\beta}$. Note that by direct computation it follows that when both $\rho>1,\beta>0$ the vector field $L_{\sigma,\rho,\beta}$ has precisely three fixed points::
\begin{itemize}
    \item The origin, $0$, a real saddle with a two-dimensional stable manifold $W^s(0)$ and a one-dimensional unstable manifold (in particular, the Poincare Index of $0$ is $1$)
    \item  $p^+,p^-$, two symmetric fixed points (w.r.t. the $z$-axis), given by $(\pm\sqrt{\beta(\rho-1)},\pm\sqrt{\beta(\rho-1)},\rho-1)$ of Poincare index $-1$ - moreover, $p_{\pm}$ are (generically) either saddles, saddle-foci, or attracting fixed points (as shown in \cite{L}, when $\sigma<\beta+1$ both $p^\pm$ are saddle foci).
    \item $\infty$, which is a repelling fixed point (see, for example, Th.2 in \cite{XYS}).
\end{itemize}

As proven in \cite{L}, at $(\sigma,\rho,\beta)=(10,28,\frac{8}{3})$ the Lorenz system possesses an attracting invariant set - and as shown numerically in \cite{L}, its appearance is as in Fig.\ref{attractorlo}. \\
\begin{figure}[h]
\centering
\begin{overpic}[width=0.6\textwidth]{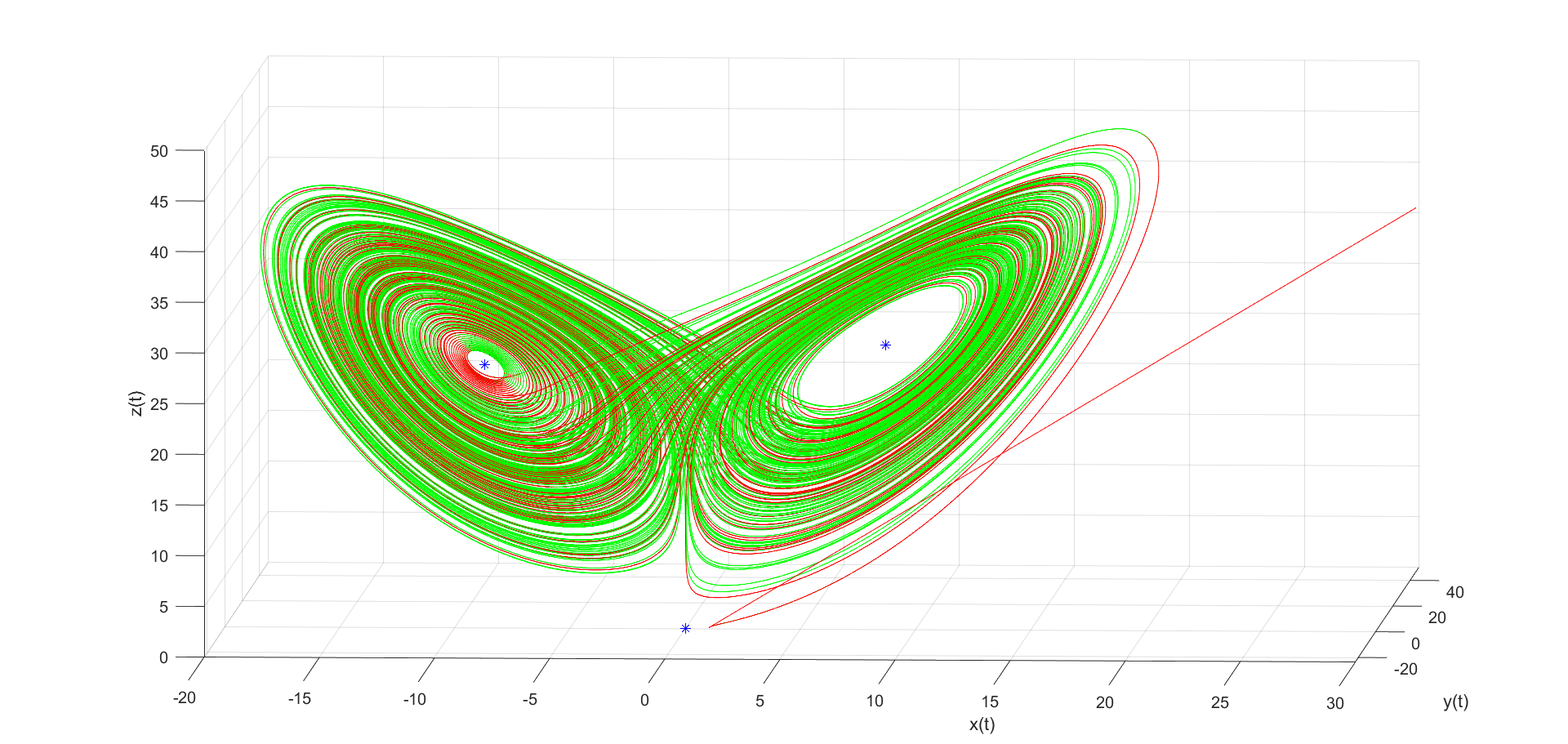}
\end{overpic}
\caption{\textit{The Lorenz attractor at $(\sigma,\rho,\beta)=(10,28,\frac{8}{3})$. The blue dots denote the fixed points for the flow.}}
\label{attractorlo}
\end{figure}

This section is organized as follows - we begin with recalling several facts from \cite{Pi} on the dynamics of the Lorenz system. Following that we use Th.\ref{orbipers} to study the dynamics of the flow, where we perform a similar analysis to the one carried in Th.\ref{trefoil1} and Th.\ref{trefoilor} (see Th.\ref{trefoil2}). Finally we conclude this section with Th.\ref{persistence} in which we use the Orbit Index Theory to study the persistence of periodic orbits in the Lorenz attractor in the breakdown of the heteroclinic knot.\\

To begin, recall that given $v=(\sigma,\rho,\beta)$ we denote the vector field generating the Lorenz system by $L_v$ and that $\sigma:\{1,2\}^\mathbf{Z}\to\{1,2\}^\mathbf{Z}$ always denotes the double-sided shift. In addition, from now on we denote the stable, invariant manifold of the origin by $W^s(0)$. We begin by recalling the following fact (see Prop.2.1 and Lemma 2.2 in \cite{Pi}):
\begin{proposition}
   \label{talilemma} There exists an open set of parameters $P=\{(\sigma,\rho,\beta)|\sigma,\beta>0,\rho>1,\rho>\frac{(\sigma+1)^2}{4\sigma}\}$ s.t. for every $v\in P$ the following is satisfied (see the illustration in Fig.\ref{RECT1}):
   
   \begin{itemize}
       \item The corresponding Lorenz system generates a universal cross-section $R_v$. In particular, $R_v$ intersects transversely with every periodic orbit for the vector field $L_v$.
       \item $R_v\cap W^s(0)$ includes a curve which divides $R_v$ in two sub-domains, $R_{1,v}$ and $R_{2,v}$ as depicted in Fig.\ref{RECT1}. Moreover, $p^-\in\partial R_{1,v}$ and $p^{+}\in\partial R_{2,v}$.
       \item The first-return map $\psi_v:R_{1,v}\cup R_{2,v}\to R_v$ is well-defined and continuous. Consequentially, there exists a $\psi_v-$invariant set $I_v\subseteq\psi_v:R_{1,v}\cup R_{2,v}$ and a continuous $\pi:I_v\to\{1,2\}$ s.t. $\pi\circ\psi_v=\sigma\circ\pi$.
   \end{itemize}

\end{proposition}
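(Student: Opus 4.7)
The plan is to follow the standard recipe for analysing the Lorenz flow: trap orbits in a compact region, construct a global cross-section transverse to the flow whose intersection with $W^s(0)$ is a separating curve, and then derive symbolic dynamics by recording which side of $W^s(0)$ each iterate of the return map lies in.

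First, I would establish a trapping ellipsoid via a quadratic Lyapunov function of the form $V(x,y,z)=\rho x^{2}+\sigma y^{2}+\sigma(z-2\rho)^{2}$. A direct computation shows that $\dot V$ is a negative-definite quadratic form plus lower-order terms, and the inequality $\rho>(\sigma+1)^{2}/(4\sigma)$ together with $\sigma,\beta>0$ guarantees $\dot V<0$ outside some compact set, producing a forward-invariant solid ellipsoid $E_{v}$ containing all three finite fixed points. In particular $\infty$ is repelling, and every non-trivial orbit remains in $E_{v}$. Next, I would take $R_{v}$ to be a suitable region of the horizontal plane $\{z=\rho-1\}$ (the plane through $p^{\pm}$) intersected with $E_{v}$, with small neighbourhoods of $p^{\pm}$ excised. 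On that plane the $z$-component of $L_{v}$ equals $xy-\beta(\rho-1)$, which vanishes precisely on a hyperbola through $p^{\pm}$; restricting $R_{v}$ to the side where this expression has a fixed sign makes $R_{v}$ everywhere transverse to the flow.

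To show $R_{v}$ is universal I would use the parameter condition to verify that the eigenvalues at $p^{\pm}$ force every orbit entering a neighbourhood of $p^{\pm}$ to spiral outward and re-cross $\{z=\rho-1\}$; combined with the trapping in $E_{v}$, any orbit which neither converges to a fixed point nor lies on $W^{s}(0)$ must intersect $R_{v}$ infinitely often, and transversely. I would then verify that $W^{s}(0)\cap R_{v}$ is a single arc $\gamma_{v}$ separating $R_{v}$ into two components $R_{1,v}\ni p^{-}$ and $R_{2,v}\ni p^{+}$: this follows from the transversality of the two-dimensional manifold $W^{s}(0)$ to the two-dimensional disc $R_{v}$, together with a topological argument on how $W^{s}(0)$ is embedded in $E_{v}$. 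Continuity of $\psi_{v}$ on $R_{1,v}\cup R_{2,v}$ then follows from the implicit function theorem applied to the time-$t$ map away from $\gamma_{v}$; the return time diverges as one approaches $\gamma_{v}$ because such points are funnelled along $W^{s}(0)$ toward the origin, which is exactly why one removes $\gamma_{v}$ from the domain.

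Finally, I would define $I_{v}$ as the set of points whose full forward and backward $\psi_{v}$-orbit avoids $\gamma_{v}$ and set $\pi(x)=(a_{n})_{n\in\mathbf{Z}}$ with $a_{n}=i$ iff $\psi_{v}^{n}(x)\in R_{i,v}$; equivariance $\pi\circ\psi_{v}=\sigma\circ\pi$ is then tautological, and continuity holds because near any $x\in I_{v}$ each symbol $a_{n}$ is locally constant. The main obstacle is proving non-emptiness of $I_{v}$ and surjectivity of $\pi$ onto the full two-sided shift, since for generic $v\in P$ we do not have global hyperbolicity to invoke a Smale horseshoe directly. The plan is to exploit the uniform expansion of $\psi_{v}$ transverse to the strong-stable foliation (which exists on a neighbourhood of the maximal invariant set by the geometric structure of the Lorenz flow) together with the fact that each image $\psi_{v}(R_{i,v})$ is a long thin strip which crosses both components of $R_{v}\setminus\gamma_{v}$ completely in the expanding direction. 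A standard nested cylinder-set argument along the strong-stable foliation then realises every admissible itinerary by at least one orbit, yielding the required semi-conjugacy.
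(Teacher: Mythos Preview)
The paper does not prove this proposition at all: it is quoted verbatim as a known fact, with the attribution ``see Prop.~2.1 and Lemma~2.2 in \cite{Pi}''. So there is no proof in the paper to compare against; your sketch is an attempt to reconstruct the argument of \cite{Pi} from scratch.

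As a reconstruction, the outline is broadly reasonable (Lyapunov trapping, horizontal cross-section through $p^{\pm}$, itinerary coding relative to $W^s(0)$), but two points deserve care. First, you misattribute the role of the parameter inequality $\rho>(\sigma+1)^2/(4\sigma)$: the Lyapunov calculation you describe gives $\dot V=-2\sigma\bigl(\rho x^2+y^2+\beta(z-\rho)^2-\beta\rho^2\bigr)$, which is negative outside a fixed ellipsoid for \emph{every} $\sigma,\beta>0$, $\rho>1$, so the trapping region does not use that hypothesis. In \cite{Pi} the inequality is instead what makes the eigenvalue configuration at the fixed points compatible with the construction of a genuinely universal cross-section $R_v$ (in particular, controlling how orbits near $p^\pm$ and near the origin re-intersect the plane $z=\rho-1$). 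Your sketch should locate the hypothesis there rather than in the Lyapunov step. Second, you set yourself the harder task of proving that $\pi$ is onto the full shift; the statement as written only asserts the existence of \emph{some} invariant set $I_v$ with a continuous semiconjugacy to $\sigma$, so the ``nested cylinder'' argument for realising every itinerary is not required here (and indeed is not available for arbitrary $v\in P$ without further hypotheses such as the heteroclinic condition of Th.~\ref{basis}).
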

\begin{figure}[h]
\centering
\begin{overpic}[width=0.5\textwidth]{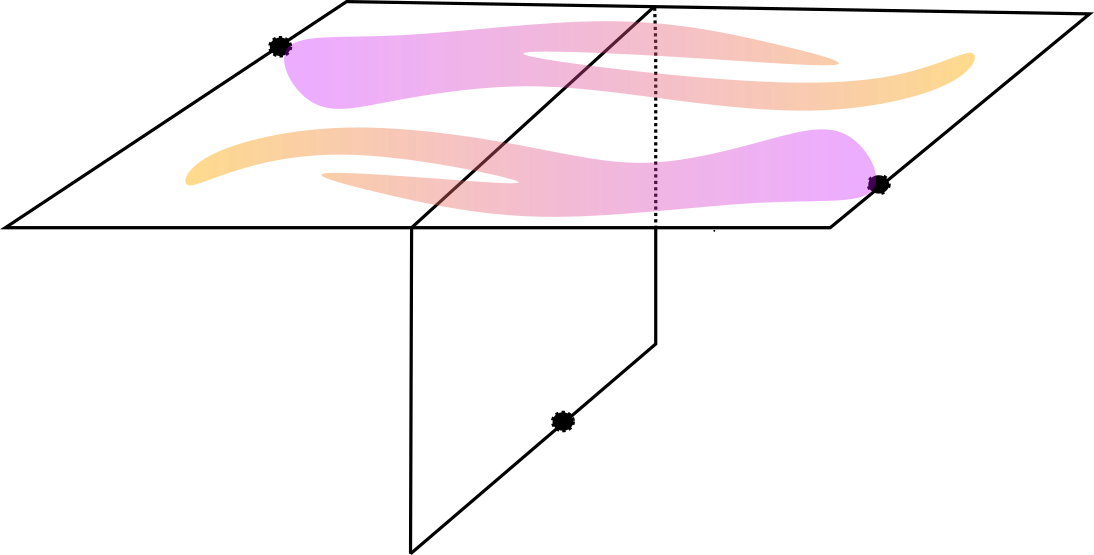}
\put(250,70){$W^s(0)$}
\put(860,470){$R_{2,v}$}
\put(100,320){ $R_{1,v}$}
\put(200,460){$p^-$}
\put(500,60){$0$}
\put(830,330){$p^+$}
\put(300,430){$\psi_v(R_{1,v})$}
\put(570,330){$\psi_v(R_{2,v})$}
\end{overpic}
\caption{\textit{The cross-section $R$ and the first-return map associated with it per Prop.\ref{talilemma}.}}
\label{RECT1}
\end{figure}

To continue, we further recall that as proven in Th.1.1 in \cite{Pi} there exist parameters $p\in P$ at which the Lorenz system generates a heteroclinic trefoil knot as in Fig.\ref{heteroclo}.  In addition, as proven in \cite{Pi} the first-return map at such parameters is as appears in Fig.\ref{RECT} - and in particular its dynamics can be described by a coding on two symbols. In more detail, recall the Lorenz Template (see Fig.\ref{TEMP2}) - then, the following fact was proven (see Cor.1.3 and Th.1.1 in \cite{Pi}):
\begin{theorem}
    \label{basis} Let $p\in P$ be a parameter s.t. the Lorenz generates a heteroclinic knot $H$ as in Fig.\ref{heteroclo}. Then, every knot type on the Lorenz Template (save possibly for two) is realized as a periodic orbit for the flow inside the attractor. Moreover, such parameters $p$ exist in $P$.
\end{theorem}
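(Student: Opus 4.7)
My plan is to separate the statement into two claims—realization of knot types on the Lorenz template and parametric existence of $p \in P$—and handle each with different machinery. For the realization half, I would exploit $H$ as a rigid topological skeleton. Starting from Prop.\ref{talilemma}, when the heteroclinic trefoil knot is present the two sub-rectangles $R_{1,v}, R_{2,v}$ share the origin $0$ as a common boundary vertex, and the terminal points of $W^u(0) \cap R_v$ are pinned by $H$ so that $\psi_v(R_{1,v})$ and $\psi_v(R_{2,v})$ must stretch fully across both rectangles in the classical geometric Lorenz tearing pattern. This places the flow squarely within the hypotheses of Th.\ref{orbipers}: the host surface $R$ of the Pseudo-Anosov map is a disc punctured at $\{0,p^+,p^-\}$, the open set playing the role of $R_1$ in Th.\ref{orbipers} is $R_{1,v}\cup R_{2,v}$, and the induced graph map on the spine covers both edges at least twice, forcing spectral radius strictly greater than one.

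Once Th.\ref{orbipers} produces an infinite essential class, I would appeal to Cor.\ref{templateth} to associate a canonical template $\Theta$ with $L_v$. To apply Cor.\ref{templateth} I must check the orbit-index and dense-orbit hypotheses; both are verified by first deforming $L_v$ $rel$ $H$ to the Afraimovich–Bykov–Shilnikov geometric model (see \cite{SAB}), where the chain-recurrent set is singular-hyperbolic, every periodic orbit is a saddle of orbit index $-1$ or $0$, and density of a Birkhoff-type orbit is standard. The identification of $\Theta$ with the Lorenz template of Fig.\ref{TEMP2} then follows from Th.\ref{begbon}: since both the idealized model and any other model flow produced by the construction in Cor.\ref{templateth} realize the same basic set in the complement of $H$ with minimal total genus, they are orbitally equivalent on that basic set, and the Birman–Williams reduction (Th.\ref{BIRW}) yields the Lorenz template in both cases, with the at-most-two exceptional knots accounting for the usual boundary orbits of the template.

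For the existence of $p \in P$ realizing $H$, I would shoot on the one-dimensional unstable manifold $W^u(0)$. The trefoil scenario of Fig.\ref{heteroclo} requires that the two branches of $W^u(0)$ each converge to $p^\mp$ after a prescribed winding pattern. Parametrising $W^u(0)$ continuously in $(\sigma,\rho,\beta)\in P$ and tracking its first intersection with $R_v$ yields a map into the cross-section whose degree can be computed from the linking data of $H$; an intermediate-value argument between parameter regions where the branches of $W^u(0)$ escape on opposite sides of $W^s(p^\pm)$ then produces the desired $p$. In practice this step would likely be supplemented by rigorous interval-arithmetic continuation from already-documented homoclinic parameters of the Lorenz family.

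The main obstacle I foresee is the template identification. Cor.\ref{templateth} delivers \emph{some} two-branched template, but upgrading this to the specific symmetric Lorenz template (and not, say, a template differing by a half-twist on one of its branches) requires a careful geometric analysis of how $\psi_v(R_{1,v})$ and $\psi_v(R_{2,v})$ are reglued along $W^s(0)\cap R_v$ and how this gluing is constrained by the linking information carried by the trefoil $H$. The cleanest route I can see is to perform the deformation $rel$ $H$ to the ABS model branch-by-branch, keeping track of orientation data at the branch line, and then appeal to Th.\ref{begbon} to upgrade orbital equivalence on the basic set to a full identification of the templates.
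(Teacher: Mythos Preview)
The paper does not prove Th.\ref{basis}: it is stated as a citation to Th.1.1 and Cor.1.3 in \cite{Pi}, with both the knot-realization claim and the existence of the heteroclinic parameter imported wholesale from that reference. The closest in-paper argument is the proof of Th.\ref{trefoil2}, which takes Th.\ref{basis} as input and then re-derives the knot conclusion for general vector fields via Th.\ref{orbipers} together with a second black-box citation to \cite{Pi} (recorded as Th.\ref{talitheorem}). So your plan to manufacture a proof from the paper's own machinery is already more than the paper attempts.

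Your outline has two concrete gaps. First, the host surface for the Pseudo-Anosov embedding is not a thrice-punctured disc. The first-return map $\psi_v$ is discontinuous along the arc $W^s(0)\cap R_v$, and the way the two torn halves are reglued forces the ambient surface to be a once-punctured torus; the paper says this explicitly in the proof of Th.\ref{trefoil2}, citing Th.3.1 in \cite{Pi}. Treating the discontinuity locus as a single puncture labeled ``$0$'' on a disc does not yield a homeomorphism in the correct isotopy class, and the spine computation you sketch (two edges each covered twice) belongs to the wrong surface.

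Second, your appeal to Cor.\ref{templateth} fails on its hypotheses: item (4) there requires every saddle fixed point on $H$ to be a saddle focus, but the origin in the Lorenz system is a real saddle. The Hopf blow-up step in the proof of Cor.\ref{templateth} cannot be performed at a real saddle, so that route to the template is blocked. The paper avoids this in Th.\ref{trefoil2} by invoking Th.\ref{talitheorem} (i.e.\ \cite{Pi}) directly for the template identification rather than going through Cor.\ref{templateth}; if you want a self-contained argument you need a different mechanism to push the invariant set off the real saddle before running Birman--Williams, and your proposal does not supply one.
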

\begin{figure}[h]
        \centering
\begin{overpic}[width=0.4\textwidth]{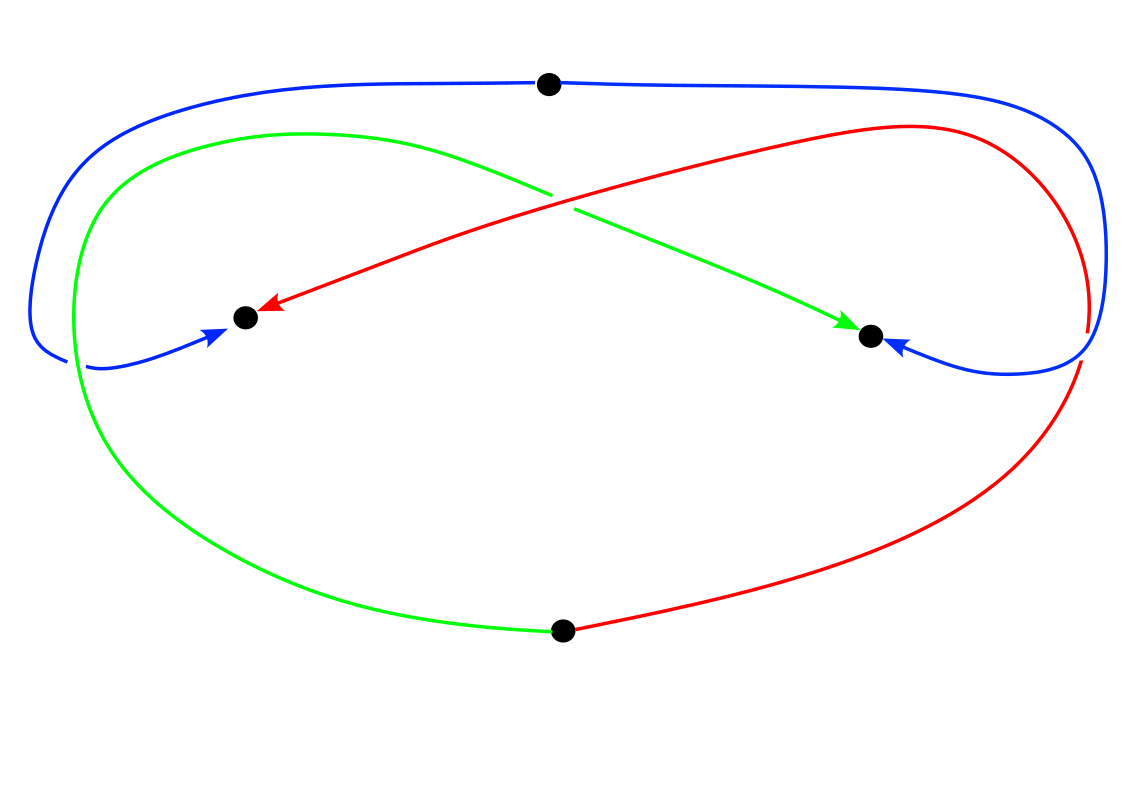}
\put(490,100){$0$}
\put(470,650){$\infty$}
\put(750,370){$p^+$}
\put(200,370){$p^-$}
\end{overpic}
\caption{\textit{A heteroclinic trefoil knot in the Lorenz system. In our setting, $p^\pm$ would always be either sinks or saddle foci, while $\infty$ and $0$ would always be a repeller and a saddle (respectively).}}
  \label{heteroclo}
    \end{figure}
    
We are now finally ready to prove the results of this section. We first prove an analogue to Th.\ref{trefoilor} and Th.\ref{trefoil1}, using Th.\ref{basis} and Th.\ref{orbipers}. We begin with the following fact: 
\begin{theorem}
    \label{trefoil2}
   Assume $F$ is a $C^k,k\geq3$ vector field of $S^3$ which generates a heteroclinic trefoil knot $H$ as in Fig.\ref{heteroclo}, and moreover, assume the fixed points $p^\pm$ are either sinks, real saddles, or saddle foci. Then, every knot type on the Lorenz Template (save possibly for two) is realized as a periodic orbit for $F$ - in particular, $F$ generates infinitely many periodic orbits of infinitely many knot types.
   \end{theorem}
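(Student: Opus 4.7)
The plan is to mirror the two-step strategy used to prove Theorem \ref{trefoilor} and Theorem \ref{trefoil1}, replacing the Smale horseshoe on the cross-section by the Lorenz-type tearing map given by Prop.\ref{talilemma}. First I would fix, via Theorem \ref{basis}, a concrete Lorenz vector field $L_p$ realising the heteroclinic trefoil of Fig.\ref{heteroclo} together with every knot type on the Lorenz template. Next I would construct an idealised vector field $G$ on $S^3$ that has exactly the same fixed points and exactly the same heteroclinic knot $H$ as $F$ (with the same topological types at $0$, $p^{\pm}$ and $\infty$), and which admits a global cross-section $R$ containing the trace $W^{s}(0)\cap R$ as a dividing curve between two sub-rectangles $R_1,R_2$ with $p^{\pm}\in\partial R_i$, such that the first-return map $g:R_1\cup R_2\to R$ of $G$ is precisely the Lorenz tearing map of Prop.\ref{talilemma}, extending continuously to the dividing curve. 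Because the admissible local models at $p^{\pm}$ (sink, real saddle, saddle-focus) all permit a return-map normal form orbit-equivalent away from the fixed points, and because the rest of $H$ is purely topological, $F$ and $G$ can be joined by a smooth homotopy fixing the fixed-point structure and the heteroclinic orbits, i.e.\ a deformation $rel$ $H$ in the sense of Def.\ref{relt}.

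The second step is to verify the hypotheses of Theorem \ref{orbipers} for $G$. Following the Rössler construction in the proof of Theorem \ref{trefoilor}, I would embed $g:R_1\cup R_2\to R$ inside a homeomorphism $P:\Sigma\to\Sigma$ of a thrice-punctured disc $\Sigma$, the three punctures being $p^+$, $p^-$ and the puncture created by the dividing curve $W^{s}(0)\cap R$ (which limits onto $0$ through the suspension). With this choice, the spine $\Gamma$ has two edges, and the induced graph map $p':\Gamma\to\Gamma$ is governed by the incidence matrix of the two-symbol shift on $R_1,R_2$, namely
\[
\begin{pmatrix}1&1\\1&1\end{pmatrix},
\]
whose spectral radius is $2>1$; by Theorem \ref{betshan} the map $P$ is isotopic to a pseudo-Anosov map, and the edge corresponding to $R_1$ (resp.\ $R_2$) is covered at least twice. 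The vertices of the sub-graph $\gamma$ correspond precisely to the boundary intersections $\overline{S_1}\cap H$ required by hypothesis $(5)$ of Theorem \ref{orbipers}, since $\partial R_i$ meets $H$ at $p^{\pm}$ and at the dividing curve landing on $0$. Applying Theorem \ref{orbipers} then gives that $Ess(F)$ (with respect to $H$) contains infinitely many periodic orbits, whose knot types and mutual linking with $H$ are preserved under the homotopy between $G$ and $F$.

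The third step is to identify the resulting template with the Lorenz template. As in the proof of Theorem \ref{trefoilor} and Cor.\ref{templateth}, I would first destroy any residual periodic orbit of Orbit Index $1$ by Kuperberg plugs, then blow up each saddle-focus (and each sink/saddle, if required) via small local modifications so that the chain-recurrent set outside small balls around the fixed points becomes a hyperbolic basic set $\Phi'$ whose periodic orbits are precisely those of the Lorenz-type suspension. The Birman--Williams theorem (Theorem \ref{BIRW}) then associates a unique template $\Theta$ to $\Phi'$, and the modified flow is a model flow in the sense of Def.\ref{model}. Since the Lorenz system $L_p$ from Theorem \ref{basis} provides, after the same normalisation, another model flow on the complement of the same heteroclinic trefoil whose template is the Lorenz template, the uniqueness of models (Theorem \ref{begbon}) forces $\Theta$ to be the Lorenz template. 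Pulling back along the deformation (which preserves knot types by Theorem \ref{orbipers}) yields that every knot type on the Lorenz template, save possibly for two, is realised as a periodic orbit for $F$.

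The main obstacle will be the careful realisation of the idealised model $G$ and the subsequent pseudo-Anosov embedding at the dividing curve, since the Lorenz return map is discontinuous across $W^{s}(0)\cap R$ while hypothesis $(2)$ of Theorem \ref{orbipers} requires a continuous extension to $\overline{S_1}\cap H$. This is handled by placing the two ``tears'' of $g$ at the puncture corresponding to $0$ in $\Sigma$, so that the extension is continuous (though non-injective) onto that puncture, exactly as the corresponding folding point at $p_0$ was treated in the proof of Theorem \ref{trefoilor}. Once this is set up, the rest of the argument is a direct adaptation of the Rössler case, with the $L(0,1)$ template replaced by the Lorenz template via Theorem \ref{begbon}.
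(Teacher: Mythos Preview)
Your overall strategy is sound and parallels the paper's, but there is one genuine gap concerning the surface into which you embed the first-return map.

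You propose to embed the Lorenz tearing map $g:R_1\cup R_2\to R$ into a pseudo-Anosov on a \emph{thrice-punctured disc}, with punctures at $p^+$, $p^-$ and ``the puncture created by the dividing curve $W^s(0)\cap R$''. The paper, by contrast, invokes Theorem~3.1 of \cite{Pi} and embeds $g$ into a pseudo-Anosov on a \emph{once-punctured torus} $\mathbf{T}^*$. This is not a cosmetic difference. In the R\"ossler case the distinguished boundary point $p_0=\Theta\cap U$ is literally the transverse intersection of the bounded heteroclinic orbit with the cross-section, so $p_0\in\partial S_1\cap H$ and the first-return map extends continuously (though non-injectively) there: it is a \emph{fold}. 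In the Lorenz case the dividing curve lies in $W^s(0)\cap R$, but the origin $0$ itself is \emph{not} on the cross-section $R$ (see Fig.~\ref{RECT1}), so your proposed third puncture does not correspond to any actual point of $\partial S_1\cap H$; hypothesis (5) of Theorem~\ref{orbipers} is therefore not verified. More fundamentally, the Lorenz map is a \emph{tear}: orbits approaching the dividing curve from the $R_1$-side and the $R_2$-side land near $p^-$ and $p^+$ respectively, so there is no continuous extension of $g$ across that curve at all, and your analogy with the R\"ossler fold point breaks down. The punctured-torus embedding from \cite{Pi} is precisely what accommodates this tearing: the figure-eight spine (one vertex, two loops) allows the two branches of the tear to be glued at a single vertex without forcing continuity of the return map there.

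A secondary difference: for the template identification you propose the full Cor.~\ref{templateth}-style machinery (Kuperberg plugs, Hopf blow-ups, Birman--Williams, then Th.~\ref{begbon}). The paper shortcuts this entirely by quoting Th.~\ref{talitheorem} (again from \cite{Pi}), which already asserts that any vector field generating the heteroclinic knot of Fig.~\ref{heteroclo} together with a cross-section and return map as in Fig.~\ref{RECT} realizes every Lorenz-template knot type. Once the straightened model $L'''$ (the fake-horseshoe suspension) satisfies those hypotheses, the conclusion is immediate; no separate model-flow comparison is needed. Your route would likely work too once the embedding surface is corrected, but it is considerably longer.
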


\begin{proof}
We begin by considering a Lorenz system corresponding to some parameter $p\in P$ at which the flow satisfies the assumptions (and conclusions) of Th.\ref{basis} (by Th.\ref{basis}, such a parameter exists). Let us denote the vector field generating the said system by $L$ (see Eq.\ref{Vect11}) - it is easy to see $L$ is a smooth vector field of $\mathbf{R}^3$. Moreover, by Th.\ref{basis} we know every knot type on the Lorenz Template (save possibly for two) is realized as a periodic orbit for $L$. Similarly to the proof of Th.\ref{trefoil1} we would like to apply Th.\ref{orbipers} to prove there exists a smooth vector field $F$ of $S^3$ as indicated above, whose dynamics are complex - which, by Th.\ref{orbipers}, would make them persistent $rel$ $H$. Intuitively, by Th.\ref{basis} we would expect the said vector field to be $L$ - however, since $L$ is not necessarily a smooth vector field of $S^3$ it is not immediate this argument actually holds. In more detail, there one major obstacle we must deal with in order for this reasoning to work: namely, even though $\infty$ is a repelling fixed point for the flow, the vector field $L$ does not necessarily extend smoothly to $\infty$.\\
\begin{figure}[h]
\centering
\begin{overpic}[width=0.5\textwidth]{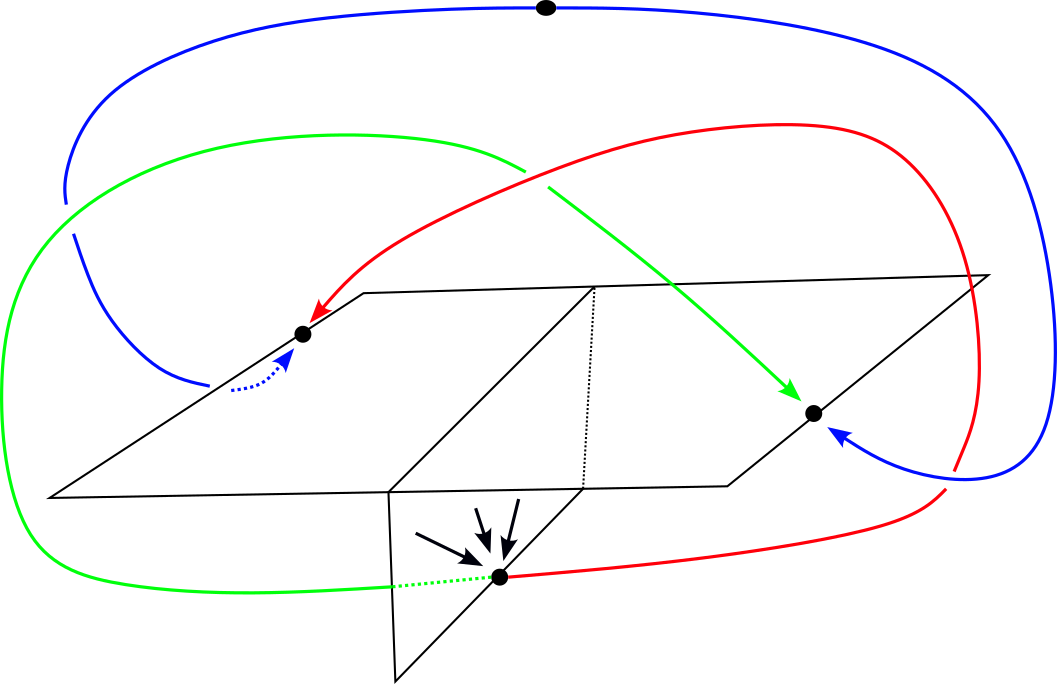}
\put(220,230){$R_{1,p}$}
\put(500,600){$\infty$}
\put(220,120){ $W^s(0)$}
\put(220,330){$p^-$}
\put(790,250){$p^+$}
\put(480,50){$0$}
\put(560,250){$R_{2,p}$}
\end{overpic}
\caption{\textit{The heteroclinic knot $H$ superimposed on the cross-section $R$.}}
\label{lorenz2}
\end{figure}

To overcome this obstacle, recall $L$ extends continuously to $\infty$ and that after this extension $\infty$ is a repelling fixed point for $L$. Consequentially, the dynamics given by Th.\ref{basis} and Prop.\ref{talilemma} all lie in some compact set $K$ which lies away from $\infty$ - which allows us to replace $L$ with $L'$, a smooth vector field of $S^3$ s.t. $L$ and $L'$ coincide on $K$. In particular, we choose $L'$ s.t. it generates the same heteroclinic knot $H$ as $L$ - see the illustration in Fig.\ref{lorenz2}. As such, by Prop.\ref{talilemma} and Th.\ref{basis} we conclude the dynamics of $L'$ inside $K$ satisfy the following (see the illustration in Fig.\ref{lorenz2}):

\begin{itemize}
    \item $L'$ has three fixed points - a real saddle $0$ with a stable manifold $W^s(0)$, and two fixed points of the same type $p^\pm$.
    \item There exists a cross-section, a topological rectangle $R$ s.t. $p^\pm$ lie on its boundary and $R\setminus W^s(0)$ is composed of two sub-domains $R_{1,p}$ and $R_{2,p}$. Moreover, $R$ is universal hence intersects transversely with all the periodic orbits of $L'$.
    \item $0$ generates two heteroclinic orbit, connecting it with $p^{\pm}$ as in Fig.\ref{lorenz2}. In addition, the first-return map $\psi_p:R_{1,p}\cup R_{2,p}\to R$ is well-defined and continuous - and consequentially, appears as in Fig.\ref{RECT}.
\end{itemize}

\begin{figure}[h]
\centering
\begin{overpic}[width=0.35\textwidth]{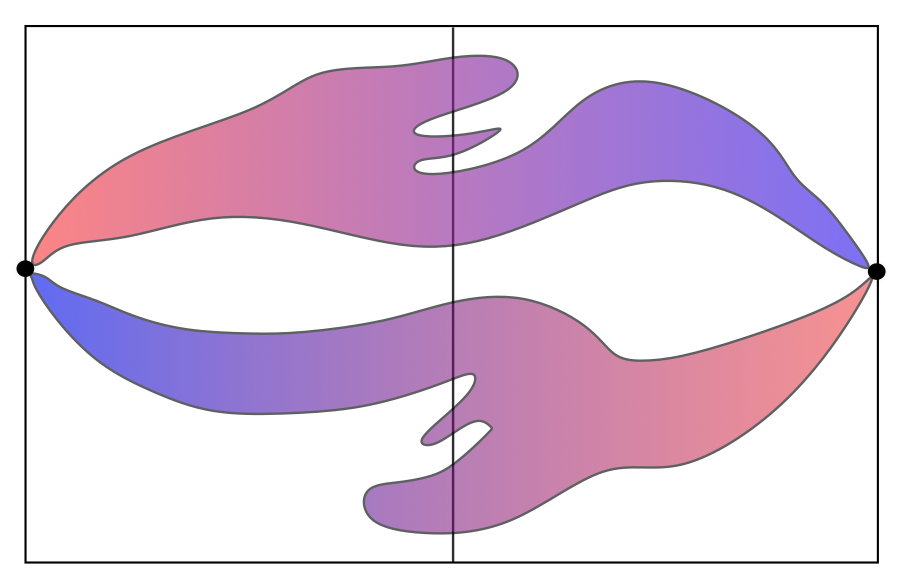}
\put(260,70){$R_{1,p}$}
\put(800,570){$R_{2,p}$}
\put(365,340){ $W^s(0)$}
\put(-65,350){$p^-$}
\put(1000,350){$p^+$}
\put(200,460){$\psi_p(R_{1,p})$}
\put(620,200){$\psi_p(R_{2,p})$}
\end{overpic}
\caption{\textit{The cross-section $R$ with the first-return map for $L'$ and $L''$ defined on it.}}
\label{RECT}
\end{figure}

Having constructed $L''$, our second goal is to deform $L''$ $rel$ $H$ to some vector field s.t. we can apply Th.\ref{orbipers} - provided we manage that, Th.\ref{trefoil2} would follow. We first show we can deform $L''$ $rel$ $H$ to a vector field s.t. the Orbit Index of each one of its periodic orbits is $-1$. To do that, we smoothly deform $L''$ to $L'''$ by moving flow lines inside $K$, as described in Fig.\ref{deform3} - that is, we "straighten" the first return map $\psi_p:R_{1,p}\cup R_{2,p}\to R$ by an isotopy to $\varphi:R_{1,p}\cup R_{2,p}\to R$ s.t. if $I$ its maximal invariant set of $\varphi$ in $R\setminus W^s(0)$ there exists a homeomorphism $\pi:I^\mathbf{Z}\to\{1,2\}^\mathbf{Z}$ s.t. $\pi\circ\varphi\circ\pi^{-1}=\sigma$ (where $\sigma:\{1,2\}^\mathbf{Z}\to\{1,2\}^\mathbf{Z}$ is the double-sided shift). It is easy to see that after this straightening deformation all the periodic orbits of $L'''$ intersect $I$. In addition, note that by opening up the fixed points $p^\pm$ and $\cup_{n\geq0}\varphi^{-n}(W^s(0))$ to intervals and rectangles (respectively) we can conjugate $\varphi$ on its maximal invariant set in $R\setminus W^s(0)$ to that of the fake horseshoe $f:abcd\to\mathbf{R}^2$ (see the illustration in Fig.\ref{fake}).\\

\begin{figure}[h]
        \centering
\begin{overpic}[width=0.7\textwidth]{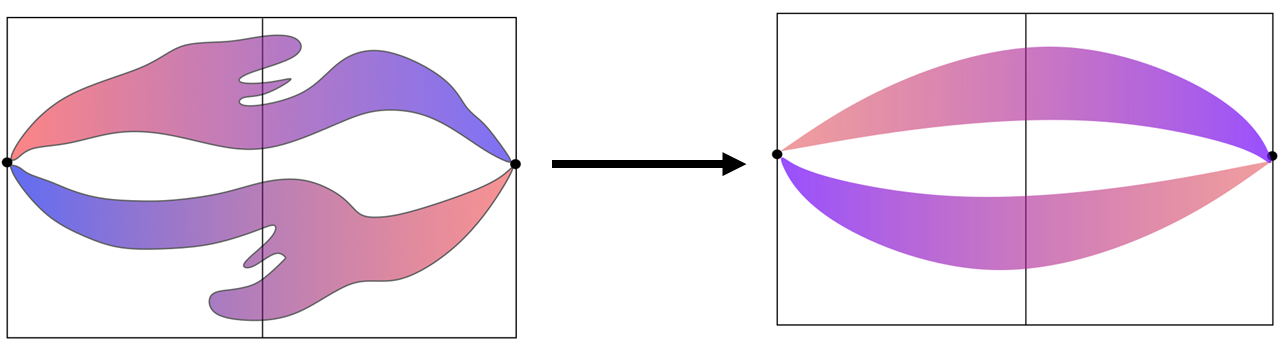}
\end{overpic}
\caption{\textit{The first-return map for $L''$ is on the left, while that for $L'''$ is on the right.}}
  \label{deform3}
    \end{figure}

To continue, note that at every point in the invariant set of $f$ in $abcd$ the differential has two positive eigenvalues and that all the periodic orbits are saddles - consequentially, the Orbit Index of every differential orbit for $L'''$ is $-1$. Moreover, as shown in the proof of Th.3.1 in \cite{Pi} the deformation of $L$ to $L'''$ induces an isotopy on the first-return map, s.t. $\varphi:R_{1,p}\cup R_{2,p}\to R$ can be embedded as a part of a fake horseshoe map inside some Pseudo-Anosov map $P:\mathbf{T^*}\to\mathbf{T^*}$  - where $\mathbf{T^*}$ is a punctured Torus (in particular, in the notations of Th.\ref{orbipers} we have $S_1=R_{1,p}\cup R_{2,p}$ and $S_2=R$). As such, by Th.\ref{orbipers} we conclude that for $F-L'$ (where $L'$ is as above) $Ess(F)$ w.r.t. $H$ is infinite.\\

\begin{figure}[h]
\centering
\begin{overpic}[width=0.3\textwidth]{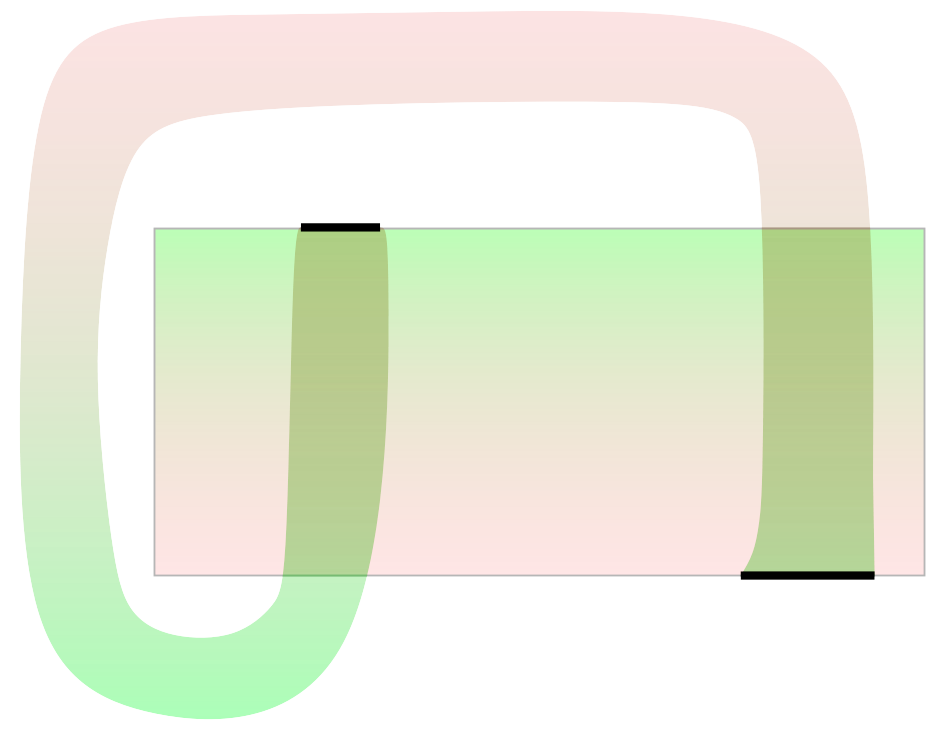}
\put(120,550){$c$}
\put(120,110){$a$}
\put(990,550){$d$}
\put(990,110){$b$}
\put(270,570){$f(cd)$}
\put(750,100){$f(ab)$}
\end{overpic}
\caption{\textit{The fake horseshoe map. $p^+$ corresponds to an arc on the $ab$ side, while $p^-$ corresponds to an arc on the $cd$ side.}}
\label{fake}
\end{figure}

To conclude the proof it remains to show the periodic orbits in $Ess(F)$ given by the proccess above are all knot types on the Lorenz Template. To do so, recall Th.3.1, Th.1.2 and Cor.1.3 in \cite{Pi}, where the following was proven:
\begin{claim}
    \label{talitheorem} Assume $F'$ is a smooth vector field of $S^3$ s.t. the following is satisfied:
    \begin{enumerate}
        \item $F'$ generates a heteroclinic knot configured like in Fig.\ref{heteroclo}.
        \item $F'$ generates a cross-section $R$ as in Fig.\ref{RECT1} and a continuous first-return map as in Fig.\ref{RECT}.
    \end{enumerate}

    Then, every knot type on the Lorenz Template (see Fig.\ref{TEMP2}) is realized as a periodic orbit for $F'$. 
\end{claim}

It is easy to see $L'''$ satisfies the assumptions of Th.\ref{talitheorem}, hence all its periodic orbits are knots on the Lorenz Template. And since the Lorenz Template includes infinitely many knot types (see Cor.3.1.14 in \cite{KNOTBOOK}) it follows $L'''$ also generates periodic orbits of infinitely many knot types. Finally, by Th.\ref{orbipers} the same is true for any smooth vector field of $S^3$ which can be deformed to $L'''$ $rel$ $H$- for example, $L$ - and setting $F=L$ the proof of Th.\ref{trefoil2} is now complete.\end{proof}

Having proven an analogue of Th.\ref{trefoil1} for the Lorenz system we conclude this section by showing another application of the Orbit Index Theory - and this time we do so to study the persistence of periodic dynamics when the heteroclinic knot $H$ breaks down. The reason we do is because the assumption a given Lorenz system generates a heteroclinic knot $H$ as in Fig.\ref{RECT1} is a very strong assumption - in the sense that it is highly non-generic. This leads us to ask the following - just how much of the complexity associated with heteroclinic dynamics persists under small $C^k$ perturbations of the the flow? In Th.\ref{pers13} in the previous section we gave a generic answer to this question - and now, using the unique properties of Eq.\ref{Vect11} we give a precise answer for the Lorenz system. In more detail, we prove:

\begin{theorem}
    \label{persistence} Assume $p\in P$ is a parameter at which the following is satisfied:
    \begin{enumerate}
        \item The corresponding Lorenz system generates a heteroclinic knot $H$ as in Fig.\ref{heteroclo}.
        \item There exists a cross-section $R$ configured with $H$ as in Fig.\ref{RECT1}.
    \end{enumerate}
    
Now, let $T_s$ be a periodic orbit given by Th.\ref{basis} - then, for any sufficiently small $C^k$-perturbation of $p$, $k\geq3$, the periodic orbit $T_s$ persists as a periodic orbit as the Lorenz system is perturbed from $p$ to $v$. Moreover, $T_s$ persists without changing its knot type.
\end{theorem}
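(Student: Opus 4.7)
The plan is to first compute the Orbit Index of $T_s$ at the parameter $p$ by deforming to an idealized model, and then leverage this value together with the robustness of the Lorenz first-return map recalled in Proposition \ref{talilemma} to conclude persistence for every sufficiently small perturbation, not merely generic ones as would be handed to us by Theorem \ref{pers13}.

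First, following the construction in the proof of Theorem \ref{trefoil2}, I would deform $L_p$ $rel\ H$ to a vector field $\tilde L_p$ whose first-return map on $R\setminus W^s(0)$ is straightened into a Fake Horseshoe map as in Figure \ref{fake}; under this deformation $T_s$ is carried to a periodic orbit $\tilde T_s$ sitting in the invariant set of the Fake Horseshoe. By Proposition \ref{dens1} every such orbit has Orbit Index $-1$, so $i(\tilde T_s)=-1$. The argument at the heart of Proposition \ref{pers11} now transfers verbatim: along a deformation $rel\ H$ through a curve in $K$ connecting $\tilde L_p$ to $L_p$, any jump of the Orbit Index would require a saddle-node bifurcation (by Lemma \ref{notyp2}, a period-doubling cannot arise until the index first changes to $+1$), and such a bifurcation is precluded by the fact that the continuation of $\tilde T_s$ along the deformation is a simple arc joining $\tilde T_s$ to $T_s$. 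Hence $i(T_s)=-1$ at the original Lorenz parameter $p$.

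The second step uses this index value to pin down the eigenvalues of $D(T_s)$: whenever $T_s$ is Type $0$, Definition \ref{index1} forces one eigenvalue in $(0,1)$ and the other in $(1,\infty)$, so $T_s$ is a hyperbolic saddle periodic orbit and the Fixed Point Index invariance of Theorem \ref{lefschetztheorem} applied to any local first-return map yields persistence of $T_s$, with unchanged knot type, under every sufficiently small $C^k$ perturbation of $p$. The main obstacle is handling non-generic parameters where $D(T_s)$ has an eigenvalue on the unit circle, and this is where the proof must genuinely diverge from the generic machinery of Theorem \ref{pers13} and exploit the Lorenz-specific structure. By Proposition \ref{talilemma} the cross-section $R_v$, the first-return map $\psi_v$, and the symbolic semi-conjugacy $\pi_v\colon I_v\to\{1,2\}^{\mathbf{Z}}$ all vary continuously with $v$ near $p$; the knot $T_s$ corresponds to a periodic symbol sequence which, by lower semi-continuity of $I_v$, lifts to a periodic point of $\psi_v$ for all such $v$, and suspending produces a periodic orbit $T_{s,v}$ of $L_v$ that varies continuously in $v$. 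That this lift is unambiguous and non-collapsing - i.e.\ that no two distinct periodic sequences can be identified under a non-generic perturbation - is precisely what the index computation $i(T_s)=-1$ secures through Lemma \ref{notyp2}, while continuity of the embedding guarantees that the knot type of $T_{s,v}$ agrees with that of $T_s$ for $v$ close enough to $p$. Combining these three ingredients completes the proof.
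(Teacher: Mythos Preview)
Your first step---deforming $rel$ $H$ to the straightened Fake Horseshoe model and pulling $i(T_s)=-1$ back along a curve in $K$---is in line with the paper and is essentially how the paper handles the generic case. The paper does supplement this with something you omit: it builds a concrete isolating neighborhood $N\subseteq R$ of the periodic point, whose boundary consists of arcs in $W^s(0)\cap R$ and in forward iterates of $\partial R_{i,p}$. Because neither the stable manifold of the origin nor the iterates of the boundary of the cross-section can carry periodic points at any stage of the deformation, the Fixed Point Index of $\psi_p^k$ on $N$ stays equal to $-1$ by Theorem~\ref{lefschetztheorem}. This index-on-$N$ computation, rather than the bare statement $i(T_s)=-1$, is what the paper actually carries into the non-generic case.

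Your treatment of the non-generic case has a genuine gap. You invoke Proposition~\ref{talilemma} to assert that the semi-conjugacy $\pi_v:I_v\to\{1,2\}^{\mathbf Z}$ varies continuously and that ``lower semi-continuity of $I_v$'' lifts the periodic symbol $s$ to a periodic point of $\psi_v$. But Proposition~\ref{talilemma} gives only the existence of \emph{some} semi-conjugacy; it does not claim that $\pi_v$ is surjective, nor that $I_v$ is lower semi-continuous in $v$, nor that any prescribed periodic word is realized. At the heteroclinic parameter $p$ the map is ``full'' by Theorem~\ref{basis}, but once the heteroclinic knot breaks the image of $\pi_v$ may shrink, and without hyperbolicity there is no automatic mechanism forcing the symbol $s$ to survive. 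Your appeal to Lemma~\ref{notyp2} does not repair this: that lemma only rules out certain approach directions to Type~II orbits and says nothing about realizability of itineraries. The paper instead approximates the curve $\{L_t\}$ by generic curves $\{L^n_t\}\subseteq K$, suspends $N$ to a flow-tube $\Theta_p$, shows that for $L$ sufficiently $C^k$-close to $L_p$ the tube $\Theta$ remains bounded, away from fixed points, and knotted the same way, and then transfers the Fixed Point Index $-1$ from the generic approximants to the original curve by the homotopy invariance of the index (again using that $\partial N$, hence $\partial\Theta$, is periodic-orbit-free throughout). The knot type is preserved because the tubes $\Theta_p$ and $\Theta$ are ambient isotopic and because the constant knot type along each $Per_s^n$ rules out cabling in the limit. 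This tube-and-index argument is the Lorenz-specific ingredient you were looking for; the symbolic semi-conjugacy alone is not strong enough to carry it.
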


\begin{proof}

The proof of Th.\ref{persistence} will be based on the following heuristic: since by Th.\ref{basis} the dynamics of the Lorenz system at $p$ are essentially those of a the Geometric Lorenz attractor, they are also essentially hyperbolic - and since hyperbolic dynamics persist under sufficiently small smooth perturbations we would expect any periodic orbit $T$ to persist under small perturbations in the parameter space $P$. In practice, this heuristic cannot be implemented as is, if only because there is no good reason to apriori assume the dynamics of the Lorenz system are actually hyperbolic - however, as we will soon show, by carefully applying the Orbit Index Theory as developed in Sect.\ref{orbitin} we can bypass such obstacles, thus guaranteeing the persistence of periodic dynamics. Before beginning we remark that unlike the previous section, this time our major tools would be Def.\ref{index22}, Th.\ref{contith} and Def.\ref{globalconti}, as we will work in a more general setting.\\

To begin, recall we denote the vector field corresponding to the Lorenz system at $p$ by $L_p$. We first recall that by $p\in P$ there exists a universal cross-section $R$ for $L_p$ which intersects transversely with $W^s(0)$ - the two-dimensional stable manifold of the origin (see Prop. \ref{talilemma}). In particular, there exists a curve $\gamma\subseteq R\cap W^s(0)$ s.t. $R\setminus \gamma=R_{1,p}\cup R_{2,p}$ where $R_{1,p}$ and $R_{2,p}$ are two (topological) rectangles, and the first-return map $\psi_p:R_{1,p}\cup R_{2,p}\to R$ is continuous and well-defined (see Prop.\ref{talilemma} and the illustrations in Fig.\ref{lorenz2} and Fig.\ref{rect2}). Moreover, recall that as shown in the proof of Th.\ref{talitheorem} in \cite{Pi}, we can deform $L_p$ to $F$ s.t. $F$ is a vector field satisfying the following:

\begin{itemize}
    \item The periodic orbits of $F$ are in one-to-one correspondence with those on the Lorenz Template.  
    \item Save for two periodic orbits, when we deform $F$ back to $L_p$, the periodic orbits of $F$ are deformed to those of $L_p$ by an ambient isotopy (see Def.\ref{knot} and the illustration in Fig.\ref{rect2}) - as such, they have the same knot type.
    \item The two periodic orbits for $F$ which do not persist as we return to $L_p$ are closed to the fixed points $p^\pm$ by Hopf bifurcations (see the illustration in Fig.\ref{rect2}). 
\end{itemize}

\begin{figure}[h]
        \centering
\begin{overpic}[width=0.7\textwidth]{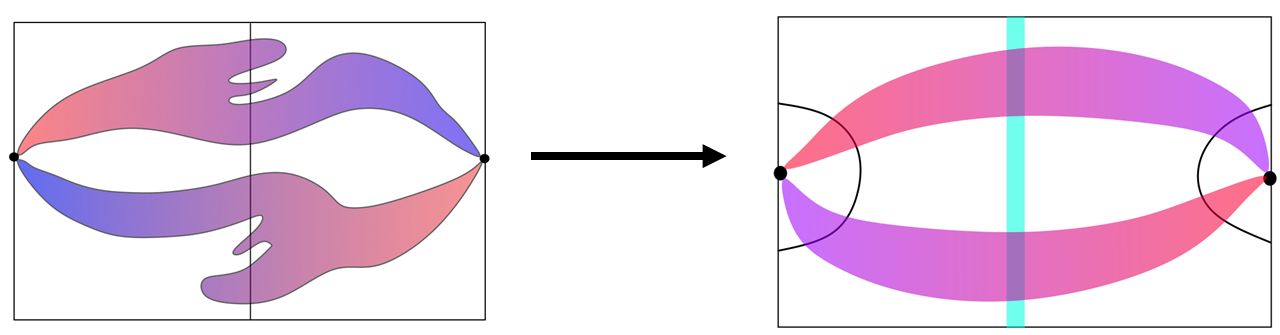}
\end{overpic}
\caption{\textit{The deformation per \cite{Pi} of the first-return map for $L_p$ (on the left) to that for $F$ (on the right) - note we expand $p^\pm$ by Hopf bifurcations to two sinks. $W^s(0)$ is deformed to the cyan rectangle denotes a region which either flows towards the basin of attraction for the sinks, or never returns to the cross-section.}}
  \label{rect2}
    \end{figure}

In more detail, let $R$ be the cross-section given by Prop.\ref{talilemma} - as shown in the proof of Th.1.2 in \cite{Pi}, we deform $L_p$ to $F$ by "straightening" the first-return map for $L_p$, $\psi_p:R_{1,p}\cup R_{2,p}\to R$ into a map $f:R_{1,p}\cup R_{2,p}\to R$, s.t. $f$ is hyperbolic on its invariant set in $R_{1,p}\cup R_{2,p}=R\setminus W^s(0)$ - which we do by expanding the fixed points $p^\pm$ by Hopf bifurcations (for more details, see the proofs of Th.1.2 and Th.3.2 in \cite{Pi}). Consequentially, setting $I\subseteq R$ as this invariant set there exists a homeomorphism $\pi:I\to\{1,2\}^\mathbf{Z}$ s.t. $\pi\circ f=\sigma\circ\pi$ where $\sigma:\{1,2\}^\mathbf{Z}\to\{1,2\}^\mathbf{Z}$ is the double-sided shift - in particular, $f$ is conjugate to the fake horseshoe map. As such, given $s\in\{1,2\}^\mathbf{Z}$, a periodic symbol of minimal period $k>1$, it follows $\pi^{-1}(s)$ forms a singleton, $\{x_s\}$ - and since there is only a finite number of periodic symbols in $\{1,2\}^\mathbf{Z}$ of minimal period $k$ it follows there exists a neighborhood $N$ of $x_s$ in $R$ s.t. the following holds:

\begin{enumerate}
    \item $N$ is a topological disc.
    \item For every $1\leq i<k$, $f^i(N)\cap N=\emptyset$.
    \item The boundary of $N$ in $R$ is composed of arcs on $W^s(0)\cap R$ and the forward iterates of $\partial R_{i,p}$, $i=1,2$ under $f$.
\end{enumerate}

Since neither $W^s(0)$ nor $\partial R_{i,p}$, $i=1,2$ include periodic orbits for the flow it is easy to see there are no fixed points for $f^k$ on $\partial N$ - consequentially, by $\{y\in N|f^k(y)=y\}=\{x_s\}$ it follows the Fixed Point Index of $f^k$ on $N$ is $-1$. Now, note that as we vary $F$ back to $L_p$ both the boundary of $N$ on $R$ and the set $W^s(0)\cap R$ change continuously as well. It is easy to see that as $W^s(0)$ is a stable manifold there are no periodic orbits for the flow on it at every stage of the deformation - similarly, as the flow always maps $\partial R\setminus W^s(0)$ inside the cross-section $R$ (save for the fixed points), no periodic orbits can emerge on the forward iterates of $\partial R_{i,p}$, $i=1,2$ under the first-return map either. As such, it follows that as we smoothly deform $F$ to $L_p$ no periodic orbits are added in $\partial N$ - and therefore, by Th.\ref{lefschetztheorem} and the discussion above we conclude the following is satisfied:

\begin{enumerate}
    \item The boundary of $N$ in $R$ is composed from a finite number of arcs on $W^s(0)\cap R$ and the forward iterates of $\partial R_{i,p}$, $i=1,2$ under $\psi_p$.
    \item For every $1\leq i<k$, $\psi_p^i(N)\cap N=\emptyset$.
    \item The Fixed Point Index of $\psi_p^k$ on $N$ is $-1$.
\end{enumerate}

In particular, it follows the tube of flow lines connecting $N$ to $f^k(N)$ is deformed by a smooth isotopy of $\mathbf{R}^3$ to the tube of flow lines connecting $N$ to $\psi^k_p(N)$ (see the illustration in Fig.\ref{tube}) - and moreover, it is easy to see that these two tubes of flow lines are knotted with themselves in the exact same way. Consequentially, if $y_s\in N$ is s.t. $\psi^k_p(y_s)=y_s$ the knot type generated by suspending $x_s$ w.r.t. $F$ and the knot type generated by suspending $y_s$ w.r.t. $L_p$ are the same. Moreover, since the Fixed Point Index of $\psi^k_p$ in $N$ is $-1$, such a $y_s$ exists - and by the discussion above we may assume $x_s$ is continuously deformed to some $y_s$.\\
 
\begin{figure}[h]
\centering
\begin{overpic}[width=0.7\textwidth]{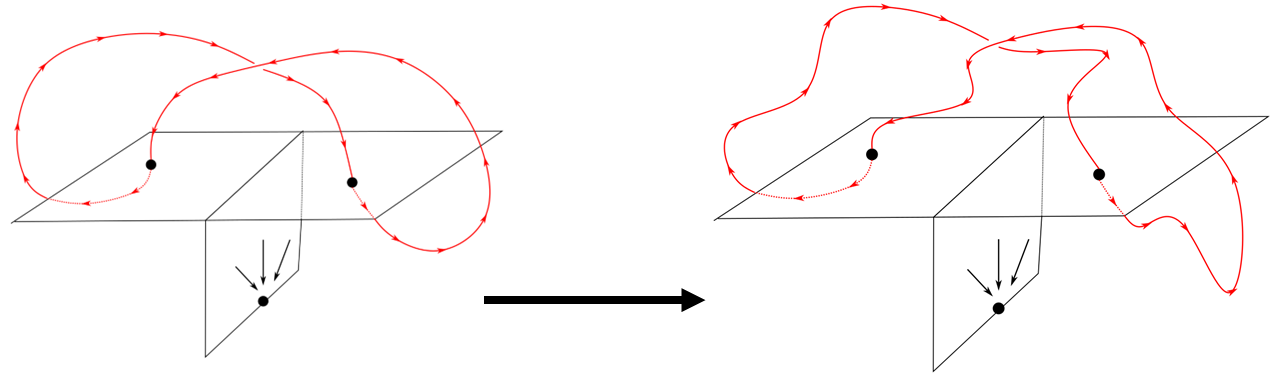}
\put(200,20){$0$}
\put(70,80){$W^s(0)$}
\put(640,80){$W^s(0)$}
\put(780,20){$0$}
\end{overpic}
\caption{\textit{The persistence of periodic orbits - due to the transverse intersection with the cross-section, they persist for sufficiently small perturbations in the parameter space.}}
\label{pert2}
\end{figure}

We are now ready to conclude the proof of Th.\ref{persistence}. To do so, let $T$ denote the periodic orbit for $F$ s.t. $x_s\in T$, and let $T_s$ denote the periodic orbit s.t. $y_s\in T_s$ - as $x_s$ is deformed to $y_s$ we know $T$ is smoothly deformed to $T_s$ as $F$ is deformed to $L_p$. Moreover, note that since $f$ is conjugate to the fake horseshoe map, using a similar argument to the one used in the proof of Th.\ref{trefoil2} we know $i(T)=-1$ (where $i$ is the Orbit Index - see Def.\ref{index22}). In addition, from now on we denote by $\{L_t\}_{t\in[0,\frac{1}{2}]}$ the curve of vector fields deforming smoothly $F=L_0$ to $L_p=L_{\frac{1}{2}}$ in $\mathbf{R}^3\times[0,\frac{1}{2}]$ - and moreover, $Per$ will always denote the collection of periodic orbits for $\{L_t\}_{t\in[0,\frac{1}{2}]}$ (i.e., for every $t\in[0,\frac{1}{2}]$ all components of $Per_s\cap \mathbf{R}^3\times\{t\}$ are periodic orbits for $L_t$).\\

We first prove Th.\ref{persistence} under the generic assumption that $T_s$ is an isolated periodic orbit for $L_p$ i.e., that there exists some local cross-section $S$ transverse to $T_s$ s.t. if $f':S\to S$ is some local first-return map, $\{y\in S|f(y)=y\}=T_s\cap S$ is a singleton (note we may well assume $S\subseteq N$ - see the illustration in Fig.\ref{lorenz1}). It is easy to see that for such a generic choice of $L_p$ we can choose the curve $\{L_t\}_{t\in[0,\frac{1}{2}]}$ s.t. the continuum of periodic orbits connecting $T$ and $T_s$ cannot be approximated by low-period orbit. That is, if $Per_s\subseteq Per$ is the component of periodic orbits in $R^3\times[0,1]$ s.t. $T\times\{0\},T_s\times\{\frac{1}{2}\}\subseteq Per_s$, then every periodic orbit on $Per_s$ would also be isolated as in the scenario described in Def.\ref{dfT}. Consequentially, by $i(T)=-1$ and by the invariance of the Orbit Index (see Th.\ref{invar}) it follows the Orbit Index is constant on $Per_s$, i.e., on every periodic orbit in $Per_s$ the Orbit Index is $-1$. As such, since $T_s$ is isolated by Def.\ref{index22} it follows $i(T_s)=-1$ - which implies the Fixed Point Index for $\psi^k_p$ on some neighborhood of $\{y_s\}=T_s\cap N$ is $-1$.\\

\begin{figure}[h]
\centering
\begin{overpic}[width=0.35\textwidth]{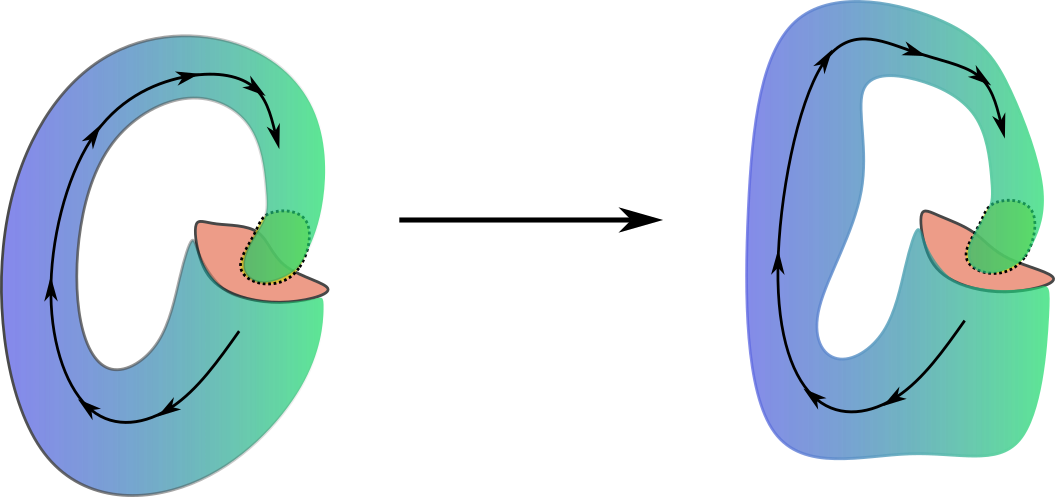}
\put(-50,360){$\Theta_p$}
\put(1000,360){$\Theta$}
\end{overpic}
\caption{\textit{The tubes of flow lines $\Theta_p$ and $\Theta$ (for the vector fields $L_p$ and $L$ respectively) connecting $N$ (the red cross-section) to itself. It is easy to see that provided $L$ is sufficiently $C^k-$close to $L_p$ the tubes $\Theta_p$ and $\Theta$ are both isotopic, bounded, and knotted in the same way.}}
\label{tube}
\end{figure}

To conclude the proof of the generic case, assume $L$ is some $C^3$ vector field of $\mathbf{R}^3$ and extend the curve to $[0,1]$, s.t. $\{L_t\}_{t\in[0,1]}$ is a smooth curve of vector fields in $\mathbf{R}^3\times[0,1]$ and $\{L_t\}_{t\in[\frac{1}{2},1]}$ is a curve in the parameter space $P$ connecting $L_p=L_\frac{1}{2}$ to $L=L_1$. Since $i(T_s)=-1$ by Th.\ref{contith} we conclude $T_s$ is globally continuable (see Def.\ref{globalconti}), hence persists when we perturb $L_p$ to any $L$ which is sufficiently $C^k$- close to $L_p$, $k\geq3$ - or in other words, we have proven that provided $L$ is sufficiently $C^k-$close to $L_p$, $T_s$ persists as we perturb $L_p$ to $L$.\\

Therefore, to conclude the proof of the generic case it remains to show that provided $L$ s sufficiently $C^k$-close $T_s$ persists without changing its knot type. To see why this is so, note that if that were not the case $L_p$ is a bifurcation set for $T_s$ (w.r.t. $\{L_t\}_{t\in[0,1]}$). Since $T_s$ is isolated we know this implies the matrix $D(T_s)$ (see Def.\ref{dfT}) must have a multiplier on $S^1$ - and since $T_s$ can be approximated by periodic orbits on $Per_s\cap\mathbf{R}^3\times[0,\frac{1}{2})$ whose Orbit Index is $-1$ by Def.\ref{index1} we conclude that multiplier can only be $1$. However, since the Fixed Point Index around $\{y_s\}=T_s\cap N$ is given by the sign $det(D(T_s)-Id)$ this would imply the Fixed Point Index at $y_s$ is $0$ - contrary to it being $-1$ (as shown above). This implies that as we vary $L_p$ to $L$ in the generic case, provided $L$ is sufficiently $C^k-$ close to $L_p$ the orbit $T_s$ persists without changing its knot type as well. All in all, the proof of Th.\ref{persistence} for the generic case is complete.\\

\begin{figure}[h]
\centering
\begin{overpic}[width=0.35\textwidth]{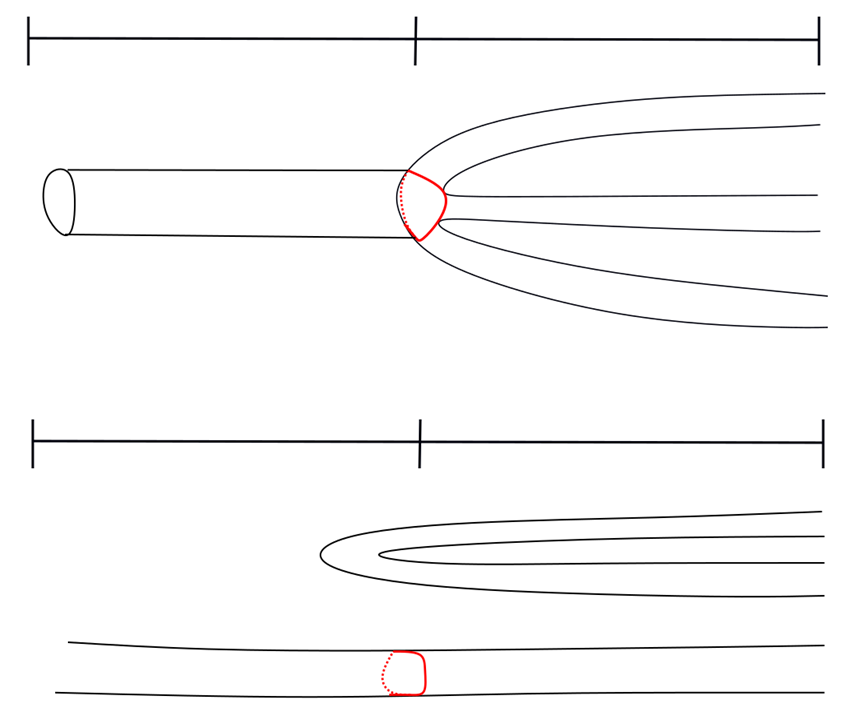}
\put(940,830){$0$}
\put(800,580){$Per_s$}
\put(450,380){ $\frac{1}{2}$}
\put(20,360){$1$}
\put(15,830){$1$}
\put(950,360){$0$}
\put(470,850){$\frac{1}{2}$}
\put(620,40){$Per_s$}
\end{overpic}
\caption{\textit{The non-generic case (above) and the generic case (below) and $per_s$ is each case (he red loop denotes $T_s$ is both scenarios). It is easy to see $T_s$ is isolated in the generic case. It is easy to see the non-generic case can be approximated with the generic one.}}
\label{lorenz1}
\end{figure}

We now prove Th.\ref{persistence} for the general case, i.e., when $L_p$ is no longer generic - or in other words, when the orbit $T_s$ is not assumed to be isolated for $L_p$. We do so using a method of approximation - and to motivate it note that regardless of the behavior of $L_p$ around $T_s$ we can choose the sub-curve $\{L_t\}_{t\in[0,\frac{1}{2}]}$, $L_0=F$, $L_\frac{1}{2}=L_p$ s.t. it is generic in the following sense: given any $t\in[0,\frac{1}{2})$, if $T'$ is a periodic orbit for $L_T$ then it is isolated. Or, in other words, the periodic orbits for the curve $\{L_t\}_{t\in[0,\frac{1}{2}]}$ stop being isolated at most in $L_\frac{1}{2}=L_p$ (see the illustration in Fig.\ref{lorenz1}). To continue, let $L$ be some $C^3$ vector field of $\mathbf{R}^3$ and similarly extend $\{L_t\}_{t\in[0,\frac{1}{2}]}$ to $\{L_t\}_{t\in[0,1]}$, a smooth curve of vector fields in $\mathbf{R}^3\times[0,1]$ s.t. $L_1=L$. This implies that whenever $T_s$ is not isolated for $L_p$ we can homotope the curve $\{L_t\}_{t\in[0,1]}$ into the generic case (i.e. making $T_s$ isolated) by some arbitrarily small $C^k-$perturbation of $\{L_t\}_{t\in[0,1]}$ - which shows we can approximate the curve $\{L_t\}_{t\in[0,1]}$ with the generic case described above.\\

With these ideas in mind, we begin by studying how $T_s$ can be destroyed by perturbation in the generic case. To do so, note that by the global continuability of $T_s$ in the generic case by Th.\ref{contith} and Def.\ref{globalconti} there are precisely three ways in which $T_s$ can terminate under perturbation: either by its period diverging to $\infty$, by a Hopf Bifurcation, or by a Blue Sky catastrophe (see Def.\ref{globalconti} and the discussion immediately following it). Now, let $\Theta_p$ denote the tube of flow lines connecting $N$ to $\psi^k_p$ - provided $v\in P$ is sufficiently close to $p$, as we deform $L_p=L_\frac{1}{2}$ to $L=L_1$ the tube $\Theta_p$ is isotoped to the tube $\Theta$ (which, similarly, is constructed from the flow lines connecting $N$ and its first-return map - see the illustration in Fig.\ref{tube}). It is easy to see that whenever $L$ is sufficiently $C^k$-close to $L_p$ the tubes $\Theta_p$ and $\Theta$ are knotted with themselves in the same way (even in the non-generic case). It is also easy to see that in the generic case, provided the $C^k$ distance between $L_p$ and $L$ is sufficiently small, by $T_s\subseteq \Theta_p$ we know $T_s$ persists as a periodic orbit trapped inside $\Theta$ as $L_p$ is varied to $L$. \\

As such, since $\Theta_p$ is bounded and lies away from the fixed points of $L_p$ (which are all stable) we immediately conclude the same is true for $\Theta$ for any $L$ sufficiently $C^k-$close to $L_p$ - in which case we know that generically, whenever $L$ is sufficiently $C^k-$close to $L_p$, $T_s$ cannot be destroyed by any Hopf bifurcation or a Blue Sky Catastrophe. Similarly, we also know that whenever $L$ is sufficiently $C^k-$close to $L_p$ the orbit $T_s$ cannot be destroyed by colliding with a fixed point (which would turn it into a homoclinic or a heteroclinic orbit). Similarly, by the genericity assumption (i.e., that $T_s$ is isolated) as the Fixed Point Index of any first-return map for an isolating cross-section is $-1$ it follows $T_s$ persists without bifurcating for all sufficiently small perturbations - i.e., that whenever $L$ is sufficiently close to $L_p$ $T_s$ cannot collapse into some bounded aperiodic motion inside $\Theta$. To summarize, provided $L$ is sufficiently $C^k-$ close to $L_p$ such that $\Theta$ remains bounded and lies away from the fixed points, the generically $T_s$ persists as a periodic orbit trapped in $\Theta$ as $L_p$ is perturbed to $L$ - and due to the Fixed Point Index argument above we know that it does so without changing its knot type.\\

\begin{figure}[h]
\centering
\begin{overpic}[width=0.5\textwidth]{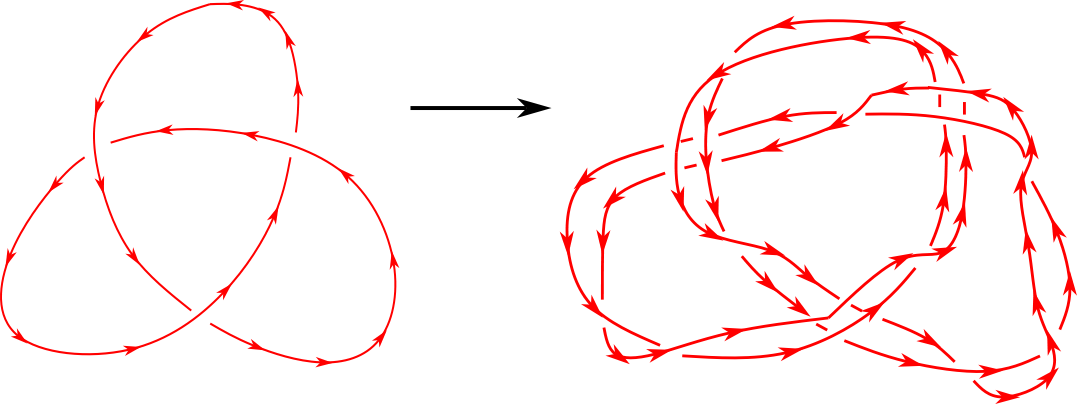}
\end{overpic}
\caption{\textit{A period-multiplying bifurcation for a periodic orbit. In this scenario we specifically see a period-doubling bifurcation.}}
\label{cable}
\end{figure}

We are now ready to use the discussion above to conclude the proof for the case when $L_p$ is non-generic. To do so, consider $L$, some vector field sufficiently close to $L_p$ s.t. as we vary $L_p$ to $L$ the set $\Theta_p$ is isotopically deformed to $\Theta$, s.t. satisfies the following (see the illustration in Fig.\ref{tube}):

\begin{itemize}
    \item $\Theta$ remains bounded away from the fixed points and $\infty$.
    \item $\Theta$ and $\Theta_p$ are knotted in the exact same way.
\end{itemize}

Now, let $\{L^n_t\}_{t\in[0,1]},n>0$ be generic approximations of $\{L_t\}_{t\in[0,1]}$ s.t. $L_0=L^n_0=F$ and for all $t$ we have $d(L^n_t,L_t)\to0$ (we $d$ is the $C^3-$distance in $\mathbf{R}^3$). Moreover, let $Per^n_s$ denote the component of periodic orbits for $\{L^n_t\}_{t\in[0,1]},n>0$ s.t. $T\times\{0\}\subseteq Per^n_s$, and recall we denote by $Per_s$ the component of periodic orbits for $\{L_t\}_{t\in[0,1]},n>0$ connecting $T\times\{0\}$ and $T_s\times\{\frac{1}{2}\}$. By the discussion above, as $\Theta$ remains bounded we know that for all sufficiently large $n$ the set $Per^n_s$ connects $\mathbf{R}^3\times\{0\}$ and $\mathbf{R}^3\times\{1\}$, and that for all $t\in[0,1]$ the intersection $Per^n_s\cap\mathbf{R}^3\times\{t\}$ is a periodic orbit which has the same knot type as $T$ and $T_s$. Moreover, it is also easy to see that as $\{L^n_t\}_{t\in[0,1]}\to \{L_t\}_{t\in[0,1]}$ we also have $\lim_n Per^n_s\to Per'_s$ (as limit sets).\\

We now claim every component of $Per'_s\cap\mathbf{R}^3\times\{t\}$, $t\in[\frac{1}{2},1]$ includes a periodic orbit (by definition, we already know this is the case for $t\in[0,\frac{1}{2})$) - to see why note 
 he first-return map from a cross-section of the tube $\Theta$ w.r.t. $L^n_t$ and $L_t$ is homotopic when the $C^k$-distances are sufficiently small. Therefore, as the Fixed Point Index for the first-return map for these first-return maps for $L^n_t$ is $-1$ and no periodic orbits appear on $\partial \Theta$ we conclude the same is true for the first-return map of $\Theta$ w.r.t. $L_t$ - i.e., $Per_s\cap\mathbf{R}^3\times\{t\}$ includes a periodic orbit for $L_t,t\in[\frac{1}{2},1]$. And since $T$ is varied smoothly along $Per^n_s$ from $F=L^n_0$ to $L^n_1$ it follows there exists a subset on $Per'_s$ which varies $T$ smoothly as $F=L_0$ is smoothly deformed to $L_1$ - and by definition, that set has to include both $T$ and $T_s$, i.e., it is included in $Per_s$. Or, in other words, we have just proven $T_s$ persists as a periodic orbit when $L_p$ is smoothly deformed to $L$.\\

We now conclude the proof by proving the knot type for periodic orbits on $Per_s$ is also constant. To do so recall $\Theta$ and $\Theta_p$ are knotted in precisely the same way. Recalling $\Theta_p$ was generated by suspending the cross-section $N$ with the flow dictated by $L_p$, it follows  $\Theta$ is generated similarly - i.e., by suspending the cross-section $N$ with the flow generated by $L$. This implies the only possibility for $T_s$ to change its knot type as $L_p$ is varied to $L$ is by undergoing some period-multiplying bifurcation which makes it cable with itself as in Fig.\ref{cable} - however since the knot type on every $Per^n_s$ is constant for all the periodic orbits on it (and independent of $n$), it follows no such cabling can occur. Or, in other words, the knot type of $T_s$ does not change as we vary $L_p$ to $L$ - and with this statement the proof of Th.\ref{persistence} is now complete.
\end{proof}
\begin{remark}
    At this point we remark similar arguments to those above can also be applied to the Rössler attractor - that is, given a Rössler system $F$ which generates a heteroclinic knot $H$ as in Th.\ref{trefoil1}, similar arguments to those used above prove each periodic orbit in $Ess(F)$ persists under sufficiently small $C^k$ perturbations of $F$, $k\geq3$.
\end{remark}
\subsection{Removable dynamics inside solid Tori}
\label{horsus}
Having studied when heteroclinic knots $H$ force $Ess(F)$ to be infinite, in this section we study the opposite case - that is, for completeness, in this section we analyze a concrete example in which a heteroclinic knot $H$ forces $Ess(F)$ to be finite. As will be clear, if the flows from Th.\ref{trefoilor} and Th.\ref{trefoil2} are analogous to Pseudo-Anosov maps, the flows discussed in this section will be analogous to periodic maps (see Th.\ref{thurston-nielsen}). In more detail, in this section we consider the scenario where $F$ is a smooth vector field of $S^3$ whose fixed points are two saddle foci, connected by a heteroclinic knot $H$ s.t. $H$ is ambient isotopic to $S^1$ - we will prove that under these assumptions the Essential Class of $F$ in $S^3\setminus H$ can sometimes include at most two periodic orbits. We will often think of $F$ as a flow on a solid torus rather than a flow on $S^3\setminus H$ - where by a "solid torus" we will always mean $\mathbf{T}=\{(x,y)|\sqrt{x^2+y^2<1}\}\times S^1$. The reason we can do so is because given any $F$ as above the flow on $S^3\setminus H$ is orbitally equivalent to a flow on a solid torus (see the illustration in Fig.\ref{s1}).\\
\begin{figure}[h]
\centering
\begin{overpic}[width=0.4\textwidth]{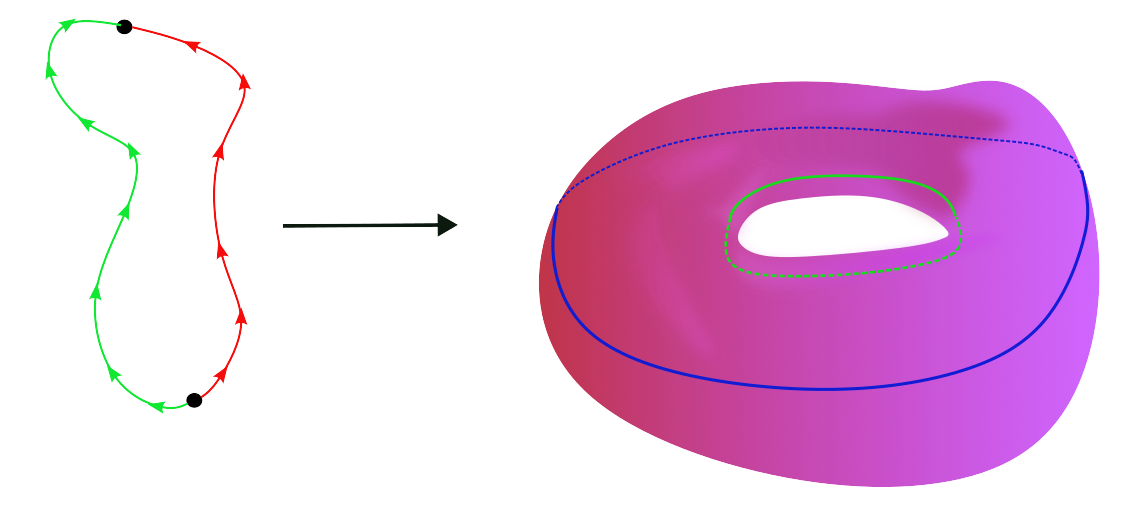}
\end{overpic}
\caption[Fig1]{\textit{A flow on the complement to a heteroclinic knot ambient isotopic to $S^1$ is orbitally equivalent to a flow in the interior of the torus $\mathbf{T}$ to the right (i.e., a solid torus).}}\label{s1}
\end{figure} 

For the sake of completeness, we remark our motivation to study such flows originally arose from the study of the Moore-Spiegel Oscillator:

\begin{equation}
\begin{cases}
\dot{x} = y \\
 \dot{y} = z\\
 \dot{z}=-z-(\tau-\rho+\rho x^2)y-\tau x
\end{cases}
\end{equation}

Where $\tau,\rho>0$. The Moore-Spiegel Oscillator, originally introduced in \cite{SM} to describe the luminosity of stars, is known to have parameter values at which the flow generates precisely one fixed point in $\mathbf{R}^3$ -  moreover, that fixed point is always a saddle-focus, $O$, which lies at the origin (see Sect.III in \cite{SM}). As proven by the author in \cite{I3} at such parameter values the flow generates an unbounded heteroclinic knot $H$ connecting $O$ and a fixed-point at $\infty$ s.t. $H$ is ambient-isotopic to $S^1$ (see Th.4.7 in \cite{I3}). Moreover, as shown in the same Theorem in \cite{I3}, by choosing an arbitrarily small smooth perturbation of the Moore-Spiegel oscillator at $\infty$ one can do two things:
\begin{itemize}
    \item Smoothly transform the Moore-Spiegel Oscillator into $G$, a smooth vector field $F$ of $S^3$ which generates a heteroclinic orbit $H$ connecting two saddle foci - the origin $O$ and $\infty$.
    \item Prove that $G$ has precisely two stable periodic orbits in $S^3\setminus H$, which together attract a.e. initial condition in $S^3\setminus H$.
\end{itemize}

As has been observed numerically, there exist parameter values for the Moore-Spiegel Oscillator in which the flow generates a seemingly chaotic attractor suspended around the saddle focus at the origin (see Fig.\ref{mooreat}). As such, even though none of our results in this section are directly applicable to the Moore-Spiegel Oscillator our results are very much inspired by it, and should be compared to numerical observations of it.\\

\begin{figure}[h]
\centering
\begin{overpic}[width=0.4\textwidth]{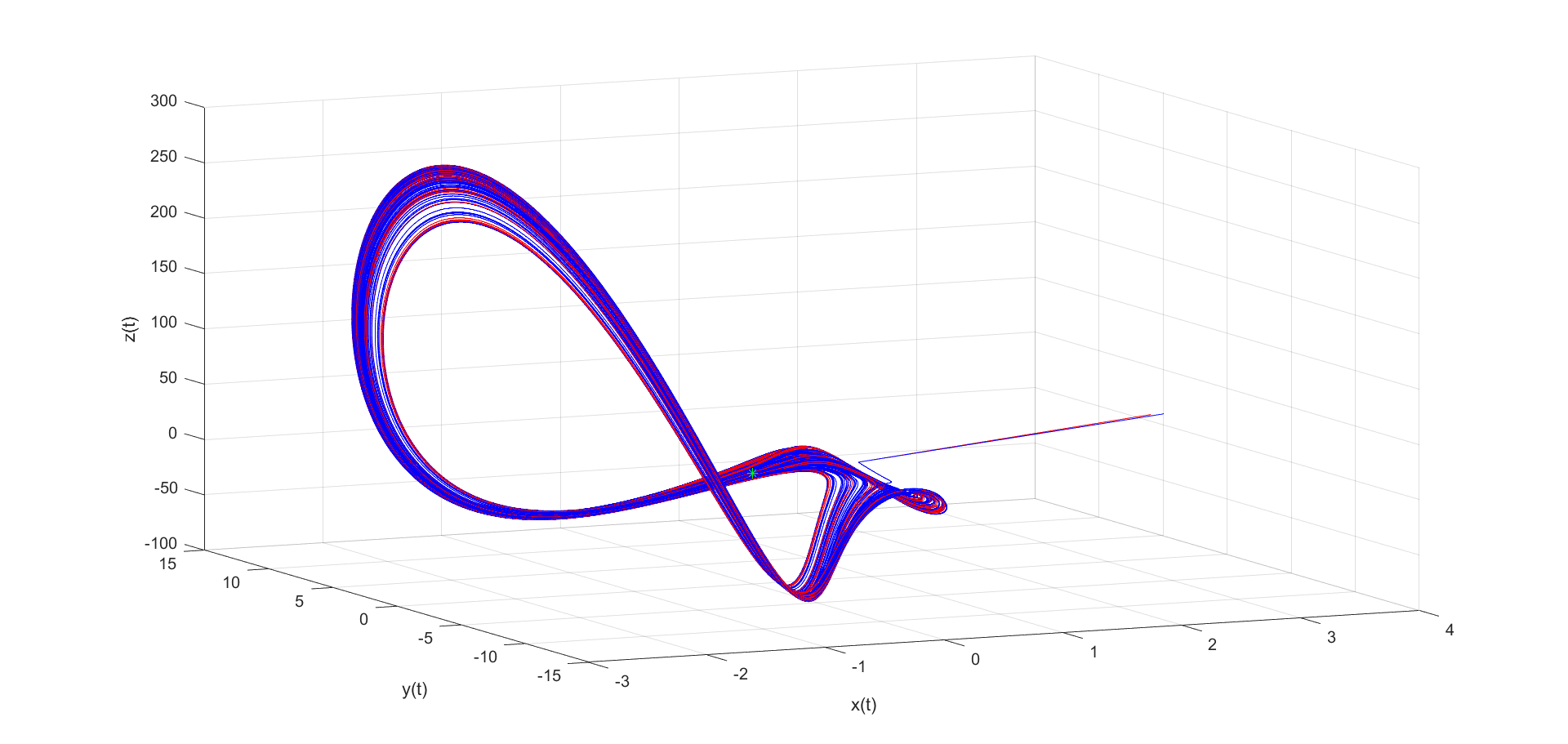}
\end{overpic}
\caption[Fig1]{\textit{The Moore-Spiegel attractor at $(\tau,\rho)=(39.25,100)$.}}
\label{mooreat}
\end{figure}

This section is organized as follows - we first prove Prop.\ref{nohyp} and Th.\ref{nohyp2} which together prove two things: first, that in certain cases hyperbolic dynamics can be easily removed with deformations $rel$ $H$, and second, that if $F$ suspends a Smale Horseshoe map inside $\mathbf{T}$ it cannot do so "too close" to $H$. Following that, we conjecture how our results can be further generalized to whenever the Essential class is finite (see Conjecture \ref{nohyp3}). Finally, we conclude this section by proving the existence of a smooth flow on $\mathbf{T}$ whose essential class is infinite - thus showing a simple topology for the heteroclinic knot $H$ does not automatically translate into simple essential class (see Th.\ref{notrivial}).\\

  \begin{figure}[h]
\centering
\begin{overpic}[width=0.25\textwidth]{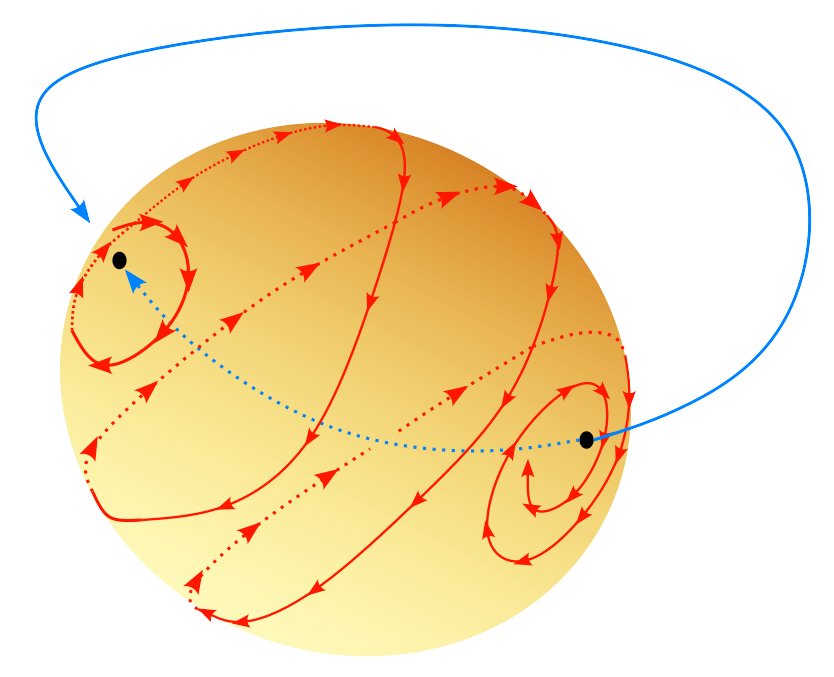}

\put(740,155){$O_2$}
\put(0,480){$O_1$}
\end{overpic}
\caption{\textit{$W$ is the yellow sphere, enclosing inside it one heteroclinic orbit. $H$ is the union of $O_1,O_2$ and the two blue heteroclinic orbit - it is easy to see $H$ is ambient isotopic to $S^1$. The red flow line is a orbit on $W$.}}
\label{coin}
\end{figure}

With these ideas in mind, we begin with the following Proposition which is an immediate consequence of Th.\ref{orbipers}:
\begin{proposition}
\label{nohyp}    There exists a smooth vector field $F$ on $S^3$ satisfying the following:

\begin{enumerate}
    \item $F$ has precisely two fixed points, $O_1$ and $O_2$, both saddle foci.
    \item $O_1$ and $O_2$ are connected by a heteroclinic knot $H$ ambient isotopic to $S^1$ (see the illustration in Fig.\ref{s1}).

    \item $F$ can be smoothly deformed $rel$ $H$ to a vector field $G$ satisfying:
    \begin{itemize}
        \item $G$ has precisely two periodic orbits, $T_1$ and $T_2$.
        \item $T_1$ and $T_2$ together attract every initial condition in $S^3\setminus H$.
    \end{itemize}
    
    \end{enumerate}

Consequently, $Ess(F)$ includes at most two periodic orbits. Moreover, we can choose $F$ s.t. its dynamics include a suspended Smale Horseshoe in $S^3\setminus H$.
\end{proposition}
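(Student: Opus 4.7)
The plan is to construct the target vector field $G$ first, then build $F$ as a local perturbation of $G$ that inserts a horseshoe mechanism away from $H$, and finally apply the definition of the essential class (Def.\ref{essentialclass}) to bound the size of $Ess(F)$.

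For the construction of $G$, I identify $S^3$ with the unit sphere in $\mathbf{C}^2$ and place the two saddle-foci $O_1, O_2$ at antipodal points on the Clifford torus $\{|z_1|=|z_2|=1/\sqrt{2}\}$, choosing their local dynamics so that their two-dimensional invariant manifolds lie close to the Clifford torus and so that the foci admit two heteroclinic orbits $\gamma_1, \gamma_2$ contained in that torus. The union $H = \{O_1,O_2\} \cup \gamma_1 \cup \gamma_2$ is then a trivially embedded circle, and the sphere $W$ of Fig.\ref{coin} can be taken to separate $\gamma_1$ from $\gamma_2$. The complement $S^3 \setminus H$ admits two disjoint unknotted core circles lying in the two complementary solid tori of the Clifford torus, and I place the attracting periodic orbits $T_1, T_2$ one on each core. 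Extending the local dynamics radially, I arrange the flow so that the two-dimensional unstable manifold of $O_1$ (respectively the two-dimensional stable manifold of $O_2$) spirals onto $T_1$ on one side of $W$ and onto $T_2$ on the other, and so that every non-heteroclinic orbit is forward-asymptotic to $T_1 \cup T_2$.

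For the horseshoe part, I pick a small closed flow-box $B$ lying in $S^3 \setminus (H \cup T_1 \cup T_2)$, inside the basin of $T_1$ say, chosen so that trajectories of $G$ cross $B$ transversally once before proceeding toward $T_1$. I then perturb $G$ smoothly inside $B$, leaving the flow unchanged outside and tangential data on $\partial B$ untouched, so that the first-return map of a cross-section $\Sigma \subset B$ to itself realizes a Smale horseshoe by the standard stretch-and-fold mechanism. The resulting vector field $F$ has precisely the same fixed points and heteroclinic knot as $G$, and $F$ is smoothly homotopic to $G$ rel $H$ by straightening the fold inside $B$; this deformation affects neither $H$ nor the topological type of any fixed point, hence satisfies Def.\ref{relt}. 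Finally, by Def.\ref{essentialclass}, $Ess(F)$ consists of those periodic orbits of $F$ that persist, without collapsing or changing knot type, under every rel $H$ smooth deformation; since the rel $H$ deformation $F \rightsquigarrow G$ destroys every periodic orbit of $F$ outside $\{T_1, T_2\}$, we conclude $Ess(F) \subseteq \{T_1, T_2\}$.

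The main obstacle is verifying the global structure of $G$: one must check that the heteroclinic knot is genuinely unknotted with $\gamma_1, \gamma_2$ unlinked, that the two-dimensional invariant manifolds of $O_1, O_2$ extend globally so as to accumulate precisely on $T_1 \cup T_2$ without creating additional recurrence or tangencies, and that the chain-recurrent set of $G$ equals $H \cup T_1 \cup T_2$ so that there are no stray periodic orbits. Once the local models near $H$ and near each $T_i$ are fixed, the global interpolation can be achieved using a Lyapunov-type function along the flow in the complement, decreasing away from $H$ and toward the attractors; this is the step where care is required to keep $\gamma_1$ and $\gamma_2$ unlinked on the Clifford torus and to match the two-dimensional invariant manifolds correctly across $W$.
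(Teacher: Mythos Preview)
Your overall strategy matches the paper's: build the simple vector field $G$ first, then perturb it to $F$ by inserting a horseshoe away from $H$, and use the deformation $F\rightsquigarrow G$ to bound $Ess(F)$. Your Clifford-torus realization of $G$ is a concrete version of what the paper only sketches via Figures~\ref{coin} and~\ref{coin3}, and your final step invoking Def.~\ref{essentialclass} is exactly right.

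There is, however, a real gap in how you insert the horseshoe. You choose a flow-box $B$ through which trajectories of $G$ pass \emph{transversally once} and then head off to the sink $T_1$, and you perturb only inside $B$ while fixing the flow on $\partial B$ and outside. But a perturbation confined to such a flow-box cannot create any recurrence: every orbit of the perturbed field still enters $B$ once through the same face, exits once through the opposite face, and then (since nothing outside $B$ has changed) spirals into $T_1$ and never returns. So there is no first-return map $\Sigma\to\Sigma$ at all, and in particular no horseshoe. A Smale horseshoe needs a genuine return mechanism, which a single-transit flow-box does not provide.

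The paper's fix is to exploit the recurrence that $G$ already has: take the cross-section $S$ transverse to one of the attracting periodic orbits $T_i$, where the first-return map $g:S\to S$ is already well-defined near $T_i$. Then pick a rectangle $ABCD\subset S$ with $ABCD\cap(T_1\cup T_2)=\emptyset$ and smoothly move the flow lines along their full circuit around $T_i$ so that the induced return map on $ABCD$ becomes a Smale horseshoe. This deformation is still supported away from $H$ (hence rel $H$), but it is not localized to a flow-box; it modifies a full tube of trajectories going once around $T_i$. With this correction your argument goes through.
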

\begin{proof}
The proof is a much simplified form of an argument originally used by the author in the proofs of Th.4.7 and Th.4.8 in \cite{I3} - for the sake of completeness we reproduce it below. To begin, let $G$ denote a smooth vector field on $S^3$ satisfying the following properties:

\begin{enumerate}
    \item $G$ has two fixed points, $O_1$ and $O_2$, both saddle foci of opposing indices.
    \item $O_1$ and $O_2$ are connected by a heteroclinic knot $H$, ambient isotopic to $S^1$.
    \item The two dimensional invariant manifolds of $O_1$ and $O_2$  coincide as a surface $W$ - i.e., without loss of generality the trajectory of every initial condition on $W$ tends to $O_1$ in backwards time and to $O_2$ in forward time (see the illustration in Fig.\ref{coin}).
    \item Set $\mathbf{T}=S^3\setminus H$ and let $\mathbf{T}_i$ as the components of $\mathbf{T}\setminus W$, $i=1,2$. Then, every initial condition in $\mathbf{T}_i$ is attracted to a periodic orbit $T_i$, $i=1,2$ (see the illustration in Fig.\ref{coin3}). 
\end{enumerate}

\begin{figure}[h]
\centering
\begin{overpic}[width=0.25\textwidth]{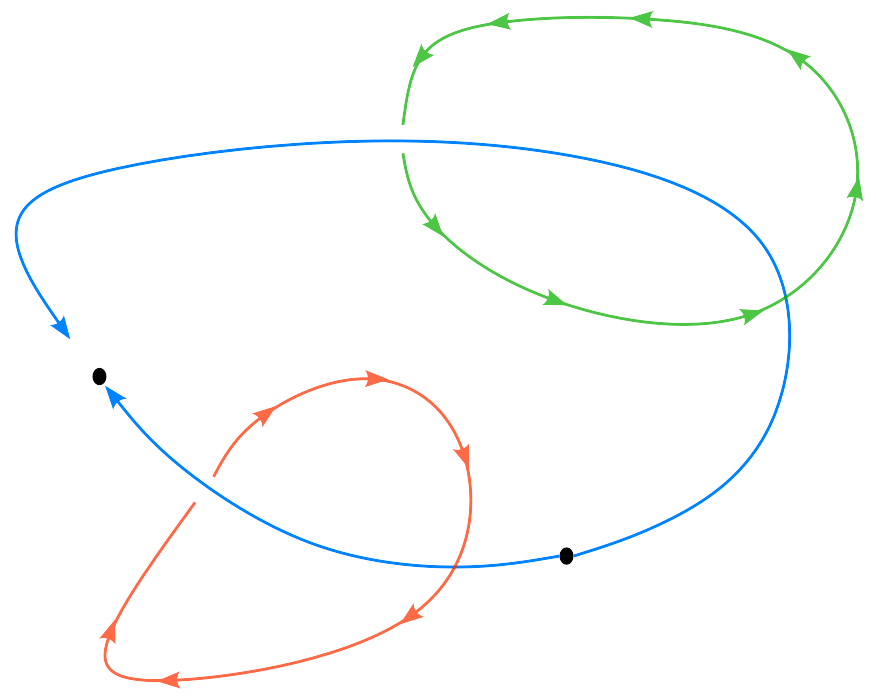}
\put(980,480){$T_1$}
\put(-40,350){$O_1$}
\put(670,90){$O_2$}
\put(410,230){$T_2$}
\end{overpic}
\caption{\textit{The periodic orbits $T_1$ and $T_2$, linked with $H$.}}
\label{coin3}
\end{figure}

We now continue by smoothly deforming $G$ inside $T$ as follows: we first choose some cross-section $S$ transverse to either $T_1$ or $T_2$, s.t. the first-return map $g:S\to S$ is a diffeomorphism. Following that, we choose some topological rectangle $ABCD\subseteq S$, $ABCD\cap (T_1\cup T_2)=\emptyset$, and begin smoothly moving flow lines s.t. $f:ABCD\to S$ becomes a Smale Horseshoe map. This induces a smooth $rel$ $H$ deformation of $G$ to a vector field $F$ which is easily seen to satisfy the conclusion of Prop.\ref{nohyp} - and by construction, it is immediate that $Ess(F)$ includes at most two periodic orbits, $T_1$ and $T_2$. All in all, the proof of Prop.\ref{nohyp} is now complete. 
\end{proof}
\begin{remark}
    Note that by definition, $S^3\setminus H=\mathbf{T}$ is homeomorphic to an unknotted solid Torus $S^1\times S^1$. Using this observation it is easy to see we can also prove a version of Prop.\ref{nohyp} where $T_1$ and $T_2$ are both ambient isotopic to $S^1$, i.e., the unknot, and linked with $H$ as in Fig.\ref{coin3}.
\end{remark}
Having proven $Ess(F)$ is at most a finite set we now analyze the dynamics of $F$ in $\mathbf{T}=S^3\setminus H$. To illustrate why we do so, note that as shown above, the fact that $Ess(F)$ is finite it does not imply the dynamics of $F$ in $S^3\setminus H$ have to be simple. This raises the following question - given a vector field $F$ as in Prop.\ref{nohyp}, can we say anything about its topological dynamics in $S^3\setminus H$? To give a partial answer to this question we first recall the notion of Dominated Splitting, a weaker form of hyperbolicity originally introduced in \cite{Man}:

\begin{definition}
    \label{dominated}
    Let $M$ be a Riemannian manifold and let $\phi_t:M\rightarrow M, t\in\mathbf{R}$ be a smooth flow. A compact, $\phi_t$-invariant set $\Lambda$ is said to satisfy a \textbf{dominated splitting} \textbf{condition} if the following conditions are satisfied:
\begin{itemize}
    \item The tangent bundle satisfies $T\Lambda=S\oplus U$, s.t. $S=\cup_{x\in\Lambda}S(x)\times\{x\}$, $U=\cup_{x\in\Lambda}U(x)\times\{x\}$, where $T_x M= S(x)\oplus U(x)$ and $S(x)$ denotes the stable directions while $U(x)$ the unstable directions.
    \item There exists some $ c>0,0<\lambda<1$, s.t. for all $ t>0$ and every $x\in\Lambda$, $(||D\phi_t|_{U(x)}||)(||D\phi_{-t}|_{S(\phi_t(x))}||)<c\lambda^t$.
    \item   For all $x\in \Lambda$, $S(x),U(x)$ vary smoothly with $t\in\mathbf{R}$ to $S(\phi_t(x)),U(\phi_t(x))$.
\end{itemize}
\end{definition}
With these ideas in mind, we now prove:
\begin{theorem}
    \label{nohyp2} Assume $F$ is a smooth vector field of $S^3$ which generates two fixed points, $O_1$ and $O_2$, both saddle-foci connected by a heteroclinic knot $H$ as illustrated in Fig.\ref{s1}. Then there is no compact, connected $F-$invariant set $\Lambda\subseteq S^3$ that includes the fixed points on which the vector field $F$ satisfies a dominated splitting condition. 
\end{theorem}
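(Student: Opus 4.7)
The plan is to assume for contradiction that a compact connected $F$-invariant set $\Lambda\subseteq S^3$ containing both saddle-foci $O_1,O_2$ admits a dominated splitting $TS^3|_\Lambda=S\oplus U$, and to derive a contradiction from the eigenvalue structure at the saddle-foci combined with the topology of $H$. First, I would observe that the splitting at each $O_i$ must be invariant under $DF(O_i)$; since $O_i$ is a saddle-focus, $DF(O_i)$ has one real and two complex conjugate eigenvalues, so the only proper non-trivial $DF(O_i)$-invariant subspaces of $T_{O_i}S^3$ are the $1$-dimensional real eigenspace $E^r_i$ and the $2$-dimensional complex eigenspace $E^c_i$. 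Hence $\{S(O_i),U(O_i)\}=\{E^r_i,E^c_i\}$, with the assignment of stable/unstable roles forced by the signs of the eigenvalues and by the domination inequality.

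Next, I would use the connectedness of $\Lambda$ together with its containment of both isolated fixed points to produce a regular orbit $\gamma\subseteq\Lambda$ that accumulates on some $O_i$; by the local stable/unstable manifold theorem, $\gamma$ must lie eventually on $W^s(O_i)$ or $W^u(O_i)$. The heteroclinic-knot hypothesis is then decisive: because $H$ is ambient isotopic to $S^1$ and both endpoints are saddle-foci, a direct case analysis of the dimensions of the invariant manifolds (noting that at each $O_i$ exactly one of $W^s(O_i),W^u(O_i)$ is two-dimensional complex) shows that at least one orbit in $\Lambda$ must approach its limiting saddle-focus tangentially to the $2$-dimensional complex invariant manifold. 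Consequently this orbit $\gamma$ spirals into $O_i$, and its unit tangent $F(\gamma(t))/\|F(\gamma(t))\|$ rotates with angular velocity asymptotic to the imaginary part of the complex eigenvalues, accumulating on every direction of $E^c_i$.

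Finally, I would derive the contradiction by analyzing the $1$-dimensional member of the splitting along the spiraling orbit. Call $L(x)$ the $1$-dimensional subspace among $\{S(x),U(x)\}$; by continuity and the first step, $L(\gamma(t))\to E^r_i$ (the real eigenspace, transverse to the spiraling plane $E^c_i$) as $\gamma(t)\to O_i$. Projecting the dynamics onto the linear Poincaré flow on the normal bundle $N_\gamma=F^\perp|_\gamma$, this subbundle descends to a continuous, invariant line field on $N_\gamma$. Because $\gamma$ spirals in $E^c_i$, the normal plane $N_{\gamma(t)}$ itself rotates relative to a fixed frame at $O_i$, so any limiting direction of this invariant line field would have to be simultaneously close to $E^r_i$ and invariant under the rotational action induced by the complex eigenvalues on the quotient $\mathbb{R}^3/E^r_i\cong E^c_i$. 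No such invariant $1$-dimensional subspace exists, yielding the desired contradiction.

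I expect the main obstacle to be making this last step fully rigorous. The heuristic is transparent: the rotation generated by genuinely complex eigenvalues in the invariant $2$-dimensional plane is incompatible with a continuous invariant line field extending across the singularity. Turning this into a formal argument requires careful quantitative control of the angle between $L(\gamma(t))$ and $E^r_i$ as a function of the argument of $F(\gamma(t))$ in $E^c_i$, and is essentially a replay of the Morales--Pacifico--Pujals obstruction distinguishing Lorenz-like singularities from saddle-foci with complex eigenvalues; adapting their argument to the weaker hypothesis of a merely dominated splitting, rather than singular hyperbolicity, requires some care but should go through with no fundamentally new idea.
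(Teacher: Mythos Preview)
Your approach is far more involved than necessary, and you miss the elementary observation that does all the work in the paper's proof. Since $F$ has exactly two fixed points on $S^3$ and $\chi(S^3)=0$, Poincar\'e--Hopf forces the saddle-foci to have \emph{opposite} indices: one (say $O_1$) has a one-dimensional unstable space, the other a two-dimensional one. Your own first step shows $\{S(O_i),U(O_i)\}=\{E^r_i,E^c_i\}$, and the domination inequality (together with the paper's Definition~\ref{dominated}, in which $S$ and $U$ are explicitly the stable and unstable directions) then forces $\dim U(O_1)=1\ne 2=\dim U(O_2)$. The paper simply takes a point $x$ on a heteroclinic orbit in $H\subseteq\Lambda$, notes that the splitting varies continuously along the orbit of $x$, and observes that following $x$ toward $O_1$ gives $\dim U(x)=1$ while following it toward $O_2$ gives $\dim U(x)=2$ --- an immediate contradiction. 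No spiraling orbit, no linear Poincar\'e flow, no Morales--Pacifico--Pujals machinery.

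Your rotation argument is also doubtful on its own terms. You claim the projected line field on $N_\gamma$ must be ``simultaneously close to $E^r_i$ and invariant under the rotational action on $E^c_i$'', but you yourself establish that $L(\gamma(t))\to E^r_i$, which is \emph{transverse} to the rotating plane $E^c_i$ and asymptotically lies in $N_{\gamma(t)}$ as a fixed, non-rotating direction; its image in the quotient $\mathbf{R}^3/E^r_i$ is zero, so the rotation in $E^c_i$ places no constraint on it whatsoever. The MPP-type obstruction you invoke applies when a one-dimensional invariant bundle is forced to project nontrivially onto the complex eigenspace, which is not the geometry here. Even if this step could be repaired, you would still need to handle the case where the connecting orbit in $\Lambda$ lies on the one-dimensional real manifolds of both fixed points and therefore does not spiral at either end --- and that case is precisely where the paper's dimension argument gives the contradiction in one line.
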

\begin{proof}
We prove Th.\ref{nohyp2} by contradiction. To do so assume there exists an invariant set $\Lambda$ as above which includes the fixed points for $F$ - by the closure of $\Lambda$ and the continuity of the flow it follows $H\subseteq\Lambda$ Moreover, as we assume by contradiction we can decompose $T\Lambda=S\oplus U$ as in Def.\ref{singular}  it follows that at each fixed point $p\in\Lambda$ for $F$ we can write $T_pS^3=S(p)\oplus U(p)$ - where $U(p)$ corresponds to the unstable directions and $S(p)$ to the stable directions.\\

By assumption we know $F$ has precisely two fixed points in $S^3$, $p_1$ and $p_2$ - both saddle foci of opposing indices, connected by a heteroclinic trajectory $H$ which lies in the intersection of their one-dimensional manifolds. In more detail, we know $H$ forms the one-dimensional unstable manifold of, say, $p_1$, while it is also the stable manifold of, say, $p_2$. Therefore, by the discussion above above we know $U(p_1)$ is one-dimensional while $S(p_1)$ is two dimensional - and similarly, $U(p_2)$ is two dimensional while $S(p_1)$ is one-dimensional. Moreover, by the continuity of the flow we know the same is true for all initial conditions $x\in\Lambda$ sufficiently close to $p_1$ and $p_2$ - that is, of $x\in\Lambda$ is sufficiently close to $p_1$ the space $U(x)$ would be one dimensional while $S(x)$ would be two dimensional, and when $x$ is sufficiently close to $p_2$ we have the opposite.\\

Now, consider any given $x\in H$ and denote the flow by $\phi_t,t\in\mathbf{R}$. Since the trajectory of $x$ tends to $p_1$ by $x\in\Lambda$ we conclude that for all sufficiently large $t>0$ the space $U(\phi_t(x))$ is one-dimensional while $E(\phi_t(x))$ is two dimensional - and since by the dominated splitting assumption these invariant subspaces vary smoothly with $t$ it follows the same is true for $x$, i.e. $U(x)$ is also one-dimensional while $S(x)$ is two dimensional. On the other hand, since the backwards trajectory of $x$ tends to $p_2$ it follows that for all sufficiently small $t<0$ the space $U(\phi_t(x))$ would be two dimensional while $S(\phi_t(x))$ would be one dimensional - and similarly it would follow $U(x)$ is two dimensional while $S(x)$ is one dimensional. This is a contradiction, which implies there can be no such $\Lambda$ and Th.\ref{nohyp2} now follows.
\end{proof}
\begin{remark}
  It is easy to see the proof above works for any smooth vector field $F$ on $S^3$ that generates a heteroclinic knot connecting two saddle foci - regardless of whether that knot is ambient isotopic to $S^1$ or not. For example, it can also be applied to the vector field $F$ considered in Th.\ref{trefoilor}.
\end{remark}
Th.\ref{nohyp2} has the following heuristic meaning: assume $F$ is a smooth vector field satisfying the assumptions of Prop.\ref{nohyp} and Th.\ref{nohyp2} - i.e., $F$ generates a suspended Smale Horseshoe map while its essential class w.r.t. $H$ is at most finite. Then, by Th.\ref{nohyp2} this set must lie strictly away from the heteroclinic knot $H$ (see the illustration in Fig.\ref{sush}) - and moreover, no matter how we deform the flow $rel$ $H$ we cannot squeeze this horseshoe map "too close" to the heteroclinic knot. Or in other words, whatever hyperbolic, chaotic dynamics $F$ may generate in $S^3\setminus \mathbf{T}$ they must lie inside some plug which isolates them from the heteroclinic knot.\\

\begin{figure}[h]
\centering
\begin{overpic}[width=0.3\textwidth]{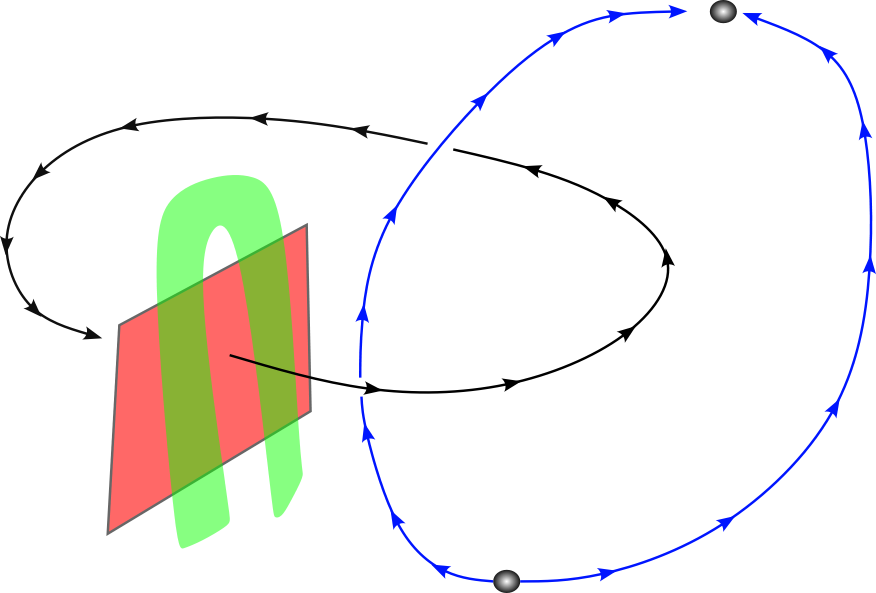}
\end{overpic}
\caption{\textit{A horseshoe suspended in $\mathbf{T}$ away from the heteroclinic knot $H$.}}
\label{sush}
\end{figure}

We conjecture this behavior is typical when $Ess(F)$ is a finite set. In order to state that conjecture precisely we need to introduce another weak form of hyperbolicity, namely the notion of singular hyperbolicity, originally introduced in \cite{MPP}:

\begin{definition}
 \label{singular}   Let $\phi_t,t\in\mathbf{R}$ be a smooth flow on $S^3$ (or some other closed $3$ manifold). We say the dynamics of $F$ on some compact invariant set $\Lambda$ are \textbf{partially hyperbolic} if  we can split the tangent bundle $T_\Lambda=E^s\oplus E^{cu}$ s.t. the following is satisfied: 
    \begin{itemize}
        \item For all $x\in\Lambda$ the space $E^{cu}(x)$ includes the direction $F(x)$, and $E^s(x)$ is one-dimensional and contracting. 
        \item There exist constants $C>0$ and $\lambda>1$ s.t. for every $t>0$ and every $x\in\Lambda$ we have: 
        
        \begin{enumerate}
            \item For $v\in E^s(x)$, $||D_{\phi_t}(x)v||<Ce^{-\lambda t}||v||$.
            \item We always have $||D_{\phi_{1,t}}(x)||||D_{\phi_{2,-t}}(x)||<Ce^{-\lambda t}$, where $D_{\phi_{1,t}}(x)$ denotes the time-$t$ differential restricted to $E^s(x)$ and $D_{\phi_{2,-t}}(x)$ denotes the restriction of the $-t$-time map to $E^{cu}(\phi_t(x))$. 
        \end{enumerate}
    \end{itemize}

 We say a partially hyperbolic set $\Lambda$ for $\phi_t$ is \textbf{volume expanding} if in addition there exists a constant $\kappa>1$ s.t. for all $x\in\Lambda$ $det(D_{\phi_{2,t}}(x))\geq\kappa$ - we say it is \textbf{volume contracting} if for all $t>0$, $x\in\Lambda$ we have $det(D_{\phi_{2,-t}}(x))\geq\kappa$. Finally, if $\phi_t$ is both partially hyperbolic on $\Lambda$, volume expanding (or contracting), and all its fixed points in $M$ are hyperbolic, we say it is \textbf{singularly hyperbolic on} $\Lambda$.
\end{definition}

Note that unlike the definition of Dominated Splitting, in the definition of Partial Hyperbolicity or Singular Hyperbolicity we do not require the invariant subspaces to vary smoothly with the flow, although we do require some domination condition on the differentials (note the arguments used to prove Th.\ref{nohyp2} in the Dominated Splitting case will not work in the singular hyperbolic case). With these ideas in mind we conjecture the following is true:

\begin{conj}
    \label{nohyp3}
    Assume $F$ is a smooth vector field on $S^3$ which generates a heteroclinic knot $H$ s.t. $Ess(F)$ w.r.t. $H$ is finite (and possibly empty). Then, there is no invariant set $I\subseteq S^3\setminus H$ on which $F$ is singular hyperbolic and $H\subseteq \overline{I}$.
\end{conj}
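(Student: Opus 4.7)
The plan is to argue by contradiction: assume $F$ is singular hyperbolic on some invariant $I \subseteq S^3 \setminus H$ with $H \subseteq \overline{I}$, yet $Ess(F)$ is finite, and then use Th.\ref{orbipers} to manufacture infinitely many essential periodic orbits from the singular hyperbolic structure, yielding the contradiction. The basic heuristic is the same as the one that underlies Th.\ref{nohyp2}: a singular hyperbolic structure near the heteroclinic knot forces a Lorenz-type two-branch return map, whose dynamics then cannot be trivialized by a deformation $rel$ $H$.

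First I would analyze the behavior of $F$ in a neighborhood of the fixed points lying on $H$. By Def.\ref{singular} these are hyperbolic; since $H \subseteq \overline{I}$ each such fixed point $p$ is an accumulation point of orbits in $I$, so its local hyperbolic splitting must be compatible with the global partial splitting $E^s \oplus E^{cu}$ on $I$. In particular, the one-dimensional strong stable direction of $p$ coincides with $E^s(p)$, and the two-dimensional volume-expanding direction $E^{cu}(p)$ contains the unstable direction of $p$. Using this, I would build a finite collection of cross-sections $\Sigma$ transverse to $F$ in a tubular neighborhood of each segment of $H$, obtained by pushing local unstable disks of the saddles on $H$ along the flow (so that $\overline{\Sigma} \cap H$ is a non-empty set of punctures or boundary corners). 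By standard singular-hyperbolic arguments (as used to set up Poincar\'e return maps for Lorenz-like attractors), the return map $g : \Sigma \setminus W^s_{\mathrm{loc}}(H) \to \Sigma$ is well defined, continuous, uniformly contracting along $E^s$, and volume-expanding along $E^{cu}$.

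Second, I would quotient along stable leaves to get a one-dimensional branched return map and then fit $g$ into the hypotheses of Th.\ref{orbipers}. The cutting loci in $\Sigma$ are the traces of the local stable manifolds of the saddles on $H$, so each puncture of $\Sigma$ corresponds to a fixed point in $H \cap \overline{\Sigma}$, matching condition (5) of Th.\ref{orbipers}. The volume expansion along $E^{cu}$, together with the fact that each cutting curve abuts a saddle on $H$, yields an induced graph map on the spine of $\Sigma$ whose associated matrix has spectral radius strictly greater than $1$, because the image of at least one edge crosses the cutting locus and therefore covers at least one edge of the spine at least twice. By Th.\ref{betshan} this makes $g$ isotopic to a Pseudo-Anosov map $P : R \to R$ on a surface of negative Euler characteristic punctured at the points of $\overline{\Sigma} \cap H$; the embedding $g \subseteq P$ is exactly the one required by Th.\ref{orbipers}. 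Invoking Th.\ref{orbipers} produces infinitely many periodic orbits in $Ess(F)$, contradicting the finiteness of $Ess(F)$ and completing the proof.

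The main obstacle will be the second step, and specifically the verification that the spectral radius of the induced graph map is actually larger than $1$. Singular hyperbolic sets can be quite thin (even essentially one-dimensional sub-branches), so without further input one cannot rule out a degenerate return map whose spine action covers every edge exactly once. The key input I expect to need is that $H \subseteq \overline{I}$ combined with volume expansion along $E^{cu}$ forces at least two distinct dynamical branches to accumulate on each saddle on $H$: informally, the two-dimensional unstable sheet at a saddle on $H$ must return on both sides of some stable manifold, as in the geometric Lorenz model. Making this precise — and in particular excluding the possibility that the return map near $H$ looks like a single branch suspended around $H$, in which case $Ess(F)$ really could be empty — is the essential technical content of the conjecture. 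A secondary difficulty, closely related, is to ensure that when the surface $R$ is assembled from the local cross-sections around different components of $H$, the resulting punctured surface has negative Euler characteristic; this should follow from $H$ having at least three fixed points, or being sufficiently non-trivially knotted, so one may need to strengthen the hypothesis slightly, or else to handle the degenerate genus-zero case separately using the reducible/periodic branches of Th.\ref{thurston-nielsen}.
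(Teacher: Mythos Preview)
The statement you are attempting to prove is labeled in the paper as a \emph{conjecture} (Conjecture~\ref{nohyp3}), and the paper does not supply a proof. Immediately after stating it, the author writes only that it ``is motivated by two heuristics'' and gives informal reasons to believe it, drawing on Th.~\ref{nohyp2} and on general structural-stability folklore for hyperbolic systems. So there is no paper proof to compare against; you are attempting to settle an open problem.

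Your strategy --- show that singular hyperbolicity accumulating on $H$ forces $F$ itself to satisfy the hypotheses of Th.~\ref{orbipers}, hence $Ess(F)$ is infinite --- is a natural one, and is quite different from what the paper does in the related but weaker Th.~\ref{nohyp2}. There the author handles the \emph{dominated splitting} case by a short dimension-counting argument: trace the invariant subbundles $S,U$ along a heteroclinic orbit on $H$ joining two saddle-foci of opposite index, and obtain a contradiction from the required continuity of $\dim S(x),\dim U(x)$. The paper explicitly remarks that this argument fails for singular hyperbolicity, since Def.~\ref{singular} does not demand that $E^s,E^{cu}$ vary smoothly along orbits in the same way.

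The obstacles you flag are genuine and, as far as I can see, unresolved. In particular: (i) nothing in the bare hypothesis ``$F$ singular hyperbolic on $I$ with $H\subseteq\overline{I}$'' forces a two-branch Lorenz-type return map near $H$; a singular hyperbolic set can in principle be a single suspended branch accumulating on a saddle, and then the induced spine map has spectral radius $1$ and Th.~\ref{betshan} gives nothing. (ii) Even if you do obtain many periodic orbits for $F$ from the singular hyperbolic structure, you still need the full embedding into a Pseudo-Anosov on a negative-Euler-characteristic surface with the precise vertex correspondence demanded by Th.~\ref{orbipers}; a hyperbolic return map alone is not enough (compare Prop.~\ref{nohyp}, where a genuine Smale horseshoe sits inside the solid torus yet $Ess(F)$ is finite because the horseshoe is not tied to $H$). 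Your proposal is a reasonable outline of how a proof \emph{might} go, but the step from ``singular hyperbolic near $H$'' to ``graph map covers an edge at least twice'' is exactly the content of the conjecture, and you have not supplied it.
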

This conjecture is motivated by two heuristics - the first one is that per our Th.\ref{nohyp2} we know that whenever $Ess(F)$ is finite is no reason to think the heteroclinic knot can suspend complex dynamics. The second heuristic comes from certain facts in the classical theory for hyperbolic dynamics. To introduce them recall that under certain conditions hyperbolic dynamical systems are structurally stable (see \cite{Hu}, \cite{Shao} and \cite{Hay} for the details) - and that given any boundaryless, compact three-dimensional manifold $M$ we can always $C^1-$approximate smooth vector fields on $M$ by either uniformly hyperbolic, singularly hyperbolic, or homoclinic vector fields (see \cite{CY} and \cite{ARH} for the precise formulation). Due to the extreme instability of heteroclinic knots under perturbations and due to the intuitive robustness of hyperbolic and singular hyperbolic dynamics it seems intuitive one should not expect heteroclinic knots to suspend such dynamics. Therefore, when $Ess(F)$ is finite we will expect any complex, hyperbolic or singular hyperbolic dynamics to lie strictly away from the heteroclinic knot.\\
\begin{figure}[h]
\centering
\begin{overpic}[width=0.5\textwidth]{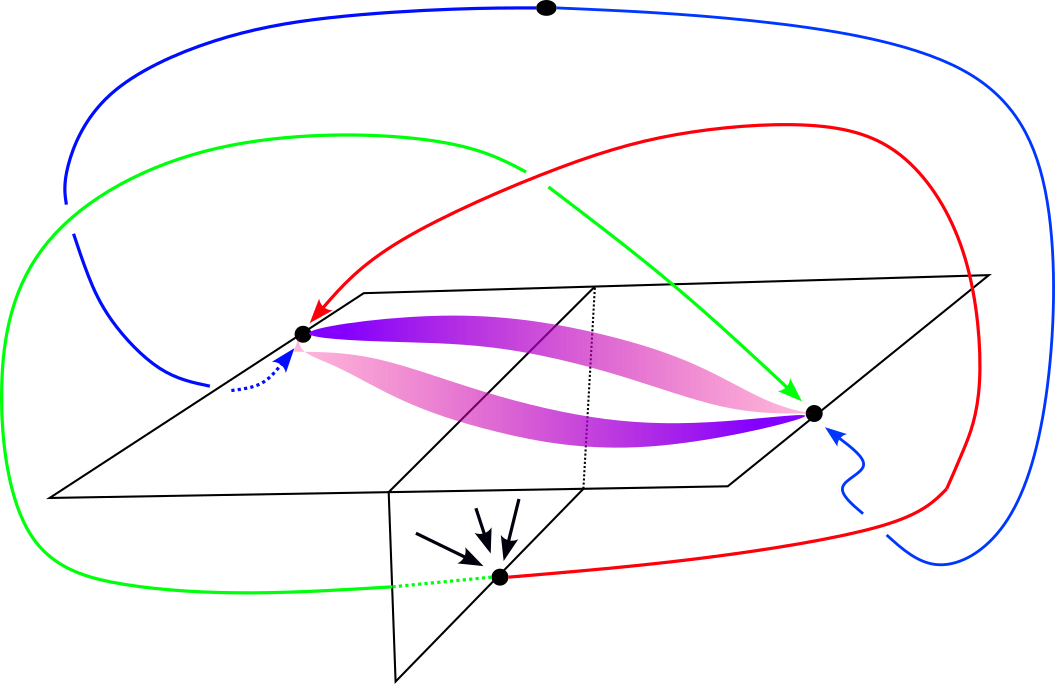}
\put(220,230){$R_{1,p}$}
\put(500,600){$\infty$}
\put(220,120){ $W^s(0)$}
\put(220,330){$p^-$}
\put(790,250){$p^+$}
\put(480,50){$0$}
\put(560,250){$R_{2,p}$}
\end{overpic}
\caption{\textit{A heteroclinic knot $H$ which is ambient isotopic to $S^1$, whose fixed points are configured precisely like those of the Lorenz system (in this scenario, $p^\pm$ are saddle foci). It is easy to see the Orbit Index of every periodic orbit is $-1$. Unlike the Lorenz system however, this vector field is not symmetric.}}
\label{simple}
\end{figure}

Having conjectured Conjecture \ref{nohyp3} we conclude this section by answering the following question - let $F$ be a smooth vector field of $S^3$ with a finite number of fixed points, all connected by a heteroclinic knot $H$. Then, assuming $H$ is ambient isotopic to $S^1$ (i.e., its knot type is the unknot), does it follow that $Ess(F)$ is finite (or empty)? Or, in other words, does a simple topology for a given heteroclinic knot translate into a finite (or empty) Essential Class of periodic orbits for $F$?\\

Surprisingly enough, the answer to this question is strictly negative. To illustrate, let $F$ be a smooth vector field of $S^3$ which generates heteroclinic knot $H$ configured as in Fig.\ref{simple} - i.e., $F$ generates two saddle foci $p^\pm$of Poincare Index  $-1$ and two fixed points of Poincare Index $1$: a repeller at $\infty$ and a real saddle at $0$, all connected by a heteroclinic knot $H$ as in Fig.\ref{simple}. It is easy to see the heteroclinic knot in Fig.\ref{simple} is ambient isotopic to $S^1$, the unknot - and it is also easy to see we can choose $F$ s.t. there exists a cross-section with a first-return map as sketched in Fig.\ref{simple}. That is, the knot and its configuration with the cross-section force the first return map to behave like some singular Fake Horseshoe - which implies $F$ has infinitely many periodic orbits. In addition, since this first return map is conjugate to the one we constructed in the proof of Th.\ref{trefoil2} (see Fig.\ref{deform3}) it is easy to see we can apply the same analysis to it - i.e. we can embed the first-return map inside a Pseudo-Anosov map of the punctured torus s.t. the assumptions of Th.\ref{orbipers} hold. This implies the following:

\begin{theorem}
    \label{notrivial} There exists a smooth vector field $F$ of $S^3$ which generates a heteroclinic knot $H$ ambient isotopic to $S^1$, s.t. $Ess(F)$ w.r.t. $H$ includes infinitely many periodic orbits. In other words, even when $H$ is ambient isotopic to $S^1$ the Essential Class of $F$ can still be infinite.
\end{theorem}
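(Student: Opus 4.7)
The plan is to make the construction sketched in the paragraph preceding the theorem fully rigorous by producing an explicit smooth vector field $F$ on $S^3$, verifying that its heteroclinic knot $H$ is ambient isotopic to $S^1$, and then invoking Th.\ref{orbipers} to force $Ess(F)$ to be infinite. First I would build $F$ by prescribing its local models at each fixed point: two saddle foci $p^\pm$ of Poincare index $-1$, a real saddle at the origin $0$ with a two-dimensional stable manifold $W^s(0)$ and one-dimensional unstable manifold, and a repelling fixed point at $\infty$. I would then specify that the two branches of $W^u(0)$ flow towards $p^-$ and $p^+$ respectively (creating two heteroclinic orbits from $0$), while the one-dimensional unstable manifolds $W^u(p^\pm)$ are glued to $\infty$, and the one-dimensional stable manifold $W^s(\infty)$ (viewed from $\infty$) terminates at the origin $0$. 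The union $H$ of these four heteroclinic orbits together with the four fixed points is a smoothly embedded closed curve in $S^3$.

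The next step is to verify that $H$ is ambient isotopic to $S^1$, i.e., the unknot. The key point is that all four heteroclinic orbits can be drawn in a single plane-like slice of $S^3$ (as suggested by Fig.\ref{simple}), and by construction there are no crossings or linkings with itself — $H$ bounds an embedded disc in $S^3$. I would make this precise by writing down an ambient isotopy which contracts $H$ to a round circle bounding a flat disc, observing that since no strand of $H$ passes over another in the schematic configuration of Fig.\ref{simple} the isotopy can be carried out in $S^3$ without obstruction. This contrasts sharply with the heteroclinic trefoil knots of Th.\ref{trefoil2}, and is the geometric input that makes the theorem surprising.

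I would then construct a universal cross-section $R$ transverse to $F$ with the fixed points $p^\pm$ on $\partial R$ and with $W^s(0)\cap R$ forming a curve $\gamma$ that splits $R$ into two rectangles $R_1,R_2$ exactly as in Fig.\ref{simple}. Because the heteroclinic configuration forces every orbit leaving $R_i$ through a neighborhood of $0$ to be torn along $W^s(0)$ and reinserted near the opposite end of $R$, I can arrange $F$ (by choosing the local invariant manifold data as above) so that the first-return map $\psi\colon R_1\cup R_2\to R$ behaves exactly like a singular Fake Horseshoe map, with $p^\pm$ playing the roles of the torn vertices — this is the same combinatorial mechanism as in the proof of Th.\ref{trefoil2}, and the verification is literally identical once the cross-section is specified, because the Fake Horseshoe structure is determined by the itinerary $R_i\to R$ forced by the heteroclinic connections.

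Finally, I would invoke Th.\ref{orbipers}. The Fake Horseshoe map $\psi$ can be embedded in a Pseudo-Anosov homeomorphism $P$ of a once-punctured torus (the four punctures $p^+,p^-,0,\infty$ reduce to a single topologically relevant puncture after identifying those that are sinks/sources as boundary components, exactly as in Sect.\ref{lorenz}); the spine of the punctured torus is covered at least twice by the induced graph map, and every puncture of the spine corresponds to a point of $\overline{S_1}\cap H$. Thus all five hypotheses of Th.\ref{orbipers} are satisfied, which yields $|Ess(F)|=\infty$. The main obstacle I anticipate is the compatibility step: ensuring simultaneously that (i) $H$ can be drawn unknotted in $S^3$ and (ii) the four branches of the invariant manifolds produce the Fake Horseshoe tearing pattern on $R$. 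Since unknottedness only constrains the global embedding of $H$ in $S^3$ and leaves the local first-return mechanism entirely free, this compatibility can be achieved by a careful but routine choice of the flow lines filling the complement of $H$, completing the argument.
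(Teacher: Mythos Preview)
Your proposal follows essentially the same route as the paper: take a Lorenz-type configuration of four fixed points (two saddle foci $p^\pm$, a real saddle at $0$, a repeller at $\infty$), arrange the heteroclinic knot $H$ to be unknotted as in Fig.~\ref{simple}, read off a singular Fake-Horseshoe first-return map on the cross-section $R=R_{1,p}\cup R_{2,p}$, embed this map inside a Pseudo-Anosov homeomorphism of the once-punctured torus exactly as in the proof of Th.~\ref{trefoil2}, and conclude via Th.~\ref{orbipers}. The paper's argument is precisely this, stated in the paragraph immediately preceding the theorem.

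One small correction to your write-up: your description of the heteroclinic connections has the directions and dimensions slightly off. Since $p^\pm$ have Poincar\'e index $-1$, their stable manifolds $W^s(p^\pm)$ are one-dimensional and their unstable manifolds are two-dimensional; and since $\infty$ is a repeller it has no stable manifold at all, so there is no ``$W^s(\infty)$ terminating at $0$'' and no orbit $p^\pm\to\infty$. The four arcs of $H$ are instead $0\to p^+$, $0\to p^-$ (the two branches of $W^u(0)$ landing in one branch each of $W^s(p^\pm)$) and $\infty\to p^+$, $\infty\to p^-$ (trajectories emanating from the repeller and landing in the remaining branches of $W^s(p^\pm)$). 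This gives exactly a $4$-cycle $0\,$--$\,p^+\,$--$\,\infty\,$--$\,p^-\,$--$\,0$, which is the unknotted circle you want. With this adjustment your argument is complete and matches the paper.
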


\section{Discussion}

Having used both Th.\ref{orbipers} and the Orbit Index Theory to study the Essential Dynamics of three-dimensional flows in this section we outline how the results of this paper can possibly be extended - and in particular, discuss how and where they possibly fit within the larger theory of low-dimensional dynamics. As stated at the introduction our results above were strongly motivated by the theory of Surface Dynamics as presented in \cite{Bo} - therefore we begin this section by comparing our results with the said theory. Following that we discuss our results in the wider context of hyperbolic dynamical systems, after which we heuristically connect them with the "Chaotic Hypothesis" known from \cite{gal}.\\

To begin, first recall that given a surface homeomorphism $h:S\to S$ we can define its Essential Class of periodic orbits in $S$ as the collection of periodic orbits which persist under isotopies (see Def.\ref{essential2}). Now, recall the Thurston-Nielsen Classification Theorem which states that if $S$ is a surface of negative Euler characteristic any homeomorphism $h:S\to S$ is isotopic to precisely one of the following (see Th.\ref{thurston-nielsen}):
\begin{enumerate}
    \item A periodic homeomorphism $f:S\to S$, i.e., there exists some $n>0$ s.t. $f^n=Id$. In that case, whatever complex dynamics $h$ may have in $S$ can be removed by an isotopy of $h$. As such, in this case the Essential Class of periodic orbits for $h$ in $S$ is at most finite.
    \item A Pseudo-Anosov map $f:S\to S$ with infinitely many periodic orbits in $S$. In this case the complex dynamics of $f$ persist as $f$ is isotoped back to $h$ (see Th.\ref{stability}) - i.e., the dynamics of $h$ are complex at the very least like those of $f$, and the Essential Class for $h$ is $S$ is infinite.
    \item A reducible map $f:S\to S$ -  i.e., $S$ can be decomposed into a finite number of surfaces $S_1,...,S_n$ of negative Euler characteristic, glued to one another along some invariant curve(s) in $S$. Moreover, for every $i=1,...,n$ $f|_{S_i}$ is either periodic or Pseudo-Anosov, and the Essential Class is as follows.
\end{enumerate}
 
As we have proven in Th.\ref{trefoilor} and Th.\ref{trefoil2} there exist vector fields on $S^3$ whose Essential Class of periodic dynamics (w.r.t. some heteroclinic knot $H$) is infinite - and in Th.\ref{nohyp} we have also proven the existence of vector fields whose Essential Class is at most a finite collection of periodic orbits. In other words, we have proven the existence of analogues to both Pseudo-Anosov and Periodic maps. We believe these analogies are not mere coincidence. In fact, we believe Th.\ref{orbipers} and Th.\ref{pers13} are both part of a larger theory which ties the topology of a given three-dimensional phase space with the possible dynamics which can exist on it - similarly to how the Arnold Conjecture connects between the dynamical properties of a given symplectomorphism and the phase space it is defined on (see Conjecture 2.1 in \cite{Arn}). This motivates us to conjecture the following:
\begin{conj}
    \label{genthunie} Let $F$ be a smooth vector field on $S^3$ with fixed points $\{x_1,...,x_n\}$, and let $H$ be a one-dimensional set invariant under the flow, s.t. each component $C$ of $H\setminus\{x_1,...,x_n\}$ satisfies one of the following:
    \begin{enumerate}
        \item $C$ is a homeomorphic to $S^1$.
        \item $C$ is homeomorphic to $(0,1)$.
        \item For every $1\leq i\leq n$ there exists some $C$ s.t. $x_i\in\overline{C}$ - i.e., all the fixed points of $F$ are on $H$.
    \end{enumerate}

Now, recall we denote the Orbit Index of a periodic orbit $T$ of $F$ by $i(T)$ (see Def.\ref{index22}). Then, under the assumptions above $F$ can be deformed $rel$ $H$ to a smooth vector field $G$ s.t. $G$ satisfies precisely one of the following:
 \begin{enumerate}
     \item $G$ is \textbf{ordered} - $G$ generates a finite number of periodic orbits, $T_1,...,T_n$, s.t. $i(T_j)=1$ for all $j=1,...,n$. Moreover, every $T_j$ is either attracting or repelling and $Ess(F)$ w.r.t. $H$ is simply $\{T_1,...,T_n\}$.
     \item $G$ is \textbf{chaotic} - $G$ generates infinitely many periodic orbits, all of which have Orbit Indices $-1$ and $0$. Moreover, the periodic orbits of $G$ all persist under deformations $rel$ $H$ without changing their knot type or collapsing to one another - in that case $Ess(F)$ w.r.t. $H$ is infinite.
     \item $G$ is \textbf{mixed} - there exists some surface $S$ invariant under the flow of $G$ $S\cap{H}\subseteq\{x_1,...,x_n\}$, s.t. on each component of $S^3\setminus(H\cup S)$ the flow is either ordered or chaotic.
 \end{enumerate}
\end{conj}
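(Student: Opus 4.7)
The strategy I would pursue mirrors the structure of the Thurston--Nielsen Classification Theorem (Th.\ref{thurston-nielsen}): reduce the flow to a two-dimensional surface homeomorphism via a suitable cross-section, classify that homeomorphism up to isotopy, and then lift the classification back to the flow $rel$ $H$ using Th.\ref{orbipers} and the Orbit Index machinery from Section \ref{orbitin}. Concretely, the first step is to construct a (possibly disconnected, possibly branched) global cross-section $\Sigma \subseteq S^3\setminus H$ whose boundary components abut $H$, chosen so that the first-return map $f\colon \Sigma_1 \to \Sigma$ is well-defined, continuous, extends to $\overline{\Sigma_1}\cap H$, and permutes the boundary punctures in a way dictated only by the combinatorial structure of $H$. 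Existence of such a $\Sigma$ in this generality is the most serious geometric hurdle; I would attack it by adapting the Birkhoff--style construction used implicitly in \cite{Pi} and in the Rössler analysis of Section \ref{rossler}, after first using small deformations $rel$ $H$ to make all fixed points non-degenerate and all invariant manifolds transverse where possible.

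Once $\Sigma$ is in hand, apply the Thurston--Nielsen trichotomy to $f$ relative to the finite puncture set $\Sigma\cap H$. If the Thurston--Nielsen representative $g$ is \emph{periodic}, the induced invariant foliations are essentially trivial, and I would isotope $f$ to $g$ on $\Sigma$ and then use a version of the lifting argument in the proof of Th.\ref{orbipers} (tracking how the isotopy of cross-section maps is realized by a $rel$ $H$ homotopy of the ambient flow) to deform $F$ to a flow with only finitely many persistent periodic orbits; the generic Orbit Index computation from Def.\ref{index1} then forces these orbits to be of index $1$ after a further generic deformation that removes all type $I$ and type $II$ bifurcation orbits, yielding the \emph{ordered} case. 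If the representative is \emph{Pseudo-Anosov}, Th.\ref{orbipers} directly supplies the \emph{chaotic} case, and a further application of the deformation used in Cor.\ref{templateth} (Hopf expansion of saddle-foci and singular-hyperbolic straightening) together with Lemma \ref{index3} and Prop.\ref{dens1} arranges all orbit indices to lie in $\{0,-1\}$. If the representative is \emph{reducible}, the invariant curves of $g$ in $\Sigma$ suspend under the flow to invariant surfaces $S\subseteq S^3$ with $S\cap H \subseteq \{x_1,\dots,x_n\}$, decomposing $S^3 \setminus (H\cup S)$ into components on which the induced cross-section dynamics are either periodic or Pseudo-Anosov, giving the \emph{mixed} case by induction on the complexity of the reducing system.

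The main technical obstacles are threefold. First, constructing the cross-section $\Sigma$ without a priori knowledge that the flow is hyperbolic or singular hyperbolic is delicate, because in general the invariant manifolds of the fixed points on $H$ can intersect wildly; a possible workaround is to first deform $F$ $rel$ $H$ so that a small tubular neighbourhood of $H$ admits a standard local model at each $x_i$ (sink/source/saddle/saddle-focus), and then extend $\Sigma$ outward along flow lines. Second, the lifting from a surface isotopy of $f$ to a flow homotopy $rel$ $H$ must be done carefully so that no spurious fixed points appear off $H$; here I would rely on the same cross-section deformation argument used in the proof of Th.\ref{orbipers} (see Fig.\ref{deform221}--\ref{deform22}), combined with Lemma \ref{perserve} to control knot types across saddle-node bifurcations encountered along the way. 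Third, in the reducible case, verifying that the pieces glue together into a globally well-defined mixed flow on $S^3$ requires matching the boundary dynamics on either side of each reducing surface; this is where I expect the deepest difficulty, and it may force one to allow $S$ to have a finite collection of connected components rather than a single surface, and to invoke Th.\ref{begbon} on each hyperbolic piece to pin down the plug structure.

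Finally, the Orbit Index arithmetic promised in the conjecture (all indices $1$ in the ordered case, all indices in $\{-1,0\}$ in the chaotic case) is not automatic from the Thurston--Nielsen reduction and must be imposed by an additional generic perturbation inside $K$ (Prop.\ref{propk}), followed by an application of Th.\ref{invar} and Lemma \ref{notyp2} to rule out index changes along the homotopy. The persistence without knot-type change in the chaotic case should then follow by the same Fixed Point Index argument used in Cor.\ref{pers12} and the Lorenz persistence theorem Th.\ref{persistence}. I expect that the cleanest formulation of the conjecture may actually require a slightly stronger hypothesis on $H$ (for example, that each component $C$ of $H\setminus\{x_1,\dots,x_n\}$ homeomorphic to $(0,1)$ be a genuine heteroclinic orbit rather than a more general invariant arc), and part of the work would be to identify the precise minimal hypothesis under which the trichotomy holds.
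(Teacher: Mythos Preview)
The statement you are attempting to prove is Conjecture~\ref{genthunie}, and the paper does \emph{not} prove it. It appears in the Discussion section explicitly as an open conjecture, introduced with ``This motivates us to conjecture the following'' and followed by ``If true, this conjecture generalizes the Thurston--Nielsen Classification Theorem\ldots''. There is therefore no proof in the paper to compare your proposal against.

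Your outline is a sensible heuristic for how one might attack the conjecture, and in fact it matches the paper's own motivation: the author states that the conjecture is meant as a flow-theoretic analogue of the Thurston--Nielsen trichotomy, and the chaotic case is precisely what Th.~\ref{orbipers} handles under its additional hypotheses. However, you should be aware that what you describe is a research programme, not a proof. The three obstacles you flag---existence of a global cross-section $\Sigma$ abutting $H$ in the absence of any hyperbolicity, lifting surface isotopies to $rel$ $H$ flow homotopies without creating spurious fixed points, and gluing along reducing surfaces in the mixed case---are exactly the reasons the statement remains a conjecture. In particular, the construction of $\Sigma$ in Th.~\ref{orbipers} is an \emph{assumption}, not a conclusion, and the paper gives no mechanism for producing such a cross-section from $H$ alone; the Rössler and Lorenz examples work because the cross-section is supplied externally (by Th.~\ref{generaldynamics} and Prop.~\ref{talilemma} respectively). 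Your closing remark that the conjecture may need a stronger hypothesis on $H$ is well taken and is consistent with the paper's own tentative tone.
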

If true, this conjecture generalizes the Thurston-Nielsen Classification Theorem and Th.\ref{stability} to three-dimensional flows - as well as greatly extends Th.\ref{orbipers}. It can be interpreted as follows - it is easy to see that whenever we can deform $F$ $rel$ $H$ to some singular-hyperbolic vector field $G$ in $S^3\setminus H$ by Def.\ref{index22} the Orbit Index of every periodic orbit for $G$ would be either $-1$ or $0$. Therefore, if the conjecture above is true it would imply the dynamics of $F$ are essentially those of a deformed hyperbolic vector field - or qualitatively speaking, the dynamics of $G$ serve as a topological lower bound for the dynamical complexity of $F$, much like Pseudo-Anosov maps are dynamically minimal (see Th.\ref{stability}). Moreover, in the mixed case this Conjecture can be interpreted as a decomposition theorem for flows - i.e., any three dimensional flow in $S^3\setminus H$ which can be deformed $rel$ $H$ to a mixed vector field $G$ is essentially the gluing two dynamical systems along some invariant surface (say, some two-dimensional invariant manifold). \\

The discussion above implies that a positive answer to Conjecture \ref{genthunie} could aid the analysis of chaotic attractors whose dynamics lie in the Essential Class. However, even if true Conjecture \ref{genthunie} does not give us tools to analyze chaotic attractors whose dynamics are removable - i.e., chaotic attractors whose dynamics can be destroyed by deformation $rel$ $H$ to an ordered vector field. Inspired by Conjecture \ref{genthunie} we now conjecture how to overcome this difficulty - to this end, assume $G$ is an ordered vector field and with previous notations let let $T_1,...,T_k$ denote the periodic orbits in $Ess(G)$. By $i(T_j)=1, j=1,...,k$ we know that for a generic choice of $G$ we may assume these periodic orbits are either attractors or repellers - which implies the generic existence of some tubular neighborhoods $T'_1,...,T'_k$ for $T_1,...,T_k$ s.t. $G$ is transverse to each $\partial T'_i$, $i=1,...,k$. With these ideas in mind, inspired by Th.\ref{orbipers} and Conjecture \ref{nohyp3} we conjecture the following:

\begin{conj}
\label{rem}  As we deform $G$ $rel$ $H$ back to $F$, $F$ remains transverse to the (possibly knotted) torus $\partial T'_i$, $i=1,...,k$.
\end{conj}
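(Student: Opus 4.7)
The plan is to establish Conjecture \ref{rem} via a continuation argument, combining openness of transversality with the bifurcation-invariance of the Orbit Index developed in Section \ref{orbitin}. Fix a smooth homotopy $\{G_t\}_{t \in [0,1]}$ realizing the deformation $rel$ $H$ with $G_0 = G$ and $G_1 = F$. By Proposition \ref{propk} and the density of $K$ we may assume, after a generic perturbation of the path, that $\{G_t\}_{t \in [0,1]} \subseteq K$. Fix $i \in \{1, \ldots, k\}$ and set
$$t^* = \sup\{t \in [0,1] : G_s \text{ is transverse to } \partial T'_i \text{ for every } s \in [0,t]\}.$$
Since transversality is an open condition and $G_0 = G$ is transverse to $\partial T'_i$ by hypothesis, $t^* > 0$. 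The goal is to show $t^* = 1$; assume for contradiction $t^* < 1$.

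At $t = t^*$ the vector field $G_{t^*}$ admits at least one tangency point $p^* \in \partial T'_i$. Using genericity within $K$, together with a standard Thom transversality argument applied to the two-parameter problem $(G_t, \partial T'_i)$, this tangency may be taken to be quadratic and isolated. Local analysis of such a quadratic tangency shows that either the tangency opens transversally again immediately after $t^*$ (contradicting the maximality of $t^*$), or it nucleates a local saddle-node bifurcation of periodic orbits $T^+, T^-$ of Orbit Indices $+1$ and $-1$ respectively (by Lemma \ref{perserve}), with one of the two orbits lying inside $T'_i$ and the other outside. Only the latter case presents a genuine obstruction, so I focus on it. Applying Corollary \ref{slice} to the open region $T'_i$ and tracking the sum of Orbit Indices of all periodic orbits in it, the sum before $t^*$ is $+1$ (from $T_i$ alone, which persists by Theorem \ref{continugeneral}), and after $t^*$ the sum must still equal $+1$; consequently exactly one of $T^+, T^-$ enters $T'_i$. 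The only scenario consistent with the index count is that an orbit $T^-$ of index $-1$ appears inside $T'_i$ (the complementary $T^+$ lies just outside).

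Next I run the argument in reverse along $\{G_t\}_{t \in [t^*, 1]}$. Since $i(T^-) = -1$, by Theorem \ref{pers13} the orbit $T^-$ is globally continuable; assuming it persists up to $t = 1$ it lies in $Ess(F)$. Now follow $T^-$ back along an independent generic $rel$ $H$ deformation from $F$ to $G$: by Corollary \ref{pers12} and Theorem \ref{pers13} its knot type and its linking number with $H$ are invariants of the continuation. The continuation of $T^-$ in $G$ cannot terminate at any of the $T_j$, since $i(T_j) = +1 \ne -1$; it cannot terminate in a Hopf bifurcation, since $rel$ $H$ forbids the appearance of a center fixed point off $H$; and the only remaining possibility, a saddle-node annihilation with a partner orbit of index $+1$, would force the existence of an extra periodic orbit of $G$ not in $\{T_1, \ldots, T_k\}$, contradicting the enumeration of $Ess(G)$ in the ordered case. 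This contradiction forces $t^* = 1$.

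The main obstacle will be handling the "chain of cancelling bifurcations" loophole in which $T^-$ is born at $t^*$ and annihilates with a partner at some $t' \in (t^*, 1)$ and therefore never reaches $F = G_1$; in this case the continuation argument of the previous paragraph never starts. To close this gap I would exploit the topological fact that, by the geometry of $\partial T'_i$ together with the hypothesis that $T_i$ is a knotted attractor/repeller with fixed knot type preserved by the $rel$ $H$ deformation, any saddle-node pair born on $\partial T'_i$ inherits a specific linking number with $H$ and a specific cabling pattern over $T_i$; any cancelling partner must share these invariants. Counting how many such configurations are permitted on a $2$-torus of fixed knot type should yield a finite combinatorial obstruction, ultimately excluding the chain scenario. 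A secondary difficulty is the careful transversality-theoretic setup needed to guarantee that tangencies along $\{G_t\}$ are generic enough for the quadratic local model to apply; standard jet-transversality on $\mathrm{Diff}^k(S^3) \times [0,1]$ restricted to $\partial T'_i$ should suffice but must be executed with care.
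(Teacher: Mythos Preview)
The statement you are attempting to prove is labeled as a \emph{conjecture} in the paper, not a theorem. The paper offers no proof whatsoever; immediately after stating Conjecture \ref{rem} the authors write ``If true, this conjecture implies\ldots'' and proceed to discuss its consequences. There is therefore no proof in the paper to compare your proposal against.

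As for the proposal itself, it contains a genuine gap at the very first substantive step. You assert that a quadratic tangency of $G_{t^*}$ to the torus $\partial T'_i$ either ``opens transversally again'' or ``nucleates a local saddle-node bifurcation of periodic orbits.'' This dichotomy is false: a tangency of a vector field to a fixed surface is a purely local event involving a single flow line grazing the surface, and has no a priori connection to the birth or death of periodic orbits. Periodic orbits are global objects; a flow line that becomes tangent to $\partial T'_i$ at a point need not be periodic at all, and generically will not be. Lemma \ref{perserve} concerns saddle-node bifurcations of periodic orbits along a curve in $K$, not tangencies to auxiliary surfaces, so invoking it here is a non sequitur. Consequently the entire index-counting argument that follows (via Corollary \ref{slice}) never gets started, because no new periodic orbit has been shown to appear inside $T'_i$ at $t^*$.

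You yourself flag the ``chain of cancelling bifurcations loophole'' as unresolved, and the combinatorial cabling argument you sketch to close it is speculative. But the more basic issue above already prevents the argument from reaching that stage. Since the paper itself leaves this as an open conjecture, it is not surprising that a direct attack along these lines does not close.
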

If true, this conjecture implies the Conley Index of $F$ and $G$ inside each $T'_i$, $i=1,...,k$ is the same. For the sake of completeness we remark the Conley Index (see \cite{con}) is an algebraic invariant which allows one to describe the dynamical complexity of an isolated invariant set (see \cite{Chu} for a survey of the field and the precise definitions). As such, Conjecture \ref{rem} states the dynamical complexity of $F$ can be partially described by answering the question which invariant sets can exist inside $T'_i$, $i=1,...,k$ while having the same Conley Index of a periodic orbit.\\

If true, another implication of Conjecture \ref{orbipers} would be that it allows us to think of all the singular hyperbolic vector fields defined in $S^3\setminus H$ as the space of all possible idealized models for chaotic behavior. This relates it to another famous conjecture, known as the \textbf{Chaotic Hypothesis} (see \cite{gal}). To state that hypothesis, assume $F$ is some smooth vector field of $\mathbf{R}^3$ which generates a chaotic attractor $A$. Then, the Chaotic Hypothesis conjectures the following:

\begin{conj}
    The flow around $A$ is orbitally equivalent to that of a hyperbolic flow.
\end{conj}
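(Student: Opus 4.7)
Since the Chaotic Hypothesis is a long-standing open problem, any serious attempt must be programmatic, and within the framework of this paper the natural route is to reduce it to Conjectures \ref{genthunie} and \ref{rem}, together with the orbit-index machinery of Sect.\ref{orbitin} and the model-uniqueness result Th.\ref{begbon}. My plan would begin by extracting a one-dimensional invariant skeleton $H$ associated with the attractor $A$: concretely, $H$ would consist of all fixed points whose invariant manifolds meet $\overline{A}$, together with a finite collection of heteroclinic (or homoclinic) trajectories joining them into a knot, as was done for the Rössler and Lorenz models in Sections \ref{rossler} and \ref{lorenz}. The existence of such an $H$ for a general chaotic attractor is itself a nontrivial input, but numerical evidence (\cite{SR},\cite{MBKPS}) strongly suggests that heteroclinic configurations organize the complex parameter regions, so this reduction is plausible.

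Assuming Conjecture \ref{genthunie}, the vector field $F$ would then be $rel$ $H$ deformable to a representative $G$ that is either ordered, chaotic, or mixed. The ordered case contradicts the assumption that $A$ carries chaotic dynamics, so one is reduced to the chaotic or mixed cases. In the chaotic case, the proof of Cor.\ref{templateth} already shows how to upgrade such a $G$ (via Hopf bifurcations of saddle-foci and the Birman-Williams theorem) to a singular-hyperbolic model; Th.\ref{orbipers} then guarantees that the essential periodic skeleton of $F$ coincides knot-theoretically with that of $G$, and Th.\ref{begbon} would identify the two flows up to orbital equivalence on a neighborhood of their chain-recurrent sets. The mixed case is handled inductively on each piece $S_i$ after cutting along the invariant surface separating them.

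To deal with the non-essential periodic orbits of $F$---orbits of Orbit Index $+1$ that appear when one deforms $G$ back to $F$---I would invoke Conjecture \ref{rem}: the tubular neighborhoods $T'_i$ of such orbits remain transverse throughout the deformation, so their Conley indices are constant and any maximal invariant set inside $T'_i$ has the Conley index of a single periodic orbit. Combining this with a standard filtration argument (decompose $\overline{A}$ into the chain-recurrent pieces of $G$ plus the $T'_i$) yields a global orbital equivalence between $F|_{\overline{A}}$ and $G|_{\overline{A}}$, and $G$ is hyperbolic by construction.

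The hard part is twofold. First, and most fundamentally, Conjectures \ref{genthunie} and \ref{rem} are themselves open; each appears to require a substantial extension of Th.\ref{orbipers}, possibly through a Bestvina--Handel-type algorithm for three-dimensional flows that is, as noted in the Introduction, the open problem this paper addresses only partially. Second, even granting both conjectures, controlling the local dynamics around saddle-focus fixed points in $H$ is delicate: Shilnikov's theorem (\cite{LeS}) shows that arbitrarily small perturbations can create or destroy countable families of horseshoes accumulating on a homoclinic connection, and these dynamics are hyperbolic only in a restricted sense. I expect that obtaining a clean orbital equivalence---as opposed to merely a semi-conjugacy on the essential class---will require weakening the conclusion of the Chaotic Hypothesis to "\emph{singularly} hyperbolic" (in the sense of Def.\ref{singular}), which is in any case the notion that fits the Lorenz and Rössler examples.
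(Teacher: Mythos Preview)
The statement you are asked to prove is not a theorem in the paper but a \emph{conjecture}---the Chaotic Hypothesis of Gallavotti~\cite{gal}---and the paper makes no attempt to prove it. It is stated only to provide context: the author remarks that Th.\ref{orbipers} ``gives a partial answer to the Chaotic Hypothesis'' in a topological (not measure-theoretic) sense, and then immediately replaces it with a weaker, more specific conjecture adapted to the framework of the paper (the final conjecture of the Discussion section). So there is no proof in the paper to compare your proposal against.

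That said, your programmatic outline is sensible and you correctly identify that the statement is open. Your strategy---reduce to Conjectures~\ref{genthunie} and~\ref{rem}, then invoke Cor.\ref{templateth} and Th.\ref{begbon}---is in the spirit of the paper, but you should be aware that the paper does \emph{not} claim this route would yield the full Chaotic Hypothesis. Rather, the author's own response is to weaken the conclusion: instead of asserting orbital equivalence to a hyperbolic flow for an arbitrary chaotic attractor $A$, the paper conjectures only that the dynamics of $F$ restricted to $\overline{Ess(F)}$ are orbitally equivalent to a hyperbolic flow, and only under the hypothesis that $F$ can be deformed $rel$ $H$ to a chaotic vector field $G$. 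Your final paragraph already anticipates exactly this kind of weakening (to singular hyperbolicity, and to the essential class), which aligns well with where the paper actually lands.
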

In more detail, the interest in the said hypothesis arose from the fact that if correct, the statistical dynamics around $A$ could be studied with an SRB measure (see Sect.2 in \cite{SRB} for the precise definition) - which, to our knowledge, is unknown to exist in the absence of any hyperbolicity assumptions on $F$. Even though our methods in this paper are topological and not measurable, it is easy to see that Th.\ref{orbipers} gives a partial answer to the Chaotic Hypothesis - in the sense that whenever we have a singular-hyperbolic vector field defined on the complement of a heteroclinic knot $H$, one should expect at least some of its dynamical complexity to persist under deformations $rel$ $H$. As such, in the spirit of the Chaotic Hypothesis we conjecture the following with which we conclude this paper:

\begin{conj}
    Let $F$ be a smooth vector field of $S^3$ and let $H$ denote some one-dimensional set as in Conjecture \ref{genthunie}. Moreover, assume all the fixed points of $F$ lie on $H$, and that $F$ can be deformed $rel$ $H$ to a chaotic vector field $G$. Then, the dynamics of $F$ on $\overline{Ess(F)}$ are orbitally equivalent to those of a hyperbolic flow.
\end{conj}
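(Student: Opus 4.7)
The plan is to bootstrap from the template-association theorem, Cor.\ref{templateth}, to obtain a genuine orbital equivalence on $\overline{Ess(F)}$. First I would reduce to a situation where Cor.\ref{templateth} applies: since by assumption $F$ is deformable $rel$ $H$ to a chaotic vector field $G$, Th.\ref{pers13} implies every periodic orbit of $G$ has Orbit Index in $\{-1,0\}$ and is continuously deformed to a periodic orbit in $Ess(F)$ of the same knot type, without any orbits collapsing into each other along the way. After performing a small preliminary deformation of $G$ $rel$ $H$ that destroys any $\varphi$-index $1$ orbits using Kuperberg-type plugs (as in the proof of Cor.\ref{templateth}) and that blows up each saddle-focus fixed point on $H$ by a Hopf bifurcation, one obtains a vector field $G'$ whose periodic orbits all have Orbit Index in $\{-1,0\}$ and whose chain-recurrent set $\Lambda'$ contains a dense orbit. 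By Cor.\ref{templateth} one then constructs a model flow $\Gamma$ (in the sense of Def.\ref{model}) on $M=S^3\setminus(B_r(\text{sinks})\cup B_r(\text{sources}))$ whose maximal invariant set $\Lambda'$ is hyperbolic and carries the Birman-Williams template $\Theta$ associated with the suspension around $H$.

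The next step is to lift the injection from periodic orbits of $F$ to knots on $\Theta$ given by Cor.\ref{templateth} to a semi-conjugacy on $\overline{Ess(F)}$. Concretely, for each periodic orbit $P\in Ess(F)$ the proof of Th.\ref{orbipers} produces a continuous curve of periodic orbits $\{P_t\}_{t\in[0,1]}$ connecting $P=P_1$ to a periodic orbit $\widetilde P=P_0$ of $\Gamma$, and by Lemma \ref{perserve} together with Th.\ref{pers13} the knot type, Orbit Index, and linking numbers of $P_t$ with $H$ and with all other persistent orbits are preserved along the curve. I would then show that the assignment $\pi\colon P\mapsto \widetilde P$ extends continuously to a surjection $\overline{Ess(F)}\to \Lambda'$ that is well-defined on limits, preserves orbit orientation, and intertwines the two flows up to a continuous time reparametrization. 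The preservation of all knot and linking data together with the density of periodic orbits in $\Lambda'$ (granted because $\Lambda'$ is a basic set by construction) should promote $\pi$ to an orbital equivalence by invoking the classical rigidity fact that two hyperbolic basic sets realising the same knot-and-linking data on their periodic orbits are orbitally equivalent; hyperbolicity of the target flow $\Gamma$ is essential here, as it supplies the shadowing and expansiveness needed to extend $\pi$ from periodic points to the full closure.

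The hard part will be controlling the recurrent behaviour of $F$ that is \emph{not} directly inherited from $\Lambda'$. In the non-generic case one can imagine $F$ having essential periodic orbits accumulating on exotic invariant pieces (say a non-hyperbolic attractor coexisting with the hyperbolic core, or a saddle connection limit destroyed by the deformation to $G$) and there is no a priori reason these pathologies vanish on $\overline{Ess(F)}$ merely because a $rel$ $H$ homotopy removes them. I would attempt to rule them out by a shadowing-type argument: any recurrent point $x\in \overline{Ess(F)}$ is a limit of essential periodic orbits $\{T_n\}$, each of which maps under $\pi$ to a hyperbolic periodic orbit $\widetilde T_n\subseteq \Lambda'$; the sequence $\{\widetilde T_n\}$ converges in Hausdorff distance to a unique orbit of $\Gamma$ by hyperbolic shadowing, and pulling this orbit back through the continuous family of curves given by Th.\ref{orbipers} should yield a hyperbolic orbit structure on the orbit of $x$ for $F$ itself.

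One remaining obstruction I foresee concerns the orbits lying in $\overline{Ess(F)}\setminus Ess(F)$ whose periods diverge as we approach them from within $Ess(F)$, that is, orbits that accumulate onto $H$ or onto some homoclinic loop to a fixed point of $H$. To control these, one would need a quantitative version of the suspension argument in the proof of Th.\ref{orbipers} describing how Pseudo-Anosov periodic orbits approach the punctures of $R$ under the conjugacy; this in turn would require a careful analysis of the extension of the first-return map $g$ to $\overline{S_1}\cap H$. Carrying this out should simultaneously settle Conjecture \ref{genthunie} in the chaotic case and, via Conjecture \ref{rem}, place the present work squarely within the topological framework underlying the Chaotic Hypothesis.
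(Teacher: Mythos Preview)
This statement is a \emph{conjecture} in the paper, not a theorem; the paper offers no proof. It is the final conjecture of the Discussion section, presented explicitly as an open problem inspired by the Chaotic Hypothesis of \cite{gal}, and the paper concludes immediately after stating it.

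Your proposal is therefore not to be compared against any existing argument but read as a research plan toward an open problem. As such, you have correctly identified the natural starting point (Cor.\ref{templateth} and the persistence machinery of Th.\ref{orbipers} and Th.\ref{pers13}) and the two principal obstructions: extending the periodic-orbit correspondence $P\mapsto\widetilde P$ to a continuous map on $\overline{Ess(F)}$, and controlling recurrent behaviour in $\overline{Ess(F)}\setminus Ess(F)$ near $H$. However, several steps you present as routine are in fact the heart of the problem. The claim that preservation of knot and linking data promotes $\pi$ to an orbital equivalence via ``classical rigidity'' is not supported by any result in the paper or the cited literature; knot-type data alone does not determine an orbital equivalence class, and the Bonatti--Beguin uniqueness (Th.\ref{begbon}) requires a pre-existing orbital equivalence on a neighbourhood of the basic set, which is precisely what you are trying to construct. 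Likewise, the shadowing argument you sketch presupposes that the family of deformation curves $\{P_t\}$ from Th.\ref{orbipers} depends continuously on $P$ in a way strong enough to pass to Hausdorff limits; nothing in the paper establishes this, since Th.\ref{orbipers} is proved orbit-by-orbit via the Bestvina--Handel semiconjugacy and gives no uniform control. Your proposal is a reasonable outline of where the difficulties lie, but it does not close any of them.
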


\section{Appendix $A$ - approximating a DA Bifurcation from $K$}

In this appendix we illustrate how one can approximate a bifurcation scenario known as "derived from Anosov" using vector fields from $K$, the family of vector fields considered in Section \ref{orbitin} (see Prop.\ref{propk}). The setting of such bifurcations, originally described in \cite{S}, is as follows: let $T$ be a hyperbolic periodic orbit (i.e., an orbit whose Orbit Index is $-1$) for some smooth vector field $F$ and let $N$ be a tubular neighborhood of $T$ s.t. $N$ is an isolating torus for $T$ - i.e., there exists a cross-section $S\subseteq N$ transverse to both $G_t$ and $N$ s.t. the first-return map $f:S\to S$ satisfies $\{x\in S|f(x)=x\}=T\cap S$. We deform $F$ inside $N$ with a pitchfork-like bifurcation - that is, we split $T$ along its stable manifold into a repelling periodic orbit, $T_1$, and two hyperbolic saddles, $T_2$ and $T_3$ as illustrated in Fig.\ref{DA1} (for more details see the description in Lemma 2.2.11 in \cite{KNOTBOOK}).\\

\begin{figure}[h]
\centering
\begin{overpic}[width=0.4\textwidth]{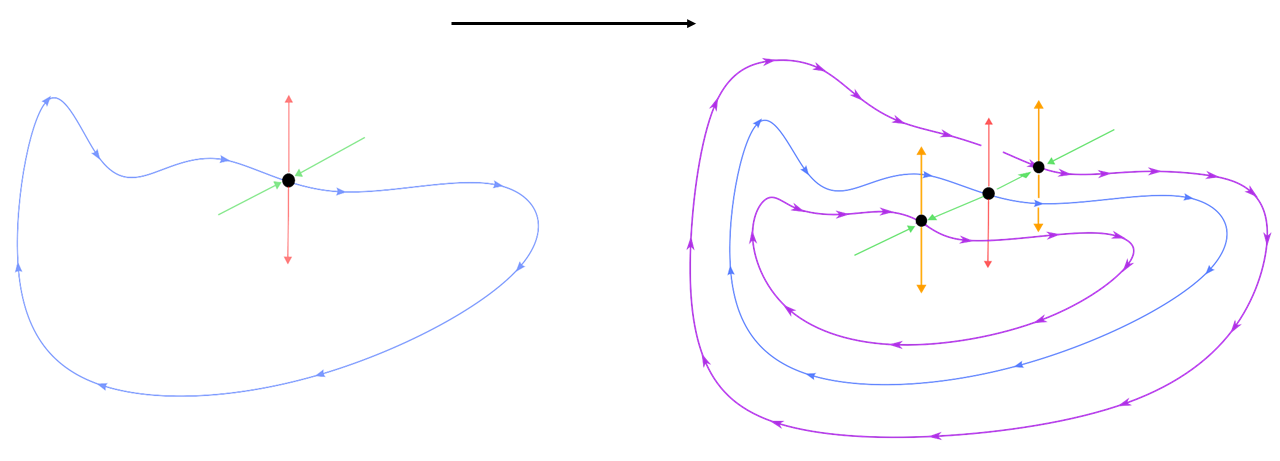}
\end{overpic}
\caption{\textit{Splitting a saddle periodic orbit to a repeller and two saddles along the stable manifold.}}
\label{DA1}
\end{figure}

Let us formalize the proccess described above in a smooth curve of vector fields $\dot{s}=F_t(s)$, $t\in[0,1]$, $s\in N$ s.t. $F_0=F$ - moreover, let us also adopt the convention that the DA bifurcation of $T$ occurs at $t=\frac{1}{2}$. Let us now consider a curve of vector fields $\dot{s}=G_t(s)$, $t\in[0,1]$, $s\in N$ satisfying the following (see Fig.\ref{DA2}):

\begin{itemize}
    \item $\{G_t\}_{t\in[0,1]}\subseteq K$ - i.e., the only bifurcations periodic orbits for $G_t$ can undergo are Hopf, Period-Doubling and Saddle Node (see Def.\ref{type}).
    \item For all $0\leq t\leq1$ the vector field $G_t$ has a periodic orbit in $N$, $T_2'$, which varies smoothly with $t$ and satisfies $i(T_2')=-1$. Moreover, for $0\leq t<\frac{1}{2}$ $N$ is an isolating neighborhood for $T'_2$ as defined above (w.r.t. the vector field $G_t$).
    \item At $t=\frac{1}{2}$ the vector field $G_\frac{1}{2}$ generates a second periodic orbit in $N$ in addition to $T'_2$, a Saddle Node (Type $I$) which we denote by $T'$. Note we can choose $G_\frac{1}{2}$ s.t. $T'$ and $T'_2$ are arbitrarily close to one another. 
    \item For $t>\frac{1}{2}$, $G_t$ has three periodic orbits in $N$ - $T'_2$ (which persists as we vary $t$) along with $T'_1$ a repeller, and $T'_3$, a hyperbolic. In particular, $T'_1$ and $T'_3$ split from $T'$ following the saddle-node bifurcation (see the illustration in Fig.\ref{DA2}). 
\end{itemize}

\begin{figure}[h]
\centering
\begin{overpic}[width=0.3\textwidth]{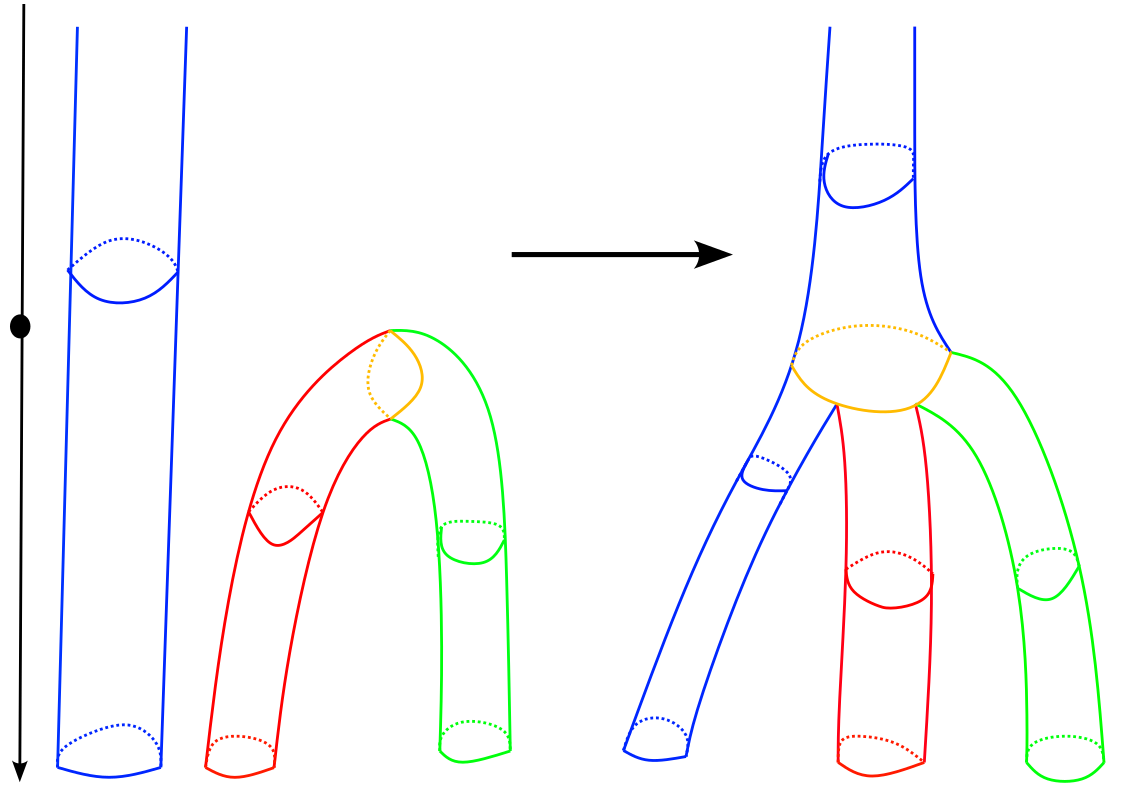}
\put(5,-30){$t$}
\put(300,215){$T'_1$}
\put(180,450){$T'_3$}
\put(320,430){$T'$}
\put(-40,400){$\frac{1}{2}$}
\put(470,200){$T'_2$}
\put(690,165){$T_1$}
\put(840,550){$T_3$}
\put(660,380){$T$}
\put(970,200){$T_2$}
\end{overpic}
\caption{\textit{Approximating a $DA$ bifurcation (on the right) by vector fields in $K$ (on the left). The yellow orbit $T$ and $T'$ correspond to bifurcation orbit, while the (generic) Orbit Indices on $T'_3$ and $T'_2$ are $-1$ while on $T'_1$ it is $1$. The limit of $T'_i$ is $T_i,i=1,2,3$.}}
\label{DA2}
\end{figure}

It is easy to see that given any $\epsilon>0$ we can choose $\{G_t\}_{t\in[0,1]}$ s.t. for all $t\in[0,1]$ we have $d_3(F_t,G_t)<\epsilon$ within $N$ (where $d_3$ denotes the $C^3-$metric on $N$). And to summarize, we have just shown we can approximate a DA Bifurcation scenario via a curve of vector fields in $K$ - for the approximation of more complex bifurcation scenario, see Section 2 in \cite{PY4}. 

\section{Appendix $B$ - the Orbit Index of suspended Horseshoes}

In this Appendix we study the Orbit Index of periodic orbits for flows generated by suspending the classical Smale Horseshoe map (see \cite{S}) - or more generally, the periodic orbits created by suspending any two-dimensional diffeomorphism whose dynamics can be conjugated to those of  the double-sided shift $\sigma:\{1,2\}^\mathbf{Z}\to\{1,2\}^\mathbf{Z}$. In particular, in this Appendix we prove Prop.\ref{dens1} from Section \ref{orbitin} and show the suspension of the classic Smale Horseshoe map always generates infinitely many periodic orbits whose Orbit Index is $-1$. As the proof would apply to a much larger class of maps, we will conclude this Appendix by briefly discussing how our can possibly be generalized.\\

    \begin{figure}[h]
\centering
\begin{overpic}[width=0.35\textwidth]{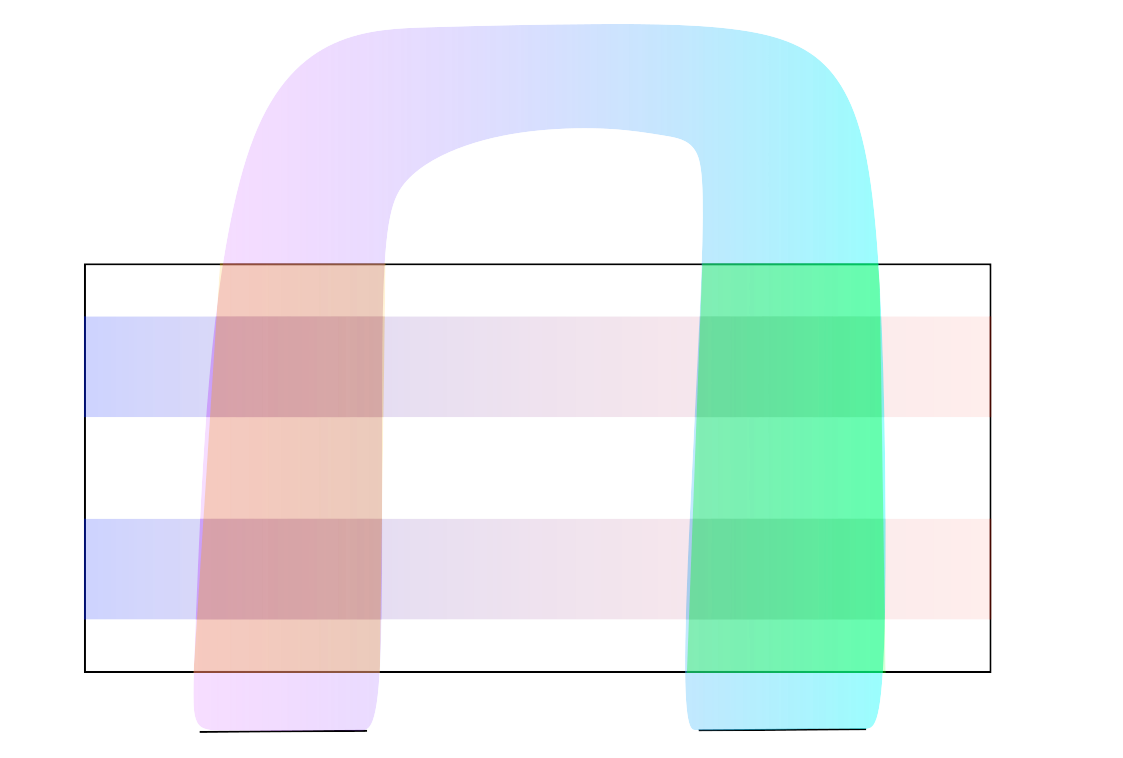}

\put(50,20){$A$}
\put(180,-20){$h(CD)$}
\put(620,-20){$h(AB)$}
\put(860,20){$B$}
\put(190,240){$h(R_2)$}
\put(615,240){$h(R_1)$}
\put(50,450){$C$}
\put(860,450){$D$}
\put(0,330){$R_2$}
\put(0,150){$R_1$}
\end{overpic}
\caption{\textit{A Smale Horseshoe map.}}
\label{hors}
\end{figure}

We first recall the following terminology, introduced in Section \ref{orbitin}. Assume $F$ is a smooth vector field on some three-manifold $M$ and let $T$ be an isolated, Type $0$ periodic orbit for $F$ - i.e., the Orbit Index of $T$ is well defined (see Def.\ref{index1} and Def.\ref{index22}), Then, we refer to $T$ as follows:

\begin{enumerate}
    \item  $T$ is said to \textbf{Hyperbolic} whenever $i(T)=-1$.
    \item $T$ is \textbf{Möbius} whenever $i(T)=0$ (see the illustration in Fig.\ref{MOB}).
    \item $T$ is \textbf{Elliptic} when $i(T)=1$ .
\end{enumerate}

With this formalism in mind, we prove:

\begin{proposition}
\label{dens}    Let $ABCD$ be a topological rectangle with vertices $ABCD$, and assume $h:ABCD\to\mathbf{R}^2$ is either a Smale Horseshoe map or a Fake Horseshoe map (see Fig.\ref{horss}). Then, the following is satisfied:

\begin{enumerate}
    \item When $h$ is a Smale Horseshoe map, every periodic orbit w.r.t. any smooth suspension of  $h$ in $S^3$ is either Hyperbolic or Möbius. When $h$ is a Fake Horseshoe map every periodic orbit w.r.t. the suspension is Hyperbolic.
    \item The periodic orbits for $h$ whose is hyperbolic are dense in the invariant set of $h$ in $ABCD$.
\end{enumerate}
\end{proposition}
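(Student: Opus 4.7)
The plan is to use the standard symbolic coding of the invariant set $I$ combined with a careful orientation analysis along periodic orbits. For both horseshoes, the itinerary map associated with the partition $\{R_1,R_2\}$ gives a topological conjugacy $\pi : I \to \{1,2\}^{\mathbf{Z}}$ between $h|_I$ and the full shift $\sigma$, so every periodic orbit of $h$ corresponds to a periodic sequence, and the differential $Dh^n$ at a period-$n$ point is the ordered product of $Dh(h^i(x))$, $i=0,\dots,n-1$, along the orbit. Since the suspension has $h^n$ as the local first-return map at a period-$n$ point, $D(T) = Dh^n(x)$ (see Def.\ref{dfT}), and the Orbit Index of $T$ can be computed directly via Def.\ref{index1}.

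First, I would set up the linearization on each strip using the stable/unstable splitting. On the hyperbolic invariant set there is a continuous $Dh$-invariant splitting $T_xI = E^s_x\oplus E^u_x$, so in the adapted basis each factor $Dh(h^i(x))$ is represented by a $2\times 2$ diagonal matrix with one entry in $(0,1)$ and one in $(1,\infty)$ in absolute value. The sign of these diagonal entries is determined by whether $h$ reverses the stable and unstable directions on the corresponding strip: for the classical Smale Horseshoe, $h|_{R_1}$ is an orientation-preserving affine contraction-stretching, while $h|_{R_2}$ involves the $U$-bend which reverses both the stable and the unstable direction, so $Dh|_{R_1}$ has positive diagonal entries while $Dh|_{R_2}$ has negative ones. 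For the Fake Horseshoe no such $U$-bend occurs and $Dh$ has positive diagonal entries on both strips.

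Next, I would compute $\varphi(T)$. For a periodic point with symbolic code $s_0 s_1\ldots s_{n-1}$, let $k = \#\{i : s_i = 2\}$; then $Dh^n$ has eigenvalues $(-1)^k\lambda_s$ and $(-1)^k\mu_s$ with $\lambda_s\in(0,1)$ and $\mu_s\in(1,\infty)$ in the Smale case, and positive eigenvalues $\lambda_s,\mu_s$ in the same ranges in the Fake case. When both eigenvalues are positive we have $k^-=0$ and $k^+=1$, so by Def.\ref{index1}, $\varphi(T) = -1$ (Hyperbolic); when both are negative we have $k^-=1$ and $k^+=0$, so $\varphi(T)=0$ (M\"obius). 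This immediately proves assertion (1): for the Fake Horseshoe every periodic orbit is Hyperbolic, and for the Smale Horseshoe a periodic orbit is Hyperbolic or M\"obius according as the number of $2$'s in its symbolic period is even or odd.

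Finally, for the density statement (2), I would show that periodic sequences with an even number of $2$'s per period are dense in $\{1,2\}^{\mathbf{Z}}$, which via the conjugacy $\pi$ gives density of the Hyperbolic periodic orbits in $I$. Given any $\omega \in \{1,2\}^{\mathbf{Z}}$ and any $N > 0$, let $w = \omega_{-N}\omega_{-N+1}\cdots\omega_{N}$ and let $k_0$ be the number of $2$'s in $w$; if $k_0$ is even, the periodic sequence $\overline{w}$ of period $2N+1$ has even count $k_0$, and if $k_0$ is odd, the periodic sequence $\overline{w\cdot 2}$ of period $2N+2$ has even count $k_0+1$. Both sequences agree with $\omega$ on $[-N,N]$, so the density follows. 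The main subtlety I expect is justifying the clean eigenvalue computation for $Dh^n$: this rests on the continuous $Dh$-invariant splitting on the hyperbolic set $I$, so in the adapted basis the composition of diagonal matrices is again diagonal and the sign of each entry accumulates exactly as $(-1)^k$ along the orbit.
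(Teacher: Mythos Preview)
Your proposal is correct and follows essentially the same approach as the paper: both arguments identify that $Dh$ has positive eigenvalues on $R_1$ and negative ones on $R_2$ (for the Smale Horseshoe), then count the number of visits to $R_2$ along a periodic orbit to determine the sign of the eigenvalues of $D(T)$, and finish by observing that periodic words with an even number of $2$'s are dense in $\{1,2\}^{\mathbf{Z}}$. Your version is slightly more careful in invoking the invariant $E^s\oplus E^u$ splitting to justify that the eigenvalues of the composite $Dh^n$ really are the products of the individual diagonal entries, a point the paper leaves implicit.
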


\begin{proof}
We only prove the assertion when $h:ABCD\to\mathbf{R}^2$ is a Smale Horseshoe map, the argument for the Fake Horseshoe map is similar. To begin, recall that $h^{-1}(ABCD)\cap ABCD$ consists of two rectangles, $R_1$ and $R_2$ s.t. the following is satisfied (see the illustration in Fig.\ref{hors}):
    \begin{itemize}
        \item The $AB$ side lies below of $R_1$, and $h(R_1)$ is stretched upwards, connecting the $AB$ and $CD$ sides - i.e., the arc on $\partial R_1\setminus(AC\cup BD)$ which lies above the $AB$ side is mapped to the $AB$ side, while the arc on $\partial R_1\setminus(AC\cup BD)$ which is closer to the $CD$ side is mapped to the $CD$ side.
        \item The $CD$ side lies above $R_2$ while $h(R_2)$ is rotated, then stretched downwards, connecting the $CD$ and $AB$ sides - i.e., the arc on $\partial R_2\setminus(AC\cup BD)$ which lies directly below the $CD$ side is mapped to the $AB$ side, while the arc on $\partial R_2\setminus(AC\cup BD)$ closer to the $AB$ side is mapped to the $CD$ side.
    \end{itemize}
    
Further recall that at every point $x$ in $R_1\cup R_2$ the differential $D_h(x)$ has one eigenvalue in $(-1,0)\cup(0,1)$ and another in $(-\infty,-1)\cup(1,\infty)$. As $h$ is orientation preserving, by this discussion we conclude that for $x\in R_1$ the differential $D_h(x)$ has one eigenvalue in $(0,1)$ and another in $(1,\infty)$, while for $x\in R_2$ it has one eigenvalue in $(-1,0)$ and another in $(-\infty,-1)$.\\

 \begin{figure}[h]
\centering
\begin{overpic}[width=0.5\textwidth]{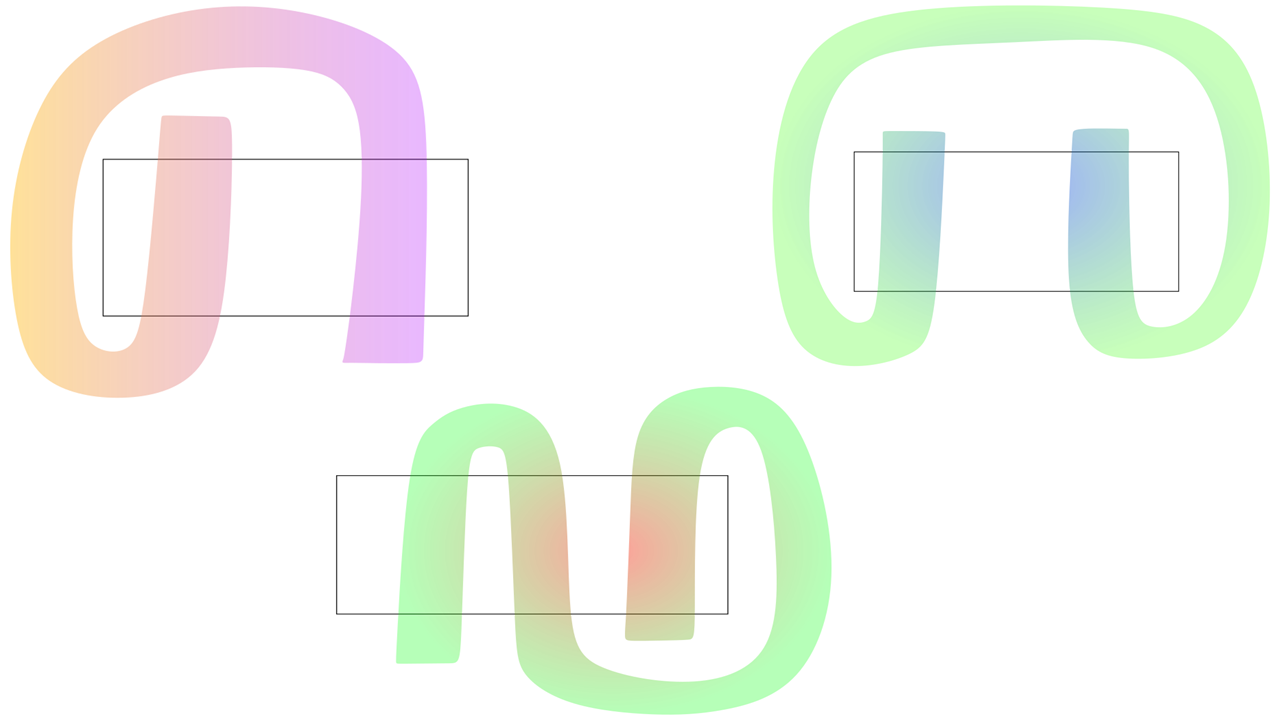}

\end{overpic}
\caption{\textit{Several variants on the Smale Horseshoe map. Using a similar argument to the one used to prove Prop.\ref{dens} it is easy to see any suspension of each of them results in a dense collection of periodic orbits of Orbit Index $-1$.}}
\label{hors2}
\end{figure}

To continue, let $P$ denote the collection of periodic orbits for $h$ in $R_1\cup R_2$ and let us recall there exists a homeomorphism between them and the collection of periodic sequences in $\{1,2\}^\mathbf{Z}$ (see \cite{S}). Now, let us suspend $h$ into a flow and let $x\in R_1\cup R_2$ be a periodic orbit for $h$ of minimal period $k$ - per definition, $x$ lies on some periodic orbit for the suspension flow. Moreover, it is easy to see that matter how we suspend $h$, as the minimal period for $x$ is $k$ the differential $D(x)$ of any local first first-return map at $x$ is given by $D(x)=\Pi_{j=0}^{k-1}D_{h}(h^j(x))$ - where $D_h$ denotes the differential of $h$.\\

Consequentially, the sign of either one of the eigenvalues of $D(x)$ is given by $(-1)^{r(x)}$ - where $r(x)$ is an integer denoting number of times the orbit of $x$ visits $R_2$ before returning to $x$. Therefore, whenever $r(x)$ is even $D(x)$ has two positive eigenvalues - in which case by Def.\ref{index1} we know the periodic orbit containing $x$is Hyperbolic (and similarly, whenever $r(x)$ is odd it is Möbius). In other words, we have proven that every periodic orbit in the suspension of $h$ has Orbit Index which is either $-1$ or $0$ - i.e., it is either Hyperbolic or Möbius.\\

It remains to prove the periodic orbits for $h$ in $ABCD$ which are suspended into Hyperbolic periodic orbits are dense in the invariant set of $h$ in $ABCD$. To do so, recall that if $x$ is suspended into a Hyperbolic periodic orbit, i.e., $r(x)$ is even, then it corresponds to some unique periodic symbol in $\{1,2\}^\mathbf{Z}$ whose minimal period includes an even number of $2'$s - and moreover, any such periodic symbol in $\{1,2\}^\mathbf{Z}$ corresponds to a unique periodic point for $h$ in $ABCD$. Since this collection of periodic symbols is dense in $\{1,2\}^\mathbf{Z}$ it follows the periodic points for $h$ corresponding to it is dense in the invariant set of $h$ in $ABCD$. All in all, the proof Prop.\ref{dens} is now complete.
\end{proof}
\begin{figure}[h]
\centering
\begin{overpic}[width=0.4\textwidth]{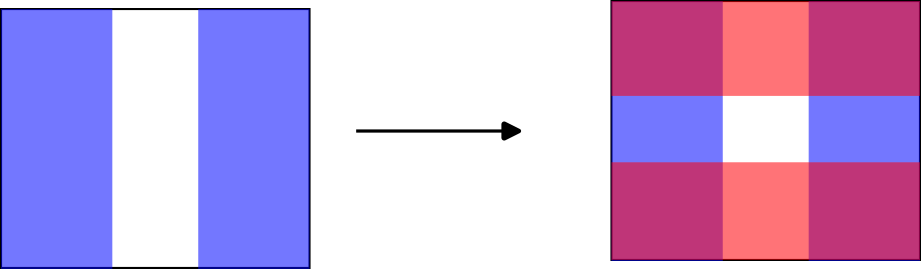}
\put(780,220){$R'_2$}
\put(780,50){$R'_1$}
\put(230,100){$R_2$}
\put(20,100){$R_1$}

\end{overpic}
\caption{\textit{The Baker's map, where $R'_i$ denotes the image of $R_i,i=1,2$. If the dynamics of some first-return map can be reduced to it, every periodic orbit generated by its suspension would be hyperbolic.}}
\label{baker}
\end{figure}

At this stage we remark the proof of Prop.\ref{dens} can be modified to extend to other variants of the Smale Horseshoe map (and many variants on the Baker map), as appear in Fig.\ref{hors2} and Fig.\ref{baker}.  This motivates us to ask the following: let $f:\mathbf{R}^2\to\mathbf{R}^2$ be an orientation-preserving diffeomorphism and assume $I\subseteq\mathbf{R}^2$ is some compact $f-$invariant subset on which $f$ is conjugate to some subshift of a finite type. Now, let $x\in I$ be periodic and let $T$ be a periodic orbit for the suspension flow of $f$ - what can we say about the Orbit Index of $T$? Or, in other words, assuming we can extend the suspension of $f$ into some smooth flow of $S^3$, what Orbit Indices should we expect to find in the suspension of $I$?\\

We will not address this question in this Appendix as it is beyond the scope of this study - however, we would like to state that heuristically we would expect $T$ to be either Hyperbolic or Möbius, at least when $f$ is conjugate to the entire double-sided shift on $I$. The reason we expect it to be so is due to the following "extreme" example in Fig.\ref{nosmal}. In the said image we see a variant on the Baker's map for which it is easy to prove the differential at each point in the invariant set in $ABCD$ has one positive and one negative eigenvalue - which, at least heuristically, could lead to the creation of infinitely many Möbius periodic orbits under any suspension. However, since $S^3$ is orientable - and hence any smooth flow on it is orientation preserving - there cannot be a flow on $S^3$ which generates such a first-return map. This discussion motivates us to conjecture the following:

\begin{conj}
\label{con1}    There is no $C^k$ vector fields $F$ on $S^3$, $k\geq3$, which satisfies both properties listed below:
\begin{enumerate}
    \item There exists a compact, connected invariant set $\Lambda$ for $F$ and a cross-section $S$ for $\Lambda$ s.t. the following holds:
    \begin{itemize}
        \item The first-return map $h:S\to S$ is continuous on $S\cap\Lambda$.
        \item $h$ has infinitely many periodic points in $S\cap\Lambda$.
        \item The dynamics of $h$ on $S\cap\Lambda$ can be factored to those of some subshift of finite type.
    \end{itemize}
    \item Every periodic orbit $T\subseteq\Lambda$ satisfies $i(T)=0$.
\end{enumerate}
\end{conj}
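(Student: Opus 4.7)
My plan is to argue by contradiction, combining the orientation of $S^3$ with a cocycle/Livshitz-type argument on the underlying subshift. Assume a vector field $F$ satisfying both properties exists, with $\Lambda$, $S$, $h$, and a semi-conjugacy $\pi:S\cap\Lambda\to\Sigma_A$ to a subshift of finite type with infinitely many periodic orbits.

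The first step is to extract the orientation constraint. Since $S^3$ is oriented and $F$ is smooth, the time-$t$ maps preserve the orientation of $TS^3$. Orienting $T_xS$ by declaring $(v_1,v_2)$ positive iff $(v_1,v_2,F(x))$ is a positively oriented basis of $T_xS^3$, the first-return map is orientation-preserving: $\det D_h(x)>0$ on the (smooth extension of the) return map. Combined with the Möbius assumption, for every periodic orbit $T$ of minimal period $k$ and fixed point $x\in T\cap S$ of $h^k$, the differential $D_{h^k}(x)$ is forced to have real eigenvalues $\lambda_x\in(-\infty,-1)$ and $\mu_x\in(-1,0)$ (both negative, keeping $\det>0$).

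The second step is to construct a $\mathbb{Z}/2$-valued cocycle measuring the orientation-reversal of the ``expanding'' eigenline. At each periodic point $x$ of minimal period $k$, the eigenline $E_x\subseteq T_xS$ of the eigenvalue $\lambda_x$ is canonical; the map $D_h$ transports $E_x$ through $E_{h(x)},\dots,E_{h^{k-1}(x)}$ and returns with a negative multiplier $\lambda_x<-1$, reversing orientation once per period. I would extend $E$ to a continuous line subbundle over the closure of $\mathrm{Per}(h)\cap \Lambda$ by taking limits of eigenlines. Then $\epsilon(x)\in\{\pm1\}$, defined as the sign of the induced action of $D_h(x)$ on $E$, gives a continuous cocycle with $\prod_{i=0}^{k-1}\epsilon(h^i(x))=-1$ on every periodic orbit.

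The third step is the cohomological contradiction. By a Livshitz-type theorem for continuous $\mathbb{Z}/2$-cocycles on subshifts of finite type, a cocycle whose value on every periodic orbit is fixed is cohomologous to a constant; more precisely, if $\prod\epsilon=-1$ on every periodic orbit, then $\epsilon$ is cohomologous (via a coboundary $\phi\circ h/\phi$) to the constant cocycle $-1$, so for every period-$k$ orbit one has $-1=\prod\epsilon=(-1)^k$. Since the subshift is an SFT with infinitely many periodic orbits and therefore positive entropy, it supports periodic orbits of both even and odd periods (indeed all sufficiently large integers in the transitive case), which is the desired contradiction.

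The main obstacle will be step two: constructing a continuous extension of the eigenline bundle $E$ across non-periodic points of $\Lambda$ without assuming any form of hyperbolicity. Eigenlines of $D_{h^k}$ are only canonically defined at periodic points, and their limits as one approaches a non-periodic point may not exist or may depend on the approximating sequence. To handle this I would first try to upgrade the hypotheses by using the factor $\pi$ to transport a hyperbolic-like structure from $\Sigma_A$, pulling back cocycles rather than push-forwarding line bundles; this reduces everything to a cocycle problem on the SFT, where Livshitz applies cleanly. A secondary issue is that $\pi$ need not be injective on periodic orbits, so the obstruction class on $\Sigma_A$ must be shown to be well-defined independent of the lift, likely by exploiting that any two preimages of a periodic $\sigma$-orbit give the same sign via the shared orientation of $S^3$.
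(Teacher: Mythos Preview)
First, note that this statement is a \emph{conjecture} in the paper, not a theorem: the author explicitly writes ``We do not know how to solve this conjecture'' and proposes the weaker Conjecture~\ref{con2} as a stepping stone. So there is no proof in the paper to compare against; you are attempting to settle an open problem.

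Your Step~4 contains a genuine logical error. The Livshitz theorem says that a continuous $\{\pm1\}$-cocycle on a mixing SFT is a coboundary iff all its periodic products equal $+1$; equivalently, two cocycles are cohomologous iff they have identical periodic data. The constant cocycle $-1$ has periodic product $(-1)^k$ on a period-$k$ orbit, not $-1$. Hence ``all periodic products equal $-1$'' is \emph{not} the periodic data of the constant $-1$ cocycle, and Livshitz does not give you the cohomology $\epsilon\sim -1$. In fact your desired conclusion $-1=(-1)^k$ is \emph{equivalent} (via Livshitz) to $\epsilon$ being cohomologous to $-1$, so the argument is circular. That said, the \emph{conclusion} you want in Step~4 can be reached directly without Livshitz: on a mixing SFT any continuous $\{\pm1\}$-valued function is locally constant (the space is totally disconnected), and considering orbits of the form $\overline{a^n w}$ for a fixed point $\overline{a}$ shows the periodic product behaves like $(-1)^n\cdot C$, which cannot equal $-1$ for all $n$.

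The deeper gap is the one you yourself flag in Step~3, and it is fatal in the stated generality. The eigenlines $E_x$ are defined only at periodic points via the spectral decomposition of $D_{h^k}(x)$; extending them to a continuous line field over $S\cap\Lambda$ is precisely a dominated-splitting hypothesis, which the conjecture does not assume. Your proposed workaround---pushing the problem to the SFT side via $\pi$---does not help, because what you need to transport is the \emph{sign} data coming from $D_h$, and that data lives upstairs, not on $\Sigma_A$. There is also a hidden assumption in Step~2: $i(T)=0$ in the sense of Definition~\ref{index22} does not automatically force the eigenvalues of $D_{h^k}$ into $(-\infty,-1)\times(-1,0)$ unless the orbit is isolated and Type~$0$; the general orbit index is defined through a limiting procedure and can vanish for other reasons. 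So as written the proposal does not close the problem; it would, however, give a clean proof of the conjecture under the additional hypothesis that $\Lambda$ carries a dominated splitting, once Step~4 is repaired as above.
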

    \begin{figure}[h]
\centering
\begin{overpic}[width=0.35\textwidth]{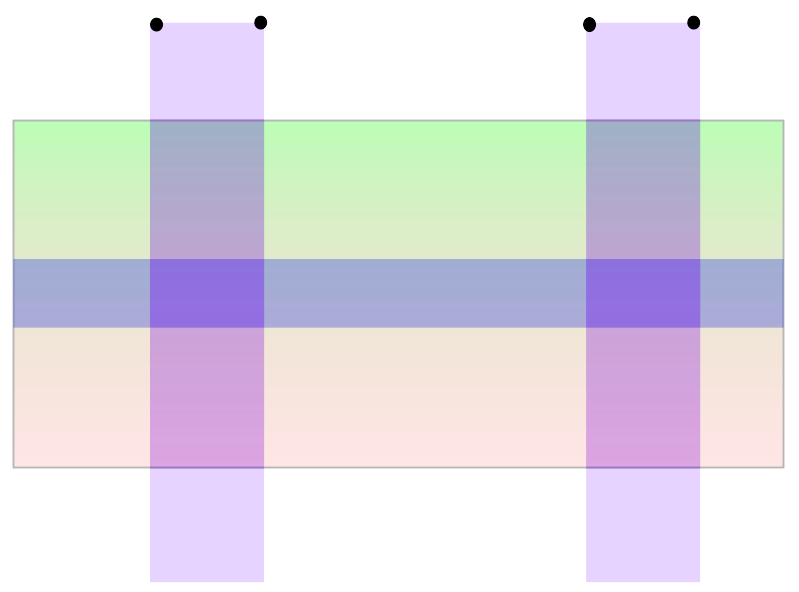}

\put(0,115){$A$}
\put(840,740){$h(B)$}
\put(640,740){$h(A)$}
\put(950,115){$B$}
\put(100,740){$h(D)$}
\put(285,740){$h(C)$}
\put(0,600){$C$}
\put(950,600){$D$}
\put(30,480){$R_2$}
\put(30,220){$R_1$}
\end{overpic}
\caption{\textit{A variant on the Baker transformation, which tears $ABCD$ along the blue rectangle in the center, and stretches $R_1$ and $R_2$ as appears above. One can verify the map is orientation reversing on both $R_1$ and $R_2$.}}
\label{nosmal}
\end{figure}

From the arguments above we know that such flows, whether they exist or not, cannot generate dynamics which include a suspension of a Smale Horseshoe. We do not know how to solve this conjecture - however, as a first step towards such a solution, we would suggest one should first tackle the following problem:

\begin{conj}
\label{con2} Let $D$ be a disc, let $\sigma:\{1,2\}^\mathbf{Z}\to\{1,2\}^\mathbf{Z}$ denote the double-sided shift, and let $f:D\to D$ be an orientation preserving diffeomorphism s.t. the following is satisfied:
\begin{enumerate}
    \item There exists some $f-$invariant subset $I\subseteq D$, a $\sigma-$invariant $\Sigma\subseteq\{1,2\}^\mathbf{Z}$ and a homeomorphism $\pi:I\to\Sigma$ s.t. $\pi\circ f=\sigma\circ\pi$.
    \item The periodic symbols are dense in $\Sigma$.
\end{enumerate}
Then, there exists a dense collection of periodic orbits in $I$ s.t. any suspension of $f$ turns them to hyperbolic periodic orbits.
\end{conj}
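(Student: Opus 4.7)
The plan is to reduce the conjecture to establishing that hyperbolic (in the classical saddle sense, with positive real multipliers) periodic points are dense in $I$, and then appeal to Katok's horseshoe theory for surface diffeomorphisms with positive topological entropy. The first step is essentially a translation. If $p\in I$ is periodic of period $k$ and $D_{f^k}(p)=\prod_{j=0}^{k-1}D_f(f^j(p))$ has two positive real eigenvalues, one in $(0,1)$ and one in $(1,\infty)$, then this matrix is exactly the differential of the first-return map at $p$ for any smooth suspension of $f$, depending only on $f$ and not on the vertical coordinate chosen by the suspension. By Definition \ref{index1} this gives generic Orbit Index $\varphi(T)=-1$, i.e.\ a hyperbolic periodic orbit $T$ in the orbit-index sense. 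Note that orientation-preservation of $f$ forces $\det D_{f^k}(p)>0$, so a priori the eigenvalues are either both positive real, both negative real, or a complex conjugate pair; the content is to rule out the last two cases.

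Second, I would use the hypothesis that $f|_I$ is conjugate to $\sigma|_\Sigma$ with periodic symbols dense in $\Sigma$. Standard counting for subshifts then yields exponential growth of periodic orbits of $f$ on $I$, hence strictly positive topological entropy $h_{\mathrm{top}}(f|_I)>0$. Adding the mild regularity assumption $f\in C^{1+\alpha}$ (which seems unavoidable for what follows), Katok's theorem then produces a sequence $\{\Lambda_n\}_n$ of uniformly hyperbolic horseshoes in $D$ on which $f$ is conjugate to a full shift and whose entropies approach $h_{\mathrm{top}}(f|_I)$. Every periodic point in $\Lambda_n$ is a classical hyperbolic saddle, and by replacing $f$ with $f^2$ if necessary one may arrange the multipliers at each such periodic point to be positive. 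By the translation in the first step, these periodic orbits have orbit index $-1$ under every suspension, i.e.\ they already give the collection asked for in the conjecture -- restricted to $\bigcup_n \Lambda_n$.

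Third, and this is the heart of the proof, one must upgrade this to density in the whole of $I$. The natural attempt would be to combine the conjugacy $\pi\colon I\to\Sigma$ with a tracking argument: given $p\in I$ and $\varepsilon>0$, approximate $\pi(p)$ by a periodic symbol $s\in\Sigma$ whose initial block agrees with $\pi(p)$ for a long time, and then locate a Katok horseshoe $\Lambda_n$ containing the $f$-periodic orbit which encodes as $s$. One route is to refine Katok's construction so that the horseshoes track the symbolic coding $\pi$ rather than merely being adapted to a hyperbolic measure of maximal entropy. A second, more ergodic-theoretic route would try to show that the invariant measures supported on $\bigcup_n \Lambda_n$ are weak-$*$ dense in $\mathcal{M}_f(I)$, so that every open set of $I$ eventually meets some $\Lambda_n$.

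The main obstacle I foresee is exactly this density step. Katok's theorem as classically stated only places horseshoes in the support of a given hyperbolic ergodic measure, which has no reason to coincide with $I$; upgrading ``density of periodic points of the shift in $\Sigma$'' to ``density in $I$ of hyperbolic periodic orbits sitting inside Katok horseshoes'' is a \emph{topological} statement that genuinely goes beyond the standard ergodic-theoretic approximation results. Circumventing it seems to require either a substantially refined approximation theorem (perhaps in the spirit of Buzzi--Crovisier--Sarig) or a direct topological shadowing argument exploiting the conjugacy $\pi$ itself. In the absence of further structural assumptions on $f$ I do not see a way past this, which is consistent with the authors' acknowledgement that Conjecture \ref{con2} is open.
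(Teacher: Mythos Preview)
The statement is a conjecture which the paper explicitly leaves open (``We do not know how to solve this conjecture''); there is no proof in the paper to compare your proposal against. Your outline is a reasonable line of attack, and you are right to flag the density-in-$I$ upgrade as the crux. But there is a more basic gap earlier on.

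In step 2 you assert that density of periodic symbols in $\Sigma$ yields exponential growth of periodic orbits and hence $h_{\mathrm{top}}(f|_I)>0$. This is false as stated: a $\sigma$-invariant $\Sigma\subseteq\{1,2\}^{\mathbf Z}$ with dense periodic points can have zero entropy. Trivially, $\Sigma$ could be a finite union of periodic orbits; less trivially, one can take $\Sigma$ to be the orbit closure of $\{(0^n1)^\infty:n\geq 1\}$, which has polynomial word-complexity, zero entropy, and dense periodic points. Without positive entropy Katok's theorem is unavailable and your mechanism for producing uniformly hyperbolic horseshoes collapses. Worse, the degenerate case already shows the conjecture as literally stated needs a nontriviality hypothesis: if $\Sigma$ is a single fixed point and $D_f$ at the corresponding $p\in I$ has two negative real eigenvalues (allowed, since orientation-preservation only forces $\det D_f(p)>0$), then the unique periodic orbit in the suspension is M\"obius and there are no hyperbolic orbits in $I$ at all.

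A smaller issue: the ``replace $f$ by $f^2$'' device does not do what you want. The orbit index of the suspension periodic orbit through a point $p$ of $f$-period $k$ is read off from $D_{f^k}(p)$, not from its square; passing to $f^2$ changes the suspension, not the index for the suspension of $f$. What actually works inside a Katok horseshoe is the same parity argument the paper uses in Prop.~\ref{dens}: periodic codes that visit the orientation-reversing rectangle an even number of times are dense in the horseshoe and give positive multipliers, hence orbit index $-1$. So your conclusion about Katok horseshoes is salvageable, but via Prop.~\ref{dens}-style reasoning rather than by squaring.
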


Assuming Conjecture \ref{con2} is proven, a possible heuristic for proving Conjecture \ref{con1} is the following: recall that given a smooth vector field $F$ of $\mathbf{R}^3$, by Th.1-4 in \cite{Kris} there exists a cross-section $S$ transverse to all the periodic orbits of $F$. Then, by sowing together all the local first-return maps of these periodic orbits (and possibly ascribing symbolic dynamics to them via \cite{Bow}) one can hope to reduce this new map to a disc diffeomorphism as in Conjecture \ref{con2} - from which the assertion would then follow. Conversely, given a counterexample to Conjecture \ref{con2} one can suspend it and derive a counterexample to Conjecture \ref{con1}.
\printbibliography
\end{document}